\newtheorem{theorem}{Theorem}[section]
\newtheorem{lemma}[theorem]{Lemma}
\newtheorem{corollary}[theorem]{Corollary}
\theoremstyle{remark}
\newtheorem{remark}[theorem]{Remark}
\newtheorem{definition}{Definition}
\numberwithin{equation}{section}
\newcommand{\R}{{\mathbb{R}}}
\newcommand{\vep}{\varepsilon}
\newcommand{\C}{{\mathbb{C}}}
\newcommand{\Z}{{\mathbb{Z}}}
\newcommand{\N}{{\mathbb{N}}}
\newcommand{\sgn}{\operatorname{sgn}}
\newcommand{\SL}{\operatorname{SL}(\psi_{\R})}
\newcommand{\Arg}{\operatorname{Arg}}
\newcommand{\norm}[1]{\left\|#1\right\|} 
\newcommand{\pa}{\operatorname{P_a}}
\newcommand{\pb}{\operatorname{P_b}}
\newcommand{\pag}{\operatorname{P_aG}}
\newcommand{\Int}{\operatorname{Int}}
\newcommand{\customlabel}[2]{\protected@write\@auxout{}{\string\newlabel{#1}{{#2}{\thepage}{#2}{#1}{}}}\hypertarget{#1}{#2}}
\begin{document}

\title[Solving cohomological equation - part I. local obstructions]
{Solving the cohomological equation for locally hamiltonian flows, part I - local obstructions}

\author[K.\ Fr\k{a}czek]{Krzysztof Fr\k{a}czek}
\address{Faculty of Mathematics and Computer Science, Nicolaus
Copernicus University, ul. Chopina 12/18, 87-100 Toru\'n, Poland}
\email{fraczek@mat.umk.pl}

\author[M. Kim]{Minsung Kim}

\address{Centro di Ricerca Matematica Ennio De Giorgi, Scuola Normale Superiore, Piazza dei Cavalieri 3, 56126 Pisa, Italy}
\email{minsung.kim@sns.it}

\address{}
\email{}
\date{\today}

\subjclass[2000]{37E35, 37A10, 37C40, 37C83, 37J12}
\keywords{locally Hamiltonian flows, cohomological equation, invariant distributions}
\thanks{}
\maketitle
\begin{abstract}
We study the cohomological equation $Xu=f$ for smooth locally Hamiltonian flows on compact surfaces.
The main novelty of the proposed approach is that it is used to study the regularity of the solution $u$ when the flow has saddle loops, which has not been systematically studied before. Then we need to limit the flow to its minimal components.
We show the existence and (optimal) regularity of solutions regarding the relations with the associated cohomological equations for interval exchange transformations (IETs).
Our main theorems state that the regularity of solutions depends not only on the vanishing of the so-called Forni's distributions (cf.\ \cite{Fo1,Fo3}), but also on the vanishing of families of new invariant distributions (local obstructions) reflecting the behavior of $f$ around the saddles.  Our main results provide some key ingredient for the complete solution to the regularity problem of solutions (in cohomological equations) for a.a.\ locally Hamiltonian flows (with or without saddle loops) to be shown in \cite{Fr-Ki3}.

The main contribution of this article is to define the aforementioned new families of invariant distributions $\mathfrak{d}^k_{\sigma,j}$, $\mathfrak{C}^k_{\sigma,l}$ and analyze their effect on the regularity of $u$ and on the regularity of the associated cohomological equations for IETs. To prove this new phenomenon, we further develop local analysis of $f$ near degenerate singularities inspired by tools from \cite{Fr-Ki} and \cite{Fr-Ul2}. We develop new tools of handling functions whose higher derivatives have polynomial singularities over IETs.
\end{abstract}


\section{Introduction}

Let $M$ be a smooth compact connected orientable surface of genus $g\geq 1$.
We  deal with smooth flows $\psi_\R = (\psi_t)_{t\in\R}$ on $M$ (associated to a vector field $X:M\to TM$) preserving a smooth positive measure $\mu$, i.e.\ such that for any (orientable)
choice of local coordinates $(x,y)$ we have $d\mu=V(x,y)dx\wedge dy$ with $V$ positive and smooth.
These flows are called \emph{locally Hamiltonian flows}. Indeed,  for any (orientable)
choice of local coordinates $(x,y)$ such that $d\mu=V(x,y)dx\wedge dy$,  the flow $\psi_\R$ is a local solution to the Hamiltonian equation
\[
\frac{dx}{dt} = \frac{\frac{\partial H}{\partial y}(x,y)}{V(x,y)},\quad
\frac{dy}{dt} = -\frac{\frac{\partial H}{\partial x}(x,y)}{V(x,y)}
\]
for a smooth real-valued function $H$, or equivalently $\frac{dz}{dt}=-2\iota\frac{\frac{\partial H}{\partial \overline{z}}(z,\overline{z})}{V(z,\overline{z})}$.
For general introduction to locally Hamiltonian flows, we refer readers to  \cite{Fr-Ul2, Fr-Ki, Rav, Ul:ICM}.

For any smooth observable $f:M\to\C$ we are interested in understanding the smoothness of the solution $u:M\to\C$ of the cohomological equation
\begin{equation}\label{eq:coheqfl}
u(\psi_tx)-u(x)=\int_0^tf(\psi_s x)\,ds\text{ for all }x\in M,\ t\in\R,
\end{equation}
or equivalently $Xu=f$, where $Xu(x)=\frac{d}{dt}u(\psi_tx)|_{t=0}$.

{
Cohomological equations are an important area of study in dynamical systems since they are related to smooth conjugacy problems via Kolmogorov-Arnold-Moser techniques. Here it is worth to mention the pioneering work of Marmi-Moussa-Yoccoz \cite{Ma-Mo-Yo3} on the conjugacy problem for small perturbations of translation flows.
The problem of solving cohomological equations for other classes of smooth dynamical systems of parabolic nature and the regularity of solutions using invariant distributions has a rich literature and has been studied, among other places, in \cite{Av-Fa-Ko,Av-Ko,Fo2,Fl-Fo03,Fl-Fo07,Fo4,Fo-Ma-Ma,Gu-Li,Kat,Ta,Wa}.}

\medskip

We always assume that all fixed points of the locally Hamiltonian flow $\psi_\R$ are isolated, so the set of fixed points of $\psi_\R$, denoted  by $\mathrm{Fix}(\psi_\R)$, is finite. For $g \geq 2$, $\mathrm{Fix}(\psi_\R)$ is non-empty. As $\psi_\R$ is area-preserving, fixed points are either centers, simple saddles or multi-saddles (saddles with $2k$ prongs with $k \geq 2$). We will deal only with \emph{perfect} saddles defined as follows:
a fixed point  $\sigma\in \mathrm{Fix}(\psi_\R)$ is a (perfect) saddle of multiplicity $m=m_\sigma\geq 2$ if there exists a chart $(x,y)$ (called \emph{a singular chart}) in a neighborhood $U_\sigma$ of $\sigma$ such that
$d\mu=V(x,y)dx\wedge dy$ and $H(x,y)=\Im (x+\iota y)^m$ ($(0,0)$ are coordinates of $\sigma$). Then the corresponding local Hamiltonian equation in $U_\sigma$ is of the form
\[\frac{dx}{dt}=\frac{\frac{\partial H}{\partial y}(x,y)}{V(x,y)}=\frac{m\Re(x+\iota y)^{m-1}}{V(x,y)}, \quad
\frac{dy}{dt}=-\frac{\frac{\partial H}{\partial x}(x,y)}{V(x,y)}=-\frac{m\Im(x+\iota y)^{m-1}}{V(x,y)},
\]
or equivalently
$\frac{dz}{dt}=\frac{m\overline{z}^{m-1}}{V(z,\overline{z})}$.
The set of perfect saddles of  $\psi_\R$  we denote by $\mathrm{Sd}(\psi_\R)$.

We call a \emph{saddle connection} an orbit of $\psi_\R$ running from a saddle to a saddle. A \emph{saddle loop} is a saddle connection
joining the same saddle.
We will deal only with flows such that all their saddle connections are loops.
The set consisting of all saddle loops of the flow we denote by $\SL$.

Recall that if every fixed point in $\mathrm{Fix}(\psi_\R)$ is isolated, $M$ splits into a finite number of $\psi_\R$-invariant surfaces (with boundary) so that every such surface is a \emph{minimal component} of  $\psi_\R$ (every orbit, except of fixed points and saddle loops, is dense in the component) or is a periodic component (filled by periodic orbits, fixed points and saddle loops). The boundary of each component consists of saddle loops and fixed points.

{The interest in the study of locally Hamiltonians flows in higher genus and, in particular, in their ergodic properties, was highlighted
by Novikov \cite{No} in connection with problems arising in solid-state
physics as well as in pseudo-periodic topology (see e.g.\ the survey \cite{Zo:how} by A.~Zorich).
The simplest examples of locally Hamiltonian flows  with fixed points on the torus, flows with one center and one simple saddle, motivated Arnold in \cite{Arn} to ask about mixing properties on minimal components.
We should mention that mixing properties of  locally Hamiltonian flows with simple saddles restricted to their minimal components
are fully described in a series of papers \cite{Ch-Wr,Fa-Ka-Ze,Rav,Ul:mix,Ul:wea,Ul:abs,Ul:ICM}. When perfect multi-saddles appear, mixing was proved already by Kochergin in \cite{Ko:mix}.}

\medskip

The problem of existence and regularity of solutions for the cohomological equation \eqref{eq:coheqfl} was essentially solved in two seminal articles \cite{Fo1,Fo3} by Forni.
Forni considered the case when the flow $\psi_\R$ is minimal over the whole surface $M$ and the function $f$ belongs to a certain weighted Sobolev space.
More precisely, choose a non-negative smooth function $W:M\to\R_{\geq 0}$ (with zeros at $\mathrm{Sd}(\psi_\R)$) and an Abelian $1$-form $\omega$ on $M$ (with zeros at $\mathrm{Sd}(\psi_\R)$) such that $X=WS$ and $S$ is the unit horizontal vector field on the translation surface $(M,\omega)$.
In singular local coordinates around any $\sigma\in\mathrm{Sd}(\psi_\R)$ we have $W(z,\overline{z})=|z|^{2(m_\sigma-1)}/V(z,\overline{z})$. Then for any $s>0$, $f\in H_W^s(M)$ iff $W^{-1}f\in H_\omega^s(M)$, where
$H_\omega^s(M)$ is the fractional weighted Sobolev space associated to the Abelian form $\omega$ and the related area form. For a formal definition of $H_\omega^s(M)$ and useful characterization of its smooth elements we refer the reader to Section~2 in  \cite{Fo3}.

In \cite{Fo1,Fo3}, for a.e.\ flow, Forni proved the existence of fundamental invariant distributions on $H_W^s(M)$ which are responsible for the degree of smoothness of the solution of $\eqref{eq:coheqfl}$ for $f\in H_W^s(M)$.
Roughly speaking, Forni's distributions are related to the Lyapunov exponents of the Kontsevich-Zorich cocycle on the absolute $1$-cohomological bundle. If all Forni's distributions at $f\in H_W^s(M)$ are zero then the solution $u\in H_\omega^{s'}(M)$ for some $s'<s$ with $s'$ not too far away from $s$. Forni's beautiful approach is based on a very deep analysis of the Kontsevich-Zorich cocycle acting on various kinds of abstract objects  related to translation surfaces. An alternative approach to constructing invariant distributions was also presented by Bufetov in \cite{Bu}. A different approach, based on moving to a special representation and studying renormalization behavior for piecewise smooth functions over interval exchange translations, was initiated by Marmi-Moussa-Yoccoz in \cite{Ma-Mo-Yo} and later developed in \cite{Ma-Yo,Fr-Ul2,Fr-Ki}.

The main goal of this article (and the subsequent one \cite{Fr-Ki3}) is to go beyond the case of a minimal flow on the whole surface $M$ and beyond the case of functions $f$ belonging to a weighted Sobolev space.
We deal with locally Hamiltonian flows restricted to any minimal component and $f:M\to\C$ is any smooth function.
The study of locally Hamiltonian flows in such a context gives a rise to new invariant distributions, which, unlike Forni's distributions, are local in nature. The first two new families of such invariant distributions, defined in Section~\ref{sec:defdist}, read local behaviour of functions around saddle points. The last family, which is a counterpart of Forni's distributions, is defined in \cite{Fr-Ki3} using renomalization techniques inspired by the approach developed in \cite{Ma-Mo-Yo,Ma-Yo,Fr-Ul2,Fr-Ki}.

All three families of invariant distributions affect the degree of smoothness of the solution of the cohomological equation. However, in the present article we focus only on the first two families and the main results of the paper are contained in Theorems~\ref{thm1}, \ref{thm2} and \ref{thm3}. The methods for studying their effect on the degree of smoothness are purely analytical, in contrast to the dynamical arguments left to \cite{Fr-Ki3}, where the last family play a central role.

\subsection{Special representation and IETs}
Locally Hamiltonian flows restricted to their minimal components are represented as special flows over interval exchange transformations.
Let us consider a restriction of a locally Hamiltonian flow $\psi_\mathbb{R}$ on $M$ to its minimal component $M'\subset M$.
Let $I\subset M'$ be any transversal smooth curve with its standard parametrization $\gamma:[0,|I|]\to I$, i.e.\ $\int_{0}^{\gamma(s)}\eta=s$
for $s\in [0,|I|]$, where $\eta$ is
the closed $1$-form given by $\eta=\frac{\partial H}{\partial x}dx+\frac{\partial H}{\partial y}dy$ in local coordinates. By minimality, $I$ is a global transversal and the first return map $T:I\to I$ is an interval exchange transformation (IET) in standard coordinates on $I$. We will denote by $I_\alpha$, $\alpha\in \mathcal A$ the subintervals translated by $T$.
In order to minimize the number of exchanged intervals, we will always assume that each end of $I$ is the first meeting point of a separatrix (that is not a saddle connection) emanating by a fixed point (incoming or outgoing) with the set $I$.

Let $\tau:I\to\R_{>0}\cup\{+\infty\}$ be the first return time map.
Then each point in $M'\setminus (\mathrm{Sd}(\psi_\R)\cup \SL)$ is uniquely represented as $\psi_tx$ for some $x\in I$ and $0\leq t<\tau (x)$.
The function $\tau: I \rightarrow \R_{>0}\cup\{+\infty\}$ is smooth on the interior of any exchanged interval and  has
\emph{singularities} at discontinuities of $T$. Each such discontinuity is the first hitting point (forward or backward) of a separatrix emanated by a saddle with the curve (interval) $I$. Moreover, degenerate saddles ($m_\sigma>2$) of $\psi_\R$ are responsible
for the appearance of singularities of \emph{polynomial type} and
simple saddles ($m_\sigma=2$) are responsible for the appearance of \emph{logarithmic type} singularities.

\subsection{Two crucial operators and two cohomological equations}\label{sec:twoop}
For any smooth observable $f:M\to\C$ we deal with the corresponding map $\varphi_f:I\to\C\cup\{\infty\}$ given by
\[\varphi_f(x)=\int_0^{\tau(x)}f(\psi_t x)dt.\]
The function $\varphi_f$ is smooth on the interior of any interval $I_\alpha$ and  can have polynomial or  logarithmic type singularities at discontinuities of $T$ depending on the vanishing of some invariant distributions on $f$ defined in \cite{Fr-Ki} and based on partial derivatives of $f$ at saddles in $M'$. One of the aim of this paper is a deeper understanding of the operator $f\mapsto\varphi_f$ on the kernel of all invariant distributions coming from \cite{Fr-Ki}. Then $\varphi_f$ has no singularities, but its derivatives can have. In this paper we define an infinite sequence of new (a little bit more sophisticated) invariant
distributions (based on partial derivatives at saddles) which are responsible for understanding the  regularity of $\varphi_f$.

For solving the cohomological equation \eqref{eq:coheqfl} we also need to study another operator $g\mapsto u_{g,f}$. Suppose that $g:I\to\C$ is a smooth solution (at least continuous) of the another cohomological equation
\begin{equation}\label{eq:coheqT}
g(Tx)-g(x)=\varphi_f(x) \text{ on }I.
\end{equation}
This is an obvious necessary condition for the existence of a smooth solution of the equation \eqref{eq:coheqfl}. Indeed, if $u$ is smooth and satisfies \eqref{eq:coheqfl}, then the map $g:I\to\C$ defined as the restriction of $u$ to $I$ is smooth and satisfies \eqref{eq:coheqT}. A natural problem is: when is this also a sufficient condition?

Suppose that $g:I\to\C$ is a smooth solution of \eqref{eq:coheqT}. Then the corresponding solution $u_{g,f}:M'\setminus (\mathrm{Sd}(\psi_\R)\cup \SL)\to\C$ is defined as follows.
If $\psi_tx\in I$ for some
$t\in\R$ then
\[u_{g,f}(x):=g(\psi_tx)-\int_{0}^tf(\psi_sx)\,ds.\]
By the proof of Lemma~6.3 in \cite{Fr-Ul}, the function $u_{g,f}$ is well
defined on $M'\setminus (\mathrm{Sd}(\psi_\R)\cup \SL)$. Moreover, if $M$ is a $C^\infty$-surface,  $\psi_\R$ is a $C^\infty$-flow and $f$ is a $C^\infty$-observable, then
$u_{g,f}$ is as regular as $g$. Indeed, by the absence of saddle connections joining different saddles, for every $x_0\in M'\setminus (\mathrm{Sd}(\psi_\R)\cup \SL)$
there exists $t_0\in\R$ such that $\psi_{t_0}x_0\in \Int I$. For simplicity, assume that $t_0\leq 0$. Then choose $\vep>0$ such that $[\psi_{t_0}x_0-\vep,\psi_{t_0}x_0+\vep]\subset \Int I$ and let
\begin{equation}\label{not:R}
R(x_0,t_0,\vep):=\bigcup_{-\vep\leq t\leq -t_0+\vep}\psi_t [\psi_{t_0}x_0-\vep,\psi_{t_0}x_0+\vep].
\end{equation}
If $\vep>0$ is small enough then $\nu:[-\vep,-t_0+\vep]\times[\psi_{t_0}x_0-\vep,\psi_{t_0}x_0+\vep]\to R(x_0,t_0,\vep)$ given by $\nu(t,x)=\psi_tx$ is a $C^\infty$-diffeomorphism. Moreover,
\[u_{g,f}\circ\nu(t,x)=g(x)-\int_0^tf\circ\nu(s-t,x)ds=g(x)+\int_0^tf\circ\nu(s,x)ds.\]
It follows that the regularity of $u_{g,f}$ restricted to $R(x_0,t_0,\vep)$ coincides with the regularity of $g$ on $[\psi_{t_0}x_0-\vep,\psi_{t_0}x_0+\vep]$. Since $x_0\in \Int R(x_0,t_0,\vep)$, we obtain our claim.

\medskip

However, the solution $u_{g,f}$ of the cohomological equation  is not fully satisfactory because it is defined only on an open (dense) subset of the minimal component, without fixed points and saddle loops.
Our main goal is to find necessary and sufficient conditions for the existence of a smooth solution (of the cohomological equation) defined over all of $M'$. More precisely, instead of $M'$ we will study  smooth solutions defined on the end compactification $M_e'$ of $M'\setminus \mathrm{Sd}(\psi_\R)$. Roughly speaking, if a saddle $\sigma$ emanates $l\geq 2$ loops, then $\sigma$ is the $l$-fold end of the set $M'\setminus \mathrm{Sd}(\psi_\R)$. For this reason, $\sigma$ splits in $M'_e$ into $l$ different end points $\sigma_1,\ldots,\sigma_l$, see Figure~\ref{fig:comp}.
 \begin{figure}[h!]
 \includegraphics[width=0.8\textwidth]{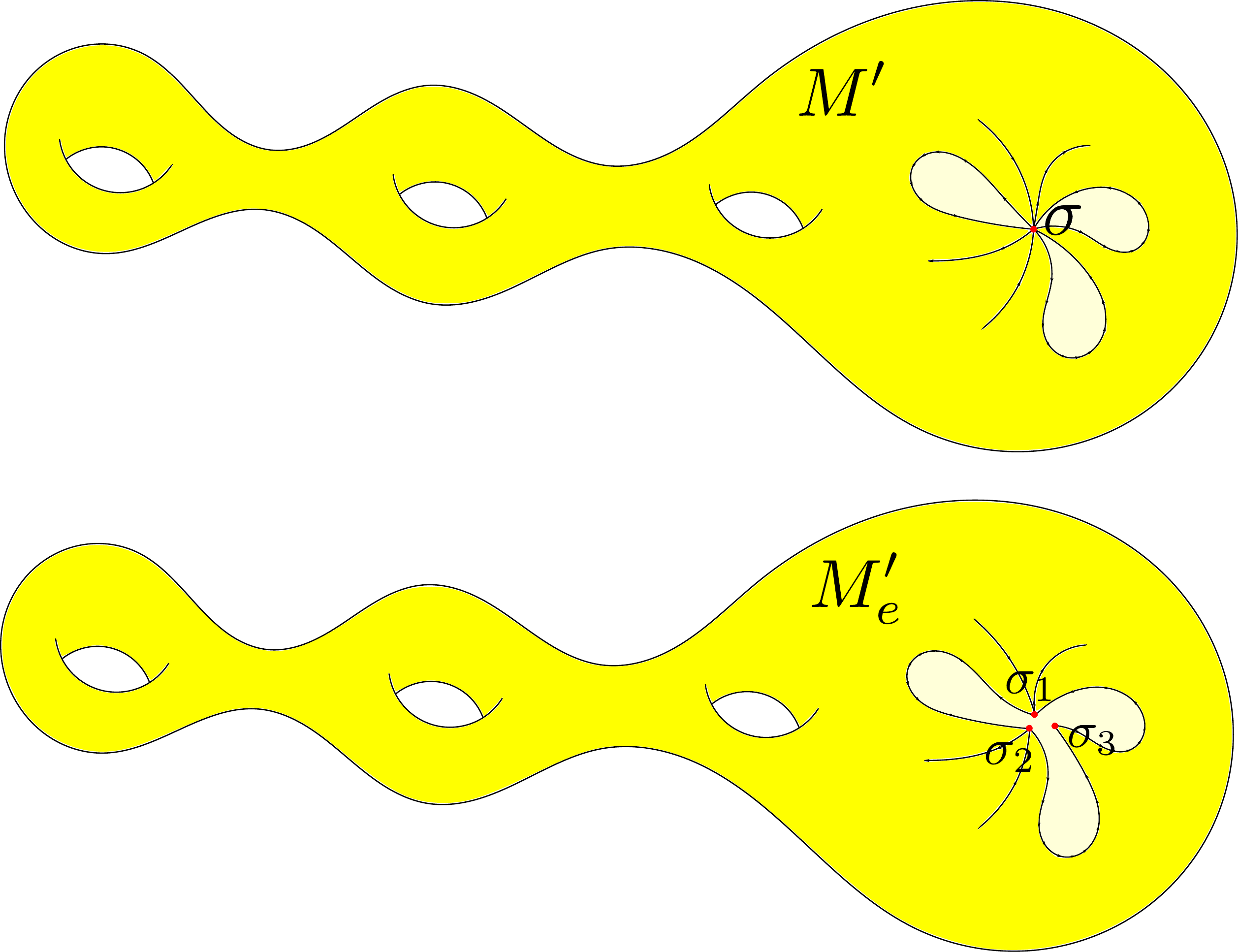}
 \caption{The minimal component $M'$ before and after separation procedure. \label{fig:comp}}
\end{figure}
We will look for smooth solutions $u:M'_e\to\C$ of \eqref{eq:coheqfl}. If a smooth solution $u:M'_e\to\C$ exists then it is smooth in a neighborhood (in $M'_e$) of any version $\sigma_i$ of the saddle point $\sigma$, but it does not even have to be continuous at $\sigma$, whenever the limits of $u$ at $\sigma$  with respect to different neighborhood sectors (connected components) are different. Of course, if each saddle emanates at most one saddle loop then $M'_e$ coincides with $M'$ and the problem of regularity of $u:M'_e\to\C$ and $u:M'\to\C$ are equivalent.

\subsection{Grading of smoothness}\label{sec:grad}
Let $M$ be a $C^\infty$-manifold with a boundary.
For any $n\in\Z_{\geq 0}$ and $0<a<1$ denote by $C^{n+a}(M)$ the space of $C^n$-functions on $M$ such that their $n$-th derivative is $a$-H\"older.
Let $\eta:\R_{\geq 0}\to\R_{\geq 0}$ be given by $\eta(x)=-x\log x$ for $x\in[0,e^{-1}]$ and $\eta(x)=e^{-1}$ for $x\geq e^{-1}$.
For any $n\in\Z_{\geq 0}$ denote by $C^{n+\eta}(M)$ the space of $C^n$-functions on $M$ such that their $n$-th derivative is continuous so that a positive multiple of $\eta$ is its modulus of continuity.
For every non-natural real $r>0$ we will write $C^r$ for $C^{\lfloor r\rfloor+\{r\}}$.

Let $\R_{\eta}:=(\R_{>-1}\setminus \Z)\cup (\Z_{\geq -1}+\{\eta\})$ and let $v:\R_{\eta}\to\R$ be given by $v(r)=r$ if $r\in (\R_{>-1}\setminus \N)$ and $v(n+\eta)=n+1$. Then $0\leq v(r)\leq v(r')$ iff $C^r\subset C^{r'}$.

\subsection{Invariant distributions}\label{sec:defdist}
To solve our main problem, in the present paper we introduce a family of invariant distributions $f\mapsto \mathfrak{d}^k_{\sigma,j}(f)$ for all $\sigma\in \mathrm{Sd}(\psi_\R)$, $k\geq 0$ and $0\leq j\leq k\wedge(m_\sigma-2)$. Throughout the article we use the notation $x\vee y=\max\{x,y\}$ and $x\wedge y=\min\{x,y\}$ for any pair of real numbers $x,y$.
 Recall that a linear bounded functional $f\mapsto \mathfrak{D}(f)$ is an \emph{invariant distribution} if $\mathfrak{D}(Xu)=0$ for any $u\in C^{\infty}(M)$. The distributions are defined locally around saddles and are obstructions to the existence of smooth solutions to the cohomological equation. The  invariant distributions $\mathfrak{d}^k_{\sigma,j}$ are defined based on the higher-order partial derivatives of the function $f$ in saddles or they are linear combinations of partial derivatives (if $k>m_\sigma-2$). We also introduce alternative versions of such invariant distributions, i.e.\ $f\mapsto \mathfrak{C}^k_{\sigma,l}(f)$ for $0\leq l< 2m_\sigma$, which have a more geometric interpretation, and generate the same space of invariant distributions as $\mathfrak{d}^k_{\sigma,j}$.

Suppose that $\sigma\in \mathrm{Sd}(\psi_\R)$ is a saddle of multiplicity $m_\sigma\geq 2$.  Fix a singular chart $(x,y)$ in a neighborhood $U_\sigma$ of $\sigma$.
Then the local Hamiltonian is of the form $H(x,y)=\Im (x+\iota y)^{m_\sigma}$ and the $\psi_\R$-invariant area-measure is $d\mu=V(x,y)dx\wedge dy$, where $V$ is  positive and  smooth.
Then for every $k\geq 0$ and $0\leq j\leq k\wedge(m_\sigma-2)$ with  $j\neq k-(m_\sigma-1)\operatorname{mod} m_\sigma$ we define the functional $\mathfrak{d}^k_{\sigma,j}:C^{k}(M)\to\C$ as follows:
\begin{equation}\label{def:gothd}
\mathfrak{d}^k_{\sigma,j}(f)=\sum_{0\leq n\leq \frac{k-j}{m_\sigma}}\frac{\binom{k}{j+nm_\sigma}\binom{\frac{(m_\sigma-1)-j}{m_\sigma}-1}{n}}{\binom{\frac{(k-j)-(m_\sigma-1)}{m_\sigma}}{n}}
\frac{\partial^k(f\cdot V)}{\partial z^{j+nm_\sigma} \partial \overline{z}^{k-j-nm_\sigma}}(0,0).
\end{equation}
Note that for $k\leq m_\sigma-2$ we have $\mathfrak{d}^k_{\sigma,j}(f)=\binom{k}{j}\frac{\partial^k(f\cdot V)}{\partial z^{j} \partial \overline{z}^{k-j}}(0,0)$, so $\mathfrak{d}^k_{\sigma,j}$ are essentially distributions defined already in \cite{Fr-Ki} to study {the} deviation spectrum of Birkhoff integrals of $f$. Let us mention that {the non-vanishing of} any of these distributions is an obstacle to the existence of any solution of the cohomological equation (even measurable).
The distributions $\mathfrak{d}^k_{\sigma,j}$ for $k\geq m_\sigma-1$ are responsible for determining {the} regularity of the solution if we already know that equation \eqref{eq:coheqfl} has a smooth solution.
To explain this relation in {a} better way, we need to introduce another family of distributions  $ \mathfrak{C}^k_{\sigma,l}:C^k(M)\to\C$ for $0\leq l< 2m_\sigma$,
\begin{equation}\label{def:gothc}
\mathfrak{C}^k_{\sigma,l}(f):=\!\sum_{\substack{0\leq i\leq k\\i\neq m_\sigma-1\operatorname{mod} m_\sigma\\i\neq k-(m_\sigma-1)\operatorname{mod} m_\sigma}}\!
\theta_\sigma^{l(2i-k)}\binom{k}{i}\mathfrak{B}(\tfrac{(m_\sigma-1)-i}{m_\sigma},\tfrac{(m_\sigma-1)-k+i}{m_\sigma})\frac{\partial^{k}(f\cdot V)}{\partial z^i\partial\overline{z}^{k-i}}(0,0),
\end{equation}
where $\theta_\sigma$ is the principal $2m_\sigma$-th root of unity and the (beta-like) function $\mathfrak{B}(x,y)$ is defined for any pair $x,y$ of real numbers such that  $x,y\notin \Z$  as follows
\[\mathfrak{B}(x,y)=\frac{\pi e^{\iota\frac{\pi}{2}(y-x)}}{2^{x+y-2}}\frac{\Gamma(x+y-1)}{\Gamma(x)\Gamma(y)},\]
where we adopt the convention $\Gamma(0)=1$ and $\Gamma(-n)=1/(-1)^n n!$.

{For any saddle $\sigma\in\mathrm{Sd}(\psi_\R)$ its neighbourhood splits into $2m_\sigma$ angular sectors (see \eqref{def:angsec}) bounded by separatrices emanated from $\sigma$.
As we will see in Corollary~\ref{cor:Hold1}, the values of $\mathfrak{C}^k_{\sigma,l}(f)$ for $k\geq 0$, roughly speaking, measure {how the appearance of orbits (of $\psi_\R$) in the $l$-th angular sector affects} on the value of ergodic integrals of the function $f$. Unfortunately, the geometric interpretation of the value of the distributions $\mathfrak{d}^k_{\sigma,j}$ is difficult to understand.}

The functionals $ \mathfrak{C}^k_{\sigma,l}$ for $0\leq l< 2m_\sigma$ are not linearly independent, in contrast to the family of functionals $\mathfrak{d}^k_{\sigma,j}$. Indeed,
\begin{gather}
\label{eq:symdist}
\mathfrak{C}^k_{\sigma,l+m_\sigma}=(-1)^k\mathfrak{C}^k_{\sigma,l}\text{ for }0\leq l< m_\sigma\text{ and}\\
\sum_{0\leq l<2m_\sigma}\theta_\sigma^{(k-2j)l}\mathfrak{C}^k_{\sigma,l}=0\text{ if }j=m_\sigma-1\text{ or }j=k-(m_\sigma-1). \nonumber
\end{gather}
 The element of $\R_\eta$ given by
\[\mathfrak{e}(\mathfrak{d}^k_{\sigma,j})=\mathfrak{e}(\mathfrak{C}^k_{\sigma,l})=\mathfrak{e}(\sigma,k)=\left\{\begin{array}{cl}\frac{k-(m_\sigma-2)}{m_\sigma}&\text{ if }\frac{k-(m_\sigma-2)}{m_\sigma}\notin \Z,\\
\frac{k-2(m_\sigma-1)}{m_\sigma}+\eta &\text{ if }\frac{k-(m_\sigma-2)}{m_\sigma}\in \Z.\end{array}\right.\]
is called the \emph{exponent} of  $\mathfrak{d}^k_{\sigma,j}$ or $\mathfrak{C}^k_{\sigma,l}$.
Then
\[\mathfrak{o}(\mathfrak{d}^k_{\sigma,j})=\mathfrak{o}(\mathfrak{C}^k_{\sigma,l})=\mathfrak{o}(\sigma,k):=v(\mathfrak{e}(\sigma,k))=\frac{k-(m_\sigma-2)}{m_\sigma}\]
is called the \emph{order} of  $\mathfrak{d}^k_{\sigma,j}$ or $\mathfrak{C}^k_{\sigma,l}$.
Finally, let
\begin{align*}
\widehat{\mathfrak{e}}(\mathfrak{d}^k_{\sigma,j})&=\widehat{\mathfrak{e}}(\sigma,k)=k-(m_\sigma-1)+\eta \text{ and} \\
\widehat{\mathfrak{o}}(\mathfrak{d}^k_{\sigma,j})&=\widehat{\mathfrak{o}}(\sigma,k)=v(\widehat{\mathfrak{e}}(\mathfrak{d}^k_{\sigma,j}))=k-(m_\sigma-2).
\end{align*}
{Referring to Section~\ref{sec:grad}, $\mathfrak{e}$ and $\widehat{\mathfrak{e}}$ should be regarded as the degree of (modified H\"older) regularity in the scale introduced in  Section~\ref{sec:grad}, while  $\mathfrak{o}$ and $\widehat{\mathfrak{o}}$ should be regarded as the numerical values of the degrees of regularity that are easy to compare.}

For any saddle $\sigma\in\mathrm{Sd}(\psi_\R)$ its (singular) neighbourhood $U_\sigma$ splits into $2m_\sigma$ (angular) sectors bounded by separatrices emanated from $\sigma$. In singular coordinates $z=(x,y)$ they are of the form
\begin{equation}\label{def:angsec}
U_{\sigma,l}:=\{z\in U_\sigma: \Arg z\in(\tfrac{\pi l}{m_\sigma},\tfrac{\pi (l+1)}{m_\sigma})\}\text{ for }0\leq l<2m_\sigma.
\end{equation}
Each such sector is either included in a minimal component $M'$ of $\psi_\R$ or is disjoint from $M'$. In the problem of studying the regularity of the solutions of the cohomological equation, only non-zero values of invariant distributions $\mathfrak{C}^k_{\sigma,l}(f)$ such that $U_{\sigma,l}\cap M'\neq \emptyset$ turn out to be relevant.

\subsection{Main results}
The first main theorem describes the smoothness of the function $\varphi_f$ depending on the values of the functionals described in Section~\ref{sec:defdist}.
To precisely describe the regularity of $\varphi_f$, in Section~\ref{sec;polycocycle}, for any $n\in\Z_{\geq 0}$ and $0\leq a<1$ we introduce the space $C^{n+\pa}(\sqcup_{\alpha \in \mathcal{A}}I_\alpha )$ (and its geometric version $C^{n+\pag}$)
of functions whose $n$-th derivative has polynomial singularities of order at most $-a$ at the ends of the intervals translated by the IET $T$.
We should mention that for any $n\in\N$ we have $C^{n+\pa}\subset C^{(n-1)+(1-a)}$ if $0<a<1$ and $C^{n+\mathrm{P_0}}\subset C^{(n-1)+\eta}$.

Recall that we always assume that $M$ is a compact connected orientable $C^\infty$-surface and $\psi_\R$ is a locally Hamiltonian $C^\infty$-flow on $M$ with isolated fixed points and such that all its saddles are perfect and all saddle connections are loops. Let $M'\subset M$ be a minimal component of the flow and let $I\subset M'$ be a transversal curve. The corresponding IET $T:I\to I$ exchanges the intervals $I_\alpha$, $\alpha \in \mathcal{A}$.

For any $r\geq -\frac{m-2}{m}$, where $m$ is the maximal multiplicity of saddles in $\mathrm{Sd}(\psi_\R)\cap M'$,  let
\begin{equation*}
k_r=\left\{
\begin{array}{cl}
\lceil mr+(m-1)\rceil &\text{if }-\frac{m-2}{m}\leq r\leq -\frac{m-3}{m},\\
\lceil mr+(m-2)\rceil &\text{if }-\frac{m-3}{m}< r.
\end{array}
\right.
\end{equation*}
Note that
\begin{equation}\label{eq:expkr}
\max\{k\geq 0:\exists_{\sigma\in \mathrm{Sd}(\psi_\R)\cap M'}\mathfrak{o}(\sigma,k)< r\}+1=\lceil mr+(m-2)\rceil.
\end{equation}

Denote by $\mathscr{TD}$ the set of triples $(\sigma,k,j)\in (\mathrm{Sd}(\psi_\R)\cap M')\times\Z_{\geq 0}\times\Z_{\geq 0}$ such that $0\leq j\leq k\wedge(m_\sigma-2)$ and $j\neq k-(m_\sigma-1)\operatorname{mod} m_\sigma$
and by $\mathscr{TC}$ the set of triples $(\sigma,k,l)\in (\mathrm{Sd}(\psi_\R)\cap M')\times\Z_{\geq 0}\times\Z_{\geq 0}$ such that $0\leq l<2m_\sigma$ and $U_{\sigma,l}\cap M'\neq \emptyset$.

\begin{theorem}\label{thm1}
Fix $r\geq-\frac{m-2}{m}$. Suppose that $f\in C^{{k}_r}(M)$ is such that $\mathfrak{C}^k_{\sigma,l}(f)=0$ for all $(\sigma,k,l)\in\mathscr{TC}$ such that $\mathfrak{o}(\mathfrak{C}^k_{\sigma,l})<r$.
Then $\varphi_f\in C^{n+\pag}(\sqcup_{\alpha \in \mathcal{A}}I_\alpha)$ with $n=\lceil r\rceil$ and $a=\lceil r\rceil-r$.
Moreover,
the operator
\[C^{{k}_r}(M)\cap\bigcap_{\substack{(\sigma,k,l)\in\mathscr{TC}\\ \mathfrak{o}(\mathfrak{C}^k_{\sigma,l})<r}}\ker(\mathfrak{C}^k_{\sigma,l})\ni f\mapsto \varphi_f\in C^{n+\pag}(\sqcup_{\alpha \in \mathcal{A}}I_\alpha)\]
is bounded.
\end{theorem}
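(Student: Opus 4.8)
The core of the argument is a \emph{local computation} near each saddle $\sigma\in\mathrm{Sd}(\psi_\R)\cap M'$, since away from the singularities of $\tau$ the map $\varphi_f$ is manifestly $C^{k_r}$ and the estimates are straightforward. First I would fix a saddle $\sigma$ of multiplicity $m=m_\sigma$, pass to the singular chart $(x,y)$ in which $H(x,y)=\Im(x+\iota y)^m$ and $d\mu=V\,dx\wedge dy$, and work in the complex coordinate $z=x+\iota y$ where the flow solves $\dot z = m\overline z^{\,m-1}/V(z,\overline z)$. Near $\sigma$ the orbit segments contributing to $\varphi_f$ enter and exit the singular neighbourhood $U_\sigma$ through fixed cross-sections, and the return-time contribution from $U_{\sigma,l}$ is the integral of $f$ along one such passage. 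The plan is to Taylor-expand $f\cdot V$ at $0$ to order $k_r$ — this is exactly why the hypothesis $f\in C^{k_r}(M)$ is the right one — and integrate the resulting monomials $z^i\overline z^{\,k-i}$ along the flow. Because the level sets $\{\Im z^m = \text{const}\}$ are the orbits, a change of variables of the form $w=z^m$ (or equivalently integrating $\frac{dz}{m\overline z^{m-1}}\cdot V$) turns each monomial contribution into an elementary integral with a power-law singularity in the transverse parameter; the exponent of that singularity, after collecting the combinatorics, is precisely $\mathfrak{e}(\sigma,k)$, and the coefficient in front of the leading singular term is (up to the explicit beta-like normalisation $\mathfrak{B}$ and the root-of-unity twist $\theta_\sigma^{l(2i-k)}$ recording which sector $U_{\sigma,l}$ the passage uses) a linear combination of $\frac{\partial^k(f\cdot V)}{\partial z^i\partial\overline z^{k-i}}(0,0)$. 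The point of the definition \eqref{def:gothc} is that this leading coefficient is \emph{exactly} $\mathfrak{C}^k_{\sigma,l}(f)$: the excluded indices $i\equiv m-1$ and $i\equiv k-(m-1)\pmod m$ are the resonant ones where the naive power-law integral produces a logarithm rather than a clean power, and those are handled separately (they correspond to the $\widehat{\mathfrak{e}}$, i.e.\ $\eta$-type, regularity and feed into the $\mathrm{P_0}$ scale). I would carry out this expansion degree by degree in $k$ from $0$ up to $k_r$.

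Second, I would run an induction on $r$ (equivalently on the finite set of relevant exponents $\{\mathfrak{o}(\sigma,k): k\ge 0,\ \sigma\in\mathrm{Sd}(\psi_\R)\cap M'\}$ that lie below $r$). The inductive hypothesis is the theorem for all smaller thresholds. Given $f$ in the stated kernel, the vanishing of $\mathfrak{C}^k_{\sigma,l}(f)$ for every $(\sigma,k,l)\in\mathscr{TC}$ with $\mathfrak{o}(\mathfrak{C}^k_{\sigma,l})<r$ means, by the local computation of the previous paragraph, that every monomial in the Taylor expansion of $f\cdot V$ whose integral along the flow would contribute a singularity of order $<r$ (more precisely: of regularity strictly below $C^{r}$ in the scale of Section~\ref{sec:grad}) has vanishing leading coefficient, hence contributes something \emph{more} regular — one full step up in the hierarchy $\mathfrak{e}\mapsto\mathfrak{e}+\frac1m$ or across the $\eta$-barrier. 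After subtracting these, the first genuinely singular term in $\varphi_f$ has regularity exponent $\ge \mathfrak{e}(\sigma,k_r)$ for the relevant $k_r$, which by the definition of $k_r$ and the identity \eqref{eq:expkr} is exactly what is needed to place $\varphi_f$ in $C^{n+\pag}$ with $n=\lceil r\rceil$, $a=\lceil r\rceil-r$. The geometric version $C^{n+\pag}$ (as opposed to the plain $C^{n+\pa}$) is what one gets because the local model distinguishes the two ends of each $I_\alpha$ according to which angular sector $U_{\sigma,l}$ the nearby orbits traverse, and the leading singular coefficients are precisely the $\mathfrak{C}^k_{\sigma,l}$ attached to those sectors; this sector-by-sector bookkeeping is exactly the content of the restriction to $(\sigma,k,l)\in\mathscr{TC}$ with $U_{\sigma,l}\cap M'\neq\emptyset$. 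Converting between the $\mathfrak{C}^k_{\sigma,l}$ and the $\mathfrak{d}^k_{\sigma,j}$ (via the linear relations \eqref{eq:symdist} and the invertibility of the relevant Vandermonde-type matrix in $\theta_\sigma$) lets one phrase the hypothesis either way; I would state and use whichever is more convenient at each step, most likely the $\mathfrak{C}$-form for the geometric estimate and the $\mathfrak{d}$-form when checking it is a genuine (linearly independent) family.

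Third, for the boundedness statement I would keep careful track of constants throughout: the Taylor coefficients $\frac{\partial^k(f\cdot V)}{\partial z^i\partial\overline z^{k-i}}(0,0)$ are bounded by $\|f\|_{C^{k_r}(M)}$ times a constant depending only on $V$ and the chart; the remainder of the Taylor expansion at order $k_r$ contributes, after integration along the flow, a term in $C^{n+\pag}$ with $C^{n+\pag}$-norm controlled by $\|f\|_{C^{k_r}}$ (this uses that $C^{n+1}$-inputs integrate to $C^{n+\pag}$-outputs in the local model, with the polynomial-singularity scale absorbing the loss of one derivative and a fraction); and the contributions away from the saddles are bounded by $\|f\|_{C^{k_r}}$ and the geometry of the (finitely many) return orbits. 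Summing the finitely many local contributions over $\sigma$ and over $\alpha\in\mathcal A$ gives the bound. The \textbf{main obstacle} I anticipate is the singular-integral bookkeeping of Step 1: extracting, from $\int f$ along a passage through $U_{\sigma,l}$, the precise leading asymptotic with the correct constant $\mathfrak{B}\big(\tfrac{(m-1)-i}{m},\tfrac{(m-1)-k+i}{m}\big)$ and the correct root-of-unity phase, \emph{uniformly} in the transverse parameter and with a controlled (one-higher-regularity) remainder, and dealing cleanly with the resonant indices where the power degenerates to a logarithm. This is where the "new tools for handling functions whose higher derivatives have polynomial singularities over IETs" advertised in the abstract must be deployed; I expect it to reduce, after the change of variables $w=z^m$, to a careful analysis of incomplete-beta-type integrals $\int_0^{\cdot} t^{\alpha-1}(1-t)^{\beta-1}\,dt$ and their expansions near the endpoints, together with a Faà di Bruno / Leibniz argument to propagate the estimate to all derivatives up to order $n$.
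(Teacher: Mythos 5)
Your proposal is correct and follows essentially the same route as the paper's proof: Taylor-expand $f\cdot V$ at each saddle in the singular chart, pass to the $z\mapsto z^{m_\sigma}$ covering coordinate to reduce to beta-like integrals whose leading singular coefficient is exactly $\mathfrak{C}^k_{\sigma,l}(f)$ (the paper's Lemmas~\ref{lem:Bs^}--\ref{lem:Bmix} and Theorem~\ref{thm:thmC}/\ref{cor:Hold}), work sector by sector so that only the $\mathfrak{C}^k_{\sigma,l}$ with $U_{\sigma,l}\cap M'\neq\emptyset$ enter, and glue the finitely many local contributions over a cover of $I$; the induction-on-$r$ framing is just a reformulation of the paper's direct decomposition $f=f_{<k}+f_k+e_f$. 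One small imprecision: the geometric condition that places $\varphi_f$ in $C^{n+\pag}$ rather than merely $C^{n+\pa}$ does not come from sector bookkeeping per se but from the normalization of the transversal — since the ends of $I$ are chosen to be first meeting points of separatrices, for one of the outermost subintervals $\pi_0^{-1}(d)$, $\pi_1^{-1}(d)$ the corresponding return strip avoids all fixed points, so the associated one-sided limit $C_{\alpha,n}^{a,\pm}(\varphi_f)$ vanishes.
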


This result provides a descending filtration of the space $\Phi^k:=\{\varphi_f:f\in C^k(M)\}$, $k\in\N\cup\{\infty\}$ that is the basis for proving a spectral theorem (in \cite{Fr-Ki3}) for the so-called Kontsevich-Zorich cocycle on $\Phi^k$. Using renormalization techniques, the aforementioned spectral result allows understanding the regularity of the solution of the cohomological equation  \eqref{eq:coheqT} (see also \cite{Fr-Ki3}) for a.e.\ IET $T$.

The second main theorem solves the problem regarding {the} regularity of the solutions of \eqref{eq:coheqfl}, provided we know the degree of smoothness for  the solution of \eqref{eq:coheqT}. This result is another ingredient in the proof of the final theorem on the regularity of the solution of the cohomological equation \eqref{eq:coheqfl} presented in \cite{Fr-Ki3}.

\begin{theorem}\label{thm2}
Fix $r\in\R_\eta$ so that $v(r)>0$. Assume that $f\in C^{{k}_{v(r)}}(M)$ is such that
\begin{itemize}
 \item $\mathfrak{d}^k_{\sigma,j}(f)=0$ for all $(\sigma,k,j)\in\mathscr{TD}$ with $\widehat{\mathfrak{o}}(\mathfrak{d}^k_{\sigma,j})<v(r)$;
 \item $\mathfrak{C}^k_{\sigma,l}(f)=0$ for all $(\sigma,k,l)\in\mathscr{TC}$ with ${\mathfrak{o}}(\mathfrak{C}^k_{\sigma,l})<v(r)$.
\end{itemize}
Suppose that $g\in C^{r}(I)$ is a solution of the cohomological equation $\varphi_f=g\circ T-g$. Then there exists $u_{g,f}\in C^r(M'_e)$ satisfying  $Xu_{g,f}=f$ on $M'_e$. Moreover, there exists a constant $C_r>0$ such that
\[\|u_{g,f}\|_{C^r(M'_e)}\leq C_r(\|g\|_{C^{r}(I)}+\|f\|_{C^{{k}_{v(r)}}(M)}).\]
\end{theorem}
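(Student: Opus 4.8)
The plan is to bootstrap the local analysis near each saddle into a global statement via the explicit "suspension" change of variables $\nu(t,x)=\psi_t x$ already introduced in Section~\ref{sec:twoop}, which reduces the regularity of $u_{g,f}$ on any flow box disjoint from the saddles to that of $g$ on the corresponding subinterval of $I$. Since $g\in C^r(I)$ by hypothesis, this immediately gives $u_{g,f}\in C^r$ on the complement of a small neighbourhood of $\mathrm{Sd}(\psi_\R)$ in $M'_e$, together with the quantitative bound $\|u_{g,f}\|_{C^r}\lesssim \|g\|_{C^r}+\|f\|_{C^{k_{v(r)}}}$ there, uniformly (the flow boxes can be taken from a finite cover). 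So the entire content of the theorem is concentrated in the behaviour of $u_{g,f}$ in each angular sector $U_{\sigma,l}$ with $U_{\sigma,l}\cap M'\neq\emptyset$, as one approaches the end point $\sigma_i\in M'_e$.

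The key step is a local model computation in a singular chart around $\sigma$. In such a chart the Hamiltonian is $H(z,\bar z)=\Im z^{m_\sigma}$ and the flow is $\dot z = m_\sigma \bar z^{m_\sigma-1}/V$; I would first absorb $V$ by a further smooth change of coordinates (as in \cite{Fr-Ki,Fr-Ul2}) so that the level sets $\Im z^{m_\sigma}=\text{const}$ are the orbits, and write $u_{g,f}$ along an orbit entering the sector $U_{\sigma,l}$ as $g$ evaluated at the first hitting point of $I$ minus the (finite or convergent) integral $\int_0^t f(\psi_s x)\,ds$. The point is to expand $f\cdot V$ in its Taylor polynomial at $0$ to order $k_{v(r)}$, plug in, and integrate term by term against the explicit orbit parametrization; each monomial $z^i\bar z^{k-i}$ contributes, after the integral, a term whose singular behaviour as $z\to 0$ is governed precisely by a beta-type integral $\int_0^{\text{(time to exit)}} (\cdots)\,ds$ — this is where the function $\mathfrak{B}(\tfrac{(m_\sigma-1)-i}{m_\sigma},\tfrac{(m_\sigma-1)-k+i}{m_\sigma})$ and the roots of unity $\theta_\sigma^{l(2i-k)}$ enter, matching the definition \eqref{def:gothc} of $\mathfrak{C}^k_{\sigma,l}$. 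The hypothesis $\mathfrak{C}^k_{\sigma,l}(f)=0$ for $\mathfrak{o}(\mathfrak{C}^k_{\sigma,l})<v(r)$ kills exactly the monomials whose contribution would be too singular (order $<v(r)$) to be absorbed into a $C^r$ function, and $\mathfrak{d}^k_{\sigma,j}(f)=0$ for $\widehat{\mathfrak{o}}(\mathfrak{d}^k_{\sigma,j})<v(r)$ handles the complementary (logarithmic / "bad residue") exponents $i\equiv m_\sigma-1$ or $i\equiv k-(m_\sigma-1)\bmod m_\sigma$ excluded from the sum in \eqref{def:gothc}; the Taylor remainder of order $k_{v(r)}$ contributes an error of regularity at least $v(r)$ by the very choice of $k_{v(r)}$ encoded in \eqref{eq:expkr}. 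One then checks that after removing these obstructed terms, what remains is a convergent expansion whose worst term is $C^r$ up to and including the end point $\sigma_i$, and whose $C^r$ norm is controlled by $\|f\|_{C^{k_{v(r)}}}$ (for the local integral part) plus $\|g\|_{C^r}$ (for the boundary contribution $g$ at the hitting point, which varies $C^r$-smoothly with the orbit by the same suspension argument applied on the $I$-side).

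The main obstacle I expect is the gluing: showing that the function defined by the local model in each sector $U_{\sigma,l}$ agrees, on the overlap with the flow-box region, with the function $u_{g,f}$ built from $g$, and that the two $C^r$ estimates are compatible so that $u_{g,f}\in C^r(M'_e)$ globally rather than merely being $C^r$ on each piece of a cover. Concretely, one must verify that $u_{g,f}$ is well-defined and the same regardless of which representative $\psi_t x\in I$ one uses (this is where Lemma~6.3 of \cite{Fr-Ul} is invoked for well-definedness on $M'\setminus(\mathrm{Sd}(\psi_\R)\cup\SL)$), and then that the limit of $u_{g,f}$ along orbits approaching $\sigma_i$ through the sector $U_{\sigma,l}$ exists and the resulting extension to $M'_e$ is $C^r$ there — including across the separatrices that bound the sectors, where the one-sided $C^r$ expansions from adjacent sectors must patch together into a genuine (one-sided, near $\sigma_i$) $C^r$ function on $M'_e$. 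Handling the interplay between the polynomial-singularity bookkeeping (the $C^{n+\pag}$ scale of Theorem~\ref{thm1}, which controls $\varphi_f$ and hence the jumps of $g$) and the final $C^r(M'_e)$ claim is the delicate part; the worst exponent bookkeeping is exactly matched by the definitions of $\mathfrak{e}$, $\widehat{\mathfrak{e}}$ and the relation $C^{n+\mathrm{P_0}}\subset C^{(n-1)+\eta}$, so once the local expansion is set up correctly the regularity counting should close, but keeping the constants uniform across the finitely many sectors and flow boxes requires care.
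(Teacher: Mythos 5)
Your high-level skeleton (flow boxes away from singularities, a local model computation in singular charts near each $\sigma$, then gluing) is the right shape and matches the paper's Steps~1--5, and your identification of the key identity relating $u$ on a sector to a local primitive of $f$ is also correct in spirit (the paper's equation $u(\omega)-u(G_l(-\varepsilon+\iota\Im\omega^{m_\sigma}))=\tfrac{\varepsilon^{-(m_\sigma-2)/m_\sigma}}{m_\sigma^2}F_{(f\cdot V)\circ\varepsilon^{1/m_\sigma}}(\varepsilon^{-1/m_\sigma}\omega)$). But there is a genuine conceptual error in how you assign roles to the two families of distributions, and this error is load-bearing: it is precisely what makes the ``worst exponent bookkeeping'' close.

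You claim that $\mathfrak{C}^k_{\sigma,l}(f)=0$ kills the singular monomials with exponents $i\not\equiv m_\sigma-1$, $i\not\equiv k-(m_\sigma-1)\bmod m_\sigma$, while $\mathfrak{d}^k_{\sigma,j}(f)=0$ ``handles the complementary (logarithmic/bad residue) exponents $i\equiv m_\sigma-1$ or $i\equiv k-(m_\sigma-1)\bmod m_\sigma$ excluded from the sum in \eqref{def:gothc}.'' This is backwards. By Remark~\ref{rem:irrel} those excluded residue classes are \emph{irrelevant}: the monomials $\omega^{m_\sigma-1}\bar\omega^{k-(m_\sigma-1)}$ and $\omega^{k-(m_\sigma-1)}\bar\omega^{m_\sigma-1}$ produce analytic $F_f$ on every closed sector and are never obstructions. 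Moreover $\mathfrak{d}^k_{\sigma,j}$ is by definition indexed over $0\le j\le k\wedge(m_\sigma-2)$ with $j\neq k-(m_\sigma-1)\bmod m_\sigma$, i.e.\ the same residue classes as appear in $\mathfrak{C}^k_{\sigma,l}$. The actual division of labour is a two-stage geometric one, and it is the reason there are two different regularity thresholds in the hypotheses ($\widehat{\mathfrak{o}}<v(r)$ for $\mathfrak{d}$ versus $\mathfrak{o}<v(r)$ for $\mathfrak{C}$, with $\mathfrak{o}(\sigma,k)<\widehat{\mathfrak{o}}(\sigma,k)$): the $\mathfrak{d}$-conditions up to $\widehat{\mathfrak{o}}$ yield $C^{\widehat{\mathfrak{e}}(\sigma,\underline k)}$-regularity of $F_f$ on the \emph{open} angular sectors (Theorem~\ref{thm:Cketa}), and the $\mathfrak{C}$-conditions up to $\mathfrak{o}$ yield the correct regularity of the boundary trace $\varphi_{f,l}$ (Theorem~\ref{cor:Hold}); the two are then spliced via the reflection identity $F_f(\omega,\bar\omega)=\varphi_{f,l}(\Im\omega^{m_\sigma})-F_{f\circ\theta_0^{-1}}(\theta_0\omega,\theta_0^{-1}\bar\omega)$ to get $C^{\mathfrak{e}(\sigma,k)}$ on the \emph{closed} sector (Theorem~\ref{thm;ext}), which is what is needed to push $u$ continuously up to the separatrices and the end point $\sigma_i$. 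Without the splitting open/closed and the reflection trick, a naive term-by-term Taylor integration does not produce the claimed $C^r$ extension, because the regularity you can afford in the interior of a sector genuinely differs from the regularity achievable up to its boundary.

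A secondary but real gap is the gluing. You treat it as ``overlaps of a cover,'' but in the presence of saddle loops an orbit starting in $I$ can pass through the \emph{same} saddle's neighbourhood several times through different sectors $\mathcal{D}^{2l_i+1}_{\sigma,\varepsilon}$ before returning to $I$, so a single strip $J^\tau$ must be decomposed into $N$ sectors separated by connecting flow-box rectangles $E_i$ whose left walls lie on saddle loops, and the $C^r$ control must be propagated \emph{inductively} from $E_0$ through the alternating chain $E_0,\mathcal{D}^{2l_1+1},E_1,\mathcal{D}^{2l_2+1},\dots,E_N$ (Steps 4.0--4.3). A plain ``patch compatible local estimates'' argument does not account for this chain and would not give the uniform constant $C_r$ in the theorem. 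Your plan therefore needs, in addition to correcting the role of $\mathfrak{d}^k_{\sigma,j}$, this explicit inductive transport of the $C^r$ bound along the chain of saddle loops.
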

{The following theorem states that the regularity of the solution obtained in Theorem \ref{thm2} is optimal.}

\begin{theorem}[optimal regularity]\label{thm3}
Let $r\in\R_\eta$ with $v(r)>0$ and let $f\in C^{{k}_{v(r)}}(M)$. If there exists  $u\in C^{r}(M'_e)$ such that $Xu=f$ on $M'_e$ then
\begin{itemize}
 \item $\mathfrak{d}^k_{\sigma,j}(f)=0$ for all $(\sigma,k,j)\in\mathscr{TD}$ with $\widehat{\mathfrak{o}}(\mathfrak{d}^k_{\sigma,j})<v(r)$;
 \item $\mathfrak{C}^k_{\sigma,l}(f)=0$ for all $(\sigma,k,l)\in\mathscr{TC}$ with ${\mathfrak{o}}(\mathfrak{C}^k_{\sigma,l})<v(r)$.
\end{itemize}
\end{theorem}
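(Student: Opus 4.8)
\textbf{Proof plan for Theorem~\ref{thm3} (optimal regularity).}
The strategy is to prove the contrapositive in the form of a local obstruction argument: if $u\in C^r(M'_e)$ solves $Xu=f$, then all the relevant invariant distributions must vanish. The key observation is that vanishing of these distributions can be detected by restricting everything to a small singular chart $U_\sigma$ around a saddle $\sigma$ with $U_{\sigma,l}\cap M'\neq\emptyset$, and examining the behaviour of ergodic integrals (equivalently, of $\varphi_f$ and its derivatives near the discontinuities of $T$) corresponding to orbits passing through the sector $U_{\sigma,l}$. The plan is first to reduce the problem to the two operators $f\mapsto\varphi_f$ and $g\mapsto u_{g,f}$ of Section~\ref{sec:twoop}: since $u\in C^r(M'_e)$ restricts to a function $g:=u|_I$ which, by the local analysis of the flow near $I$ (the same diffeomorphism $\nu$ as in Section~\ref{sec:twoop}, but read in reverse), lies in $C^r(I)$ and satisfies $\varphi_f=g\circ T-g$ on $I$. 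Hence $\varphi_f$ itself must inherit regularity: $\varphi_f=g\circ T-g\in C^r$ on each $I_\alpha$, and more importantly its one-sided asymptotics at each endpoint of $I_\alpha$ are constrained.

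The second and central step is to run the converse of the local expansion that underlies Theorem~\ref{thm1}. By the local analysis near a degenerate saddle (developed, following \cite{Fr-Ki,Fr-Ul2}, in the body of the paper), the function $\varphi_f$ near a discontinuity $d$ of $T$ coming from the $l$-th sector of $\sigma$ admits an asymptotic expansion in powers $|x-d|^{\mathfrak{o}(\sigma,k)}$ (with possible logarithmic corrections when $\mathfrak{e}(\sigma,k)$ has an $\eta$-component), and the coefficient attached to the exponent $\mathfrak{e}(\sigma,k)$ is, up to a nonzero explicit constant, exactly $\mathfrak{C}^k_{\sigma,l}(f)$; the beta-like function $\mathfrak{B}$ in \eqref{def:gothc} is precisely the normalization coming from integrating the model singularity $z\mapsto \overline z^{\,i}z^{\,k-i}$ against the invariant measure along the flow. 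If $u\in C^r(M'_e)$ then $\varphi_f=g\circ T-g$ with $g\in C^r$, so $\varphi_f$ has no singular term of exponent $<r$ at $d$; comparing with the expansion forces $\mathfrak{C}^k_{\sigma,l}(f)=0$ for every $(\sigma,k,l)\in\mathscr{TC}$ with $\mathfrak{o}(\mathfrak{C}^k_{\sigma,l})<v(r)$. This handles the second bullet.

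For the first bullet, the distributions $\mathfrak{d}^k_{\sigma,j}$ with $k\leq m_\sigma-2$ are the ``primitive'' obstructions from \cite{Fr-Ki}: their non-vanishing already prevents $\varphi_f$ from even being bounded (a pole-type singularity), hence prevents the existence of any measurable solution $g$, a fortiori any $u$; so they vanish whenever a solution exists at all (and $\widehat{\mathfrak{o}}(\sigma,k)=k-(m_\sigma-2)\leq 0<v(r)$ in that range, consistent with the hypothesis). For $k\geq m_\sigma-1$, the point is that $\widehat{\mathfrak{e}}(\sigma,k)=k-(m_\sigma-1)+\eta$ records a \emph{different}, finer obstruction than $\mathfrak{C}^k_{\sigma,l}$: it controls not the leading singular exponent of $\varphi_f$ but the jump/matching of the $(k-(m_\sigma-1))$-th derivative of the \emph{solution} $u_{g,f}$ across the saddle, i.e.\ the compatibility of the one-sided limits of $u$ at the end points $\sigma_1,\dots,\sigma_l$ of $\sigma$ in $M'_e$. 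The plan is to express this jump as a finite linear combination of one-sided Birkhoff-type integrals of partial derivatives of $fV$ at $0$ — which, after summation over the relevant sectors, collapses (by the summation identities of \eqref{eq:symdist} and the definition \eqref{def:gothd}) to $\mathfrak{d}^k_{\sigma,j}(f)$ — and to observe that if this combination is nonzero then the corresponding derivative of $u$ cannot be continuous (with the prescribed $\eta$-modulus) at $\sigma$ in $M'_e$, contradicting $u\in C^r(M'_e)$. I would organize this as: (i) fix $\sigma$, $j$, $k$ with $\widehat{\mathfrak{o}}(\sigma,k)<v(r)$; (ii) write the defining integral for $u_{g,f}$ along a flow-box degenerating onto a separatrix of $\sigma$; (iii) extract the $z^{j+nm_\sigma}\overline z^{\,k-j-nm_\sigma}$-contributions and identify the resulting constants with the binomial ratios in \eqref{def:gothd} via the same beta-integral computation; (iv) conclude that $C^r$-regularity of $u$ at $\sigma_i$ forces the combination to vanish.

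The main obstacle I anticipate is step (iii)–(iv) for the $\mathfrak{d}^k_{\sigma,j}$ with $k\geq m_\sigma-1$: one has to carefully track \emph{which} terms of the Taylor expansion of $fV$ at $0$ produce a genuinely non-removable singularity in the derivative of $u$ versus those that are absorbed into smooth corrections, and the congruence conditions $j\neq k-(m_\sigma-1)\bmod m_\sigma$ (and the excluded indices in \eqref{def:gothc}) are exactly the combinatorial bookkeeping for the resonant exponents where the model integral produces a logarithm rather than a pure power — so the constants $\mathfrak{B}$ degenerate and must be handled by the $\Gamma(-n)$-convention stated after \eqref{def:gothc}. A secondary subtlety is uniformity in the number of loops $l\geq 2$: the obstruction must be formulated in $M'_e$ (not $M'$), so one must be sure that the relevant derivative-jump is taken between the correct pair of end points $\sigma_i$, and that no cancellation between different sectors is being silently assumed; here the hypothesis $U_{\sigma,l}\cap M'\neq\emptyset$ and the symmetry relations \eqref{eq:symdist} do the bookkeeping. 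Once these are in place, optimality follows by contraposition against Theorem~\ref{thm2}: Theorem~\ref{thm2} produces a $C^r$ solution under vanishing of exactly the distributions of order/regularity below $v(r)$, and Theorem~\ref{thm3} shows none of them can be dispensed with.
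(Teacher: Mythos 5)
Your overall strategy — reduce to a local obstruction near each saddle, and derive vanishing of distributions from the $C^r$-regularity of the putative solution — is in the right spirit, and the first bullet for $k\le m_\sigma-2$ is indeed trivial. But the central mechanism you propose for the $\mathfrak{d}^k_{\sigma,j}$ with $k\geq m_\sigma-1$ is not the one that works, and there is also an aggregation issue in your treatment of the $\mathfrak{C}^k_{\sigma,l}$.

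Concerning $\mathfrak{C}^k_{\sigma,l}$: restricting $u$ to $I$ and reading asymptotics of $\varphi_f=g\circ T-g$ near a discontinuity $d$ of $T$ only detects the \emph{sum} $\mathfrak{C}_{[(\sigma,k,l)]}(f)=\sum_{l'\sim l}\mathfrak{C}^k_{\sigma,l'}(f)$ taken over all sectors chained together by the saddle loops crossed between $d$ and the next return (this is exactly what Corollary~\ref{cor:vfzan} records). The statement to be proved is the vanishing of each \emph{individual} $\mathfrak{C}^k_{\sigma,l}(f)$. The paper extracts this by working not with the return map to $I$ but with the ``local solution'' $F_f$ on a single closed angular sector: the identity \eqref{eq:aaa0}, namely $u(\omega)-u(G_l(-\vep+\iota\Im\omega^{m_\sigma}))=\tfrac{\vep^{-(m_\sigma-2)/m_\sigma}}{m_\sigma^2}F_{(f\cdot V)\circ\vep^{1/m_\sigma}}(\vep^{-1/m_\sigma}\omega)$, immediately shows that $F_f$ is $C^r$ on $\overline{\mathcal{D}}(\tfrac{2l+1}{2m_\sigma},\tfrac{2l+2}{2m_\sigma})$; the sector-by-sector $\mathscr{C}^j_l$ vanishing then comes from Theorem~\ref{thm;extinv} via $\varphi_{f,l}(\cdot)=F_f$ restricted to the exit curve of that one sector.

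Concerning $\mathfrak{d}^k_{\sigma,j}$ with $k\geq m_\sigma-1$: your step (iii)--(iv) proposes to recover $\mathfrak{d}^k_{\sigma,j}(f)$ by ``summation over the relevant sectors'' from sector-wise data. This is precisely what \emph{fails} in the presence of saddle loops, and the paper says so explicitly: when some sectors are excluded from $M'$, the $\mathfrak{C}^k_{\sigma,l}$ on the remaining sectors generate a strictly smaller space than all $\mathfrak{d}^k_{\sigma,j}$, so $\mathfrak{d}^k_{\sigma,j}(f)$ cannot be recovered from them by any linear inversion. Moreover, the ``jump/matching of derivatives across the end points $\sigma_1,\dots,\sigma_l$ of $\sigma$ in $M'_e$'' is not a constraint at all — in $M'_e$ those end points are \emph{distinct} points and $u$ may take different values at them by design; $C^r(M'_e)$ imposes no compatibility between them. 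The actual mechanism in the paper for the $\partial^j_i$ (hence $\mathfrak{d}^j_{\sigma,i}$) vanishing is quite different and works entirely within a \emph{single} closed sector: in Theorem~\ref{thm;extinv}, once $\mathscr{C}^j_l(f)=0$ is established, one subtracts the smooth tail and is left with $F_{f_{<\underline{k}}}$, which is an explicit superposition of homogeneous functions $\Upsilon_{j,l}(\omega^m,\overline\omega^m)$ of degree $j-(m_\sigma-2)$ plus a $\log u$ term; $C^r$-regularity of $F_{f_{<\underline{k}}}$ on the closed sector forces the log coefficients to vanish and (by Lemma~\ref{lem:hom}) each homogeneous piece to be a polynomial, and a coefficient comparison after differentiating the resulting identity yields $\partial^j_i(f)=0$. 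This homogeneity argument is the key idea you are missing; without it, I do not see how your plan would produce the first bullet in the presence of saddle loops, which is exactly the new and hard case this theorem addresses.
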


In summary, all three main results provide an analytical background necessary to fully solve the regularity problem of solving the cohomological equation for locally Hamiltonian flows. The dynamical component, using mainly renormalization techniques, the authors left to \cite{Fr-Ki3}.

{\subsubsection*{Discussion on the minimal case}
Assume that the locally Hamiltonian flow $\psi_\R$ has no saddle connection, so it is minimal on the whole surface $M$. Then  $\mathscr{TC}$ is the set of all triples $(\sigma,k,l)\in \mathrm{Sd}(\psi_\R)\times\Z_{\geq 0}\times\Z_{\geq 0}$ such that $0\leq l<2m_\sigma$, {that is,} there are no restrictions on $l$ coming from the excluded sectors.} It follows that for any $k\geq 0$ the functionals $\mathfrak{C}^k_{\sigma,l}$ and $\mathfrak{d}^k_{\sigma,j}$ generate the same space of invariant distributions. In general, the former space is a subspace of the latter.  Since ${\mathfrak{o}}(\sigma,k)<\widehat{\mathfrak{o}}(\sigma,k)$, the conditions involving the functionals  $\mathfrak{d}^k_{\sigma,j}$ ({in Theorems~\ref{thm2}~and~\ref{thm3}) can be removed.
Then only sectorial distributions $\mathfrak{C}^k_{\sigma,l}$ matter and our main results (Theorems~\ref{thm2}~and~\ref{thm3}) have the following simplified form.}
\begin{corollary}\label{main:cor}
{Suppose $\psi_\mathbb{R}$ is a locally Hamiltonian flow on $M$ that has no saddle connections.}
Fix $r\in\R_\eta$ so that $v(r)>0$. Assume that $f\in C^{{k}_{v(r)}}(M)$  and $g\in C^{r}(I)$ is a solution of the cohomological equation $\varphi_f=g\circ T-g$. Then the existence of $u_{g,f}\in C^r(M)$ satisfying  $Xu_{g,f}=f$ is equivalent to
 \[\mathfrak{C}^k_{\sigma,l}(f)=0\text{ for all }\sigma\in {\rm{Sd}}(\psi_\R),\ 0\leq l<m_\sigma\text{ and }k<m_\sigma v(r)+(m_\sigma-2).\]
\end{corollary}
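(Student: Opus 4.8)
The plan is to deduce Corollary~\ref{main:cor} from Theorems~\ref{thm2} and \ref{thm3} by carefully unpacking what the index sets $\mathscr{TD}$ and $\mathscr{TC}$ become in the minimal case, and then translating the $\mathfrak{o}$-threshold conditions into the explicit bound $k<m_\sigma v(r)+(m_\sigma-2)$. First I would observe that if $\psi_\R$ has no saddle connections then $M$ itself is the (unique) minimal component, so $M'_e=M'=M$ and every angular sector $U_{\sigma,l}$ meets $M'$; hence $\mathscr{TC}$ is exactly the set of triples $(\sigma,k,l)$ with $\sigma\in\mathrm{Sd}(\psi_\R)$ and $0\le l<2m_\sigma$, with no sectorial exclusion, as noted in the ``Discussion on the minimal case''. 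Using the symmetry relation $\mathfrak{C}^k_{\sigma,l+m_\sigma}=(-1)^k\mathfrak{C}^k_{\sigma,l}$ from \eqref{eq:symdist}, the conditions $\mathfrak{C}^k_{\sigma,l}(f)=0$ for $0\le l<2m_\sigma$ are equivalent to those for $0\le l<m_\sigma$ alone, which matches the statement of the corollary.

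Next I would show the $\mathfrak{d}$-conditions in Theorems~\ref{thm2} and \ref{thm3} are redundant given the $\mathfrak{C}$-conditions. The key point is the inequality $\mathfrak{o}(\sigma,k)<\widehat{\mathfrak{o}}(\sigma,k)$ (which holds since $\mathfrak{o}(\sigma,k)=\frac{k-(m_\sigma-2)}{m_\sigma}$ and $\widehat{\mathfrak{o}}(\sigma,k)=k-(m_\sigma-2)$, and $m_\sigma\ge 2$), combined with the fact that in the minimal case the functionals $\{\mathfrak{C}^k_{\sigma,l}\}_l$ and $\{\mathfrak{d}^k_{\sigma,j}\}_j$ span the same space of invariant distributions for each fixed $(\sigma,k)$. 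Therefore, whenever $\widehat{\mathfrak{o}}(\mathfrak{d}^k_{\sigma,j})<v(r)$ we also have $\mathfrak{o}(\mathfrak{C}^k_{\sigma,l})<v(r)$ for all relevant $l$; since vanishing of all $\mathfrak{C}^k_{\sigma,l}$ at that level $k$ forces vanishing of all $\mathfrak{d}^k_{\sigma,j}$ at the same $k$, the $\mathfrak{d}$-hypotheses follow from the $\mathfrak{C}$-hypotheses and can be dropped from both the sufficiency (Theorem~\ref{thm2}) and necessity (Theorem~\ref{thm3}) directions.

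Then I would convert the threshold $\mathfrak{o}(\mathfrak{C}^k_{\sigma,l})<v(r)$ into the stated inequality on $k$. Since $\mathfrak{o}(\sigma,k)=\frac{k-(m_\sigma-2)}{m_\sigma}$, the condition $\mathfrak{o}(\sigma,k)<v(r)$ reads $k-(m_\sigma-2)<m_\sigma v(r)$, i.e.\ $k<m_\sigma v(r)+(m_\sigma-2)$, which is precisely the bound appearing in the corollary. I would also note that the regularity requirement $f\in C^{k_{v(r)}}(M)$ is common to all three statements, so no extra smoothness hypothesis is introduced. Putting the pieces together: given $f\in C^{k_{v(r)}}(M)$ and a solution $g\in C^r(I)$ of $\varphi_f=g\circ T-g$, if the listed $\mathfrak{C}$-conditions hold then by Theorem~\ref{thm2} (with its $\mathfrak{d}$-conditions automatically satisfied) we get $u_{g,f}\in C^r(M)$ with $Xu_{g,f}=f$; conversely, if such a $u_{g,f}\in C^r(M)$ exists, Theorem~\ref{thm3} forces the $\mathfrak{C}$-conditions. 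This gives the claimed equivalence.

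The main obstacle I anticipate is making the ``same span'' reduction for the $\mathfrak{d}$-distributions fully rigorous at each fixed level $k$: one must be careful that the correspondence between $\{\mathfrak{C}^k_{\sigma,l}\}$ and $\{\mathfrak{d}^k_{\sigma,j}\}$ respects the level (it does, since both are built from the same partial derivatives $\frac{\partial^k(f\cdot V)}{\partial z^i\partial\bar z^{k-i}}(0,0)$ of fixed total order $k$) and that no lower- or higher-$k$ distributions creep in when inverting the linear relation; this is where the explicit formulas \eqref{def:gothd} and \eqref{def:gothc}, together with the relations \eqref{eq:symdist}, must be invoked. The rest is bookkeeping with the definitions of $\mathfrak{o}$, $\widehat{\mathfrak{o}}$, and $v$.
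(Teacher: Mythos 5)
Your argument is correct and is essentially the same as the paper's own reasoning, which appears in the ``Discussion on the minimal case'' paragraph immediately before the corollary: no saddle connections means $M'_e=M'=M$ and $\mathscr{TC}$ has no excluded sectors, so by the linear relations between $\mathfrak{C}^k_{\sigma,l}$ and $\mathfrak{d}^k_{\sigma,j}$ (which span the same space level by level) together with $\mathfrak{o}(\sigma,k)<\widehat{\mathfrak{o}}(\sigma,k)$, the $\mathfrak{d}$-hypotheses in Theorems~\ref{thm2} and \ref{thm3} are implied by the $\mathfrak{C}$-hypotheses and can be dropped. Your translation of $\mathfrak{o}(\sigma,k)<v(r)$ into $k<m_\sigma v(r)+(m_\sigma-2)$ and the reduction from $0\le l<2m_\sigma$ to $0\le l<m_\sigma$ via~\eqref{eq:symdist} are both exactly what the paper does.
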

{Note that the above simplified version of the main results remains true also in the presence of some saddle loops. But not in every case, only when the excluded sectors do not affect the size of the space generated by the sectorial distributions. This is due to the absence of linear independence of the distribution $\mathfrak{C}^k_{\sigma,l}$. For example, if for each pair $U_{l,\sigma}$, $U_{l+m_\sigma,\sigma}$, $0\leq l<m_\sigma$ of sectors at most one is excluded then, by \eqref{eq:symdist},  for any $k\geq 0$ the functionals $\mathfrak{C}^k_{\sigma,l}$ and $\mathfrak{d}^k_{\sigma,j}$ generate the same space of invariant distributions and Corollary~\ref{main:cor} remains still true.}

Let us mention that local $C^\infty$-solutions of cohomological equations for  flows without saddle loops around saddles were studied by Roussarie in \cite{Ro}. We should emphasize that our results are new (even for flows without saddle loops) because they involve solutions with finite differentiability, which causes significant technical complications.
In this case, Forni has suggested us an alternative strategy that potentially simplifies the complex techniques used in this article.

However, the main advantage and novelty of {the} local tools introduced in this article is the ability to study solutions in closed angular sectors (so-called semi-solutions), which makes it possible to apply {our tools to flows} that have saddle loops. These types of problems has not been systematically studied before.
Under an assumption that some saddles have (many) loops, for every $k$ large enough the functionals $\mathfrak{C}^k_{\sigma,l}$ generate {smaller} space than that generated by $\mathfrak{d}^k_{\sigma,j}$. Then some functionals $\mathfrak{d}^k_{\sigma,j}$ begin to have an independent effect on the regularity of solutions, but their influence has less intensity than the functionals $\mathfrak{C}^k_{\sigma,l}$, even though both types of functionals  (for fixed $k$) have the same order of regularity. This seems to be a completely new phenomenon, not previously observed in the study of the regularity of solutions to cohomological equations in parabolic dynamics.

\subsection{Structure of the paper}
The paper is organized as follows. In Section \ref{sec;polycocycle}, we define one-parameter family of Banach spaces of functions whose (higher order) derivatives have polynomial singularities at the ends of intervals exchanged by an IET. We establish their basic properties necessary in next sections of the article.
In Section \ref{sec;localanal}, for any continuous function $f$ defined around a saddle, we define three types of functions: $\varphi_{f,l}$,  $\mathscr{F}_{f,l}$ and $F_f$.
The map $\varphi_{f,l}$ is a local version of the function $\varphi_f$ defined in Section~\ref{sec:twoop} and is necessary to study the local behavior of $\varphi_f$ near the ends of intervals exchanged by an IET.
The map  $F_{f}$ is (in a sense) a local solution to the cohomological equation $Xu=f$ in open angular sectors $U_{\sigma,l}$ around the saddle. The map $\mathscr{F}_{f,l}$ is a covering of $F_{f}$ and is a technical tool for showing basic properties of the other two. In Section~\ref{sec;localanal}, we prove basic properties of $\mathscr{F}_{f,l}$, which are used to understand the behavior of $F_f$ on open angular sectors $U_{\sigma,l}$.
In Section \ref{sec;laphi}, using the tools introduced in Section~\ref{sec;localanal}, we determine precisely the form of $\varphi_{f,l}$ and $\mathscr{F}_{f,l}$ on some angular sectors.
Both of these results are then used to prove that $F_f$ has a smooth extension to closed angular sectors $\overline{U_{\sigma,l}}$ and to establish necessary and sufficient conditions (expressed in the language of local invariant distributions) for such an extension.
Finally, in Section \ref{sec;GP}, we use the contents of all previous sections to prove Theorem \ref{thm1}, \ref{thm2} and \ref{thm3}.

\section{Functions whose (higher order) derivatives have polynomial singularities}\label{sec;polycocycle}
In this section we introduce one-parameter family of Banach spaces of functions whose (higher order) derivatives have polynomial singularities at the ends of intervals exchanged by an IET. 
The  new spaces simply generalize Banach spaces $\pa$ studied in \cite{Fr-Ki}.

\subsection{Space $C^{n+\pa}$}\label{sec;npa}
Fix $0\leq a<1$ and an IET $T:I\to I$ satisfying so called Keane's condition. Denote by  $I_\alpha=[l_\alpha,r_\alpha)$, $\alpha\in \mathcal A$ all subintervals exchanged by $T$. The IET is determined by a pair
$(\pi,\lambda)$, where $\lambda=(\lambda_\alpha)_{\alpha\in\mathcal{A}}\in
\R_{>0}^{\mathcal{A}}$ is the vector of lengths of exchanged intervals, i.e.\ $\lambda_\alpha=r_\alpha-l_\alpha$, and $\pi=(\pi_0,\pi_1)$ is the pair of
bijections $\pi_\vep:\mathcal{A}\to\{1,\ldots,d\}$ for $\vep=0,1$ ($d=|\mathcal A|$ is the number of exchanged intervals) such that $\pi_0(\alpha)$ is the item of $I_\alpha$ before the translation and $\pi_1(\alpha)$ after the translation.

For every $\alpha\in\mathcal{A}$, denote by $m_\alpha$ the middle point of  $I_\alpha$, i.e.\ $m_\alpha = (l_\alpha + r_\alpha)/2$. For  every $\varphi\in C^{1}(\sqcup_{\alpha \in \mathcal{A}}\Int I_\alpha, \C)$ let us consider
\begin{align*}
p_{a}(\varphi):
=& \max_{\alpha\in\mathcal{A}}\Big\{\sup_{x\in(l_\alpha,m_\alpha]}|D\varphi(x)(x-l_\alpha)^{1+a}|,\sup_{x\in[m_\alpha,r_\alpha)}|D\varphi(x)(r_\alpha-x)^{1+a}|\Big\}.
\end{align*}

\begin{definition}\label{def;pa}
For every integer $n \geq 0$, we denote by $C^{n+\pa}(\sqcup_{\alpha \in \mathcal{A}}I_\alpha )$ the space of functions $\varphi\in C^{n+1}(\sqcup_{\alpha \in \mathcal{A}}\Int I_\alpha, \C)$ such that
$p_a(D^n\varphi)<+\infty$ and for every $\alpha\in\mathcal{A}$ the limits
\begin{align*}
C_{\alpha,n}^{a,+}(\varphi) &=(-1)^{n}C_{\alpha}^+(D^n\varphi):=(-1)^{n+1}\lim_{x\searrow l_\alpha}D^{n+1}\varphi(x)(x-l_\alpha)^{1+a},\\
C_{\alpha,n}^{a,-}(\varphi) &=C_{\alpha}^-(D^n\varphi):=\lim_{x\nearrow r_\alpha}D^{n+1} \varphi(x)(r_\alpha-x)^{1+a}
\end{align*}
exist.
We denote by $C^{n+\pag}(\sqcup_{\alpha \in \mathcal{A}}I_\alpha )\subset C^{n+\pa}(\sqcup_{\alpha \in \mathcal{A}}I_\alpha )$ the subset (the union of subspaces) of functions $\varphi\in C^{n+\pa}(\sqcup_{\alpha \in \mathcal{A}}I_\alpha )$ of \emph{geometric type}, i.e.\ such that
\[C_{\pi_0^{-1}(d),n}^{a,-}(\varphi) \cdot C_{\pi_1^{-1}(d),n}^{a,-}(\varphi)=0\quad\text{and}\quad C_{\pi_0^{-1}(1),n}^{a,+}(\varphi) \cdot C_{\pi_1^{-1}(1),n}^{a,+}(\varphi)=0.\]
\end{definition}
%
%
%

For every $0\leq a<1$ and every integer $n \geq 0$, by Lemma~4.3 in \cite{Fr-Ki}, if $\varphi\in C^{n+\pa}(\sqcup_{\alpha \in \mathcal{A}}I_\alpha)$ then $D^{n}\varphi\in L^1(I)$. Let us consider the norm on $C^{n+\pa}(\sqcup_{\alpha \in \mathcal{A}}I_\alpha)$ given by
\begin{equation}\label{eqn;pan}
\|\varphi\|_{C^{n+\pa}}:= \sum_{k=0}^n\|D^{k}\varphi\|_{L^1(I)}+p_a(D^{n}\varphi).
\end{equation}
Recall that, by Lemma~4.2 in \cite{Fr-Ki}, for $n=0$ the space
$C^{n+\pa}(\sqcup_{\alpha \in \mathcal{A}}I_\alpha)$ equipped with the norm $\|\,\cdot\,\|_{C^{n+\pa}}$ is Banach. This gives Banach's condition also for all $n\geq 1$. Moreover, $C^{n+\pag}(\sqcup_{\alpha \in \mathcal{A}}I_\alpha)$ is  a closed subspace of $C^{n+\pa}(\sqcup_{\alpha \in \mathcal{A}}I_\alpha)$ for any $n\geq0$.


Let $\eta:\R_{\geq 0}\to\R_{\geq 0}$ be given by $\eta(x)=-x\log x$ for $x\in[0,e^{-1}]$ and $\eta(x)=e^{-1}$ for $x\geq e^{-1}$. Denote by $C^{\eta}(\sqcup_{\alpha \in \mathcal{A}}I_\alpha)$
the space of functions $f:I\to \C$ such that
\[|f|_{C^\eta}:=\max_{\alpha\in\mathcal A}\sup\left\{\frac{|f(x)-f(y)|}{\eta(|x-y|)}:x,y\in\Int I_\alpha,x\neq y\right\}<+\infty.\]
Then $C^{\eta}(\sqcup_{\alpha \in \mathcal{A}}I_\alpha)$ equipped with the norm  $\|f\|_{C^\eta}=\|f\|_{L^1}+|f|_{C^\eta}$ is a Banach space.
 For every $0<a<1$  denote by $C^{a}(\sqcup_{\alpha \in \mathcal{A}}I_\alpha)$
the space of piecewise $a$-H\"older continuous functions, i.e.\ such that
\[|f|_{C^a}:=\max_{\alpha\in\mathcal A}\sup\left\{\frac{|f(x)-f(y)|}{|x-y|^a}:x,y\in\Int I_\alpha,x\neq y\right\}<+\infty,\]
equipped with the Banach norm $\|f\|_{C^a}=\|f\|_{L^1}+|f|_{C^a}$.

For every $n\geq 0$ we also deal with the Banach spaces $C^{n+\eta}(\sqcup_{\alpha \in \mathcal{A}}I_\alpha)$, $C^{n+a}(\sqcup_{\alpha \in \mathcal{A}}I_\alpha)$
equipped with the norms
\[\|\varphi\|_{C^{n+\eta}}=\sum_{k=0}^n\|D^{k}\varphi\|_{L^1}+|D^{n}\varphi|_{C^\eta},\quad \|\varphi\|_{C^{n+a}}=\sum_{k=0}^n\|D^{k}\varphi\|_{L^1}+|D^{n}\varphi|_{C^a}, \text{ resp.}\]
For every non-natural real number $r>0$ we will write $C^r$ for $C^{\lfloor r\rfloor+\{r\}}$.
\begin{remark}\label{rmk:Hold}
In view of Lemma~4.5 in \cite{Fr-Ki}, for every $\varphi \in C^{0+\pa}(\sqcup_{\alpha \in \mathcal{A}}I_\alpha)$ and $x\in \Int I_\alpha$,
\begin{align*}
\begin{split}
|\varphi(x)|& \leq \frac{\norm{\varphi}_{L^1}}{|I|} +p_a(\varphi)\Big(\frac{1}{a\min\{x-l_\alpha, r_\alpha-x\}^{a}}+
\frac{2^{a+2}}{a(1-a)|I_\alpha|^{a}}\Big)\text{ if }0<a<1,\\
|\varphi(x)| &\leq \frac{\norm{\varphi}_{L^1}}{|I|} + p_a(\varphi)\Big(\log\frac{|I_\alpha|}{2\min\{x-l_\alpha, r_\alpha-x\}}+2 \Big)\text{ if }a=0.
\end{split}
\end{align*}
It follows that if $\varphi \in C^{n+\pa}$ for some $n\geq 1$, then
\begin{align*}
\varphi \in C^{(n-1)+(1-a)}&\text{ with }\|\varphi\|_{C^{(n-1)+(1-a)}}\leq \frac{2^{2+a}\max_{\alpha\in\mathcal A}|I_\alpha|^{1-2a}}{a(1-a)}\|\varphi\|_{C^{n+\pa}}\text{ if } 0<a<1,\\
\varphi \in C^{(n-1)+\eta}&\text{ with }\|\varphi\|_{C^{(n-1)+\eta}}\leq (|I|^{-1}+3)\|\varphi\|_{C^{n+\pa}}\text{ if } a=0.
\end{align*}
\end{remark}

\begin{remark}\label{rmk:filpa}
For any $0\leq a<1$ and any interval $J\subset I_\alpha$ let
\[p_a(\varphi,J):=\sup\{(\min\{x-l_\alpha,r_\alpha-x\})^{1+a}|\varphi'(x)|:x\in J\}.\]
Moreover, for any $n\geq 0$ let
\[\|\varphi\|_{C^{n+\pa}(J)}:= \sum_{k=0}^n\|D^{k}\varphi\|_{L^1(J)}+p_a(D^{n}\varphi,J).\]
In view of Lemma~4.3 in \cite{Fr-Ki}, if $J=(l_\alpha,l_\alpha+\vep]$ or $J=[r_\alpha-\vep,r_\alpha)$ with $\vep\leq |I_\alpha|/2$, then for every $0\leq b<1$ we have
\begin{equation}\label{neq:L1pa}
p_b(\varphi,J)\leq \|\varphi'\|_{L^1(J)}+\frac{p_a(\varphi',J)}{1-a}.
\end{equation}
Let $n,n'\geq 0$ and $0\leq a,a'<1$ such that $n-a\leq n'-a'$. In view of \eqref{neq:L1pa}, $C^{n'+\mathrm{P_{a'}}}\subset C^{n+\pa}$ and
$\|\varphi\|_{C^{n+\pa}(J)}\leq \frac{1}{1-a'}\|\varphi\|_{C^{n'+\mathrm{P_{a'}}}(J)}$.

If $J\subset[l_\alpha+\vep,r_\alpha-\vep]$ for some $\vep>0$ then for any $n\geq 0$ and $0\leq a<1$,
$\|\varphi\|_{C^{n+\pa}(J)}\leq \|\varphi\|_{C^{n+1}(J)}$.
\end{remark}


\section{Local analysis around saddles}\label{sec;localanal}

{In this section, for any continuous function $f$ defined around a saddle $\sigma$, we introduce three types of functions: $\varphi_{f,l}$,  $\mathscr{F}_{f,l}$ for $0\leq l<m_\sigma$ (related to the behavior of the $f$ function on  angular sectors bounded by outgoing separatrices) and $F_f$.
The map $\varphi_{f,l}$ is a local version of the function $\varphi_f$ defined in Section~\ref{sec:twoop} and it is responsible for the behavior of the ergodic integrals for $f$ along the flow orbits as they pass around the saddle and through the $l$-th angular sector.
The map  $F_{f}$ is an ersatz of a local solution to the cohomological equation $Xu=f$ around the saddle $\sigma$, which is defined only on the union of open angular sectors. The map $\mathscr{F}_{f,l}$ is a covering of $F_{f}$ and is a technical tool for showing basic properties of the other two. In Section~\ref{sec;localanal}, we prove basic properties of $\mathscr{F}_{f,l}$, which are used to understand the behavior of $F_f$ on  $U_{\sigma,l}$.
In Section \ref{sec;laphi}, using the tools introduced in Section~\ref{sec;localanal}, we determine precisely the form of $\varphi_{f,l}$ and $\mathscr{F}_{f,l}$ on some angular sectors.

To analyse these three classes of functions we will use a change of variables around $\sigma$ by the covering map $z\mapsto z^{m_\sigma}$. This is a very useful strategy because in the new coordinates the trajectories of the locally Hamiltonian flow are straight lines. Furthermore, we will use the Taylor expansion of the function $f$ at the point $\sigma$ with respect to the complex variables $z$ and $\bar{z}$. Since the operators $f\mapsto \varphi_{f,l}$, $f\mapsto \mathscr{F}_{f,l}$ and $f\mapsto F_{f}$ are linear, our analysis boils down to the study of functions $f$ that are polynomials  in the variables $z$, $\bar{z}$. Moreover, such polynomials are well-behaved for composition with the map $z\mapsto z^{m_\sigma}$ and its inverse branches.}

\medskip

Let $m\geq 2$ be the multiplicity of a saddle point.
Let $G_0:\C\to\C$ be the principal branch of the $m$-th root $G_0(re^{\iota t})=r^{1/m}e^{\iota t/m}$ if $t\in[0,2\pi$) and let $\theta$ and $\theta_0$ be the principal $m$-th and $2m$-th root of unity respectively. Then $G=G_l:\C\to\C$ given by $G_l=\theta^lG_0$ for $0\leq l< m$ is the $l$-th branch of the $m$-th root. 

Let $f:\mathcal{D}\to\C$ be a bounded Borel map where $\mathcal{D}=\mathcal{D}_m$ is the pre-image of the square $[-1,1]\times[-1,1]$
by the map $\C\ni\omega\mapsto \omega^m\in\C$. We will usually  treat $f$ as a function depending on a pair of complex variables $(\omega,\bar{\omega})$. The purpose of this and the next section is to understand the properties of two types of functions $\varphi_{f,l}:[-1,0)\cup(0,1]\to\C$ and  $\mathscr{F}_{f,l}:[-1,1]^2\setminus ([0,1]\times\{0\})\to\C$ for  $0\leq l<m$ associated with $f$, which are crucial in proving the main results of this article. They are given by
\begin{equation}\label{eqn;fgus}
\varphi_{f,l}(s)=\int_{-1}^1\frac{f(G_l(u,s))}{(u^2+s^2)^{\frac{m-1}{m}}}\,du, \quad
\mathscr{F}_{f,l}(u,s) = \int_{-1}^u\frac{f(G_l(v,s))}{(v^2+s^2)^{\frac{m-1}{m}}}\,dv.
\end{equation}
Then $\varphi_{f,l}(s)=\mathscr{F}_{f,l}(1,s)$ for $s\neq 0$. We will usually  treat $\mathscr{F}_{f,l}$ as a function depending on a pair of complex variables $(z,\bar{z})$, where $z=u+\iota s$.

For any $0\leq \alpha<\beta\leq 1$ let $\mathcal{D}(\alpha,\beta):=\{\omega\in \mathcal{D}\setminus\{0\}:\Arg(\omega)\in(2\pi \alpha, 2\pi \beta)\}$. We denote its closure by  $\overline{\mathcal{D}}(\alpha,\beta)$.
For any $A\subset \C$ denote by $A^{1/m}$ the pre-image of $A$ for the map $\omega\mapsto\omega^m$.
We will also need third type of associated function $F_f:\mathcal{D}\setminus([0,1]\times \{0\})^{1/m}\to\C$. As $\mathcal{D}\setminus([0,1]\times \{0\})^{1/m}=\bigcup_{0\leq l<m}\mathcal{D}(\frac{l}{m},\frac{l+1}{m})$, the map $F_f$ is defined by
\[F_f(\omega, \overline \omega) : = \mathscr{F}_{f,l}(\omega^m, {\overline \omega}^m)\text{ on }\mathcal{D}(\tfrac{l}{m},\tfrac{l+1}{m}).
\]
Note that $F_{f\cdot V}$ is (in a sense) a local solution to the cohomological equation $Xu=f$ in any angular sector $\mathcal{D}(\tfrac{l}{m},\tfrac{l+1}{m})$. Indeed, since $d\omega/dt=m\bar{\omega}^{m-1}/V$, we have
\[Xu=m\Big(\overline{\omega}^{m-1}\frac{\partial u}{\partial \omega}+{\omega}^{m-1}\frac{\partial u}{\partial \overline{\omega}}\Big)/V.\]
By definition,
\begin{align*}
\frac{\partial\mathscr{F}_{f\cdot V,l}(z,\bar z)}{\partial z}+\frac{\partial\mathscr{F}_{f\cdot V,l}(z,\bar z)}{\partial\bar z}&=\frac{\partial\mathscr{F}_{f\cdot V,l}(u,s)}{\partial u}\\
&=\frac{(f\cdot V)(G_l(u,s))}{(u^2+s^2)^{\frac{m-1}{m}}}=\frac{(f\cdot V)(G_l(z, \overline z))}{|z|^{2\frac{m-1}{m}}}.
\end{align*}
Then for any $\omega\in \mathcal{D}(\tfrac{l}{m},\tfrac{l+1}{m})$,
\begin{align*}
XF_{f\cdot V}(\omega, \overline \omega)&=\frac{m(\overline{\omega}^{m-1}\frac{\partial \mathscr{F}_{f\cdot V,l}(\omega^m, {\overline \omega}^m)}{\partial \omega}+{\omega}^{m-1}\frac{\partial \mathscr{F}_{f\cdot V,l}(\omega^m, {\overline \omega}^m)}{\partial \overline{\omega}})}{V(\omega, {\overline \omega})}\\
&=\frac{m^2|\omega|^{2(m-1)}(\frac{\partial \mathscr{F}_{f\cdot V,l}(\omega^m, {\overline \omega}^m)}{\partial z}+\frac{\partial \mathscr{F}_{f\cdot V,l}(\omega^m, {\overline \omega}^m)}{\partial \overline{z}})}{V(\omega, {\overline \omega})}=m^2 f(\omega,\overline \omega).
\end{align*}
The map $F_{f}$ is well defined and smooth on every open angular sector $\mathcal{D}(\tfrac{l}{m},\tfrac{l+1}{m})$.
One of the most important technical challenges of this article is to answer the question of when and how the map $F_{f}$ extends smoothly into the closure $\overline{\mathcal{D}}(\tfrac{l}{m},\tfrac{l+1}{m})$.

Some key properties of the three functions are taken in Theorems~\ref{thm:Cketa},~\ref{cor:Hold}~and~\ref{thm;ext}. Since their proofs are very technical, long and intertwined, we precede them with a long list of auxiliary results, which should be regarded as intermediate steps in the proof of the main theorems.

\subsection{Preliminary calculations}\label{subsec:perm}
For every $r>0$ let $\mathscr{S}(r)$ be the circular sector
\[\mathscr{S}(r)=\{(u,s)\neq (0,0): u\leq r|s|\}.\]
For any $0< s\leq 1$ and any $a\in \R$ let
\[\langle s\rangle^a=\left\{
\begin{array}{ccl}
\frac{s^{a}-1}{-a}+1&\text{if}& a<0,\\
1-\log s&\text{if}& a=0,\\
1&\text{if}& a>0.
\end{array}
\right.\]
\begin{remark}\label{rmk:power}
Note that
\begin{itemize}
\item for any $0< s\leq 1$ and any pair of real number $a\leq b$ we have $\langle s\rangle^a\geq \langle s\rangle^b$;
\item for any $a\geq 1$ we have $s^{-a}/a\leq \langle s\rangle^{-a}\leq s^{-a}$;
\item for any $0< a\leq 1$ we have $s^{-a}\leq \langle s\rangle^{-a}\leq s^{-a}/a$;
\item for any $m\geq 1$ and $a\in \R$ we have $\langle s^m\rangle^a\leq m\langle s\rangle^{am}$.
\end{itemize}
\end{remark}

\begin{lemma}\label{lem:estF}
For every $a\in \R$ and $r>0$ there exist $C_a, C_{a,r}>0$ such that
\begin{equation}\label{eq:estF1}
\int_{-1}^u\frac{1}{(v^2+s^2)^a}\,dv \leq C_a \langle|s|\rangle^{1-2a} \text{ for all }s\in[-1,1]\setminus\{ 0\}
\end{equation}
and
\begin{equation}\label{eq:estF2}
\int_{-1}^u\frac{1}{(v^2+s^2)^a}\,dv \leq C_{a,r} \langle\sqrt{\tfrac{u^2+s^2}{2}}\rangle^{1-2a} \text{ for all }(u,s)\in [-1,1]^2\cap\mathscr{S}(r).
\end{equation}
If $f:[-1,1]^2\to\C$ is continuous at $(0,0)$, $f(0,0)=0$ and $a\geq 1/2$ then
\begin{equation}\label{eq:estF3}
\int_{-1}^u\frac{f(v,s)}{(v^2+s^2)^a}\,dv = o(\langle|s|\rangle^{1-2a}).
\end{equation}
\end{lemma}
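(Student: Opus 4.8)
\textbf{Proof plan for Lemma~\ref{lem:estF}.}

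The plan is to prove the three estimates in order, since \eqref{eq:estF3} will be deduced by combining \eqref{eq:estF1} with a splitting argument. For \eqref{eq:estF1}, I would bound the integral by $\int_{-1}^{1}(v^2+s^2)^{-a}\,dv$ and then split the integration domain at $|v|=|s|$. On the inner part $|v|\leq |s|$ one uses $(v^2+s^2)^{-a}\leq \min\{|s|^{-2a},(2v^2)^{-a}\}$ depending on the sign of $a$; on the outer part $|s|\leq |v|\leq 1$ one uses $(v^2+s^2)^{-a}\le v^{-2a}$ for $a\ge 0$ and $(v^2+s^2)^{-a}\le (2v^2)^{-a}$ (or just $\le 2^{-a}$) for $a<0$. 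Carrying out the elementary one–variable integrals $\int_{|s|}^{1}v^{-2a}\,dv$ and $\int_{0}^{|s|}v^{-2a}\,dv$ produces either a power $|s|^{1-2a}$ (when $a>1/2$ or $a<1/2$, $a\ne 0$), a logarithm $-\log|s|$ (the boundary cases $a=1/2$ and, via the outer integral, $a=0$), or a constant; these are exactly the three regimes encoded in $\langle|s|\rangle^{1-2a}$. One should double–check the case $a=1/2$ (where $1-2a=0$ and $\langle|s|\rangle^{0}=1-\log|s|$) and the case $a=0$ separately, but after this the estimate \eqref{eq:estF1} follows with a constant $C_a$ depending only on $a$.

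For \eqref{eq:estF2}, the point is that on the sector $\mathscr{S}(r)=\{u\le r|s|\}$ one has $u^2+s^2\le (r^2+1)s^2$, hence $|s|$ and $\sqrt{(u^2+s^2)/2}$ are comparable up to a constant depending on $r$; so I would simply invoke \eqref{eq:estF1} and then replace $\langle|s|\rangle^{1-2a}$ by $\langle\sqrt{(u^2+s^2)/2}\rangle^{1-2a}$ using the monotonicity and the comparison properties of $\langle\,\cdot\,\rangle^a$ collected in Remark~\ref{rmk:power} (in particular $\langle s^m\rangle^a\le m\langle s\rangle^{am}$ and $\langle s\rangle^a\ge\langle s\rangle^b$ for $a\le b$, together with a scaling $\langle cs\rangle^a\le \langle s\rangle^a$-type inequality for $c\ge 1$ or its constant-loss version). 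This step is routine once \eqref{eq:estF1} is in hand.

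For \eqref{eq:estF3}, fix $\vep>0$ and use continuity of $f$ at $(0,0)$ with $f(0,0)=0$ to choose $\delta>0$ with $|f(v,s)|\le\vep$ whenever $v^2+s^2\le\delta^2$. Split $\int_{-1}^{u}=\int_{\{|v|\le\delta\}}+\int_{\{|v|>\delta\}}$ (intersected with $(-1,u)$). On the first piece, $|f(v,s)|\le\vep$ and, since $a\ge 1/2$, applying \eqref{eq:estF1} to the constant function bounds it by $\vep\,C_a\langle|s|\rangle^{1-2a}$. On the second piece, $|v|>\delta$ forces $(v^2+s^2)^{-a}\le\delta^{-2a}$ when $a\ge0$ (and $\le 2^{-a}$ when $0\le a$ fails—but here $a\ge1/2>0$), so the integral is bounded by $2\|f\|_\infty\delta^{-2a}$, a constant in $s$; since $a\ge 1/2$ gives $\langle|s|\rangle^{1-2a}\to\infty$ as $s\to0$ (it is $\ge 1$ always, and $\to\infty$ when $a>1/2$ or $a=1/2$), this bounded term is $o(\langle|s|\rangle^{1-2a})$. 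Here there is a small subtlety: when $a=1/2$, $\langle|s|\rangle^{0}=1-\log|s|\to\infty$, which is fine; one just needs $\langle|s|\rangle^{1-2a}\to\infty$, and this holds for all $a\ge 1/2$. Hence $\limsup_{s\to0}\langle|s|\rangle^{2a-1}\big|\int_{-1}^u (v^2+s^2)^{-a}f(v,s)\,dv\big|\le \vep C_a$, and letting $\vep\to0$ gives \eqref{eq:estF3}.

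The main obstacle is purely bookkeeping: tracking the three regimes of $a$ (namely $a<1/2$, $a=1/2$, $a>1/2$, plus the extra boundary case $a=0$ which affects the outer integral) so that in each case the elementary integral is correctly matched to the definition of $\langle|s|\rangle^{1-2a}$, and making sure the comparison constants in \eqref{eq:estF2} depend only on $r$ (and $a$) and not on $(u,s)$. No genuinely hard analysis is involved; the care lies in the case distinctions and in invoking the monotonicity/scaling facts of Remark~\ref{rmk:power} at the right places.
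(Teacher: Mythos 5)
Your arguments for \eqref{eq:estF1} and \eqref{eq:estF3} are sound. For \eqref{eq:estF1} you split the integral at $|v|=|s|$ rather than rescaling $v=|s|t$ as the paper does; this is a legitimate, slightly more elementary alternative, and for \eqref{eq:estF3} your argument is essentially the paper's.

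However, there is a genuine gap in your treatment of \eqref{eq:estF2}. You write that on $\mathscr{S}(r)=\{u\le r|s|\}$ one has $u^2+s^2\le(r^2+1)s^2$, so that $|s|$ and $\sqrt{(u^2+s^2)/2}$ are comparable and \eqref{eq:estF2} follows from \eqref{eq:estF1}. This is false: the sector condition $u\le r|s|$ only bounds $u$ from \emph{above}, not $|u|$. Points such as $(u,s)=(-1,\vep)$ with $\vep$ tiny (or even $(u,0)$ with $u<0$, which lies in $\mathscr{S}(r)$ while being excluded from \eqref{eq:estF1}) belong to the sector for every $r>0$, yet $u^2+s^2\approx 1$ while $s^2\approx 0$, so no comparison of the form $u^2+s^2\le C_rs^2$ can hold. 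In fact, for $a>1/2$ the map $t\mapsto\langle t\rangle^{1-2a}$ is decreasing, so along $u\to -1$ at fixed small $|s|$ the quantity $\langle\sqrt{(u^2+s^2)/2}\rangle^{1-2a}$ becomes much \emph{smaller} than $\langle|s|\rangle^{1-2a}$; thus \eqref{eq:estF2} is a strictly sharper estimate than \eqref{eq:estF1} in that regime and cannot be deduced from it by swapping the argument of $\langle\cdot\rangle$. The paper handles this by substituting $v=|s|t$ and analysing $\nu(x)=\int_{-\infty}^{x}(t^2+1)^{-a}\,dt$: since $\nu(x)\sim\tfrac{1}{2a-1}(x^2+1)^{1/2-a}$ as $x\to-\infty$ and the sector gives the upper bound $u/|s|\le r$, one obtains $\nu(u/|s|)\le C_{a,r}\big(\tfrac{(u/s)^2+1}{2}\big)^{1/2-a}$ uniformly, and multiplying by $|s|^{1-2a}$ yields exactly $C_{a,r}\big(\tfrac{u^2+s^2}{2}\big)^{1/2-a}$; the cases $s=0$, $a=1/2$, and $a<1/2$ are checked separately. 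Your plan needs this (or an equivalent) genuine use of the decay of the integral as $u\to-\infty$ rather than a comparability claim.
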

\begin{proof}

\textbf{Case 1.} Suppose that $a>1/2$.
If $s\neq 0$ then
\[\int_{-1}^u\frac{1}{(v^2+s^2)^a}\,dv=|s|^{1-2a}\int_{-1/|s|}^{u/|s|}\frac{1}{(t^2+1)^a}\,dt\leq |s|^{1-2a}\int_{-\infty}^{+\infty}\frac{1}{(t^2+1)^a}\,dt,\]
which gives \eqref{eq:estF1}.

If $s=0$ and $u<0$ then
\begin{equation}\label{eq:s=0}
\int_{-1}^{u}\frac{1}{(v^2+s^2)^a}\,dv =\int_{|u|}^{1}v^{-2a}\,dv=\frac{1}{2a-1}(|u|^{1-2a}-1)\leq \frac{1}{2a-1}|u|^{1-2a}.
\end{equation}
Let us consider the function $\nu:(-\infty,+\infty)\to\R_+$ given by $\nu(x):=\int_{-\infty}^x\frac{1}{(t^2+1)^a}\,dt$.
If $s\neq 0$ then $\int_{-1}^u(v^2+s^2)^{-a}\,dv\leq |s|^{1-2a}\nu(u/|s|)$.
As
\[\lim_{x\to-\infty}\frac{\nu'(x)}{\frac{d}{dx}(x^2+1)^{1/2-a}}=\lim_{x\to-\infty}\frac{(x^2+1)^{-a}}{(1-2a)x(x^2+1)^{-a-1/2}}=\frac{1}{2a-1},\]
we have $\nu(x)/(x^2+1)^{1/2-a}\to 1/(2a-1)$ as $x\to-\infty$. Therefore there exists $C_{a,r}>0$ such that
$\nu(x)\leq C_{a,r}(\frac{x^2+1}{2})^{1/2-a}$ for $x\leq r$. It follows that for every $(u,s)\in \mathscr{S}(r)$ with $s\neq 0$,
\[\int_{-1}^u\frac{dv}{(v^2+s^2)^a}\leq |s|^{1-2a}\nu(u/|s|)\leq C_{a,r} |s|^{1-2a}\big(\tfrac{(u/s)^2+1}{2}\big)^{1/2-a}=C_{a,r} \big(\tfrac{u^2+s^2}{2}\big)^{1/2-a},\]
which (together with \eqref{eq:s=0}) gives \eqref{eq:estF2}.
\medskip

\textbf{Case 2.} Suppose that $a=1/2$.
If $s\neq 0$ then
 \[\int_{-1}^u(v^2+s^2)^{-1/2}\,dv=\log\frac{u+\sqrt{u^2+s^2}}{-1+\sqrt{1+s^2}}\leq -2\log\frac{s}{3},\]
which gives \eqref{eq:estF1}.
If $s=0$ and $u<0$ then
\begin{equation}\label{eq:s=01}
\int_{-1}^u(v^2+s^2)^{-1/2}\,dv=-\log|u|.
\end{equation}
Moreover,  for any $(u,s)\in \mathscr{S}(r)$ with $s\neq 0$,
\[\int_{-1}^u(v^2+s^2)^{-1/2}\,dv=\log\frac{1+\sqrt{1+s^2}}{-u+\sqrt{u^2+s^2}}\leq \log\frac{6(r^2+1)}{\sqrt{u^2+s^2}},\]
which (together with \eqref{eq:s=01}) gives \eqref{eq:estF2}.
\medskip

\textbf{Case 3.} Suppose that $a<1/2$.
If $0<a<1/2$ then
 \[\int_{-1}^u(v^2+s^2)^{-a}\,dv\leq 2\int_0^1v^{-2a}\,dv=\frac{2}{1-2a}.\]
If $a\leq 0$ then
\[\int_{-1}^u(v^2+s^2)^{-a}\,dv\leq 2^{1-a},\]
which gives \eqref{eq:estF1} and \eqref{eq:estF2}.
\medskip

\textbf{Last claim.}
Suppose that $f:[-1,1]^2\to\C$ is continuous at $(0,0)$, $f(0,0)=0$ and $a\geq 1/2$. For any $\vep>0$ choose $\delta>0$
such that $|f(v,s)|\leq \vep$ if $|v|,|s|<\delta$. It follows that if $|s|<\delta$ then
\begin{align*}
\Big|\int_{-1}^u\frac{f(v,s)}{(v^2+s^2)^a}\,dv \Big|&\leq
\int_{-\delta}^\delta\frac{|f(v,s)|}{(v^2+s^2)^a}\,dv +2\int_{\delta}^1\frac{\|f\|_{\sup}}{(v^2+s^2)^a}\,dv\\
&\leq
\vep\int_{-1}^1\frac{1}{(v^2+s^2)^a}\,dv +2\int_{\delta}^1\frac{\|f\|_{\sup}}{v^{2a}}\,dv\\
&\leq\vep C_a \langle|s|\rangle^{1-2a}+2\|f\|_{\sup}\langle|\delta|\rangle^{1-2a}.
\end{align*}
This gives \eqref{eq:estF3}.
\end{proof}

\begin{remark}\label{rem;zus} For ${z} = u+\iota s$, the followings hold:
\begin{align}
\frac{\partial}{\partial u}  = \left(\frac{\partial}{\partial {z}} + \frac{\partial}{\partial \overline {z}}\right), \quad \frac{\partial}{\partial s} &= \iota\left(\frac{\partial}{\partial {z}} - \frac{\partial}{\partial \overline {z}}\right), \label{der;us}\\
\frac{\partial}{\partial {z}}  = \frac{1}{2}\left(\frac{\partial}{\partial u} - \iota\frac{\partial}{\partial s}\right), \quad \frac{\partial}{\partial \overline {z}} &= \frac{1}{2}\left(\frac{\partial}{\partial u} + \iota\frac{\partial}{\partial s}\right) \label{der;us2}.
\end{align}
\end{remark}
For any $n_1,n_2,a_1, a_2\in\Z_{\geq 0}$ and any $f\in C^n(\mathcal{D})$ ($n=n_1+n_2$), we will deal with some auxiliary functions $F_{n_1,n_2,a_1,a_2},G_{n_1,n_2,a_1,a_2}:[-1,1]^2\setminus([0,1]\times\{0\})$  given by
\begin{align*}
F_{n_1,n_2,a_1,a_2}({z},\overline {z}) &= F_{n_1,n_2,a_1,a_2}(u,s)  = \frac{\partial^{n}f}{\partial {\omega}^{n_1}\partial \overline{\omega}^{n_2}}(G(u,s))\cdot {G(u,s)}^{-a_1}\overline{G(u,s)}^{-a_2},\\
G_{n_1,n_2,a_1,a_2}({z},\overline {z}) &= G_{n_1,n_2,a_1,a_2}(u,s) =  \int_{-1}^uF_{n_1,n_2,a_1,a_2}(v,s) dv.
\end{align*}
The functions $F_{n_1,n_2,a_1,a_2}$ and $G_{n_1,n_2,a_1,a_2}$ will be called \emph{$F$-type} and \emph{$G$-type} functions.\\

{The main use of $F,G$-type functions is to estimate higher-order derivatives of the functions $\mathscr{F}_{f,l}$ and ${F}_{f}$ in a convenient way.
This will then be used to study the effect that the remainder term in the Taylor expansion of $f$ has on the behavior of the functions $\mathscr{F}_{f,l}$ and ${F}_{f}$ around zero (see Lemma~\ref{eqn;F(z)bound}~and~\ref{lem:estDF}). }
\begin{lemma}\label{lem;fd}
For any $f\in C^{n+1}(\mathcal{D})$ we have
\begin{align}
\frac{\partial F_{n_1,n_2,a_1,a_2}}{\partial {z}} &= \frac{1}{m}F_{n_1+1,n_2,a_1+m-1,a_2} -\frac{a_1}{m}F_{n_1,n_2,a_1+m,a_2},\label{eqn;derF1}\\
\frac{\partial F_{n_1,n_2,a_1,a_2}}{\partial \overline {z}} &= \frac{1}{m}F_{n_1,n_2+1,a_1,a_2+m-1} -\frac{a_2}{m}F_{n_1,n_2,a_1,a_2+m},\label{eqn;derF2}\\
\frac{\partial G_{n_1,n_2,a_1,a_2}}{\partial {z}} & = \frac{1}{2} F_{n_1,n_2,a_1,a_2} + \frac{1}{2m} (G_{n_1+1,n_2,a_1+m-1,a_2} - a_1G_{n_1,n_2,a_1+m,a_2}) \label{eqn;derG1} \\
& \quad - \frac{1}{2m} (G_{n_1,n_2+1,a_1,a_2+m-1} - a_2G_{n_1,n_2,a_1,a_2+m}),\nonumber \\
\frac{\partial G_{n_1,n_2,a_1,a_2}}{\partial \overline  {z}} & =  \frac{1}{2}F_{n_1,n_2,a_1,a_2}  - \frac{1}{2m} (G_{n_1+1,n_2,a_1+m-1,a_2} - a_1G_{n_1,n_2,a_1+m,a_2})\label{eqn;derG2}\\
&  \quad + \frac{1}{2m} (G_{n_1,n_2+1,a_1,a_2+m-1} - a_2G_{n_1,n_2,a_1,a_2+m})\nonumber .
\end{align}
\end{lemma}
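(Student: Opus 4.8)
The plan is to compute all four partial derivatives directly from the definitions, using the chain rule together with the two elementary facts that underlie everything: first, that $G = G_l = \theta^l G_0$ satisfies $G(u,s)^m = z = u+\iota s$ (so $G$ is a holomorphic branch of the $m$-th root, hence $\partial G/\partial \bar z = 0$ and $\partial G/\partial z = \frac{1}{m} G^{1-m} = \frac{1}{m} G \cdot G^{-m}$), and second, that differentiating a power $G^{-a}$ produces $-a\, G^{-a}\cdot (\partial G/\partial z)/G = -\frac{a}{m} G^{-a} G^{-m}$. Combining these, for the $F$-type function $F_{n_1,n_2,a_1,a_2}(z,\bar z) = \tfrac{\partial^n f}{\partial\omega^{n_1}\partial\bar\omega^{n_2}}(G)\cdot G^{-a_1}\overline{G}^{-a_2}$, applying $\partial/\partial z$ hits only the factors depending on $z$, i.e.\ the $\tfrac{\partial^n f}{\partial\omega^{n_1}\partial\bar\omega^{n_2}}(G)$ term (via $\partial_\omega$ composed with $\partial G/\partial z = \frac{1}{m}G^{1-m}$) and the $G^{-a_1}$ term; the factor $\overline{G}^{-a_2}$ is annihilated because $\overline{G}$ is anti-holomorphic in $z$. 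This yields exactly \eqref{eqn;derF1}: the first term has $n_1 \mapsto n_1+1$ and $a_1 \mapsto a_1 + (m-1)$ with coefficient $1/m$, and the second has $a_1 \mapsto a_1 + m$ with coefficient $-a_1/m$. The formula \eqref{eqn;derF2} is the mirror image, obtained by applying $\partial/\partial\bar z$ and using that $\tfrac{\partial}{\partial\bar z}\overline{G} = \tfrac{1}{m}\overline{G}^{1-m}$ while $G$ is holomorphic.

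For the $G$-type function $G_{n_1,n_2,a_1,a_2}(z,\bar z) = \int_{-1}^u F_{n_1,n_2,a_1,a_2}(v,s)\,dv$, where $z = u + \iota s$, I would first pass to the $(u,s)$ variables via Remark~\ref{rem;zus}: $\tfrac{\partial}{\partial z} = \tfrac{1}{2}(\tfrac{\partial}{\partial u} - \iota\tfrac{\partial}{\partial s})$. The $\partial/\partial u$ derivative of the integral is just the integrand $F_{n_1,n_2,a_1,a_2}(u,s)$ by the fundamental theorem of calculus, giving the $\tfrac12 F_{n_1,n_2,a_1,a_2}$ term. For the $\partial/\partial s$ derivative, differentiate under the integral sign: $\tfrac{\partial}{\partial s}\int_{-1}^u F\,dv = \int_{-1}^u \tfrac{\partial F}{\partial s}(v,s)\,dv$, and then use $\tfrac{\partial}{\partial s} = \iota(\tfrac{\partial}{\partial z} - \tfrac{\partial}{\partial\bar z})$ on the integrand $F$ together with the already-established \eqref{eqn;derF1}–\eqref{eqn;derF2}, integrating the resulting $F$-type functions back to $G$-type functions. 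Assembling $\tfrac{\partial}{\partial z} = \tfrac12 \tfrac{\partial}{\partial u} - \tfrac{\iota}{2}\tfrac{\partial}{\partial s}$, the $\iota$'s cancel and one reads off \eqref{eqn;derG1}; the computation for $\tfrac{\partial}{\partial\bar z} = \tfrac12 \tfrac{\partial}{\partial u} + \tfrac{\iota}{2}\tfrac{\partial}{\partial s}$ is identical except for the sign of the $\partial_s$ contribution, producing \eqref{eqn;derG2}. Note the two formulas sum to $\tfrac{\partial}{\partial u}G = F_{n_1,n_2,a_1,a_2}$, a useful consistency check.

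The only genuine points requiring care are the justification of differentiation under the integral sign and of the endpoint term, i.e.\ that $(v,s) \mapsto F_{n_1,n_2,a_1,a_2}(v,s)$ and its $s$-derivative are integrable/continuous on the relevant domain. Since $f \in C^{n+1}(\mathcal D)$ the derivatives $\tfrac{\partial^n f}{\partial\omega^{n_1}\partial\bar\omega^{n_2}}\circ G$ are bounded, and the only singular factors are the powers $|G|^{-a_i} = |z|^{-a_i/m}$-type terms, which are locally integrable in $(v,s)$ away from the slit $[0,1]\times\{0\}$ and behave like the kernels already controlled in Lemma~\ref{lem:estF}; this makes the Leibniz rule applicable on $[-1,1]^2 \setminus ([0,1]\times\{0\})$. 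I do not expect this to be a serious obstacle — it is the bookkeeping of the chain rule in the complex variables $z, \bar z$ versus $\omega, \bar\omega$ (with the factor of $m$ and the branch constant $\theta^l$ both dropping out harmlessly, since $\theta^l G_0$ and $G_0$ differ by a constant) that is the substantive content, and the identities \eqref{eqn;derF1}–\eqref{eqn;derG2} are then a matter of carefully collecting terms.
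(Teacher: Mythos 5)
Your proof is correct and follows essentially the same route as the paper: compute \eqref{eqn;derF1}--\eqref{eqn;derF2} from the chain rule using that $G$ is holomorphic in $z$ (so $\partial_z G = \tfrac{1}{m}G^{1-m}$, $\partial_{\bar z}G = 0$) while $\overline G$ is antiholomorphic, then obtain \eqref{eqn;derG1}--\eqref{eqn;derG2} by switching to $(u,s)$ via Remark~\ref{rem;zus}, applying the fundamental theorem of calculus for $\partial_u$, differentiating under the integral for $\partial_s$ and converting back to $\partial_z - \partial_{\bar z}$ to reuse the $F$-type identities. The consistency check $\partial_z G_{\cdots} + \partial_{\bar z}G_{\cdots} = F_{\cdots}$ and the remark about integrability away from the slit are sensible additions but do not change the argument.
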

\begin{proof}
Since $\frac{\partial G}{\partial z}=\frac{1}{m}G^{1-m}$, $\frac{\partial G}{\partial \overline{z}}=0$,
$\frac{\partial \overline{G}}{\partial \overline{z}}=\frac{1}{m}\overline{G}^{1-m}$ and $\frac{\partial \overline{G}}{\partial z}=0$, we obtain
\begin{align*}
\frac{\partial F_{n_1,n_2,a_1,a_2}}{\partial {z}}  &=  \frac{1}{m}\frac{\partial^{n_1+n_2+1}}{\partial {\omega}^{n_1+1}\partial \overline{\omega}^{n_2}}f(G,\overline {G})\cdot{G}^{-a_1+1-m}\overline {G}^{-a_2} \\
&\quad - \frac{a_1}{m}\frac{\partial^{n_1+n_2}}{\partial {\omega}^{n_1}\partial \overline{\omega}^{n_2}}f(G,\overline {G})\cdot{G}^{-a_1-m}\overline {G}^{-a_2} \\
& = \frac{1}{m}F_{n_1+1,n_2,a_1+m-1,a_2} - \frac{a_1}{m}F_{n_1,n_2,a_1+m,a_2}.
\end{align*}
We also verify  \eqref{eqn;derF2} in the same manner.

To obtain \eqref{eqn;derG1}, in view of  \eqref{der;us} and \eqref{der;us2}, we get
\begin{align*}
\frac{\partial G_{n_1,n_2,a_1,a_2}}{\partial  {z}} &= \frac{1}{2}\left(\frac{\partial}{\partial u} - \iota\frac{\partial}{\partial s}\right)
\int_{-1}^uF_{n_1,n_2,a_1,a_2}(v,s) \, dv\\
& = \frac{1}{2}F_{n_1,n_2,a_1,a_2} - \frac{\iota}{2} \frac{\partial}{\partial s}\int_{-1}^u F_{n_1,n_2,a_1,a_2}(v,s)\,dv\\
& = \frac{1}{2}F_{n_1,n_2,a_1,a_2} + \frac{1}{2} \int_{-1}^u \left(\frac{\partial}{\partial {z}} - \frac{\partial}{\partial \overline {z}}\right) F_{n_1,n_2,a_1,a_2}(v,s)\,dv.
\end{align*}
Therefore, in view of \eqref{eqn;derF1} and \eqref{eqn;derF2}, this gives \eqref{eqn;derG1}. Likewise, we repeat the same for \eqref{eqn;derG2}.
\end{proof}

The quantities
\[d(F_{n_1,n_2,a_1,a_2})=\frac{n_1+n_2+a_1+a_2}{m},\ d(G_{n_1,n_2,a_1,a_2})=\frac{n_1+n_2+a_1+a_2}{m}-1\]
we call the \emph{degrees} of the functions $F_{n_1,n_2,a_1,a_2}$ and $G_{n_1,n_2,a_1,a_2}$.
In view of \eqref{eqn;derF1}-\eqref{eqn;derG2}, we have the following conclusion.
\begin{corollary}\label{cor:FGtype}
For any $(l_1,l_2)\in \Z^2_{\geq 0}$ the partial derivative
$\frac{\partial^{l}G_{n_1,n_2,a_1,a_2}}{\partial {z}^{l_1}\partial \overline{z}^{l_2}}$
is a linear combination of $F$-type and $G$-type functions of degree $d(G_{n_1,n_2,a_1,a_2})+l$, where $l=l_1+l_2$. Moreover, each component of the linear combination is of the form $F_{n'_1,n'_2,a'_1,a'_2}$ and
$G_{n'_1,n'_2,a'_1,a'_2}$ such that $n\leq n'_1+n'_2\leq n+l$.
\end{corollary}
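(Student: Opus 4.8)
The plan is to prove Corollary~\ref{cor:FGtype} by induction on $l = l_1 + l_2$. The base case $l=0$ is trivial: $G_{n_1,n_2,a_1,a_2}$ is itself a $G$-type function of degree $d(G_{n_1,n_2,a_1,a_2})$ with the trivial bound $n \le n_1 + n_2 \le n$. For the inductive step, suppose the claim holds for some $l \ge 0$, so that $\frac{\partial^l G_{n_1,n_2,a_1,a_2}}{\partial z^{l_1}\partial\overline z^{l_2}}$ (for any $l_1+l_2=l$) is a finite linear combination of terms $F_{n_1',n_2',a_1',a_2'}$ and $G_{n_1',n_2',a_1',a_2'}$, each of degree $d(G_{n_1,n_2,a_1,a_2})+l$ and with $n \le n_1'+n_2' \le n+l$. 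To pass to order $l+1$ I apply one more derivative $\partial/\partial z$ or $\partial/\partial\overline z$ and use linearity to reduce to differentiating a single $F$-type or $G$-type summand.

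The core of the argument is then just bookkeeping on the four identities \eqref{eqn;derF1}--\eqref{eqn;derG2} of Lemma~\ref{lem;fd}. I would check two invariants are preserved under each of these identities. \emph{Degree:} in \eqref{eqn;derF1}, $F_{n_1,n_2,a_1,a_2}$ has degree $\frac{n_1+n_2+a_1+a_2}{m}$; the term $F_{n_1+1,n_2,a_1+m-1,a_2}$ has degree $\frac{(n_1+1)+n_2+(a_1+m-1)+a_2}{m} = \frac{n_1+n_2+a_1+a_2}{m}+1$, and $F_{n_1,n_2,a_1+m,a_2}$ has degree $\frac{n_1+n_2+a_1+a_2}{m}+1$; so differentiating raises the degree by exactly $1$. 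The same check works verbatim for \eqref{eqn;derF2}, and for \eqref{eqn;derG1}--\eqref{eqn;derG2} one notes $d(F_{n_1,n_2,a_1,a_2}) = d(G_{n_1,n_2,a_1,a_2})+1$, so the ``free'' $\tfrac12 F_{n_1,n_2,a_1,a_2}$ term and all the $G$-type terms on the right have degree $d(G_{n_1,n_2,a_1,a_2})+1$ too. Hence every summand produced has degree exactly one more than its parent, and by the inductive hypothesis the new terms all have degree $d(G_{n_1,n_2,a_1,a_2})+l+1$, as required. \emph{Order index $n_1'+n_2'$:} in each of the four identities the sum $n_1'+n_2'$ of the output either stays equal to that of the input (the terms where only the $a$-indices change) or increases by exactly $1$ (the terms $F_{n_1+1,n_2,\ldots}$, $F_{n_1,n_2+1,\ldots}$, $G_{n_1+1,n_2,\ldots}$, $G_{n_1,n_2+1,\ldots}$). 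Therefore if the inductive hypothesis gives $n \le n_1'+n_2' \le n+l$ for every summand, after one more differentiation every summand satisfies $n \le n_1'+n_2' \le n+l+1$. This closes the induction.

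I would also make one remark about regularity to be sure the statement is not vacuous: to form $\frac{\partial^l G_{n_1,n_2,a_1,a_2}}{\partial z^{l_1}\partial\overline z^{l_2}}$ one needs the partial derivatives $\frac{\partial^{n_1'+n_2'}f}{\partial\omega^{n_1'}\partial\overline\omega^{n_2'}}$ appearing in the $F$-type summands to exist, i.e.\ one needs $f \in C^{n+l}$ roughly; since the statement quantifies only over a fixed $l$ and since Lemma~\ref{lem;fd} was stated for $f \in C^{n+1}$, I would apply that lemma iteratively, at step $j$ of the induction invoking it for the partial-derivative-shifted $F$-type functions (which involve $f$ differentiated up to order $n+j$, requiring $f\in C^{n+j+1}$ to differentiate once more), so that after $l$ steps one has used $f\in C^{n+l}$. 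This is a harmless hypothesis to carry along and I would either state it explicitly in the corollary or note it is implicit.

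The main obstacle, such as it is, is purely organizational rather than mathematical: one must be careful that ``linear combination'' is preserved, i.e.\ that no summand degenerates or that coefficients like $a_1$, $a_2$ (which can be zero, killing a term) do not cause problems — but a vanishing term only helps. The genuinely delicate point to state cleanly is that the two claimed invariants (degree $=d(G_{n_1,n_2,a_1,a_2})+l$, and $n\le n_1'+n_2'\le n+l$) are the \emph{right} invariants, in the sense that they are exactly what \eqref{eqn;derF1}--\eqref{eqn;derG2} propagate; once that observation is made, the proof is a one-line induction. I expect essentially no serious difficulty here; the corollary is a formal consequence of Lemma~\ref{lem;fd} recorded for convenience in later sections.
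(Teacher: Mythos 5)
Your proposal is correct and takes essentially the same approach as the paper, which simply records the corollary as an immediate consequence of the four identities \eqref{eqn;derF1}--\eqref{eqn;derG2} of Lemma~\ref{lem;fd} without writing out the induction; your degree and order-index bookkeeping matches exactly what those identities propagate.
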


\begin{lemma}\label{lem;FGbound}
Let $n,\in\Z_{\geq 0}$.
Assume that $f\in C^{k\vee n}(\mathcal{D})$ and  $D^jf(0,0)=0$ for $0\leq j<k$. Then
\[|F_{n_1,n_2,a_1,a_2}(z,\overline z)|  \leq \|f\|_{C^{k\vee n}}|z|^{-d(F_{n_1,n_2,a_1,a_2}) + \frac{k\vee n}{m}}.\]
Moreover, there exists $C=C_{a_1,a_2,n,k}>0$ such that
\begin{align}\label{eq:Gna1}
|G_{n_1,n_2,a_1,a_2}(z,\overline z)| \leq
C\|f\|_{C^{k\vee n}}\langle|\Im z|\rangle^{-d(G_{n_1,n_2,a_1,a_2}) + \frac{k\vee n}{m}}
\end{align}
for any $z\in [-1,1]^2\setminus ([-1,1]\times\{0\})$.
For every $r>0$  there exists $C_r=C_{a_1,a_2,n,k,r}>0$ such that
\begin{align}\label{eq:Gna2}
|G_{n_1,n_2,a_1,a_2}(z,\overline z)|\leq
C_r\|f\|_{C^{k\vee n}}\langle|z|/\sqrt{2}\rangle^{-d(G_{n_1,n_2,a_1,a_2}) + \frac{k\vee n}{m}}
\end{align}
on $[-1,1]^2\cap \mathscr{S}(r)$.

If additionally $n\leq k$ and $D^kf(0,0)=0$ then
\begin{align}\label{eq:FGbound}
\begin{split}
|F_{n_1,n_2,a_1,a_2}(z,\overline z)|&=o(|\Im z|^{-d(F_{n_1,n_2,a_1,a_2}) + \frac{k}{m}})\text{ if }\tfrac{k}{m}\leq d(F_{n_1,n_2,a_1,a_2})\text{ and}\\
|G_{n_1,n_2,a_1,a_2}(z,\overline z)|&=o(\langle|\Im z|\rangle^{-d(G_{n_1,n_2,a_1,a_2}) + \frac{k}{m}})\text{ if }\tfrac{k}{m}\leq d(G_{n_1,n_2,a_1,a_2}).
\end{split}
\end{align}
\end{lemma}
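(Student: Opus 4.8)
The plan is to treat the $F$-type bound first and then derive the $G$-type bounds from it by integrating along the horizontal segment $v\mapsto(v,s)$, using the estimates of Lemma~\ref{lem:estF}. For the pointwise $F$-bound, I would start from the Taylor expansion of $f$ at $(0,0)$ up to order $k\vee n$: since $D^jf(0,0)=0$ for $0\leq j<k$ (but not necessarily for $k\leq j<k\vee n$, so this step needs a little care), the partial derivative $\frac{\partial^{n}f}{\partial\omega^{n_1}\partial\overline\omega^{n_2}}(G(u,s))$ vanishes to order $(k\vee n)-n=(k-n)\vee 0$ at the origin as a function of $\omega=G(u,s)$, hence is $O(|G(u,s)|^{(k\vee n)-n})=O(|z|^{((k\vee n)-n)/m})$ by the classical estimate on the remainder of a $C^{k\vee n}$ function (Taylor with integral remainder, controlled by $\|f\|_{C^{k\vee n}}$). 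Multiplying by $|G|^{-a_1}|\overline G|^{-a_2}=|z|^{-(a_1+a_2)/m}$ and by the implicit $|z|^{-n/m}$ lurking in $d(F_{n_1,n_2,a_1,a_2})$, one reads off $|F_{n_1,n_2,a_1,a_2}(z,\overline z)|\leq\|f\|_{C^{k\vee n}}|z|^{-d(F)+\frac{k\vee n}{m}}$ after bookkeeping the exponents: $-d(F)+\frac{k\vee n}{m}=\frac{k\vee n-(n_1+n_2+a_1+a_2)}{m}$, which is exactly $\frac{((k\vee n)-n)-(a_1+a_2)}{m}$ since $n=n_1+n_2$. I should double-check the case $\frac{k\vee n}{m}> d(F)$, where the exponent is positive and one just gets a bounded (indeed vanishing at $0$) function — the inequality still holds on $\mathcal D$ since $|z|\leq\sqrt2$.

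For the $G$-type bounds \eqref{eq:Gna1} and \eqref{eq:Gna2} I would write $|G_{n_1,n_2,a_1,a_2}(z,\overline z)|\leq\int_{-1}^u|F_{n_1,n_2,a_1,a_2}(v,s)|\,dv$ and plug in the $F$-bound just proved. Since $|G(v,s)|=(v^2+s^2)^{1/(2m)}$, the factor $|G(v,s)|^{-a_1}|\overline G(v,s)|^{-a_2}$ equals $(v^2+s^2)^{-(a_1+a_2)/(2m)}$ and the whole integrand is a constant multiple of $\|f\|_{C^{k\vee n}}(v^2+s^2)^{-\beta}$ with $2\beta=d(F)-\frac{k\vee n}{m}+1=d(G)-\frac{k\vee n}{m}+2$, so that $1-2\beta=-d(G)+\frac{k\vee n}{m}$. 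Now \eqref{eq:estF1} gives $\int_{-1}^u(v^2+s^2)^{-\beta}\,dv\leq C_\beta\langle|s|\rangle^{1-2\beta}$ and \eqref{eq:estF2} gives the same with $\langle|z|/\sqrt2\rangle^{1-2\beta}$ on $\mathscr S(r)$; inserting the exponent identity yields \eqref{eq:Gna1} and \eqref{eq:Gna2} respectively, with $C_{a_1,a_2,n,k}$, $C_{a_1,a_2,n,k,r}$ absorbing $C_\beta$, $C_{\beta,r}$ and the combinatorial constant from the Taylor remainder. (When $\beta\leq 1/2$, i.e.\ $\frac{k\vee n}{m}\geq d(G)$, the exponent $1-2\beta\geq 0$ and $\langle\cdot\rangle^{1-2\beta}=1$, matching the trivial bound; this is the branch of Lemma~\ref{lem:estF}, Case~3.)

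For the little-$o$ refinement \eqref{eq:FGbound}, the extra hypothesis is $n\leq k$ and $D^kf(0,0)=0$, so now $D^jf(0,0)=0$ for all $0\leq j\leq k$ and $k\vee n=k$. I would apply the same Taylor argument but keep one more order: $\frac{\partial^n f}{\partial\omega^{n_1}\partial\overline\omega^{n_2}}(\omega,\overline\omega)=o(|\omega|^{k-n})$ as $\omega\to0$ because its derivatives of order $k-n$ vanish at $0$ and $f\in C^k$, i.e.\ the remainder in the $(k-n)$-th order Taylor expansion of this $C^{k-n}$ function is $o(|\omega|^{k-n})$ rather than merely $O(|\omega|^{k-n})$. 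Substituting $\omega=G(v,s)$ and multiplying by the $|G|^{-a_1}|\overline G|^{-a_2}$ weight, $F_{n_1,n_2,a_1,a_2}(v,s)=o((v^2+s^2)^{-\beta})$ uniformly in the relevant regime, which is precisely the hypothesis of \eqref{eq:estF3} in Lemma~\ref{lem:estF} (note $\beta\geq1/2$ is exactly the condition $\frac{k}{m}\leq d(G)$ after translating exponents, and similarly $\frac{k}{m}\leq d(F)$ gives the $F$-statement directly); hence $G_{n_1,n_2,a_1,a_2}(z,\overline z)=o(\langle|s|\rangle^{1-2\beta})=o(\langle|\Im z|\rangle^{-d(G)+\frac{k}{m}})$, and the $F$-version follows at once from the pointwise $o$-estimate.

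The main obstacle I anticipate is purely notational: keeping the four indices $n_1,n_2,a_1,a_2$ and the degree shifts straight while passing from $f$ to $\frac{\partial^n f}{\partial\omega^{n_1}\partial\overline\omega^{n_2}}\circ G$, in particular correctly accounting for the hidden factor $|z|^{-n/m}$ coming from differentiating in $\omega$ versus in $z$ (the chain rule factor $\partial\omega/\partial z=\frac1m G^{1-m}$), so that the final exponent always collapses to $-d(\cdot)+\frac{k\vee n}{m}$. There is also a minor subtlety in the regime $n>k$: then $k\vee n=n$, the Taylor-vanishing hypothesis on $f$ contributes nothing beyond order $k$, and one must check that the claimed exponent $-d+\frac nm=\frac{-(a_1+a_2)}{m}\leq 0$ is still produced correctly (it is, since then the $C^{k\vee n}=C^n$ norm already controls $\frac{\partial^n f}{\partial\omega^{n_1}\partial\overline\omega^{n_2}}$ boundedly and one only carries the $(v^2+s^2)^{-(a_1+a_2)/(2m)}$ weight into Lemma~\ref{lem:estF}).
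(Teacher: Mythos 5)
Your proposal follows the paper's argument essentially verbatim: bound $\frac{\partial^n f}{\partial\omega^{n_1}\partial\overline\omega^{n_2}}$ by Taylor vanishing to get the pointwise estimate on $F$-type functions, then integrate along the horizontal segment and invoke Lemma~\ref{lem:estF} (parts \eqref{eq:estF1}, \eqref{eq:estF2}, \eqref{eq:estF3}) for the $G$-type bounds and the little-$o$ refinement, with the same treatment of the case $n>k$. One small arithmetic slip: you write $2\beta=d(F)-\frac{k\vee n}{m}+1$, but it should be $2\beta=d(F)-\frac{k\vee n}{m}$ (no $+1$); your claimed conclusion $1-2\beta=-d(G)+\frac{k\vee n}{m}$ is nevertheless correct and is consistent only with the corrected $\beta$.
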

\begin{proof}
As $D^jf(0,0)=0$ for $0\leq j<k$,
\[\left|\frac{\partial^n f}{\partial \omega^{n_1}\partial {\bar \omega}^{n_2}}(\omega,\bar{\omega})\right|\leq \|D^{k\vee n}f\|_{C^0}|\omega|^{(k\vee n)-n}.
%
\]
Hence
\[|F_{n_1,n_2,a_1,a_2}(z,\overline z)|  \leq \|f\|_{C^{k\vee n}}|z|^{-\frac{a_1+a_2+n}{m}+\frac{k\vee n}{m}} = \|f\|_{C^{k\vee n}}|z|^{-d(F_{n_1,n_2,a_1,a_2}) +\frac{k\vee n}{m}}.\]
If $d_F=d(F_{n_1,n_2,a_1,a_2})=\frac{n+a_1+a_2}{m}$ then
\[|G_{n_1,n_2,a_1,a_2}({z},\overline {z})| \leq  \int_{-1}^u|F_{n_1,n_2,a_1,a_2}(v,s)| dv=\|f\|_{C^{k\vee n}}\int_{-1}^u(v^2+s^2)^{\frac{-d_F+\frac{k\vee n}{m}}{2}} dv.\]
As $d_G=d(G_{n_1,n_2,a_1,a_2})=d_F-1$, the inequalities \eqref{eq:Gna1} and \eqref{eq:Gna2} follow directly from Lemma~\ref{lem:estF}.

\medskip

Suppose that $0\leq n\leq k$, $f\in C^{k}(\mathcal{D})$ and $D^jf(0,0)=0$ for $0\leq j\leq k$. Then
$\left|\frac{\partial^n f}{\partial \omega^{n_1}\partial {\bar \omega}^{n_2}}(\omega,\bar{\omega})\right|=o(|\omega|^{k-n})$. Hence
\[|F_{n_1,n_2,a_1,a_2}(z,\overline z)| =o(|z|^{-\frac{a_1+a_2+n}{m}+\frac{k}{m}}) = o(|\Im z|^{-d_F +\frac{k}{m}}) \text{ if }\frac{k}{m}\leq d_F.\]
 Moreover,
\[|G_{n_1,n_2,a_1,a_2}({z},\overline {z})| \leq  \int_{-1}^u|F_{n_1,n_2,a_1,a_2}(v,s)| dv=
\int_{-1}^u\frac{\xi(v,s)}{(v^2+s^2)^{\frac{d_G+1-\frac{k}{m}}{2}}} dv,
\]
where $\lim_{(v,s)\to(0,0)}\xi(v,s)=0$. If $d_G\geq \frac{k}{m}$ then  the second line of \eqref{eq:FGbound} follows directly from \eqref{eq:estF3}.
\end{proof}

\subsection{Higher derivatives of functions $\mathscr{F}$ and $F$}
In this section,   using the results proved in Section~\ref{subsec:perm}, we study the behaviour  around zero of the higher order partial derivatives for the functions $\mathscr{F}_{f,l}$ and $F_f$.
For any $a\in\Z_{\geq 0}$ and any bounded Borel map $f:\mathcal{D}\to\C$
let us consider
$\mathscr{F}=\mathscr{F}_l=\mathscr{F}_{f,l}:[-1,1]^2\setminus ([0,1]\times\{0\})\to\C$ given by
\[ \mathscr{F}_{f,l}(u,s) = \int_{-1}^u\frac{f(G_l(v,s))}{(v^2+s^2)^{\frac{a}{m}}}\,dv.\]

\begin{lemma}\label{eqn;F(z)bound}
Assume that $f\in C^{k\vee n}(\mathcal{D})$ and $D^jf(0,0)=0$ for $0\leq j<k$. Then there exists $C=C_{a,n,k}>0$ such that for every $(n_1,n_2)\in\Z^2_{\geq 0}$ with $n_1+n_2=n$ we have
\begin{align}\label{eqn;F(z)bound1}
\left|\frac{\partial^{n} {\mathscr{F}}(z,{\overline z})}{\partial z^{n_1}\partial {\overline z}^{n_2}}\right|\leq
C\|f\|_{C^{k\vee n}} \langle|\Im z|\rangle^{-(\frac{2a}{m}+(n-1)-\frac{k}{m})}\text{ if }\Im z\neq 0.
\end{align}
For every $r>0$ there exists $C_r=C_{a,n,k,r}>0$ such that
\begin{align}\label{eqn;F(z)bound2}
\left|\frac{\partial^{n} {\mathscr{F}}(z,{\overline z})}{\partial z^{n_1}\partial {\overline z}^{n_2}}\right|\leq
C_r\|f\|_{C^{k\vee n}}\langle|z|/\sqrt{2}\rangle^{-(\frac{2a}{m}+(n-1)-\frac{k}{m})}\text{ on }\mathscr{S}(r).
\end{align}
If additionally $0\leq n\leq k$ and $D^kf(0,0)=0$ then
\begin{align}\label{eqn;F(z)bound3}
\left|\frac{\partial^{n} {\mathscr{F}}(z,{\overline z})}{\partial z^{n_1}\partial {\overline z}^{n_2}}\right|=o(\langle|\Im z|\rangle^{-(\frac{2a}{m}+(n-1)-\frac{k}{m})})\text{ if }\frac{2a}{m}+(n-1)\geq \frac{k}{m}.
\end{align}
\end{lemma}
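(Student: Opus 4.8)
The plan is to differentiate $\mathscr{F}_{f,l}$ directly using the structure already set up, reducing everything to the $F$- and $G$-type bounds of Lemma~\ref{lem;FGbound}. First I would observe that
\[
\mathscr{F}_{f,l}(u,s)=\int_{-1}^u\frac{f(G_l(v,s))}{(v^2+s^2)^{a/m}}\,dv
\]
is, up to the normalisation constant built into $G_l=\theta^lG_0$, precisely $G_{0,0,a,a}$ for the function $f$ (since $|G_l(v,s)|^{2}=(v^2+s^2)^{1/m}$ gives $(v^2+s^2)^{a/m}=G_l^{a}\overline{G_l}^{a}$ up to a unimodular factor). Hence $d(\mathscr{F}_{f,l})=d(G_{0,0,a,a})=\tfrac{2a}{m}-1$ as a $G$-type object. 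By Corollary~\ref{cor:FGtype}, the partial derivative $\frac{\partial^{n}\mathscr{F}}{\partial z^{n_1}\partial\overline z^{n_2}}$ is a finite linear combination of $F$-type and $G$-type functions of degree $(\tfrac{2a}{m}-1)+n = \tfrac{2a}{m}+(n-1)$, in which every component $F_{n_1',n_2',a_1',a_2'}$ or $G_{n_1',n_2',a_1',a_2'}$ satisfies $0\le n_1'+n_2'\le n$ (here the base index is $n=0$, so $n\le n_1'+n_2'\le n+l$ of the corollary reads $0\le n_1'+n_2'\le n$). The coefficients depend only on $a,m,n$, which accounts for the constant $C_{a,n,k}$.

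Next I would simply invoke Lemma~\ref{lem;FGbound} termwise. For each $F$-type component of degree $d=\tfrac{2a}{m}+(n-1)$ we get the bound $\|f\|_{C^{k\vee n'}}|z|^{-d+\frac{k\vee n'}{m}}\le\|f\|_{C^{k\vee n}}|z|^{-d+\frac{k\vee n}{m}}$ (using $n'\le n$ and $|z|\le\sqrt2$ on $\mathcal D$, so a larger exponent is harmless after absorbing constants), and since $|z|\ge|\Im z|$ and $|z|^{-\beta}\le (\text{const})\langle|\Im z|\rangle^{-\beta}$ for $\beta\ge0$ by Remark~\ref{rmk:power}, this is dominated by $C\|f\|_{C^{k\vee n}}\langle|\Im z|\rangle^{-(\frac{2a}{m}+(n-1)-\frac{k}{m})}$. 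For each $G$-type component, \eqref{eq:Gna1} directly gives $C\|f\|_{C^{k\vee n}}\langle|\Im z|\rangle^{-d(G)+\frac{k\vee n}{m}}$ with $d(G)=\tfrac{2a}{m}+(n-1)$. Summing the finitely many components and taking the maximum of the constants yields \eqref{eqn;F(z)bound1}. Estimate \eqref{eqn;F(z)bound2} is obtained identically, replacing \eqref{eq:Gna1} by \eqref{eq:Gna2} and using the $\mathscr S(r)$ version of the $F$-type bound ($|z|\le\sqrt2$ again lets us pass from $|z|^{-\beta}$ to $\langle|z|/\sqrt2\rangle^{-\beta}$ for $\beta\ge0$ via Remark~\ref{rmk:power}), with the constant now also depending on $r$. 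For \eqref{eqn;F(z)bound3}, under the extra hypothesis $0\le n\le k$ and $D^kf(0,0)=0$, each $F$-type and $G$-type component has base order $n'\le n\le k$, so the $o(\cdot)$ clauses \eqref{eq:FGbound} of Lemma~\ref{lem;FGbound} apply provided $\tfrac{k}{m}\le d(F)$, resp. $\tfrac{k}{m}\le d(G)$; both $d(F)$ and $d(G)$ equal $\tfrac{2a}{m}+(n-1)$, so the single hypothesis $\tfrac{2a}{m}+(n-1)\ge\tfrac{k}{m}$ covers all terms, and a finite sum of $o(\langle|\Im z|\rangle^{-\beta})$ terms is again $o(\langle|\Im z|\rangle^{-\beta})$.

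The one point needing genuine care — and the main obstacle — is justifying differentiation under the integral sign when passing from $\mathscr{F}$ to its $z,\overline z$ derivatives, i.e.\ confirming that the formal identification of $\frac{\partial^n\mathscr F}{\partial z^{n_1}\partial\overline z^{n_2}}$ with the linear combination produced by Corollary~\ref{cor:FGtype} is legitimate on $[-1,1]^2\setminus([0,1]\times\{0\})$. This is exactly the content of Lemma~\ref{lem;fd}, whose proof already performs the $\partial_s$-differentiation under the integral; one should note that for $\Im z\neq 0$ the integrand and all the integrands appearing after differentiation are continuous in $(v,s)$ on a neighbourhood of the relevant compact, with singularities only at $(v,s)=(0,0)$ which is avoided, so Leibniz's rule applies and the recursion \eqref{eqn;derF1}--\eqref{eqn;derG2} may be iterated $n$ times. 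A secondary bookkeeping point is tracking that the exponent $\tfrac{2a}{m}+(n-1)-\tfrac{k}{m}$ matches $d(G_{0,0,a,a})+n-\tfrac{k}{m}=(\tfrac{2a}{m}-1)+n-\tfrac{k}{m}$; this is immediate. Everything else is a direct, if slightly tedious, application of the machinery assembled in Section~\ref{subsec:perm}.
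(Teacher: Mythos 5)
Your strategy matches the paper's: identify $\mathscr{F}=G_{0,0,a,a}$, decompose $\frac{\partial^{n}\mathscr F}{\partial z^{n_1}\partial\overline z^{n_2}}$ via Corollary~\ref{cor:FGtype} into $F$- and $G$-type components of degree $\frac{2a}{m}+n-1$ with base order $0\le n'\le n$, and estimate each term by Lemma~\ref{lem;FGbound}. However, the intermediate inequality you assert for the $F$-type terms with $n'<n$,
\[
\|f\|_{C^{k\vee n'}}\,|z|^{-d+\frac{k\vee n'}{m}}\ \le\ \|f\|_{C^{k\vee n}}\,|z|^{-d+\frac{k\vee n}{m}},
\]
is false near the singular set. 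Since $k\vee n'\le k\vee n$, the exponent on the left-hand side is the \emph{smaller} one, and for $0<|z|<1$ this makes the left-hand side larger: the ratio of the two sides equals $|z|^{(k\vee n'-k\vee n)/m}\to+\infty$ as $|z|\to 0$. Boundedness of the domain lets you lower a negative exponent at the cost of a constant, not raise it.

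The fix — and the route the paper takes — is to perform the exponent change only after converting to the $\langle\cdot\rangle$-scale, where the monotonicity in Remark~\ref{rmk:power} runs in the helpful direction. Bound each $F$-type term first by $C\,\|f\|_{C^{k\vee n'}}\langle|\Im z|\rangle^{-d+\frac{k\vee n'}{m}}$ (using $|z|\ge|\Im z|$ and that remark); then, since $k\vee n'\ge k$ for every component, conclude $\langle|\Im z|\rangle^{-d+\frac{k\vee n'}{m}}\le\langle|\Im z|\rangle^{-d+\frac{k}{m}}$. Combined with $\|f\|_{C^{k\vee n'}}\le\|f\|_{C^{k\vee n}}$, this gives \eqref{eqn;F(z)bound1}, and the same reordering repairs your derivation of \eqref{eqn;F(z)bound2}. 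Your $o(\cdot)$ argument for \eqref{eqn;F(z)bound3} is correct. The paper streamlines the main bounds by splitting into $n\le k$ (where $k\vee n'=k$ uniformly and no exponent adjustment is needed) and $n>k$; your single-pass version is fine once the order of the conversion and the exponent change is swapped.
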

\begin{proof}
By definition, $\mathscr{F}=G_{0,0,a,a}$. In view of Corollary~\ref{cor:FGtype}, the partial derivative
$\frac{\partial^{n}G_{0,0,a,a}}{\partial {z}^{n_1}\partial \overline{z}^{n_2}}$
is a linear combination of $F$-type and $G$-type functions of the form $F_{n'_1,n'_2,a'_1,a'_2}$ and $G_{n'_1,n'_2,a'_1,a'_2}$ such that their degree is $2a/m+n-1$ and $0\leq n':=n'_1+n'_2\leq n$.

Suppose that $n\leq k$. Then \eqref{eqn;F(z)bound1} and  \eqref{eqn;F(z)bound2}  follow directly from Lemma~\ref{lem;FGbound}.
The same arguments combined with \eqref{eq:FGbound} yield \eqref{eqn;F(z)bound3}.

Suppose that $n>k$. Then $k\leq n'\vee k<n$. Therefore, $\|f\|_{C^{n'\vee k}}\leq \|f\|_{C^n}$ and for any $0\leq s\leq 1$ and $d\in \R$ we have $\langle s\rangle^{-d+\frac{n'\vee k}{m}}\leq \langle s\rangle^{-d+\frac{k}{m}}$.
In view of Lemma~\ref{lem;FGbound} this gives \eqref{eqn;F(z)bound1} and  \eqref{eqn;F(z)bound2}.
\end{proof}
By change of coordinates, we obtain the bound of higher derivatives of the map $F=F_f:\mathcal{D}\setminus([0,1]\times \{0\})^{1/m}\to\C$ given by
$F_f(\omega, \overline \omega) = \mathscr{F}_{f,l}(\omega^m, {\overline \omega}^m)$ on $\mathcal{D}(\frac{l}{m},\frac{l+1}{m})$.

\begin{lemma} \label{lem:estDF}
Assume that $f\in C^{k\vee n}(\mathcal{D})$ and $D^jf(0,0)=0$ for $0\leq j<k$. Then
for any $r>0$ there exists $C_{r,n}>0$ such that for every $(n_1,n_2)\in\Z^2_{\geq 0}$ with $n_1+n_2=n$,
\begin{align} \label{neq:estDF}
\left|\frac{\partial^{n} {F}(\omega,{\overline \omega})}{\partial \omega^{n_1}\partial {\overline \omega}^{n_2}}\right|
\leq C_{r,n}\|f\|_{C^{k\vee n}}(1+|\log|\omega||)|\omega|^{(-2a+m+k-n)\wedge 0}
\end{align}
for  $\omega\in \mathcal{D}\cap \mathscr{S}(r)^{1/m}$.
\end{lemma}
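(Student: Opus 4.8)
The plan is to deduce the bound from Lemma~\ref{eqn;F(z)bound} via the change of variable $z=\omega^m$, keeping careful track of the powers of $\omega$ that the chain rule produces. Recall that on each open sector $\mathcal{D}(\tfrac{l}{m},\tfrac{l+1}{m})$ we have $F_f(\omega,\bar\omega)=\mathscr{F}_{f,l}(\omega^m,\bar\omega^m)$, that $z=\omega^m$ is holomorphic and $\bar z=\bar\omega^m$ antiholomorphic in $\omega$, so that $\frac{\partial}{\partial\omega}h(\omega^m,\bar\omega^m)=m\omega^{m-1}\frac{\partial h}{\partial z}(\omega^m,\bar\omega^m)$ and similarly for $\frac{\partial}{\partial\bar\omega}$. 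Iterating this (a Fa\`{a} di Bruno computation for the power map), I would show that $\frac{\partial^{n}F_f}{\partial\omega^{n_1}\partial\bar\omega^{n_2}}(\omega,\bar\omega)$ is a finite linear combination, with constant coefficients, of the functions
\[
\omega^{mj_1-n_1}\,\bar\omega^{mj_2-n_2}\,\frac{\partial^{j_1+j_2}\mathscr{F}_{f,l}}{\partial z^{j_1}\partial\bar z^{j_2}}(\omega^m,\bar\omega^m),\qquad \lceil n_1/m\rceil\le j_1\le n_1,\ \ \lceil n_2/m\rceil\le j_2\le n_2
\]
(with $j_i=0$ if $n_i=0$). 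The decisive bookkeeping point here is that every exponent $mj_i-n_i$ is $\ge 0$: a monomial $\omega^{p}$ with $p\ge 0$ cannot, under further $\partial_\omega$, produce a negative power, so only the terms with $mj_i\ge n_i$ survive. Thus the chain rule by itself introduces no singularity at $\omega=0$.

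Next I would estimate each such term. Put $j=j_1+j_2$, so $\lceil n/m\rceil\le j\le n$, $mj-n\ge 0$, and $\|f\|_{C^{k\vee j}}\le\|f\|_{C^{k\vee n}}$. Since $\omega\in\mathscr{S}(r)^{1/m}$ means $\omega^m\in\mathscr{S}(r)$ and $|\omega^m|=|\omega|^m$, inequality \eqref{eqn;F(z)bound2} of Lemma~\ref{eqn;F(z)bound} (applied with derivative order $j$) gives
\[
\Big|\frac{\partial^{j_1+j_2}\mathscr{F}_{f,l}}{\partial z^{j_1}\partial\bar z^{j_2}}(\omega^m,\bar\omega^m)\Big|\le C_r\,\|f\|_{C^{k\vee n}}\,\big\langle |\omega|^m/\sqrt 2\big\rangle^{\beta_j},\qquad \beta_j:=\tfrac{k-2a}{m}-(j-1).
\]
Because $z=\omega^m\in[-1,1]^2$ forces $|\omega|^m/\sqrt 2\le 1$, I would then convert the bracket into a power of $|\omega|$ or a logarithm by the elementary inequalities (cf.\ Remark~\ref{rmk:power}): $\langle t\rangle^{\beta_j}=1$ when $\beta_j>0$; $\langle t\rangle^{0}=1-\log t\le C(1+|\log|\omega||)$; and $\langle t\rangle^{\beta_j}\le(1+|\beta_j|^{-1})\,t^{\beta_j}\le C|\omega|^{m\beta_j}$ when $\beta_j<0$. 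Hence, combining with the factor $|\omega|^{mj-n}$, each term is bounded by $C\,\|f\|_{C^{k\vee n}}$ times $|\omega|^{mj-n}$, $|\omega|^{mj-n}(1+|\log|\omega||)$, or $|\omega|^{mj-n+m\beta_j}$, according to the sign of $\beta_j$.

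The key algebraic observation is that $mj-n+m\beta_j=m+k-2a-n$ is \emph{independent of $j$}. So if $\beta_j<0$ the term is $\le C\|f\|_{C^{k\vee n}}|\omega|^{-2a+m+k-n}$; if $\beta_j=0$ then also $mj-n=-2a+m+k-n$ and the term carries the extra factor $1+|\log|\omega||$; and if $\beta_j>0$ then $0\le mj-n<-2a+m+k-n$, so $-2a+m+k-n>0$ and the term is simply bounded by $C\|f\|_{C^{k\vee n}}$. In every case, using that $|\omega|^{p}$ is bounded on $\mathcal{D}$ for $p\ge 0$, the term is $\le C\,\|f\|_{C^{k\vee n}}(1+|\log|\omega||)\,|\omega|^{(-2a+m+k-n)\wedge 0}$. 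Summing the finitely many terms, and letting the constant also absorb the (finitely many) branches $l$, yields \eqref{neq:estDF}. I expect the only genuinely delicate points to be the two facts highlighted above: the nonnegativity of the exponents $mj_i-n_i$ arising from the power-map chain rule, and the $j$-independent cancellation $mj-n+m\beta_j=-2a+m+k-n$; once these are in place the remainder is a routine case check.
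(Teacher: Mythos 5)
Your proposal is correct and follows essentially the same route as the paper's proof: both proceed via Fa\`a di Bruno for the power map $z=\omega^m$, reduce to a finite sum of terms $\omega^{mj_1-n_1}\bar\omega^{mj_2-n_2}\partial^{j_1+j_2}\mathscr{F}(\omega^m,\bar\omega^m)$, observe that the exponents $mj_i-n_i$ are non-negative, invoke estimate \eqref{eqn;F(z)bound2}, and then split into the three cases according to the sign of $\beta_j$. The only stylistic difference is that you make explicit the $j$-independent cancellation $mj-n+m\beta_j=m+k-2a-n$, which the paper performs implicitly inside its three cases; this is a cleaner way to organize the same bookkeeping and does not change the argument.
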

\begin{proof}
Recall that ${F}(\omega,{\overline \omega}) = \mathscr{F}(\omega^m, {\overline \omega}^m)$. By Fa\`a di Bruno's formula,
\begin{align*}
\begin{split}
\frac{\partial^{n} F(\omega,\overline \omega)}{\partial \omega^{n_1}\partial \overline \omega^{n_2}} & = \frac{d^{n}\mathscr{F}(\omega^m,\overline \omega^m)}{d\omega^{n_1}d\overline \omega^{n_2}}\\
 &= \sum_{\bar{p},\bar{q}} C_{\bar{p},\bar{q}} \frac{\partial^{|\bar{p}|+|\bar{q}|} \mathscr{F}}{\partial z^{|\bar{p}|}\partial \overline z^{|\bar{q}|} }(\omega^m,\overline \omega^m)\cdot  \prod_{j=1}^{n_1\wedge m}(\omega^{m-j})^{p_j}\prod_{j=1}^{n_2\wedge m}(\overline \omega^{m-j})^{q_j},
\end{split}
\end{align*}
where the sum is over all $n_1$-tuples $\bar{p}=(p_1,\ldots,p_{n_1})$ and $n_2$-tuples $\bar{q}=(q_1,\ldots,q_{n_2})$ of non-negative integers satisfying the constraints
\[
\sum_{j=1}^{n_1}jp_j=n_1, p_j=0 \text{ for } j>n_1\wedge m \text{ and } \sum_{j=1}^{n_2}jq_j=n_2, q_j=0 \text{ for } j>n_1\wedge m,
\]
and we use the notation $|\bar{p}|=\sum_{j=1}^{n_1}p_j$ and $|\bar{q}|=\sum_{j=1}^{n_2}q_j$. Let
\begin{gather*}
P:=\{|\bar{p}|: \sum_{j=1}^{n_1}jp_j=n_1, p_j=0\text{ for }j>n_1\wedge m\},\\
Q:=\{|\bar{q}|: \sum_{j=1}^{n_2}jq_j=n_2, q_j=0\text{ for }j>n_2\wedge m\}.
\end{gather*}
Then
\begin{align*}\frac{\partial^{n} F(\omega,\overline \omega)}{\partial \omega^{n_1}\partial \overline \omega^{n_2}}
&= \sum_{\bar{p},\bar{q}} C_{\bar{p},\bar{q}} \frac{\partial^{|\bar{p}|+|\bar{q}|} \mathscr{F}}{\partial z^{|\bar{p}|}\partial \overline z^{|\bar{q}|} }(\omega^m,\overline \omega^m)\cdot  \omega^{m|\bar{p}|-n_1}{\overline \omega}^{m|\bar{q}|-n_2}\\
&= \sum_{p\in P,q\in Q} C'_{p,q} \frac{\partial^{p+q} \mathscr{F}}{\partial z^{p}\partial \overline z^{q} }(\omega^m,\overline \omega^m)\cdot  \omega^{mp-n_1}{\overline \omega}^{mq-n_2}.
\end{align*}
In view of Lemma~\ref{eqn;F(z)bound} and Remark~\ref{rmk:power}, for every $\omega\in \mathcal{D} \cap \mathscr{S}(r)^{1/m}$,
\begin{align*}
\left|\frac{\partial^{p+q} \mathscr{F}}{\partial z^{p}\partial \overline z^{q} }(\omega^m,\overline \omega^m)\right| \leq
mC_r\|f\|_{C^{k\vee n}}\langle|\omega|/\sqrt[2m]{2}\rangle^{-(2a+(p+q-1)m-k)}.
\end{align*}
Therefore, taking $l=p+q\in P+Q$, for every $\omega\in \mathcal{D} \cap \mathscr{S}(r)^{1/m}$,
\begin{align}
\label{eq:Fomega}
\left|\frac{\partial^{n} {F}(\omega,{\overline \omega})}{\partial \omega^{n_1}\partial {\overline \omega}^{n_2}}\right| \leq \sum_{l\in P+Q}m C_r\|f\|_{C^{k\vee n}}
\langle|\omega/\sqrt[2m]{2}|\rangle^{-(2a+(l-1)m-k)}|\omega|^{ml-n}.
\end{align}
Moreover, for $l=p+q\in P+Q$ we have
\[ml-n=\sum_{j=1}^{n_1\wedge m}(m-j)p_j+\sum_{j=1}^{n_2\wedge m}(m-j)q_j\geq 0.\]
If $2a+(l-1)m-k>0$ then
\[\langle|\omega/\sqrt[2m]{2}|\rangle^{-(2a+(l-1)m-k)}|\omega|^{ml-n}=O(|\omega|^{-(2a+n-m-k)}).\]
If $2a+(l-1)m-k=0$ then
\[\langle|\omega/\sqrt[2m]{2}|\rangle^{-(2a+(l-1)m-k)}|\omega|^{ml-n}=O((1+|\log|\omega||)|\omega|^{-(2a+n-m-k)}).\]
If $2a+(l-1)m-k<0$ then 
\[\langle|\omega/\sqrt[2m]{2}|\rangle^{-(2a+(l-1)m-k)}|\omega|^{ml-n}=|\omega|^{ml-n}=O(1).\]
In view of \eqref{eq:Fomega}, this gives \eqref{neq:estDF}.
%
%
%
\end{proof}

\subsection{Preliminary results necessary to define invariant distributions}
{In this section we introduce new functions $\mathfrak G^l_{a_1,a_2}$ which are useful  for computing $\mathscr{F}_{f,l}$ when $f$ is a polynomial. Their recursive properties are crucial for studying new invariant distributions.}

For any pair of integers $(a_1,a_2)$ {and any $0\leq l<m$}, let $\mathfrak F_{a_1,a_2}=\mathfrak F_{a_1,a_2}^l:[-1,1]^2\setminus([0,1]\times\{0\})\to\C$
and $\mathfrak G_{a_1,a_2}=\mathfrak G_{a_1,a_2}^l:[-1,1]^2\setminus([0,1]\times\{0\})\to\C$ be given by
\begin{align}
\mathfrak F_{a_1,a_2}({z},\overline {z}) &= \mathfrak F_{a_1,a_2}(u,s)  =  {G_l(u,s)}^{-a_1}\overline{G_l(u,s)}^{-a_2},\label{form;F}\\
\mathfrak G_{a_1,a_2}({z},\overline {z}) &= \mathfrak G_{a_1,a_2}(u,s) =  \int_{-1}^u\mathfrak F_{a_1,a_2}(v,s) dv.\label{form;G}
\end{align}
Then $\mathfrak G^l_{a_1,a_2}=\theta^{l(a_2-a_1)}\mathfrak G^0_{a_1,a_2}$ for every $0\leq l<m$.
As
\[G_0(-u-\iota s)=\theta_0 G_0(u+\iota s)\text{ and }G_0(u-\iota s)=\theta^2_0 \overline{G_0(u+\iota s)}\text{ for }s>0,\] it follows that
\begin{equation}\label{eq:Gl}
\mathfrak G^l_{a_1,a_2}(1,s)=\left\{
\begin{array}{rl}
\theta_0^{2l(a_2-a_1)}\mathfrak G^0_{a_1,a_2}(1,|s|)&\text{ if}\quad s\in (0,1],\\
\theta_0^{(2l+1)(a_2-a_1)}\mathfrak G^0_{a_1,a_2}(1,|s|)&\text{ if}\quad s\in [-1,0)
\end{array}
\right.
\end{equation}
and
\begin{equation}\label{eq:Glu}
\mathfrak G^l_{a_1,a_2}(u,s)=\left\{
\begin{array}{rl}
\theta_0^{2l(a_2-a_1)}\mathfrak G^0_{a_1,a_2}(u,|s|)&\text{ if}\quad s\in (0,1],\\
\theta_0^{(2l+2)(a_2-a_1)}\overline{\mathfrak G^0_{a_1,a_2}(u,|s|)}&\text{ if}\quad s\in [-1,0).
\end{array}
\right.
\end{equation}

For any set $A\subset \R^d$ denote by $C^\omega(A)$ the space of complex-valued real-analytic maps on $A$ which have analytic extention to the closure $\bar{A}$. If $f,g:A\to\C$ are such that $f-g\in C^\omega(A)$ then we write $f=g+ C^\omega(A)$. We denote by $C^{\omega,m}_l$ the space of functions $f:[-1,1]^2\setminus([0,1]\times\{0\})\to\C$ such that $\omega\mapsto f(\omega^m,\overline{\omega}^m)$ has a real analytic extension on $\overline{\mathcal{D}}(\frac{l}{m},\frac{l+1}{m})$. For example, $\mathfrak F_{a_1,a_2}\in C^{\omega,m}_l$ if $a_1,a_2$ are non-positive.
\begin{lemma}
For any pair of integers $(a_1,a_2)$,
\begin{equation}\label{eq:anal1}
a_1\mathfrak G^l_{a_1+m,a_2}(1,s)+a_2\mathfrak G^l_{a_1,a_2+m}(1,s)\in C^\omega((0,1])\cap C^\omega([-1,0)).
\end{equation}
If $a_1,a_2$ are additionally both non-positive then
\begin{equation}\label{eq:anal2}
a_1\mathfrak G^l_{a_1+m,a_2}+a_2\mathfrak G^l_{a_1,a_2+m}\in C^{\omega,m}_l.
\end{equation}
\end{lemma}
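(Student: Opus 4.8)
## Proof Strategy

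The plan is to prove the two claims \eqref{eq:anal1} and \eqref{eq:anal2} by exhibiting the combination $a_1\mathfrak G^l_{a_1+m,a_2}+a_2\mathfrak G^l_{a_1,a_2+m}$ as the derivative (in the $u$-variable, or equivalently as a $\partial_z+\partial_{\bar z}$ combination) of a simpler function whose analyticity is transparent, up to an explicitly analytic boundary term. The key observation is that the integrand $\mathfrak F_{a_1,a_2}$ satisfies a product rule: by the chain rule identities $\frac{\partial G_l}{\partial z}=\frac{1}{m}G_l^{1-m}$, $\frac{\partial \overline{G_l}}{\partial\bar z}=\frac1m\overline{G_l}^{1-m}$ (and the conjugate derivatives vanish), one computes
\[
\Big(\frac{\partial}{\partial z}+\frac{\partial}{\partial\bar z}\Big)\mathfrak F_{a_1,a_2}
= \frac{\partial}{\partial u}\mathfrak F_{a_1,a_2}
= -\frac{a_1}{m}\mathfrak F_{a_1+m,a_2} - \frac{a_2}{m}\mathfrak F_{a_1,a_2+m}.
\]
This is exactly the special case $n_1=n_2=0$ of the identities already recorded in Lemma~\ref{lem;fd}. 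Integrating from $-1$ to $u$ and using that $\mathfrak G_{a_1,a_2}(u,s)=\int_{-1}^u\mathfrak F_{a_1,a_2}(v,s)\,dv$, we get
\[
a_1\mathfrak G^l_{a_1+m,a_2}(u,s)+a_2\mathfrak G^l_{a_1,a_2+m}(u,s)
= -m\big(\mathfrak F_{a_1,a_2}(u,s)-\mathfrak F_{a_1,a_2}(-1,s)\big).
\]

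Granting this identity, both claims follow quickly. For \eqref{eq:anal1} we set $u=1$: the right-hand side becomes $-m(\mathfrak F_{a_1,a_2}(1,s)-\mathfrak F_{a_1,a_2}(-1,s))$, i.e. $-m\big(G_l(1,s)^{-a_1}\overline{G_l(1,s)}^{-a_2} - G_l(-1,s)^{-a_1}\overline{G_l(-1,s)}^{-a_2}\big)$. Since $z\mapsto G_l(z)$ is a single analytic branch of the $m$-th root on each of the half-strips $\{s>0\}$ and $\{s<0\}$, and $z=\pm1+\iota s$ stays away from the branch cut and from $0$ for $s\in(0,1]$ (resp. $s\in[-1,0)$), each term $G_l(\pm1,s)^{-a_1}\overline{G_l(\pm1,s)}^{-a_2}$ is real-analytic in $s$ on $(0,1]$ and on $[-1,0)$, with analytic extension to the closed intervals; this gives membership in $C^\omega((0,1])\cap C^\omega([-1,0))$. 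For \eqref{eq:anal2}, when $a_1,a_2\le 0$ the function $\mathfrak F_{a_1,a_2}$ is, after the substitution $\omega\mapsto(\omega^m,\overline\omega^m)$, equal to $\omega^{-a_1m}\cdot\overline\omega^{-a_2m}$ restricted appropriately — a polynomial in $\omega,\overline\omega$ — hence lies in $C^{\omega,m}_l$ (the excerpt explicitly notes $\mathfrak F_{a_1,a_2}\in C^{\omega,m}_l$ when $a_1,a_2$ are non-positive). The boundary term $\mathfrak F_{a_1,a_2}(-1,s)$ is, as a function of $(u,s)$, constant in $u$, so after the covering substitution it is analytic in $\omega$ (it only involves $G_l(-1,s)$, which pulls back analytically); thus the whole combination lies in $C^{\omega,m}_l$, proving \eqref{eq:anal2}.

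The only point requiring care — and the step I expect to be the main obstacle — is justifying differentiation under the integral sign and the interpretation of $(\partial_z+\partial_{\bar z})\mathfrak G_{a_1,a_2}=\partial_u\mathfrak G_{a_1,a_2}=\mathfrak F_{a_1,a_2}$, since $\mathfrak F_{a_1,a_2}$ may have a non-integrable-looking singularity as $(v,s)\to(0,0)$ when $a_1+a_2>0$. Here one uses that for $s\neq 0$ the path of integration $\{(v,s):v\in[-1,u]\}$ avoids the origin, so on each slice $s=$ const $\neq 0$ the integrand is smooth and the Fundamental Theorem of Calculus applies directly; the identity \eqref{eq:anal1} and \eqref{eq:anal2} are statements on $(0,1]$, $[-1,0)$ and on $[-1,1]^2\setminus([0,1]\times\{0\})$ respectively, i.e. precisely away from the bad set, so no delicate limiting argument is needed. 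The same slice-wise reasoning legitimizes the manipulations in Lemma~\ref{lem;fd} that we are invoking. One should also record the relations \eqref{eq:Gl}–\eqref{eq:Glu} to transfer the $l=0$ analyticity to general $l$ via the rotation factors $\theta_0^{\,\cdot(a_2-a_1)}$, though since $\mathfrak G^l_{a_1,a_2}=\theta^{l(a_2-a_1)}\mathfrak G^0_{a_1,a_2}$ the case $l=0$ already implies the general case for \eqref{eq:anal2}, and \eqref{eq:Gl} handles the reflection $s\mapsto -s$ needed for \eqref{eq:anal1}.
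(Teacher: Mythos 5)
Your proof is correct and follows essentially the same route as the paper: you integrate the identity $\partial_u\mathfrak F_{a_1,a_2}=-\frac{a_1}{m}\mathfrak F_{a_1+m,a_2}-\frac{a_2}{m}\mathfrak F_{a_1,a_2+m}$ to express $a_1\mathfrak G^l_{a_1+m,a_2}+a_2\mathfrak G^l_{a_1,a_2+m}$ as $-m(\mathfrak F_{a_1,a_2}(u,s)-\mathfrak F_{a_1,a_2}(-1,s))$ and then read off analyticity of the two boundary terms. One small slip worth noting: after the substitution $(z,\overline z)=(\omega^m,\overline\omega^m)$ one has $G_l(\omega^m)=\omega$, so $\mathfrak F_{a_1,a_2}(\omega^m,\overline\omega^m)=\omega^{-a_1}\overline\omega^{-a_2}$, not $\omega^{-a_1 m}\overline\omega^{-a_2 m}$; this does not affect your conclusion since $-a_1,-a_2\geq 0$ in either case, but the exponent should be corrected.
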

\begin{proof}
Note that, by \eqref{der;us} and \eqref{form;G},
\begin{align*}
&\mathfrak F_{a_1,a_2}(z,\overline {z})-\mathfrak F_{a_1,a_2}(-1+\iota\Im z,-1-\iota\Im z)=\mathfrak F_{a_1,a_2}(u,s)-\mathfrak F_{a_1,a_2}(-1,s)\\
&=\int_{-1}^u\frac{\partial}{\partial v}\mathfrak F_{a_1,a_2}(v,s)\,dv=
\int_{-1}^u\left(\frac{\partial}{\partial z}+\frac{\partial}{\partial \overline z}\right)\mathfrak F_{a_1,a_2}(v,s)\,dv\\
&=-\int_{-1}^u\left(\frac{a_1}{m}\mathfrak F_{a_1+m,a_2}(v,s)+\frac{a_2}{m}\mathfrak F_{a_1,a_2+m}(v,s)\right)\,dv\\
&=-\frac{1}{m}({a_1}\mathfrak G_{a_1+m,a_2}(z,\overline {z})+{a_2}\mathfrak G_{a_1,a_2+m}(z,\overline {z})).
\end{align*}
{Taking $(z,\overline {z}) = (1,s)$, by \eqref{form;F}, it follows that}
\begin{gather*}
a_1\mathfrak G_{a_1+m,a_2}(1,s)+a_2\mathfrak G_{a_1,a_2+m}(1,s)\\
=m(G_l(-1,s)^{-a_1}\overline{G_l(-1,s)}^{-a_2}-G_l(1,s)^{-a_1}\overline{G_l(1,s)}^{-a_2}).
\end{gather*}
Since the maps $[-1,1]\ni s\mapsto G_l(-1,s)\in\C$, $[0,1]\ni s\mapsto G_l(1,s)\in\C$  and $[-1,0)\ni s\mapsto G_l(1,s)\in\C$ are analytic and the latter has an analytic extension to $[-1,0]$,
this gives \eqref{eq:anal1}. Moreover, for any $\omega\in \mathcal{D}(\frac{l}{m},\frac{l+1}{m})$,
\begin{gather*}
a_1\mathfrak G_{a_1+m,a_2}({\omega}^m,\overline {\omega}^m)+a_2\mathfrak G_{a_1,a_2+m}({\omega}^m,\overline {\omega}^m)\\
=m(G_l(-1+\iota\Im \omega^m)^{-a_1}\overline{G_l(-1+\iota\Im\omega^m)}^{-a_2}-G_l(\omega^m)^{-a_1}\overline{G_l(\omega^m)}^{-a_2})\\
=m(G_l(-1+\iota\Im\omega^m)^{-a_1}\overline{G_l(-1+\iota\Im\omega^m)}^{-a_2}-{\omega}^{-a_1}\overline {\omega}^{-a_2}).
\end{gather*}
Since $-a_1,-a_2$ are non-negative integers, all functions on the RHS are analytic which completes the proof.
\end{proof}

As a conclusion we obtain that for any integer $k\neq m$,
\begin{equation*}\label{eq:anal10}
\mathfrak G_{0,k}(1,s), \mathfrak G_{k,0}(1,s)\in C^\omega((0,1])\cap C^\omega([-1,0))
\end{equation*}
and  for any integer $k<m$,
\begin{equation}\label{eq:anm}
\mathfrak G_{0,k}, \mathfrak G_{k,0}\in C^{\omega,m}_l.
\end{equation}
Moreover,
\begin{align*}
\mathfrak G_{0,m}(1,s)&=\int_{-1}^{1}\frac{1}{v-\iota s}dv=\int_{-1}^{1}\frac{v+\iota s}{v^2+s^2}dv=\iota\sgn(s)\int_{-1/|s|}^{1/|s|}\frac{1}{x^2+1}dx\\
&=\iota\sgn(s)(\arctan(1/|s|)-\arctan(-1/|s|))\\&
=\iota(\operatorname{arccot}(s)-\operatorname{arccot}(-s))+\iota\sgn(s)\pi.
\end{align*}
Hence, $\mathfrak G_{0,m}(1,s), \mathfrak G_{m,0}(1,s)\in C^\omega((0,1])\cap C^\omega([-1,0))$. Using  \eqref{eq:anal1} again, we have
\begin{equation}\label{eq:anal11}
\mathfrak G_{k,-lm}(1,s), \mathfrak G_{-lm,k}(1,s)\in C^\omega((0,1])\cap C^\omega([-1,0))\text{ for all }k\in\Z,l\in \Z_{\geq 0}.
\end{equation}

\subsection{Invariant distributions $\partial^k_j$ and their effect on the regularity of $\mathscr{F}$ and $F$}
%
%
For every $m\geq 2$, $0\leq l<m$, $k\geq 0$ and $f\in C^k(\mathcal{D})$ we deal with three associated functions  $\mathscr{F}_{f,l}$, $\varphi_{f,l}$ and $F_f$. Recall that
$\mathscr{F}_{f,l}:[-1,1]^2\setminus ([0,1]\times\{0\})\to\C$ is given by
\[\mathscr{F}_{f,l}(z,\overline{z})=\mathscr{F}_{f,l}(u,s)=\int_{-1}^u\frac{f(G_l(v,s))}{(v^2+s^2)^{\frac{m-1}{m}}}dv,\]
 $\varphi_{f,l}:[-1,0)\cup(0,1]\to\C$ is given by $\varphi_{f,l}(s)=\mathscr{F}_{f,l}(1,s)$ and $F_f:\mathcal{D}\setminus([0,1]\times\{0\})^{1/m}\to\C$ is given by
$F_f(\omega, \overline \omega) = \mathscr{F}_{f,l}(\omega^m, {\overline \omega}^m)$ on $\mathcal{D}(\frac{l}{m},\frac{l+1}{m})$.

\medskip

For every $k\geq 0$ and let us consider functionals $\partial^k_{j}:C^k(\mathcal{D})\to\C$ for $0\leq j\leq k\wedge (m-1)$ given by
\begin{equation}\label{def:parl}
\partial^k_{j}(f)=\sum_{0\leq n\leq \frac{k-j}{m}}
\frac{\binom{k}{j+nm}\binom{\frac{(m-1)-j}{m}-1}{n}}{\binom{\frac{(k-j)-(m-1)}{m}}{n}}\frac{\partial^kf}{\partial \omega^{j+nm} \partial \overline{\omega}^{k-j-nm}}(0,0).
\end{equation}
Comparing with \eqref{def:gothd}, functionals $\partial^k_{j}$ will play a key role in understanding the meaning of distribution $\mathfrak{d}^k_{\sigma,j}$.
If $0\leq k\leq m-2$ then $\partial^k_j(f)=\binom{k}{j}\frac{\partial^kf}{\partial \omega^{j} \partial \overline{\omega}^{k-j}}(0,0)$.
If $k\geq m-1$ then as we will see in the following lemma, only $m-2$ functionals matter. More precisely,
$\partial^k_{j}$ is irrelevant
if $j=m-1$ or  $j=k-(m-1)\operatorname{mod} m$, {this is, its value does not affect the smoothness of $\varphi_{f,l}$ and $F_f$,  see also Remark~\ref{rem:irrel}}. Note that if $k=m-2\operatorname{mod} m$ then, in this exceptional case, we have $m-1$ relevant functionals.

\medskip

Recall that for any $0\leq \alpha<\beta\leq 1$ let $\mathcal{D}(\alpha,\beta):=\{\omega\in \mathcal{D}\setminus\{0\}:\Arg(\omega)\in(2\pi \alpha, 2\pi \beta)\}$. We denote its closure by  $\overline{\mathcal{D}}(\alpha,\beta)$.

\begin{lemma}\label{lem:analf}
Suppose that $f\in \C[\omega,\overline \omega]$ is a polynomial of degree at most $k$ such that
$\partial^j_{i}(f)=0$ for all $0\leq j\leq k$ and $0\leq i\leq j\wedge(m-2)$ with $i\neq j-(m-1)\operatorname{mod} m$. Then
\[F_f\in C^\omega(\mathcal{D}(\tfrac{l}{m},\tfrac{l+1}{m}))\ \text{ and }\ \varphi_{f,l}\in C^\omega([-1,0))\cap C^{\omega}((0,1])\text{ for }0\leq l<m.\]
Moreover, for any $n\geq 0$ there exists a constant $C^n_k>0$ such that
\begin{align}
\label{neq:FCn}
\|F_f\|_{C^n(\mathcal{D}(\frac{l}{m},\frac{l+1}{m}))}\leq C^n_k\|f\|_{C^{k}(\mathcal{D})}\text{ and }
\|\varphi_{f,l}\|_{C^n([-1,0)\cup(0,1])}\leq C^n_k\|f\|_{C^{k}(\mathcal{D})}.
\end{align}
\end{lemma}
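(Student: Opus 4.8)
The plan is to reduce the statement to an explicit computation of $\mathscr{F}_{f,l}$ for monomials $f=\omega^{p}\overline{\omega}^{q}$ using the functions $\mathfrak{G}_{a_1,a_2}^{l}$ introduced above. Since $f\mapsto\mathscr{F}_{f,l}$, $f\mapsto\varphi_{f,l}$ and $f\mapsto F_f$ are linear, and since $f$ is a polynomial of degree at most $k$, it suffices to treat each monomial $\omega^{p}\overline{\omega}^{q}$ with $p+q\le k$ separately and then recombine, carefully keeping track of how the vanishing conditions $\partial^j_i(f)=0$ interact across monomials of the same total degree $j$. For a single monomial one has, by the very definitions \eqref{form;F}--\eqref{form;G} and the fact that $G_l(u,s)^{m}=u+\iota s=z$,
\[
\mathscr{F}_{\omega^{p}\overline{\omega}^{q},l}(z,\overline z)=\int_{-1}^{u}\frac{G_l(v,s)^{p}\overline{G_l(v,s)}^{q}}{(v^2+s^2)^{\frac{m-1}{m}}}\,dv=\mathfrak{G}_{m-1-p,\,m-1-q}^{l}(z,\overline z),
\]
because $(v^2+s^2)^{\frac{m-1}{m}}=|z|^{2\frac{m-1}{m}}=G_l(v,s)^{m-1}\overline{G_l(v,s)}^{m-1}$. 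So the whole problem is to understand when a $\Z$-linear combination of the functions $\mathfrak{G}_{m-1-p,\,m-1-q}^{l}$ (over monomials of a fixed degree $j$) lies in $C^{\omega,m}_l$, respectively in $C^{\omega}((0,1])\cap C^{\omega}([-1,0))$ after setting $u=1$.

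**Key steps.** First I would fix a total degree $j$ and list the relevant pairs: monomials $\omega^{i}\overline{\omega}^{j-i}$ with $0\le i\le j$ produce $\mathfrak{G}_{m-1-i,\,m-1-j+i}$. By \eqref{eq:anm} (resp.\ \eqref{eq:anal11} after evaluation at $u=1$), those $\mathfrak{G}$'s with at least one index non-positive and $>-$(something) are already analytic; the exponents $m-1-i$ and $m-1-j+i$ lie in $\{-j+m-1,\dots,m-1\}$, so the only ones that can fail analyticity are those where both indices are positive, and in fact trouble is localized at indices that are $\equiv 0 \bmod m$ (genuine poles, controlled by \eqref{eq:anal11}) or the single fractional-exponent index — this is exactly the arithmetic encoded by the exclusions $i=m-1\bmod m$ and $i=j-(m-1)\bmod m$ in the definition of $\partial^j_i$. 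Second, I would apply the recursion \eqref{eq:anal2}: the combination $a_1\mathfrak{G}^l_{a_1+m,a_2}+a_2\mathfrak{G}^l_{a_1,a_2+m}\in C^{\omega,m}_l$ when $a_1,a_2\le 0$ is precisely the engine that lets one "shift" a bad pair of indices down by $m$ at the cost of a combination with prescribed coefficients; iterating it, a suitable linear combination of $\mathfrak{G}_{m-1-i,\,m-1-j+i}$ over a residue class is analytic, and the coefficients that appear are exactly the ratios of binomials $\binom{k}{j+nm}\binom{\frac{(m-1)-j}{m}-1}{n}\big/\binom{\frac{(k-j)-(m-1)}{m}}{n}$ in \eqref{def:parl}. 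Hence $\partial^j_i(f)=0$ for the relevant $i$ means precisely that, degree by degree, the bad part of $\mathscr{F}_{f,l}$ cancels and what remains is analytic. Third, having established $F_f\in C^{\omega}(\mathcal D(\tfrac{l}{m},\tfrac{l+1}{m}))$ and $\varphi_{f,l}\in C^{\omega}([-1,0))\cap C^{\omega}((0,1])$, the norm bound \eqref{neq:FCn} is a soft consequence: on the space of polynomials of degree $\le k$ (finite-dimensional), $f\mapsto F_f$ and $f\mapsto\varphi_{f,l}$ are linear maps into the stated Banach spaces, so boundedness is automatic with a constant $C^n_k$ depending only on $n$, $k$, $m$; one records it this way rather than tracking constants through the recursion.

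**Main obstacle.** The delicate point is the bookkeeping in the second step: one must verify that the particular binomial coefficients in \eqref{def:parl} are exactly those produced by iterating the recursion \eqref{eq:anal2} along a residue class modulo $m$, and that the two "degenerate" values of $i$ (namely $i\equiv m-1$ and $i\equiv j-(m-1)\bmod m$, where an index hits a multiple of $m$ and the corresponding $\mathfrak G$ picks up the genuine $\arctan$/logarithmic contribution from \eqref{eq:anal11}) are handled by the separate clause — these do not need to vanish because $\mathfrak G_{0,km}(1,s)$ and its shifts are already in $C^{\omega}((0,1])\cap C^{\omega}([-1,0))$, though they obstruct the $C^{\omega,m}_l$-extension, which is why they are excluded from the sum defining $\partial^k_j$ but still matter elsewhere. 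Making the index arithmetic (especially the edge case $k\equiv m-2\bmod m$, where an extra relevant functional appears) fully rigorous, and checking that the non-positivity hypothesis $a_1,a_2\le 0$ needed for \eqref{eq:anal2} is actually met at each stage of the iteration (it is, since $m-1-i\le m-1$ and one only shifts downward), is the technical heart of the argument; everything else is assembling linear-algebra and the already-proved analyticity lemmas.
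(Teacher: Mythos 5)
Your strategy mirrors the paper's proof: decompose $f$ into homogeneous pieces $f_j$ of degree $j$, express $\mathscr{F}_{f_j,l}$ as a sum of $\mathfrak G_{(m-1)-i,(m-1)-(j-i)}$ over $0\le i\le j$, collect monomials by residue class $i\bmod m$, iterate the recursion \eqref{eq:anal2} (equivalently \eqref{eq:Gmulti}) to collapse each class to a single base $\mathfrak G$ with coefficient exactly $\partial^j_i(f)$, and get the norm bound \eqref{neq:FCn} for free from the linearity of $f\mapsto F_f$ on the finite-dimensional space of polynomials of degree $\le k$. This is the same route, and the recognition that the binomial ratios in \eqref{def:parl} are produced by iterating \eqref{eq:anal2} is the correct key observation.

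However, there is one genuine misstatement that matters. You claim that the $\mathfrak G$'s arising at the two exceptional residues $i\equiv m-1$ and $i\equiv j-(m-1)\bmod m$ are analytic only on the half-slices $(0,1]$ and $[-1,0)$ but \emph{obstruct the $C^{\omega,m}_l$-extension}. If that were so, your argument would establish only the statement $\varphi_{f_j,l}\in C^\omega([-1,0))\cap C^\omega((0,1])$, not $F_{f_j}\in C^\omega(\mathcal D(\tfrac{l}{m},\tfrac{l+1}{m}))$ --- and half of the lemma would remain unproved. In fact the opposite is true: for $i=m-1$ the base of the class is $\mathfrak G_{0,2(m-1)-j}$ and for $i\equiv j-(m-1)\bmod m$ the class reduces to $\mathfrak G_{0,k}$ or $\mathfrak G_{k,0}$ with $k<m$ (concretely $k=2(m-1)-j<m$ since $j\ge m-1$), and by \eqref{eq:anm} all of these lie in $C^{\omega,m}_l$, i.e.\ they \emph{do} extend analytically to the closed sector. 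So the whole $\xi_i$ is in $C^{\omega,m}_l$ regardless of the value of $\partial^j_i(f)$ --- this is exactly \emph{why} those $i$'s are harmless and are excluded from the vanishing assumptions. The only $\mathfrak G$ with a genuine jump across $s=0$ is $\mathfrak G_{0,m}$, and a short check of the index arithmetic (either index hits $0$ only at the excluded $i$, and there it lands strictly below $m$) shows it never appears as a base. You need to replace the appeal to \eqref{eq:anal11} for the exceptional classes with an appeal to \eqref{eq:anm} to make the $F_f$ half of the conclusion go through.
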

\begin{proof}
First note that \eqref{neq:FCn} follows directly from the first part of the lemma. Indeed, $f\mapsto F_{f}\in C^\omega(\mathcal{D}(\frac{l}{m},\frac{l+1}{m}))$ and $f\mapsto \varphi_{f,l}\in C^\omega([-1,0))\cap C^{\omega}((0,1])$
are linear operators on a finite-dimensional space, so they are bounded.  This gives \eqref{neq:FCn}.

By assumption, $f=\sum_{0\leq j\leq k}f_j$ with
\[f_j(\omega,\overline \omega)=\frac{1}{j!}\sum_{0\leq i\leq j}\binom{j}{i}\frac{\partial^jf}{\partial \omega^{i} \partial \overline{\omega}^{j-i}}(0,0)\omega^i\overline{\omega}^{j-i}.\]
We will show that if $\partial^j_{i}(f)=0$ for all $0\leq i\leq j\wedge(m-2)$ with $i\neq j-(m-1)\operatorname{mod} m$, then
\begin{equation*}\label{eq:analfj}
F_{f_j}\in C^\omega(\mathcal D(\tfrac{l}{m},\tfrac{l+1}{m}))\ \text{ and }\ \varphi_{f_j,l}\in C^\omega([-1,0))\cap C^{\omega}((0,1])\text{ for }0\leq l<m.
\end{equation*}
This gives our claim.

Note that
\begin{align*}\label{eq:Ff}
\begin{split}
\mathscr{F}_{f_j,l}&=\frac{1}{j!}\sum_{0\leq i\leq j}\binom{j}{i}\frac{\partial^jf}{\partial \omega^{i} \partial \overline{\omega}^{j-i}}(0,0)\mathfrak G_{(m-1)-i,(m-1)-(j-i)}
=\frac{1}{j!}\sum_{0\leq i< m}\xi_i,
\end{split}
\end{align*}
where
\[\xi_i(z,\overline z)=\!\!\sum_{0\leq n\leq \frac{j-i}{m}}\!\!\binom{j}{i+nm}\frac{\partial^jf}{\partial \omega^{i+nm} \partial \overline{\omega}^{(j-i)-nm}}(0,0)\mathfrak G_{(m-1)-i-nm,(m-1)-(j-i)+nm}(z,\overline{z}).\]
For every $1\leq n\leq \frac{j-i}{m}$ we have
\[a_1=(m-1)-i-nm< 0 \text{ and } a_2=(m-1)-(j-i)+(n-1)m< 0.\]
 In view of \eqref{eq:anal2},
\begin{align*}
&((m-1)-i-nm)\mathfrak G_{(m-1)-i-(n-1)m,(m-1)-(j-i)+(n-1)m}\\
&\quad+((m-1)-(j-i)+(n-1)m)\mathfrak G_{(m-1)-i-nm,(m-1)-(j-i)+nm}\in C^{\omega,m}_l,
\end{align*}
so
\begin{align*}
&\mathfrak G_{(m-1)-i-nm,(m-1)-(j-i)+nm}\\
&\quad=\frac{(\frac{(m-1)-i}{m}-n)}{(\frac{(j-i)-(m-1)}{m}-(n-1))}\mathfrak G_{(m-1)-i-(n-1)m,(m-1)-(j-i)+(n-1)m}+ C^{\omega,m}_l,
\end{align*}
It follows that for every $0\leq n\leq \frac{j-i}{m}$,
\begin{equation}\label{eq:Gmulti}
\mathfrak G_{(m-1)-i-nm,(m-1)-(j-i)+nm}=\frac{\binom{\frac{(m-1)-i}{m}-1}{n}}{\binom{\frac{(j-i)-(m-1)}{m}}{n}}\mathfrak G_{(m-1)-i,(m-1)-(j-i)}
+C^{\omega,m}_l.
\end{equation}
It follows that for every $0\leq i\leq m-1$,
\begin{equation*}
\xi_i=\partial^j_{i}(f)\mathfrak G_{(m-1)-i,(m-1)-(j-i)}+C^{\omega,m}_l.
\end{equation*}
If $i=m-1$ then by \eqref{eq:anm}, $\mathfrak G_{(m-1)-i,(m-1)-(j-i)}\in C^{\omega,m}_l$ so $\xi_i\in C^{\omega,m}_l$.
If $i=j-(m-1)\operatorname{mod} m$, then $(m-1)-(j-i)+nm=0$ with $n=\lfloor\frac{j-i}{m}\rfloor=\frac{j-i-(m-1)}{m}$.
Again by \eqref{eq:anm}, $\mathfrak G_{(m-1)-i-nm,(m-1)-(j-i)+nm}\in C^{\omega,m}_l$. In view of \eqref{eq:Gmulti}, it follows that
$\mathfrak G_{(m-1)-i,(m-1)-(j-i)}\in C^{\omega,m}_l$ and again $\xi_i\in C^{\omega,m}_l$. Hence
\begin{equation}\label{eq:anxi1}
\mathscr{F}_{f_j,l}=\frac{1}{j!}\sum_{\substack{0\leq i\leq j\wedge(m-2)\\i\neq j-(m-1)\operatorname{mod} m}}\partial^j_{i}(f)\mathfrak G_{(m-1)-i,(m-1)-(j-i)}+C^{\omega,m}_l.
\end{equation}
As $\partial^j_i(f_j)=0$ for $i\neq m-1$ and $i\neq j-(m-1) \ \operatorname{mod} m$, this yields $\mathscr{F}_{f_j,l}\in C^{\omega,m}_l$
 and
$F_{f_j}\in C^\omega(\mathcal{D}(\frac{l}{m},\frac{l+1}{m}))$.

Using  \eqref{eq:anal1} instead of \eqref{eq:anal2}, the same arguments show $\varphi_{f_j,l}{(s)}=\mathscr{F}_{f_j,l}(1,s)\in C^\omega([-1,0))\cap C^{\omega}((0,1])$.
\end{proof}

{
\begin{remark}\label{rem:irrel}
Let $k\geq m-1$ and let us consider $P(\omega,\overline{\omega})=\omega^{m-1}\overline{\omega}^{k-(m-1)}$ and $Q(\omega,\overline{\omega})=\omega^{k-(m-1)}\overline{\omega}^{m-1}$. Then
\[\mathscr{F}_{P,l}=\mathfrak{G}^l_{0,2(m-1)-k}\text{ and }\mathscr{F}_{Q,l}=\mathfrak{G}^l_{2(m-1)-k,0}.\]
As $2(m-1)-k<m$, in view of \eqref{eq:anm}, $\mathscr{F}_{P,l}, \mathscr{F}_{Q,l}\in C^{\omega,m}_l$, so $F_P$ and $F_Q$ are analytic on each closed angular sector. On the other hand, {by \eqref{def:parl}}
\[\partial^k_{m-1}(P)=k!\text{ and } \partial^k_{j}(Q)=k!\binom{\frac{(m-1)-j}{m}-1}{\frac{(k-j)-(m-1)}{m}},\] where $0\leq j<m$ is such that $j=k-(m-1)\operatorname{mod} m$. It follows that the smoothness of $F_f$ is not related to the vanishing of distributions $\partial^k_{m-1}(f)$ and $\partial^k_{j}(f)$.
\end{remark}
}

We finish this section by showing a smooth extension of $F_f$ on angular sectors.
\begin{theorem}\label{thm:Cketa}
Let $k\geq m-1$.
Suppose that $f\in C^{k}(\mathcal D)$ and  $\partial^j_{i}(f)=0$ for all $0\leq j< k$ and $0\leq i\leq j\wedge(m-2)$ with $i\neq j-(m-1)\operatorname{mod} m$. Then for every $r>0$ the map $F_f$ on every angular sector of $\mathcal D\cap \mathscr{S}(r)^{1/m}$ has a $C^{\widehat{\mathfrak{e}}(\sigma,k)}$-extension at $(0,0)$ (recall that $\widehat{\mathfrak{e}}(\sigma,k)=k-(m-1)+\eta$). Moreover, there exists $C_r>0$ such that
$\|F_f\|_{C^{\widehat{\mathfrak{e}}(\sigma,k)}(\mathcal D\cap \mathscr{S}(r)^{1/m})}\leq C_r\|f\|_{C^{k}}$.
\end{theorem}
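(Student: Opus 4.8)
The plan is to reduce the statement to the polynomial case handled in Lemma~\ref{lem:analf} via a Taylor expansion of $f$ at $(0,0)$, and then control the remainder term using the derivative bounds of Lemma~\ref{lem:estDF}. First I would write $f = P_{k} + R_{k}$, where $P_k$ is the Taylor polynomial of $f$ at $(0,0)$ of degree $k$ (in the variables $\omega,\overline\omega$) and $R_k$ is the remainder, so $D^j R_k(0,0)=0$ for $0\le j\le k$ and $R_k\in C^k(\mathcal D)$ with $\|R_k\|_{C^k}\lesssim\|f\|_{C^k}$. Since all the operators $f\mapsto F_f$, $f\mapsto\mathscr F_{f,l}$ are linear, it suffices to treat $F_{P_k}$ and $F_{R_k}$ separately. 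For $P_k$: the hypothesis $\partial^j_i(f)=0$ for $0\le j<k$ and the relevant range of $i$ passes to $P_k$ for those $j$; for the top degree $j=k$ there is no vanishing assumption, but by Remark~\ref{rem:irrel} and the structure of \eqref{eq:anxi1}, the degree-$k$ part $P_k^{(k)}$ of $P_k$ contributes to $\mathscr F_{P_k^{(k)},l}$ only through $\mathfrak G^l_{(m-1)-i,(m-1)-(k-i)}$ with $i=m-1$ or $i=k-(m-1)\bmod m$ (all other $\partial^k_i(P_k)=0$ would follow, but here we instead just keep those two harmless terms), and for those the exponents $a_1,a_2$ lie in $(-m,0]$ up to the shift, so by \eqref{eq:anm} and \eqref{eq:Gmulti} the corresponding $\mathfrak G\in C^{\omega,m}_l$. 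Hence $\mathscr F_{P_k,l}\in C^{\omega,m}_l$, so $F_{P_k}$ is real-analytic on each closed sector $\overline{\mathcal D}(\tfrac{l}{m},\tfrac{l+1}{m})$, in particular of class $C^{\widehat{\mathfrak e}(\sigma,k)}$ there, with a norm bound $\lesssim\|f\|_{C^k}$ coming from finite-dimensionality as in the first paragraph of the proof of Lemma~\ref{lem:analf}.

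The heart of the matter is the remainder $R_k$. Here I would apply Lemma~\ref{lem:estDF} with this $f$ replaced by $R_k$, $n$ ranging over $0\le n\le k-(m-1)+1 = k-m+2$ (the derivatives we must control to get $C^{k-(m-1)+\eta}$ regularity), $a=m-1$, and the hypothesis ``$D^j R_k(0,0)=0$ for $0\le j<k$'' — in fact $D^k R_k(0,0)=0$ too, so we may take $k$ in the lemma equal to $k$. With $a=m-1$ the exponent in \eqref{neq:estDF} is $(-2a+m+k-n)\wedge 0 = (k-m+2-n)\wedge 0$. For $n\le k-m+2$ this is $0$ (up to the logarithmic factor); for $n=k-m+2 = k-(m-1)+1$ it is exactly $0$ and the bound reads $|D^n F_{R_k}(\omega,\overline\omega)|\lesssim \|f\|_{C^k}(1+|\log|\omega||)$. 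A derivative of order $k-(m-1)+1$ bounded by $O(|\log|\omega||)$ near $\omega=0$, combined with continuity (indeed $o(1)$, via the last clause of Lemma~\ref{eqn;F(z)bound} pushed through the Fa\`a di Bruno computation) of the derivatives of order $\le k-(m-1)$, is exactly the statement that $F_{R_k}$ extends to a function whose $(k-(m-1))$-th derivative has modulus of continuity $O(\eta)$, i.e. $F_{R_k}\in C^{k-(m-1)+\eta}$ on $\mathcal D\cap\mathscr S(r)^{1/m}$. To make this last implication precise I would use the standard fact that if a $C^{N}$ function on a convex set has its $(N{+}1)$-st derivative bounded by $C(1+|\log\dist(\cdot,0)|)$ then its $N$-th derivative has $\eta$ as a modulus of continuity (integrate the $(N{+}1)$-st derivative along segments; $\int_0^\delta \log(1/t)\,dt = O(\eta(\delta))$), together with the quantitative norm bound that this procedure produces.

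The main obstacle I anticipate is bookkeeping at the borderline degree: showing that the Taylor polynomial part really does land in $C^{\omega,m}_l$ without a vanishing hypothesis on $\partial^k_i(f)$, which requires carefully checking (as in Remark~\ref{rem:irrel}) that the only surviving $\mathfrak G$'s at degree $k$ are the two ``irrelevant'' ones with exponent $2(m-1)-k<m$ that are automatically in $C^{\omega,m}_l$ by \eqref{eq:anm}; and matching the two pieces so that the claimed regularity $\widehat{\mathfrak e}(\sigma,k)=k-(m-1)+\eta$ — rather than something half a unit better or worse — comes out. A secondary technical point is that Lemma~\ref{lem:estDF} is stated on the circular-sector region $\mathscr S(r)^{1/m}$, so the extension is obtained sector-by-sector on $\overline{\mathcal D}(\tfrac{l}{m},\tfrac{l+1}{m})\cap\mathscr S(r)^{1/m}$ and one must note that finitely many such rotated sectors cover $\mathcal D\cap\mathscr S(r)^{1/m}$ near $0$, with the $C^{\widehat{\mathfrak e}}$-norms on the overlaps controlled uniformly; the constant $C_r$ then absorbs the dependence on $r$ coming from $C_{r,n}$ in \eqref{neq:estDF} and the finitely many $n\le k-m+2$.
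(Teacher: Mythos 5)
Your decomposition $f = P_k + R_k$, with $P_k$ the Taylor polynomial through degree $k$, is off by one and the error is not cosmetic: it leads you to a false statement about $F_{P_k}$. Lemma~\ref{lem:analf} needs the hypothesis $\partial^j_i=0$ up to and including $j=k$ for a polynomial of degree $k$, but Theorem~\ref{thm:Cketa} only assumes vanishing for $j<k$, so you cannot apply the lemma to $P_k$ directly. You try to get around this by invoking Remark~\ref{rem:irrel} to argue that the degree-$k$ part $P_k^{(k)}$ contributes only ``harmless'' $\mathfrak G$'s; this misreads the remark. Equation~\eqref{eq:anxi1} (in the proof of Lemma~\ref{lem:analf}) says
\[\mathscr{F}_{P_k^{(k)},l}=\frac{1}{k!}\sum_{\substack{0\leq i\leq k\wedge(m-2)\\i\neq k-(m-1)\bmod m}}\partial^k_{i}(f)\,\mathfrak G_{(m-1)-i,(m-1)-(k-i)}+C^{\omega,m}_l,\]
so \emph{every} relevant $\partial^k_i(f)$ appears as a coefficient of a $\mathfrak G$ that is genuinely non-analytic. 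Remark~\ref{rem:irrel} only identifies the two \emph{excluded} indices $i=m-1$ and $i=k-(m-1)\bmod m$ as harmless; it does not say the others disappear. Since the theorem does not assume $\partial^k_i(f)=0$ for the relevant $i$, $\mathscr F_{P_k,l}\notin C^{\omega,m}_l$ in general, and the claim ``$F_{P_k}$ is real-analytic on each closed sector'' is simply false. (Indeed, if it were true, you would be proving analytic regularity under weaker hypotheses than Lemma~\ref{lem:analf}, which contradicts the whole design of $\widehat{\mathfrak e}(\sigma,k)$.)

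The fix is small and is exactly what the paper does: take the Taylor polynomial only through degree $k-1$, writing $f=f_{<k}+e_f$ with $f_{<k}=\sum_{0\le j<k}f_j$. Then $f_{<k}$ has no degree-$k$ part, so $\partial^k_i(f_{<k})=0$ automatically, and Lemma~\ref{lem:analf} applies to give $F_{f_{<k}}\in C^\omega$ on each sector with the bound coming from finite-dimensionality. The remainder $e_f=P_k^{(k)}+R_k$ still satisfies $D^j e_f(0,0)=0$ for $0\le j<k$ (this is all Lemma~\ref{lem:estDF} asks for — you do \emph{not} need $D^ke_f(0,0)=0$), so your estimate of the $(k-(m-2))$-nd derivatives by $C_{r}\|f\|_{C^k}(1+|\log|\omega||)$ and the integration argument giving a modulus of continuity $O(\eta)$ for the $(k-(m-1))$-st derivatives go through unchanged. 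That part of your write-up (the choice $a=m-1$, the exponent computation $(-2a+m+k-n)\wedge 0=0$ at $n=k-(m-2)$, integrating the log-bound to get $\eta$-continuity, and assembling finitely many sectors inside $\mathscr S(r)^{1/m}$) matches the paper's argument and is correct; your appeal to the last clause of Lemma~\ref{eqn;F(z)bound} for the lower-order derivatives is unnecessary — boundedness of the top derivative plus integration already gives continuity of all lower-order ones.
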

\begin{proof}
Let $\mathscr{A}$ be an angular sector of $\mathcal D\cap \mathscr{S}(r)^{1/m}$. Let us decompose $f=f_{<k}+e_f$ with
\[f_{<k}(\omega,\overline \omega)=\sum_{0\leq j< k}\frac{1}{j!}\sum_{0\leq i\leq j}\binom{j}{i}\frac{\partial^j f}{\partial\omega^i\partial\overline{\omega}^{j-i}}(0,0)\omega^i\overline{\omega}^{j-i}.\]
Note that the operator $C^{k}(\mathcal D)\ni f\mapsto f_{<k}\in C^{k}(\mathcal D)$ is bounded.
By Lemma~\ref{lem:analf}, $F_{f_{<k}}$ is analytic and for every $n\geq 0$ there exists $C^n_k>0$ such that
$\|F_{f_{<k}}\|_{C^n(\mathcal{D}(\frac{l}{m},\frac{l+1}{m}))}\leq C^n_k\|f\|_{C^{k}(\mathcal D)}$ for every $0\leq l<m$ . On the other hand, $D^j(e_f)=0$ for every $0\leq j< k$.
In view of Lemma~\ref{lem:estDF}, if $(n_1,n_2)\in\Z^2_{\geq 0}$ is such that $n_1+n_2=n= k-(m-2)$ then for every $\omega\in \mathscr A$,
\[\left|\frac{\partial^{n} {F_{e_f}}(\omega,{\overline \omega})}{\partial \omega^{n_1}\partial {\overline \omega}^{n_2}}\right| \leq
C_{r,n}\|e_f\|_{C^{k}(\mathcal{D})}(1+|\log|\omega||)|\omega|^{(k-n-(m-2))\wedge 0}.\]
As $\|D^{k-(m-2)}F_{e_f}(\omega,{\overline \omega})\|\leq C_{r,n}\|e_f\|_{C^{k}(\mathcal{D})}(1+|\log|\omega||)$,
$D^{k-(m-1)}F_{e_f}$ on $\mathscr{A}$ has a continuous extension on $\overline{\mathscr{A}}=\mathscr{A}\cup\{(0,0)\}$ with the modulus of continuity bounded by a multiplicity of $\eta$.
Therefore, $F_{e_f}$ can be extended to a $C^{k-(m-1)+\eta}$-function on $\overline{\mathscr{A}}$
 and  $\|F_{e_f}\|_{C^{k-(m-1)+\eta}(\overline{\mathscr{A}})}\leq C\|e_f\|_{C^{k}(\mathcal{D})}\leq C'\|f\|_{C^{k}(\mathcal{D})}$. As $F_f=F_{f_{<k}}+F_{e_f}$,
this gives our claim.
\end{proof}

\section{Local analysis of $\varphi_f$}\label{sec;laphi}
This section is devoted to computing a limiting behavior of higher derivatives of $\varphi_f$ related to singularities on  angular sectors of $\mathcal{D}$. We introduce a family of functionals $\mathscr{C}^k_l$ which are responsible
for the asymptotic behaviour of $\varphi_{f,l}$ around zero. The new result is inspired by the  approach for multi-saddles (related to polynomial singularities) in \cite{Fr-Ki}.
The main results of this section (Theorem~\ref{cor:Hold}) plays a central role in proving Theorem \ref{thm1} in \S \ref{sec;GP} as well as
is applied to extend the regularity of $F_f$ (obtained in Theorem~\ref{thm:Cketa}) to the closure of any sector $\mathcal{D}(\frac{l}{m},\frac{l+1}{m})$.

\subsection{Preliminary properties of $\mathfrak G_{a_1,a_2}(1,s)$}\label{sec;prelim}
Firstly we present limiting behaviour of $\frac{d^n}{ds^n}\mathfrak G_{a_1,a_2}(1,s)$ around zero. We show that for large enough higher derivatives  their asymptotic is polynomial with a weight factor established by the Beta-like function $\mathfrak{B}$. This is further used in evaluating {asymptotics} of $D^{n+1}\varphi_{f,l}$ in \S \ref{sec;limfact}. \medskip

Note that for any pair of integers $a_1,a_2$,
\begin{align}\label{eq:derGa}
\begin{split}
\frac{d}{ds}\mathfrak G_{a_1,a_2}(1,s)&=-\frac{2\iota a_1}{m}\mathfrak G_{a_1+m,a_2}(1,s)+ C^\omega((0,1])\cap C^\omega([-1,0))\\
&=\frac{2\iota a_2}{m}\mathfrak G_{a_1,a_2+m}(1,s)+ C^\omega((0,1])\cap C^\omega([-1,0)).
\end{split}
\end{align}
Indeed,
\begin{align*}
\frac{d}{ds}\mathfrak G_{a_1,a_2}(u,s)&=\int_{-1}^u\frac{d}{ds}\mathfrak F_{a_1,a_2}(v,s) dv=\int_{-1}^u \iota\left(\frac{\partial}{\partial z}-\frac{\partial}{\partial \overline z}\right)\mathfrak F_{a_1,a_2}(v,s) dv\\
&=\int_{-1}^u \iota\left(-\frac{a_1}{m}\mathfrak F_{a_1+m,a_2}(v,s)+\frac{a_2}{m}\mathfrak F_{a_1,a_2+m}(v,s)\right)\,dv\\
&=\frac{\iota}{m}(-{a_1}\mathfrak G_{a_1+m,a_2}(u,s)+{a_2}\mathfrak G_{a_1,a_2+m}(u,s)).
\end{align*}
In view of \eqref{eq:anal1}, this gives \eqref{eq:derGa}.
It follows that
for every $n\geq 1$,
\begin{equation}\label{eq:dnds}
\frac{d^n}{ds^n}\mathfrak G_{a_1,a_2}(1,s)=n!(-2\iota)^n \binom{-\frac{a_2}{m}}{n}\mathfrak G_{a_1,a_2+nm}(1,s)+ C^\omega((0,1])\cap C^\omega([-1,0))
\end{equation}
and
\begin{equation}\label{eq:dndsus}
\frac{d^n}{ds^n}\mathfrak G_{a_1,a_2}=\iota^n n!\sum_{0\leq j\leq n}(-1)^{n-j} \binom{-\frac{a_1}{m}}{j}\binom{-\frac{a_2}{m}}{n-j}\mathfrak G_{a_1+jm,a_2+(n-j)m}.
\end{equation}

\medskip

Suppose that $a_1,a_2$ are integers such that  $a_1+a_2>m$. Then for every  $s\in(0,1]$,
\begin{align*}
\mathfrak G^0_{a_1,a_2}(1,s)&=\int_{-1}^1G_0(v+\iota s)^{-a_1}\overline{G_0(v+\iota s)}^{-a_2} dv\\&=
s^{-\frac{a_1+a_2-m}{m}}\int_{-1/s}^{1/s}G_0(x+\iota)^{-a_1}\overline{G_0(x+\iota)}^{{-a_2}} dx.
\end{align*}
{Since $|G_0(x+\iota)^{-a_1}\overline{G_0(x+\iota)}^{{-a_2}}|=\sqrt{x^2+1}^{-\frac{a_1+a_2}{m}}$ with $\frac{a_1+a_2}{m}>1$, we have}
\begin{align*}
\lim_{s\to 0^+} s^{\frac{a_1+a_2-m}{m}}\mathfrak G^0_{a_1,a_2}(1,s)=\mathfrak{B}(\tfrac{a_1}{m},\tfrac{a_2}{m}):=\int_{\R}G_0(x+\iota)^{-a_1}\overline{G_0(x+\iota)}^{{-a_2}} dx.
\end{align*}
Note that, {making the substitution $x=\cot t$, we obtain}
\[\mathfrak{B}(\tfrac{a_1}{m},\tfrac{a_2}{m})=\int_0^{\pi}\frac{e^{\iota\frac{a_2-a_1}{m}t}}{\sin^{-\frac{a_1+a_2}{m}+2}t}dt\]
and for $\frac{a_1}{m},\frac{a_2}{m}\notin \Z_{\leq 0}$,
\[\int_0^{\pi}\frac{e^{\iota\frac{a_2-a_1}{m}t}}{\sin^{-\frac{a_1+a_2}{m}+2}t}dt=\frac{\pi e^{\iota\frac{a_2-a_1}{2m}\pi}}{2^{\frac{a_1+a_2-2m}{m}}\frac{a_1+a_2-m}{m}} \frac{\Gamma(\frac{a_1}{m}+\frac{a_2}{m}-1)}{\Gamma(\frac{a_1}{m})\Gamma(\frac{a_2}{m})}.\]
For any pair $x,y$ of real numbers such that  $x,y\notin \Z_{\leq 0}$ and $x+y\notin \Z_{\leq 1}$ let
\[\mathfrak{B}(x,y)=\frac{\pi e^{\iota\frac{\pi}{2}(y-x)}}{2^{x+y-2}(x+y-1)B(x,y)}=\frac{\pi e^{\iota\frac{\pi}{2}(y-x)}}{2^{x+y-2}}\frac{\Gamma(x+y-1)}{\Gamma(x)\Gamma(y)}\neq 0.\]
Note that
\begin{equation}\label{eq:conj}
\overline{\mathfrak{B}(x,y)}=\mathfrak{B}(y,x)= e^{-\iota{\pi}(y-x)}\mathfrak{B}(x,y).
\end{equation}
By \eqref{eq:Gl}, for any pair of integers $a_1,a_2$ such that $a_1+a_2>m$ and $\frac{a_1}{m},\frac{a_2}{m}\notin \Z_{\leq 0}$,
\begin{align}\label{eq:0pm}
\begin{split}
\lim_{s\to 0^+} |s|^{\frac{a_1+a_2-m}{m}}\mathfrak G^l_{a_1,a_2}(1,s)&=\theta_0^{2l(a_2-a_1)}\mathfrak{B}(\tfrac{a_1}{m},\tfrac{a_2}{m}),\\
\lim_{s\to 0^-} |s|^{\frac{a_1+a_2-m}{m}}\mathfrak G^l_{a_1,a_2}(1,s)&=\theta_0^{(2l+1)(a_2-a_1)}\mathfrak{B}(\tfrac{a_1}{m},\tfrac{a_2}{m}).
\end{split}
\end{align}
In view of \eqref{eq:anal11}, if $\frac{a_1}{m}\in \Z_{\leq 0}$ or $\frac{a_2}{m}\in \Z_{\leq 0}$ then the limit is zero.
For this reason, we extend the definition of the function $\mathfrak{B}$ by letting
\begin{equation}\label{def:Bint}
\mathfrak{B}(x,y)=0\text{ if  } x\in \Z_{\leq 0}\text{ or }y\in \Z_{\leq 0}.
\end{equation}

{In the next three lemmas, we present a crucial decomposition of the map $\mathfrak G^l_{a_1,a_2}$ depending on the value of $a=a_1+a_2$. We deal with three cases: $a>m$, $a=m$ and $a<m$.}
\begin{lemma}\label{lem:Bs^}
Suppose that $a=a_1+a_2>m$.
For every $0<r<1$ there exist $\rho^\pm,\varrho^\pm\in C^\omega([0,r])$ such that if $0<u\leq 1$ and $0<|s|\leq ru$ then
\begin{align}\label{eq:Bs^-}
\begin{split}
\mathfrak G^l_{a_1,a_2}(u,s)&\!=\!\theta_0^{2l(a_2-a_1)}\!\big(\mathfrak{B}(\tfrac{a_1}{m},\tfrac{a_2}{m})|s|^{-\frac{a-m}{m}}+\rho^+(|s|)+u^{-\frac{a-m}{m}}\varrho^{+}(\tfrac{|s|}{u})\big)
\text{ if }s>0,\\
\mathfrak G^l_{a_1,a_2}(u,s)&\!=\!\theta_0^{(2l+1)(a_2-a_1)}\!\big(\mathfrak{B}(\tfrac{a_1}{m},\tfrac{a_2}{m})|s|^{-\frac{a-m}{m}}\!+\!\rho^-(|s|)\!+\!u^{-\frac{a-m}{m}}\!\varrho^{-}(\tfrac{|s|}{u})\big)
\text{ if }s<0.
\end{split}
\end{align}
\end{lemma}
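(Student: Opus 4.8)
The plan is to obtain the decomposition \eqref{eq:Bs^-} by splitting the defining integral $\mathfrak G^l_{a_1,a_2}(u,s)=\int_{-1}^u \mathfrak F^l_{a_1,a_2}(v,s)\,dv$ into a part near the singularity and a part away from it, and to analyze each piece separately. By \eqref{eq:Glu} it suffices to treat $l=0$ and $s>0$: the two cases in \eqref{eq:Bs^-} then follow by multiplying through by the appropriate root of unity and, for $s<0$, using the conjugation relation in \eqref{eq:Glu} together with \eqref{eq:conj}. So fix $0<r<1$, $0<u\le 1$, $0<s\le ru$, and write
\[
\mathfrak G^0_{a_1,a_2}(u,s)=\int_{-1}^{0}\mathfrak F^0_{a_1,a_2}(v,s)\,dv+\int_{0}^{u}\mathfrak F^0_{a_1,a_2}(v,s)\,dv=:A(s)+B(u,s).
\]
For $A(s)$, note that on $[-1,0]$ the branch $G_0(v+\iota s)$ stays in a sector where the integrand extends analytically (here we use \eqref{eq:anal11}-type reasoning, namely that for $v\le 0$ the point $v+\iota s$ with small $s>0$ lies away from the positive real axis); hence $A(s)$ is the restriction of a function analytic near $s=0$, and on a small interval $[0,r']$ it contributes to $\rho^+$. (For the range $s\in[r',r]$ we may absorb everything into $\varrho^+$ or enlarge $\rho^+$ trivially since there is no singularity there — this bookkeeping at the non-singular end is routine.)

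The substantive term is $B(u,s)=\int_0^u G_0(v+\iota s)^{-a_1}\overline{G_0(v+\iota s)}^{-a_2}\,dv$. Substituting $v=sx$ gives
\[
B(u,s)=s^{-\frac{a-m}{m}}\int_{0}^{u/s}G_0(x+\iota)^{-a_1}\overline{G_0(x+\iota)}^{-a_2}\,dx.
\]
Now I would compare the integral $\int_0^{u/s}$ with $\int_0^\infty$. Since $u/s\ge 1/r$, write $\int_0^{u/s}=\int_0^\infty-\int_{u/s}^\infty$. The first integral is a constant, and recalling $\mathfrak{B}(\tfrac{a_1}{m},\tfrac{a_2}{m})=\int_\R G_0(x+\iota)^{-a_1}\overline{G_0(x+\iota)}^{-a_2}\,dx$ from the text preceding the lemma, I need to relate $\int_0^\infty$ to the full real-line integral; the complementary piece $\int_{-\infty}^0$, after undoing the substitution, reproduces exactly the analytic term $A(s)$ up to sign, so the constants combine to give $\mathfrak{B}(\tfrac{a_1}{m},\tfrac{a_2}{m})$ as the coefficient of $|s|^{-\frac{a-m}{m}}$. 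For the tail $\int_{u/s}^\infty G_0(x+\iota)^{-a_1}\overline{G_0(x+\iota)}^{-a_2}\,dx$, the key observation is that for large $x$ the integrand is an analytic function of $1/x$ (expand $G_0(x+\iota)=x^{1/m}(1+\iota/x)^{1/m}$ and its conjugate in powers of $1/x$, using $\frac{a_1+a_2}{m}>1$ to ensure convergence of the tail integral), so $\int_{u/s}^\infty(\cdots)\,dx = (u/s)^{-(\frac{a}{m}-1)}\Psi(s/u)$ for some $\Psi$ analytic near $0$. Multiplying back by $s^{-\frac{a-m}{m}}$ turns this into $u^{-\frac{a-m}{m}}\varrho^+(s/u)$ with $\varrho^+$ analytic on $[0,r]$, as required.

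The main obstacle I anticipate is \emph{not} the asymptotics themselves but the careful branch-tracking: verifying that $G_0(v+\iota s)$ and $G_0(x+\iota)$ stay in the half-plane $\Arg\in[0,\pi)$ throughout the relevant ranges so that the principal-branch formulas and their Taylor expansions in $1/x$ are legitimate, and confirming that the analytic pieces from the $[-1,0]$ part of the integral combine with $\int_{-\infty}^0$ exactly as claimed to recover the constant $\mathfrak{B}(\tfrac{a_1}{m},\tfrac{a_2}{m})$ rather than some other constant. A secondary technical point is making the two functions $\rho^\pm$, $\varrho^\pm$ genuinely defined and analytic on the \emph{closed} interval $[0,r]$ uniformly (the statement asks for $C^\omega([0,r])$); this is where one uses that the tail integrand, as a function of $1/x$, has a power series with positive radius of convergence uniform in the parameters, so that $\varrho^\pm$ extends analytically past $r$. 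Once these analyticity bookkeeping points are settled, the identity \eqref{eq:Bs^-} drops out by collecting terms, and the analogous decomposition for $a=m$ and $a<m$ (the next two lemmas) will use the same splitting with the power $|s|^{-\frac{a-m}{m}}$ replaced by a logarithm or by a bounded analytic term, respectively.
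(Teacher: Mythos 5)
Your strategy is close in spirit to the paper's (change variables to remove the weight, compare with $\int_{\R}$ to extract $\mathfrak{B}$, handle the tails by expanding in $1/x$), but there is a concrete error in the treatment of the piece $A(s)=\int_{-1}^0\mathfrak F^0_{a_1,a_2}(v,s)\,dv$, and the cancellation claim you rely on to recover $\mathfrak{B}$ is misstated in a way that hides the error.

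The claim that $A(s)$ ``is the restriction of a function analytic near $s=0$'' is false. The integrand is analytic in $s$ for each fixed $v<0$, but $|\mathfrak F^0_{a_1,a_2}(v,s)|=(v^2+s^2)^{-a/(2m)}$ and $a>m$ means $a/(2m)>1/2$; hence the integral over $(-1,0]$ picks up a non-integrable singularity at $v=0$ as $s\to 0^+$, and in fact $A(s)\asymp s^{-(a-m)/m}\to\infty$. So $A(s)$ cannot contribute to $\rho^+$ by itself. Your next sentence then asserts that $\int_{-\infty}^0(\cdots)\,dx$ ``reproduces exactly the analytic term $A(s)$ up to sign,'' which, besides being inconsistent with the earlier claim (a constant times $s^{-(a-m)/m}$ is not analytic at $0$), is also not the right identity: undoing the substitution $x=v/s$ turns $s^{-(a-m)/m}\int_{-\infty}^0(\cdots)\,dx$ into $\int_{-\infty}^0\mathfrak F^0_{a_1,a_2}(v,s)\,dv$, which equals $A(s)+\int_{-\infty}^{-1}\mathfrak F^0_{a_1,a_2}(v,s)\,dv$, not $\pm A(s)$.

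The repair is small but necessary: write $c_-:=\int_{-\infty}^0 G_0(x+\iota)^{-a_1}\overline{G_0(x+\iota)}^{-a_2}\,dx$ (finite because $a>m$) and observe that the divergent pieces cancel in the combination
\[
A(s)-s^{-(a-m)/m}c_- \;=\; -\int_{-\infty}^{-1}\mathfrak F^0_{a_1,a_2}(v,s)\,dv ,
\]
and \emph{this} quantity is analytic in $s$ on $[0,r]$ for any $r<1$, because the domain of integration stays away from $v=0$ and the integrand is uniformly dominated by $|v|^{-a/m}\in L^1((-\infty,-1])$. Equivalently — and this is what the paper does — one can substitute $t=-1/x$ in $\int_{-\infty}^{-1/s}$ and use the Taylor series of $t\mapsto -G_0(-1+\iota t)^{-a_1}\overline{G_0(-1+\iota t)}^{-a_2}$ (radius of convergence $1$) to show $s^{1-a/m}\int_0^s\xi_-(t)t^{a/m-2}\,dt\in C^\omega([0,r])$. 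The analysis of the other tail $\int_{u/s}^\infty$ in your proposal is sound and matches the paper; once the bookkeeping above is corrected, the remaining steps close. The paper avoids the pitfall by not splitting $\int_{-1}^u$ at $v=0$ at all: it scales the whole integral first, then writes $\int_{-1/s}^{u/s}=\int_{\R}-\int_{-\infty}^{-1/s}-\int_{u/s}^{\infty}$ and treats both tails symmetrically, so no spurious singular piece like $A(s)$ ever appears.
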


\begin{proof}
In view of \eqref{eq:Glu} and \eqref{eq:conj}, it suffices to show the first line of \eqref{eq:Bs^-} for $l=0$.

 By change of variables used twice, for every $s,u\in (0,1]$,
\begin{align*}
\mathfrak G^0_{a_1,a_2}(u,s)&=
s^{-\frac{a-m}{m}}\int_{-1/s}^{u/s}G_0(x+\iota)^{-a_1}\overline{G_0(x+\iota)}^{{-a_2}} dx\\
&=s^{-\frac{a-m}{m}}\Big(\int_{-\infty}^{+\infty}-\int_{-\infty}^{-1/s}-\int_{u/s}^{+\infty}\Big)G_0(x+\iota)^{-a_1}\overline{G_0(x+\iota)}^{{-a_2}} dx\\
&=s^{-\frac{a-m}{m}}\mathfrak{B}(\tfrac{a_1}{m},\tfrac{a_2}{m})+s^{1-\frac{a}{m}}\Big(\int_0^{s/u}\frac{\xi_+(t)}{t^{2-\frac{a}{m}}}dt+\int_0^s\frac{\xi_-(t)}{t^{2-\frac{a}{m}}}dt\Big),
\end{align*}
where $\xi_{\pm}:\R\to\C$, $\xi_\pm(t)=-G_0(\pm 1+\iota t)^{-a_1}\overline{G_0(\pm 1+\iota t)}^{{-a_2}}$
is an analytic map with the radius of convergence at $0$ equal to $1$. Then for every $0<r< 1$ let $\sum_{n\geq 0}|c^\pm_n|r^n<+\infty$ such that $\sum_{n\geq 0}c^\pm_nt^n$ tends to $\xi_\pm(t)$ uniformly on $[0,r]$.
As $\frac{a}{m}>1$,
\[\frac{1}{t^{2-\frac{a}{m}}}\sum_{n\geq 1}c^\pm_nt^n=\sum_{n\geq 1}c^\pm_nt^{(n-1)+\frac{a}{m}-1}\text{ tends on $[0,r]$ uniformly to }\frac{\xi_\pm(t)-c^\pm_0}{t^{2-\frac{a}{m}}}.\]
It follows that
\[s^{1-\frac{a}{m}}\int_0^s\frac{(\xi_\pm(t)-c^\pm_0)}{t^{2-\frac{a}{m}}}dt=s^{1-\frac{a}{m}}\sum_{n\geq 1}c^\pm_n\int_0^st^{(n-1)+\frac{a}{m}-1}dt=\sum_{n\geq 1}\frac{c^\pm_ns^n}{n+\frac{a}{m}-1}.\]
Since $\sum_{n\geq 0}|c^\pm_n|r^n<+\infty$, the map $s^{1-\frac{a}{m}}\int_0^s\frac{(\xi_\pm(t)-c^\pm_0)}{t^{2-\frac{a}{m}}}dt\in C^\omega([0,r])$. Moreover,
\[s^{1-\frac{a}{m}}\int_0^s\frac{\xi_\pm(t)}{t^{2-\frac{a}{m}}}dt =\frac{c^\pm_0}{\frac{a}{m}-1}+s^{1-\frac{a}{m}}\int_0^s\frac{(\xi_\pm(t)-c^\pm_0)}{t^{2-\frac{a}{m}}}dt,\]
so $\widetilde{\xi}_\pm(s)=s^{1-\frac{a}{m}}\int_0^s\frac{\xi_\pm(t)}{t^{2-\frac{a}{m}}}dt\in C^\omega([0,r])$. As
\[\mathfrak G^0_{a_1,a_2}(u,s)=s^{-\frac{a-m}{m}}\mathfrak{B}(\tfrac{a_1}{m},\tfrac{a_2}{m})+u^{-\frac{a-m}{m}}\widetilde{\xi}_+(s/u)+\widetilde{\xi}_-(s)\text{ if }0\leq s/u\leq r,\]
this completes the proof of \eqref{eq:Bs^-}.
\end{proof}

By definition, for every natural number $n$ if $x,y\notin\Z$ and $x+y\notin\Z_{\leq 1}$  then
\begin{equation}\label{eq:beta}
(2\iota)^n\tbinom{-y}{n}\mathfrak{B}(x,y+n)=\tbinom{x+y+n-2}{n}\mathfrak{B}(x,y)=(-2\iota)^n\tbinom{-x}{n}\mathfrak{B}(x+n,y).
\end{equation}
We can extend again the domain of the function $\mathfrak{B}$ by adding the pairs $(x,y)$ such that $x,y\notin\Z$ and $x+y\in \Z_{\leq 1}$. For every such pair we let
$\mathfrak{B}(x,y)=\frac{\pi e^{\iota\frac{\pi}{2}(y-x)}}{2^{x+y-2}}\frac{\Gamma(x+y-1)}{\Gamma(x)\Gamma(y)}$, where we adopt the convention
\[\Gamma(0):=\lim_{x\to 0}x\Gamma(x)=1 \text{ and } \Gamma(-n):=\frac{\Gamma(0)}{(-1)\cdots(-n)}=\frac{1}{(-1)\cdots(-n)}\] for any $n\in\N$.
Then we also have
\begin{equation}\label{eq:beta1}
\tbinom{-x}{n}\mathfrak{B}(x+n,y-n)=(-1)^n\tbinom{-y+n}{n}\mathfrak{B}(x,y).
\end{equation}
The extended $\Gamma$-function satisfies $\Gamma(x+1)=x\Gamma(x)$ for all $x\in\R\setminus \{0\}$ and $\Gamma(1)=\Gamma(0)=1$. It follows that
\eqref{eq:beta} holds even when $x+y+n\in\Z_{\leq 1}$.


Finally note that, if $x+y=1$ then we also have
\begin{equation}\label{eq:beta3}
\mathfrak{B}(x,y)=\frac{2\pi e^{\iota\pi(y-1/2)}}{\Gamma(1-y)\Gamma(y)}=-2\iota e^{\iota\pi y}\sin (\pi y)=1-e^{2\iota\pi y}=1+e^{\iota\pi(y-x)}.
\end{equation}

\begin{lemma}\label{lem:Blog}
Suppose that $a=a_1+a_2=m$. 
There exist $\rho^\pm,\varrho^\pm\in C^\omega([0,1])$ such that for all $0<u\leq 1$ and $0<|s|\leq u$,
\begin{align}\label{eq:Bs^-log}
\begin{split}
&\mathfrak G^l_{a_1,a_2}(u,s)\!=\!\theta_0^{2l(a_2-a_1)}\!\big(\!-\!\mathfrak{B}(\tfrac{a_1}{m},\tfrac{a_2}{m})\!\log|s|\!+\!\log u\!+\!\rho^+(|s|)\!+\!\varrho^{+}(\tfrac{|s|}{u})\big)\! \text{ if }s>0,\\
&\mathfrak G^l_{a_1,a_2}(u,s)\!=\!\theta_0^{(2l+1)(a_2-a_1)}\!\big(\!-\!\mathfrak{B}(\tfrac{a_1}{m},\tfrac{a_2}{m})\!\log|s|\!+\!\theta_0^{a_2-a_1}
\log u\!+\!\rho^-(|s|)\!+\!\varrho^{-}(\tfrac{|s|}{u})\big)\! \text{ if }s<0.
\end{split}
\end{align}
\end{lemma}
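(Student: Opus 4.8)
\textbf{Proof plan for Lemma~\ref{lem:Blog}.}
The strategy is to mimic the proof of Lemma~\ref{lem:Bs^}, but to handle the critical exponent $a=m$ (so that $\tfrac{a}{m}-1=0$) where the power $s^{1-a/m}$ degenerates to a constant and logarithms appear. First I would reduce to $l=0$: by \eqref{eq:Glu} together with \eqref{eq:conj}, the case $s<0$ follows from the case $s>0$ by complex conjugation and the prefactor bookkeeping $\theta_0^{(2l+1)(a_2-a_1)}$, so it suffices to prove the first line of \eqref{eq:Bs^-log} with $l=0$. (One must double-check that the $\theta_0^{a_2-a_1}\log u$ term in the $s<0$ line is exactly what conjugation of $\log u$ produces after the change of branch; this is where the asymmetry between the two lines comes from.)

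Next, following the computation in Lemma~\ref{lem:Bs^}, for $0<s\le u\le 1$ I would write, after the substitution $v=sx$,
\[
\mathfrak G^0_{a_1,a_2}(u,s)=\int_{-1/s}^{u/s}G_0(x+\iota)^{-a_1}\overline{G_0(x+\iota)}^{-a_2}\,dx,
\]
using that $a_1+a_2=m$ makes the power of $s$ vanish. Splitting $\int_{-1/s}^{u/s}=\int_{-1/s}^{-1}+\int_{-1}^{1}+\int_{1}^{u/s}$ isolates a fixed finite integral $\int_{-1}^1$ (which is an analytic function of nothing, i.e.\ a constant) plus two tail integrals. In the tails, substitute back $t=1/x$ (so $x\to\pm\infty$ corresponds to $t\to 0^\pm$): since the integrand decays like $|x|^{-a_1-a_2}=|x|^{-m}$ times bounded analytic factors, each tail becomes $\int_0^{s}$ or $\int_0^{s/u}$ of $\xi_\pm(t)t^{-2+a/m}=\xi_\pm(t)t^{-1}$ with $\xi_\pm$ analytic on $[0,1]$ with radius of convergence $1$. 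The key point is that $\int_0^\delta \xi_\pm(t)t^{-1}\,dt = c_0^\pm\log\delta + (\text{analytic in }\delta)$, because the constant term $c_0^\pm=\xi_\pm(0)$ produces the logarithm and the higher-order terms $\sum_{n\ge1}c_n^\pm t^{n-1}$ integrate to an honest convergent power series. Here $c_0^\pm=\xi_\pm(0)=-G_0(\pm1)^{-a_1}\overline{G_0(\pm1)}^{-a_2}$, which for the $+$ tail equals $-1$ (since $G_0(1)=1$), yielding a $-\log(s/u)=-\log s+\log u$ contribution, and for the $-$ tail produces a $\log s$ contribution with coefficient related to $\mathfrak B(\tfrac{a_1}{m},\tfrac{a_2}{m})$. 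Collecting: the total $\log s$ coefficient should assemble to $-\mathfrak B(\tfrac{a_1}{m},\tfrac{a_2}{m})$ (using \eqref{eq:beta3}, which in the case $x+y=1$ gives $\mathfrak B(x,y)=1+e^{\iota\pi(y-x)}$, exactly the combination of the two tail constants $c_0^+$ and $c_0^-$), the $\log u$ coefficient is $1$, and the remaining analytic pieces give $\rho^+(|s|)+\varrho^+(\tfrac{|s|}{u})$.

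The main obstacle I anticipate is the precise identification of the coefficients: verifying that the combination $c_0^- \log s$ from the negative tail plus the constant-from-$\int_{-1}^1$ together reconstitute exactly $-\mathfrak{B}(\tfrac{a_1}{m},\tfrac{a_2}{m})\log|s|$, rather than $\log|s|$ times some other constant. This requires using the identity \eqref{eq:beta3} for the degenerate case $x+y=1$ and carefully tracking the phase factors $e^{\iota\pi(y-x)}$ coming from the branch of $G_0$ at $\pm1$. A secondary technical point is confirming uniform convergence of the rearranged series $\sum_{n\ge1}c_n^\pm t^{n-1}$ on $[0,r]$ for $r<1$ — but since the original series has radius of convergence $1$, this is routine (exactly as in Lemma~\ref{lem:Bs^}), and it is the reason the hypotheses here can take $r=1$ directly rather than a general $0<r<1$: the relevant range $0<|s|\le u$ keeps the argument $|s|/u$ in $[0,1]$, and the analytic functions $\rho^\pm,\varrho^\pm$ extend to the closed interval $[0,1]$ because the singularities of $\xi_\pm$ sit on the unit circle. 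Once the coefficients are pinned down, the statement follows by reassembling the pieces exactly as in the previous lemma.
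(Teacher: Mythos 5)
Your approach is correct in spirit but follows a different computational route from the paper, and has one formal imprecision worth fixing. You continue the template of Lemma~\ref{lem:Bs^}: substitute $v=sx$, split the $x$-integral, and substitute $t=1/x$ in the tails. That is a sensible modification because for $a=m$ the decomposition $\int_{-1/s}^{u/s}=\int_{-\infty}^{\infty}-(\text{tails})$ from Lemma~\ref{lem:Bs^} breaks down (the full integral diverges logarithmically), and your three-piece split $\int_{-1/s}^{-1}+\int_{-1}^{1}+\int_{1}^{u/s}$ sidesteps that. The paper's actual proof does something different and shorter: it subtracts the explicit model kernel $\tfrac{1}{v-\iota s}$ from the integrand, observes that the difference integrates to an analytic function of $s/u$, and pulls the $\log$ directly out of $\int_0^u\tfrac{dv}{v-\iota s}=\log\sqrt{u^2+s^2}-\log s+\iota\operatorname{arccot}(s/u)$; it then handles $\int_{-1}^0$ by the reflection symmetry $v\mapsto-v$, $G_0\mapsto\theta_0\overline{G_0}$, which is where the extra factor $\theta_0^{a_2-a_1}$ on $\log u$ in the $s<0$ line comes from. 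Both routes correctly arrive at the combination $1+\theta_0^{a_2-a_1}=\mathfrak B(\tfrac{a_1}{m},\tfrac{a_2}{m})$ via \eqref{eq:beta3}; your route gets the two summands as the constants $-\xi_\pm(0)=G_0(\pm1)^{-a_1}\overline{G_0(\pm1)}^{-a_2}$, the paper gets them as $\{1,\theta_0^{a_2-a_1}\}$ from the two halves of the interval.

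Two corrections: first, the stated decay $|x|^{-a_1-a_2}=|x|^{-m}$ should be $|x|^{-(a_1+a_2)/m}=|x|^{-1}$; the subsequent formula $\xi_\pm(t)t^{-2+a/m}=\xi_\pm(t)t^{-1}$ is the right one, so this is a slip rather than a misconception. Second, and more substantively, you cannot write the tails as $\int_0^{s/u}\xi_+(t)t^{-1}dt$ or $\int_0^s\xi_-(t)t^{-1}dt$: these diverge when $\xi_\pm(0)\ne 0$ (which is the case here). After the substitution $t=1/x$, the tails are $\int_{s/u}^1\xi_+(t)t^{-1}dt$ and $\int_s^1\xi_-(t)t^{-1}dt$, and the logarithm arises from $-\xi_\pm(0)\int_\delta^1 t^{-1}dt=\xi_\pm(0)\log\delta$ while the remainder $\int_\delta^1(\xi_\pm(t)-\xi_\pm(0))t^{-1}dt$ is analytic in $\delta$ up to $\delta=1$ (the singularities of $\xi_\pm$ are at $t=\pm\iota$, away from $[0,1]$). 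Once that is fixed, the reassembly works and you do recover analyticity of $\rho^\pm,\varrho^\pm$ on all of $[0,1]$, matching the statement.
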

\begin{proof}
In view of \eqref{eq:Glu} and \eqref{eq:conj}, it suffices to show the first line of  \eqref{eq:Bs^-log} for $l=0$.

By change of variables, for every $u,s\in (0,1]$,
\begin{align*}
\psi(u,s)&:=\int_{0}^uG_0(v+\iota s)^{-a_1}\overline{G_0(v+\iota s)}^{-a_2}dv-\int_{0}^u\frac{1}{v-\iota s} dv\\
&=\int_{0}^u\left(\left(\frac{\overline{G_0(v+\iota s)}}{G_0(v+\iota s)}\right)^{a_1}-1\right)\frac{1}{v-\iota s} dv
=\int_{0}^{u/s}\frac{\left(\left(\frac{\overline{G_0(x+\iota)}}{G_0(x+\iota)}\right)^{a_1}-1\right)}{x-\iota} dx.
\end{align*}
It follows that $\psi(u,s)=\widetilde{\psi}(s/u)$, where
\[\widetilde{\psi}'(x)=-\frac{1}{x^2}\frac{\left(\frac{\overline{G_0(1/x+\iota)}}{G_0(1/x+\iota)}\right)^{a_1}-1}{1/x-\iota}
=\frac{\left(\frac{\overline{G_0(1+\iota x)}}{G_0(1+\iota x)}\right)^{a_1}-1}{x}\frac{1}{\iota x-1}.\]
As the map $x\mapsto \left(\frac{\overline{G_0(1+\iota x)}}{G_0(1+\iota x)}\right)^{a_1}-1$ is real analytic and vanishes at $0$, $\widetilde{\psi}'$ is also analytic. Hence $\widetilde{\psi}\in C^\omega(\R)$.
Moreover,
\begin{align*}
\int_{0}^u\frac{1}{v-\iota s} dv=\int_{0}^u\frac{v+\iota s}{{v^2+s^2}} dv= \log\sqrt{u^2+s^2}-\log s+\iota \operatorname{arccot} (s/u).
\end{align*}
Hence
\[\int_{0}^uG_0(v+\iota s)^{-a_1}\overline{G_0(v+\iota s)}^{-a_2}dv=-\log (s/u)+\varrho^+(s/u),\]
where
$\varrho^+(x)=\log\sqrt{1+x^2}+\widetilde{\psi}(x)+\iota \operatorname{arccot} (x)$ is analytic.
In particular,
\begin{equation}\label{eq:=m}
\int_{0}^1G_0(v+\iota s)^{-a_1}\overline{G_0(v+\iota s)}^{-a_2}dv=-\log s+\varrho^+(s).
\end{equation}
Since
\[\int_{-1}^0G_0(v+\iota s)^{-a_1}\overline{G_0(v+\iota s)}^{-a_2}dv=\int_{0}^1G_0(-v+\iota s)^{-a_1}\overline{G_0(-v+\iota s)}^{-a_2}dv\]
and $G_0(-v+\iota s)=\theta_0 \overline{G_0(v+\iota s)}$ if $s,v\in(0,1]$, we get
\[\int_{-1}^0G_0(v+\iota s)^{-a_1}\overline{G_0(v+\iota s)}^{-a_2}dv=\theta_0^{(a_2-a_1)}\int_{0}^1G_0(v+\iota s)^{-a_2}\overline{G_0(v+\iota s)}^{-a_1}dv.\]
In view of \eqref{eq:=m}, this gives
\[\mathfrak G^0_{a_1,a_2}(u,s)=-(1+\theta_0^{(a_2-a_1)})\log s+\log u + \theta_0^{(a_2-a_1)}\overline{\varrho^+(s)}+\varrho^+(s/u).\]
Since, by \eqref{eq:beta3},
$1+\theta_0^{(a_2-a_1)}=1+e^{\pi \iota(\frac{a_2}{m}-\frac{a_1}{m})}=\mathfrak{B}(\tfrac{a_1}{m},\tfrac{a_2}{m})$,
which gives the first line of \eqref{eq:Bs^-log}.
\end{proof}

\begin{lemma}\label{lem:Bmix}
Suppose that $a=a_1+a_2<m$  and $\frac{a_1}{m},\frac{a_2}{m}\notin\Z$.
If $\frac{a}{m}\notin\Z$ then for every $0<r<1$ there exist $\rho^\pm,\varrho^\pm\in C^\omega([0,r])$ such that if $0<u\leq 1$ and $0<|s|\leq ru$ then
\begin{align}\label{eq:Bs^-low}
\begin{split}
\mathfrak G^l_{a_1,a_2}(u,s)&\!=\!\theta_0^{2l(a_2-a_1)}\big(\mathfrak{B}(\tfrac{a_1}{m},\tfrac{a_2}{m})|s|^{\frac{m-a}{m}}
\!+\!\rho^+(|s|)\!+\!u^{\frac{m-a}{m}}\varrho^{+}(\tfrac{|s|}{u})\big)\! \text{ if }s>0,\\
\mathfrak G^l_{a_1,a_2}(u,s)&\!=\!\theta_0^{(2l+1)(a_2-a_1)}\big(\mathfrak{B}(\tfrac{a_1}{m},\tfrac{a_2}{m})|s|^{\frac{m-a}{m}}
\!+\!\rho^-(|s|)\!+\!u^{\frac{m-a}{m}}\varrho^{-}(\tfrac{|s|}{u})\big) \!\text{ if }s<0.
\end{split}
\end{align}
If $\frac{a}{m}\in\Z$ then there exist $\rho^\pm,\varrho^\pm\in C^\omega([0,1])$ and $c_\pm\in \C$  such that if $0<u\leq 1$ and $0<|s|\leq u$ then
\begin{align}\label{eq:Bs^-lowlog}
\begin{split}
\mathfrak G^l_{a_1,a_2}(u,s)&=\theta_0^{2l(a_2-a_1)}\Big(-\mathfrak{B}(\tfrac{a_1}{m},\tfrac{a_2}{m})|s|^{\frac{m-a}{m}}\log|s|\\
&\quad+c_+|s|^{\frac{m-a}{m}}\log u+\rho^+(|s|)+u^{\frac{m-a}{m}}\varrho^{+}(\tfrac{|s|}{u})\Big) \text{ if }s>0,\\
\mathfrak G^l_{a_1,a_2}(u,s)&=\theta_0^{(2l+1)(a_2-a_1)}\Big(-\mathfrak{B}(\tfrac{a_1}{m},\tfrac{a_2}{m})|s|^{\frac{m-a}{m}}\log|s|\\
&\quad+c_-|s|^{\frac{m-a}{m}}\log u+\rho^-(|s|)+u^{\frac{m-a}{m}}\varrho^{-}(\tfrac{|s|}{u})\Big) \text{ if }s<0.
\end{split}
\end{align}
\end{lemma}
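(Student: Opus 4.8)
The proof runs parallel to those of Lemmas~\ref{lem:Bs^} and~\ref{lem:Blog}, the new feature being that for $a<m$ the integrand $h(x):=G_0(x+\iota)^{-a_1}\overline{G_0(x+\iota)}^{-a_2}$, which satisfies $|h(x)|=(x^2+1)^{-a/(2m)}$, is \emph{not} integrable at $\pm\infty$; accordingly the leading behaviour as $s\to0$ is the sum of an analytic part and a term of size $|s|^{\frac{m-a}m}$ (or $|s|^{\frac{m-a}m}\log|s|$, which occurs precisely when $\frac am\in\Z$, forcing $\frac am\le0$ and $\frac{m-a}m\in\Z_{\ge1}$). As in those lemmas, \eqref{eq:Glu} together with \eqref{eq:conj}, and the hypothesis $\frac{a_1}m,\frac{a_2}m\notin\Z$ (so that $\mathfrak{B}(\frac{a_1}m,\frac{a_2}m)\neq0$), reduce the whole statement to establishing the first line of \eqref{eq:Bs^-low} (resp.\ \eqref{eq:Bs^-lowlog}) for $l=0$ and $s>0$. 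Fixing that case and substituting $v=sx$ we obtain
\[\mathfrak{G}^0_{a_1,a_2}(u,s)=s^{\frac{m-a}m}\int_{-1/s}^{u/s}h(x)\,dx .\]

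First I would establish the \emph{shape} of the expansion. Split $\int_{-1/s}^{u/s}=\int_0^{u/s}+\int_{-1/s}^0$; on $x>0$ one has the convergent expansion $h(x)=x^{-a/m}\sum_{n\ge0}h^+_nx^{-n}$ ($x>1$, $h^+_0\neq0$) coming from $h(x)=(x+\iota)^{-a_1/m}(x-\iota)^{-a_2/m}$, and the reflection $G_0(-x+\iota)=\theta_0\overline{G_0(x+\iota)}$ gives $h(-x)=\theta_0^{a_2-a_1}G_0(x+\iota)^{-a_2}\overline{G_0(x+\iota)}^{-a_1}$, reducing $\int_{-1/s}^0$ to the same computation with $a_1,a_2$ interchanged and with upper limit $1/s$ instead of $u/s$. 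Choosing $N_0>\frac{m-a}m$, I would subtract the first $N_0$ expansion terms from $h$ on $[1,\infty)$ so that the remainder integrates to a constant over $[0,\infty)$, integrate the subtracted power terms explicitly (one of them equals $x^{-1}$ exactly when $\frac am\in\Z$, producing a $\log(u/s)$), and expand the tail $\int_{u/s}^\infty$ in powers of $s/u$. Carrying the factor $s^{\frac{m-a}m}$ through, using $s^{\frac{m-a}m}(u/s)^{\frac{m-a}m}=u^{\frac{m-a}m}$ and $s^{\frac{m-a}m}(u/s)^{\frac{m-a}m-n}=u^{\frac{m-a}m}(s/u)^n$, every summand lands in one of the prescribed groups: a constant multiple of $|s|^{\frac{m-a}m}$ (which becomes $|s|^{\frac{m-a}m}\log|s|$ after writing $\log(u/s)=\log u-\log s$ when $\frac am\in\Z$); an analytic $\rho^+(|s|)$ (the $\int_{-1/s}^0$ piece having upper limit $1/s$, so after multiplying by $s^{\frac{m-a}m}$ its power-law terms become pure powers $s^n$); a term $u^{\frac{m-a}m}\varrho^+(s/u)$ with $\varrho^+$ analytic and $\varrho^+(0)=\tfrac m{m-a}h^+_0\neq0$; and, when $\frac am\in\Z$, a term $c_+|s|^{\frac{m-a}m}\log u$. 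This gives \eqref{eq:Bs^-low} (resp.\ \eqref{eq:Bs^-lowlog}) with \emph{some} coefficient $\mathfrak{B}'$ in place of $\mathfrak{B}(\frac{a_1}m,\frac{a_2}m)$, and $\mathfrak{B}'$ is unambiguously determined, since $|s|^{\frac{m-a}m}$ is linearly independent from analytic functions when $\frac{m-a}m\notin\Z$, and $|s|^{\frac{m-a}m}\log|s|$ is the only term carrying $\log|s|$ otherwise.

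The remaining and main point is to show $\mathfrak{B}'=\mathfrak{B}(\frac{a_1}m,\frac{a_2}m)$, which I would deduce from the already proved Lemmas~\ref{lem:Bs^} and~\ref{lem:Blog} rather than from a direct $\Gamma$-function evaluation. Set $u=1$, so that $\mathfrak{G}^0_{a_1,a_2}(1,s)$ equals $\mathfrak{B}'|s|^{\frac{m-a}m}$ (resp.\ $-\mathfrak{B}'|s|^{\frac{m-a}m}\log|s|$) plus an analytic function. If $\frac am\notin\Z$, pick $n$ with $a+nm>m$; by \eqref{eq:dnds}, $\frac{d^n}{ds^n}\mathfrak{G}^0_{a_1,a_2}(1,s)$ equals $n!(-2\iota)^n\binom{-a_2/m}{n}\mathfrak{G}^0_{a_1,a_2+nm}(1,s)$ modulo $C^\omega$, whose non-analytic part is given by Lemma~\ref{lem:Bs^} as $n!(-2\iota)^n\binom{-a_2/m}{n}\mathfrak{B}(\frac{a_1}m,\tfrac{a_2}m+n)\,|s|^{\frac{m-a}m-n}$; by the Beta recursion \eqref{eq:beta} this coefficient equals $\bigl(\prod_{j=0}^{n-1}(\tfrac{m-a}m-j)\bigr)\mathfrak{B}(\frac{a_1}m,\frac{a_2}m)$, which is exactly the coefficient of $|s|^{\frac{m-a}m-n}$ in the $n$-th derivative of $\mathfrak{B}'|s|^{\frac{m-a}m}$, so comparison (for $n$ large enough that $|s|^{\frac{m-a}m-n}$ is not masked by the analytic part) forces $\mathfrak{B}'=\mathfrak{B}(\frac{a_1}m,\frac{a_2}m)$. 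If $\frac am\in\Z$, take instead $n=N:=\frac{m-a}m\in\Z_{\ge1}$, so $a+Nm=m$ and Lemma~\ref{lem:Blog} applies to $\mathfrak{G}^0_{a_1,a_2+Nm}(1,s)$ with leading term $-\mathfrak{B}(\frac{a_1}m,\tfrac{a_2}m+N)\log|s|$; using $\binom{a/m+N-2}{N}=\binom{-1}{N}=(-1)^N$ in \eqref{eq:beta} (valid here since then $x+y+n=1\in\Z_{\le1}$) gives $(-2\iota)^N\binom{-a_2/m}{N}\mathfrak{B}(\frac{a_1}m,\tfrac{a_2}m+N)=\mathfrak{B}(\frac{a_1}m,\frac{a_2}m)$, and matching the $\log|s|$-coefficients of $\frac{d^N}{ds^N}\mathfrak{G}^0_{a_1,a_2}(1,s)$ computed both ways forces the coefficient of $|s|^{\frac{m-a}m}\log|s|$ to be $-\mathfrak{B}(\frac{a_1}m,\frac{a_2}m)$. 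In either case the $u$-dependence is already recorded in the structural form of the second paragraph, so this completes the proof. The main hazard is bookkeeping when $a<0$: keeping straight which of the finitely many subtracted terms are genuinely singular (so that $N_0$ and the invoked instances of \eqref{eq:beta} are the correct ones) and checking that no stray power of $\theta_0$ is introduced in the reflection step.
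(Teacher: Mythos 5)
Your proof is correct, but it takes a genuinely different route from the paper's. The paper derives \eqref{eq:Bs^-low} and \eqref{eq:Bs^-lowlog} in a single pass, shape and constant together: it applies $\tfrac{d^n}{ds^n}$ with $n=\lceil\tfrac{m-a}{m}\rceil$ via the general-$u$ identity \eqref{eq:dndsus}, so that every $\mathfrak{G}^0_{a_1+jm,a_2+(n-j)m}(u,s)$ appearing has index sum $a+nm\geq m$ and is therefore covered by Lemma~\ref{lem:Bs^} or Lemma~\ref{lem:Blog}; the resulting linear combination of Beta-like coefficients is collapsed to a single multiple of $\mathfrak{B}(\tfrac{a_1}{m},\tfrac{a_2}{m})$ using \eqref{eq:beta}; and one then integrates $n$ times back in $s$, feeding in the boundary data $\tfrac{d^k}{ds^k}\mathfrak{G}^0_{a_1,a_2}(u,0)=c_{k,1}u^{\tfrac{m-a}{m}-k}+c_{k,0}$ from \eqref{eq:derk} (which follows from the closed form $\mathfrak{G}^0_{a_1,a_2}(u,0)=\tfrac{m}{m-a}(u^{\tfrac{m-a}{m}}+\theta_0^{a_2-a_1})$), which automatically reproduces the $\rho^+(|s|)+u^{\frac{m-a}{m}}\varrho^+(|s|/u)$ form. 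You instead establish the shape directly by rescaling and asymptotically expanding $h(x)=G_0(x+\iota)^{-a_1}\overline{G_0(x+\iota)}^{-a_2}$ at infinity, subtracting the divergent tail, and tracking where each piece lands; this leaves an undetermined leading coefficient $\mathfrak{B}'$, which you then pin at $u=1$ by differentiating via \eqref{eq:dnds} and matching against Lemma~\ref{lem:Bs^}/\ref{lem:Blog} and \eqref{eq:beta}. Your second stage is essentially the $u=1$ specialization of the paper's one-pass argument; what your route buys is a more explicit picture of the origin of each analytic and $u$-homogeneous correction, while the paper's route avoids the subtracted-tail bookkeeping entirely because the correct $u$-dependence and the constant both fall out of the differentiate-then-integrate loop.
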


\begin{proof}
In view of \eqref{eq:Glu} and \eqref{eq:conj}, it suffices to show the first line of \eqref{eq:Bs^-low} and \eqref{eq:Bs^-lowlog} for $l=0$. Let $n=\lceil \frac{m-a}{m}\rceil$.
By \eqref{eq:dndsus}, for every $k\geq 0$,
\[\frac{d^k}{ds^k}\mathfrak G^0_{a_1,a_2}=\iota^k\sum_{0\leq j\leq k}k!(-1)^{k-j} \binom{-\frac{a_1}{m}}{j}\binom{-\frac{a_2}{m}}{k-j}\mathfrak G^0_{a_1+jm,a_2+(k-j)m}.\]
A direct computation shows that if $a=a_1+a_2<m$ and $u\in[0,1]$ then
\[\mathfrak G^0_{a_1,a_2}(u,0)=\frac{m}{m-a}(u^{\frac{m-a}{m}}+\theta_0^{a_2-a_1}).\]
It follows that if $k<\frac{m-a}{m}$ (i.e.\ $k<n$) then there exist $c_{k,1},c_{k,0}\in\C$ such that
\begin{equation}\label{eq:derk}
\frac{d^k}{ds^k}\mathfrak G^0_{a_1,a_2}(u,0)=c_{k,1}u^{\frac{m-a}{m}-k}+c_{k,0}.
\end{equation}

 If $\frac{a}{m}\notin \Z$ then for any $0\leq j\leq n$ we have $a_1+jm+a_2+(n-j)m=a+nm>m$. Hence, by Lemma~\ref{lem:Bs^}, there exist $\rho_n^+,\varrho_n^+\in C^{\omega}([0,r])$ such that for all $0<u\leq 1$ and $0<s\leq ru$,
\begin{align*}
\frac{d^n}{ds^n}&\mathfrak G^0_{a_1,a_2}(u,s)=\rho_n^+(s)+u^{\frac{m-a}{m}-n}\varrho_n^{+}(\tfrac{s}{u})\\
&+\iota^n\sum_{0\leq j\leq n}n!(-1)^{n-j} \binom{-\frac{a_1}{m}}{j}\binom{-\frac{a_2}{m}}{n-j}
\mathfrak{B}(\tfrac{a_1}{m}+j,\tfrac{a_2}{m}+n-j)s^{\frac{m-a}{m}-n}.
\end{align*}

 If $\frac{a}{m}\in \Z$ then $n=\frac{m-a}{m}$ and $a_1+jm+a_2+(n-j)m=a+nm=m$. Hence, by Lemma~\ref{lem:Blog}, there exist $\rho_n^+,\varrho_n^+\in C^{\omega}([0,1])$ such that for all $0<u\leq 1$ and $0<s\leq u$,
\begin{align*}
\frac{d^n}{ds^n}&\mathfrak G^0_{a_1,a_2}(u,s)=\rho_n^+(s)+\varrho_n^{+}(\tfrac{s}{u})\\
&+\iota^n\sum_{0\leq j\leq n}n!(-1)^{n-j} \binom{-\frac{a_1}{m}}{j}\binom{-\frac{a_2}{m}}{n-j}
\big(-\mathfrak{B}(\tfrac{a_1}{m}+j,\tfrac{a_2}{m}+n-j)\log s +\log u).
\end{align*}

By \eqref{eq:beta} (and its extension in the integer case),
\begin{gather*}
\iota^n\sum_{0\leq j\leq n}n!(-1)^{n-j} \binom{-\frac{a_1}{m}}{j}\binom{-\frac{a_2}{m}}{n-j}
\mathfrak{B}(\tfrac{a_1}{m}+j,\tfrac{a_2}{m}+n-j)\\
=(-1/2)^n\sum_{0\leq j\leq n}n!\binom{n}{j}\binom{\frac{a}{m}+n-2}{n}
\mathfrak{B}(\tfrac{a_1}{m},\tfrac{a_2}{m})=(-1)^nn!\binom{\frac{a}{m}+n-2}{n}
\mathfrak{B}(\tfrac{a_1}{m},\tfrac{a_2}{m}).
\end{gather*}
Therefore, in the non-integer case, for all $0<u\leq 1$ and $0<s\leq ru$,
\[\frac{d^n}{ds^n}\mathfrak G^0_{a_1,a_2}(u,s)=\rho_n^+(s)+u^{\frac{m-a}{m}-n}\varrho_n^{+}(\tfrac{s}{u})+(-1)^nn!\tbinom{-(\frac{m-a}{m}-(n-1))}{n}
\mathfrak{B}(\tfrac{a_1}{m},\tfrac{a_2}{m})s^{\frac{m-a}{m}-n}.\]
In the integer case, for all $0<u\leq 1$ and $0<s\leq u$,
\[\frac{d^n}{ds^n}\mathfrak G^0_{a_1,a_2}(u,s)=\rho_n^+(s)+\varrho_n^{+}(\tfrac{s}{u})-n!
\mathfrak{B}(\tfrac{a_1}{m},\tfrac{a_2}{m})\log s +c_n\log u.\]
Since
\[\frac{d^k}{ds^k}\mathfrak G^0_{a_1,a_2}(u,s)=\frac{d^k}{ds^k}\mathfrak G^0_{a_1,a_2}(u,0)+\int_0^s\frac{d^{k+1}}{ds^{k+1}}\mathfrak G^0_{a_1,a_2}(u,t)dt\text{ for all }0\leq k<n,\]
using the  formulae for $\frac{d^n}{ds^n}\mathfrak G^0_{a_1,a_2}$ together with \eqref{eq:derk} and induction, we obtain \eqref{eq:Bs^-low} and \eqref{eq:Bs^-lowlog}.
\end{proof}

\begin{remark}
To summarize, by Lemmas~\ref{lem:Bs^}, \ref{lem:Blog} and \ref{lem:Bmix}, for any pair of integer numbers $a_1,a_2$ such that $\frac{a_1}{m},\frac{a_2}{m}\notin\Z$ if $\frac{m-a}{m}\notin \Z$ ($a=a_1+a_2$) or $\frac{m-a}{m}\in \Z_{<0}$ then
\begin{align}\label{eq:B1s}
\begin{split}
\mathfrak G^l_{a_1,a_2}(1,s)&=\theta_0^{2l(a_2-a_1)}\mathfrak{B}(\tfrac{a_1}{m},\tfrac{a_2}{m})|s|^{\frac{m-a}{m}}+C^\omega((0,1]),\\
\mathfrak G^l_{a_1,a_2}(1,s)&=\theta_0^{(2l+1)(a_2-a_1)}\mathfrak{B}(\tfrac{a_1}{m},\tfrac{a_2}{m})|s|^{\frac{m-a}{m}}+C^\omega([-1,0)).
\end{split}
\end{align}
If  $\frac{m-a}{m}\in \Z_{\geq 0}$ then
\begin{align}\label{eq:B1sln}
\begin{split}
\mathfrak G^l_{a_1,a_2}(1,s)&=-\theta_0^{2l(a_2-a_1)}\mathfrak{B}(\tfrac{a_1}{m},\tfrac{a_2}{m})|s|^{\frac{m-a}{m}}\log |s|+C^\omega((0,1]),\\
\mathfrak G^l_{a_1,a_2}(1,s)&=-\theta_0^{(2l+1)(a_2-a_1)}\mathfrak{B}(\tfrac{a_1}{m},\tfrac{a_2}{m})|s|^{\frac{m-a}{m}}\log |s|+C^\omega([-1,0)).
\end{split}
\end{align}
Indeed, in the non-integer case, we obtain the analyticity of the remainder only on intervals $[-r,0]$ and $[0,r]$ for any $0<r<1$. Nevertheless, for any choice of integer $a_1,a_2$, the function $\mathfrak G^l_{a_1,a_2}(1,s)$ is analytic on $[r,1]$ and $[-1,-r]$ for any $0<r<1$. This gives our claim.
\end{remark}

\subsection{Evaluation of asymptotic factors for $\varphi_{f,l}$}\label{sec;limfact}
The behaviour of higher derivatives of $\varphi_{f,l}$ at zero is evaluated by linear combinations of invariant distributions $\partial_j^{k}$.
For this reason, we define a list of new functionals $\mathscr{C}^k_l:C^{k}(\mathcal{D})\to\C$ for $k\geq 0$ and $0\leq l<2m$ given by
\begin{align*}
\mathscr{C}^k_{l}(f)=\sum_{\substack{0\leq j\leq  k\wedge(m-2)\\j\neq k-(m-1)\operatorname{mod} m}}
\theta_0^{l(2j-k)}\mathfrak{B}(\tfrac{(m-1)-j}{m},\tfrac{(m-1)-(k-j)}{m})\partial_j^{k}(f).
\end{align*}
{As we will see in the next theorem, the values of $\mathscr{C}^k_{l}(f)$ for $k\geq 0$ measure the effect that the appearance of orbits  in the $l$-th angular sector has on the value of ergodic integrals of the function $f$.}
Comparing with \eqref{def:gothc}, functionals $\mathscr{C}^k_{l}$  play a key role in understanding the meaning of distribution $\mathfrak{C}^k_{\sigma,l}$.

From now on, we adopt the convention $\tbinom{0}{n}:=\lim_{x\to 0}\tbinom{x}{n}/x=\frac{(-1)^{n-1}}{n}$.


\begin{theorem}\label{thm:thmC}
For any  $k\geq 0$ let $n=\lceil\frac{k-(m-2)}{m}\rceil$ and $b=n-\frac{k-(m-2)}{m}$.
Suppose that $f\in C^{k\vee(n+1)}(\mathcal D)$ is  such that  $\partial^j_{i}(f)=0$ for all $0\leq j<k$ and $0\leq i\leq j\wedge(m-2)$ with $i\neq j-(m-1)\operatorname{mod} m$.  Then $\varphi_{f,l}\in C^{n+\operatorname{P_b}}([-1,0)\cup(0,1])$ and there exists $C>0$ such that $\|\varphi_{f,l}\|_{C^{n+\operatorname{P_b}}([-1,0)\cup(0,1])}\leq C\|f\|_{C^{k\vee(n+1)}(\mathcal D)}$. Moreover,
for every $0\leq l<m$,
\begin{align}
\label{eq:b+1+}
\lim_{s\to 0^+}|s|^{b+1}D^{n+1}\varphi_{f,l}(s)&=(-1)^{n+1}\frac{(n+1)!}{k!}\binom{b}{n+1}\mathscr{C}^k_{2l}(f),\\
\label{eq:b+1-}
\lim_{s\to 0^-}|s|^{b+1}D^{n+1}\varphi_{f,l}(s)&=\frac{(n+1)!}{k!}\binom{b}{n+1}\mathscr{C}^k_{2l+1}(f).
\end{align}
\end{theorem}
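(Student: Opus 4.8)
The plan is to reduce everything to the explicit $\mathfrak G^l_{a_1,a_2}$-calculus developed in Section~\ref{sec;prelim}. First I would split $f = f_{<k} + f_{=k} + e_f$, where $f_{<k}$ collects Taylor terms of degree $<k$, $f_{=k}$ is the homogeneous degree-$k$ part, and $e_f$ is the remainder with $D^j e_f(0,0)=0$ for $0\le j\le k$. By Lemma~\ref{lem:analf}, the hypothesis $\partial^j_i(f)=0$ for $j<k$ forces $F_{f_{<k}}$ (hence $\varphi_{f_{<k},l}$) to be analytic on $[-1,0)\cup(0,1]$ with a bounded linear dependence on $f$, so this piece contributes nothing to \eqref{eq:b+1+}--\eqref{eq:b+1-} and only a $C^\omega$-term to the regularity claim. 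For the remainder $e_f$, I would invoke Lemma~\ref{eqn;F(z)bound} (with $a = m-1$): since $\varphi_{f,l}(s) = \mathscr F_{f,l}(1,s)$, the bound \eqref{eqn;F(z)bound1} on $D^n\mathscr F$ evaluated along $z = 1 + \iota s$ gives $|D^{n+1}\varphi_{e_f,l}(s)| \lesssim \langle|s|\rangle^{-(2(m-1)/m + n - k/m)}$, and one checks $2(m-1)/m + n - k/m = b+1$ precisely when $n = \lceil (k-(m-2))/m\rceil$; moreover by \eqref{eqn;F(z)bound3} (using $D^k e_f(0,0)=0$) this is in fact $o(\langle|s|\rangle^{-(b+1)})$, so $\varphi_{e_f,l}\in C^{n+\operatorname{P_b}}$ with the stated norm bound and it contributes $0$ to the limits.

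The heart of the matter is the homogeneous piece $f_{=k}$. Writing $f_{=k}(\omega,\overline\omega) = \frac1{k!}\sum_{0\le i\le k}\binom{k}{i}\frac{\partial^k f}{\partial\omega^i\partial\overline\omega^{k-i}}(0,0)\,\omega^i\overline\omega^{k-i}$, formula \eqref{eqn;fgus} gives $\mathscr F_{f_{=k},l} = \frac1{k!}\sum_i \binom{k}{i}(\cdot)\,\mathfrak G^l_{(m-1)-i,(m-1)-(k-i)}$. Exactly as in the proof of Lemma~\ref{lem:analf} (the identity \eqref{eq:Gmulti} collapsing the $n$-sum), after grouping $i$ modulo $m$ this reduces modulo $C^\omega$ to $\frac1{k!}\sum_{0\le j\le k\wedge(m-2),\, j\ne k-(m-1)\bmod m} \partial^k_j(f)\,\mathfrak G^l_{(m-1)-j,(m-1)-(k-j)}$, the terms $j=m-1$ and $j \equiv k-(m-1)$ dropping out by \eqref{eq:anm}. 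Now I apply $D^{n+1}$ and the asymptotics of Section~\ref{sec;prelim}: for each surviving $j$ one has $a_1=(m-1)-j$, $a_2=(m-1)-(k-j)$, $a:=a_1+a_2 = 2(m-1)-k$, so $\frac{m-a}{m} = \frac{k-(m-2)}{m}$ and $\frac{d^{n+1}}{ds^{n+1}}\mathfrak G^l_{a_1,a_2}(1,s)$ behaves like (a constant times) $|s|^{\frac{m-a}{m}-(n+1)} = |s|^{-(b+1)}$ — via \eqref{eq:dnds} together with \eqref{eq:B1s}/\eqref{eq:B1sln}. Carefully differentiating $|s|^{\frac{m-a}{m}}$ (resp.\ $|s|^{\frac{m-a}{m}}\log|s|$ in the integer case) $n+1$ times and reading off the $\theta_0^{2l(a_2-a_1)}$ (resp.\ $\theta_0^{(2l+1)(a_2-a_1)}$) prefactor from \eqref{eq:B1s}--\eqref{eq:B1sln}, and noting $a_2-a_1 = 2j-k$, one gets a factor $\theta_0^{2l(2j-k)} = \theta_0^{(2l)(2j-k)}$ from the right and $\theta_0^{(2l+1)(2j-k)}$ from the left; summing over $j$ with the weights $\mathfrak B(\frac{(m-1)-j}{m},\frac{(m-1)-(k-j)}{m})$ and $\partial^k_j(f)$ reproduces precisely $\mathscr C^k_{2l}(f)$ and $\mathscr C^k_{2l+1}(f)$. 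The combinatorial constant $(-1)^{n+1}\frac{(n+1)!}{k!}\binom{b}{n+1}$ comes from differentiating the power $|s|^{\frac{m-a}{m}}$ exactly $n+1$ times (using $\frac{m-a}{m}=n-b$), with the convention $\binom{0}{n} = \frac{(-1)^{n-1}}{n}$ handling the case $\frac{m-a}{m}\in\Z_{\ge0}$ where a $\log$ is present; one must also check the sign asymmetry between $s\to0^+$ and $s\to0^-$, which is exactly the $(-1)^{n+1}$ discrepancy in \eqref{eq:b+1+} versus \eqref{eq:b+1-} (differentiating $|s|^c$ for $s<0$ picks up $(-1)^{n+1}$ relative to $s>0$).

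The main obstacle I anticipate is bookkeeping the constants and sign conventions in the final step: one has to track simultaneously (i) the $\theta_0$-phase from \eqref{eq:B1s}/\eqref{eq:B1sln} and the relation $\theta_0^{2l(a_2-a_1)}$ vs.\ $\theta_0^{(2l+1)(a_2-a_1)}$, (ii) the Beta-function identities \eqref{eq:beta}--\eqref{eq:beta3} when $\frac{a_1}{m}$ or $\frac{a_2}{m}$ is a non-positive integer (where $\mathfrak B$ is extended by $0$ via \eqref{def:Bint}), which is why the definition of $\mathscr C^k_l$ already excludes $j=m-1$ and $j\equiv k-(m-1)$, (iii) the unified treatment of the power-type and $\log$-type asymptotics via the convention $\binom{0}{n}=\frac{(-1)^{n-1}}{n}$, and (iv) the contribution of the lower-order remainders $\rho^\pm,\varrho^\pm$ from Lemmas~\ref{lem:Bs^},~\ref{lem:Blog},~\ref{lem:Bmix}, which must be checked to be $o(|s|^{-(b+1)})$ (they are, since they are bounded by $|s|^{\frac{m-a}{m}-n}$ times analytic functions, and $\frac{m-a}{m}-n = -b > -1-b$... here one uses $b<1$). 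The regularity statement $\varphi_{f,l}\in C^{n+\operatorname{P_b}}$ with the operator bound then follows by combining the analytic part, the $o$-estimate for $e_f$, and the explicit form of the $f_{=k}$-part, together with Remark~\ref{rmk:filpa} to control the norm on intervals away from the endpoints.
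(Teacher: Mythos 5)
Your proposal reproduces the paper's proof almost verbatim: the same decomposition $f = f_{<k} + f_k + e_f$, Lemma~\ref{lem:analf} for the polynomial tail, Lemma~\ref{eqn;F(z)bound} for the error term, reduction to the $\mathfrak G^l_{a_1,a_2}(1,s)$ asymptotics of \S\ref{sec;prelim} for the homogeneous degree-$k$ piece, and the same final differentiation producing the binomial constant and the $\theta_0$-phase that assembles $\mathscr C^k_{2l}$ and $\mathscr C^k_{2l+1}$. The sign bookkeeping you describe between $s\to 0^+$ and $s\to 0^-$ is correct.

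The one place you would get stuck is the $o$-estimate $|D^{n+1}\varphi_{e_f,l}(s)| = o(|s|^{-(b+1)})$: you cite \eqref{eqn;F(z)bound3}, but that estimate requires the order of differentiation to be at most $k$, i.e.\ $n+1 \le k$. By Remark~\ref{rem:kwedgen} this fails exactly when $k=0$, or $k=1$ with $m=2$; in those cases $D^{k+1}e_f(0,0)$ need not vanish, so \eqref{eqn;F(z)bound3} is unavailable for the $(n+1)$th derivative. The paper handles this borderline case separately: there $e_f \in C^{k+1}$ with $D^j e_f(0,0)=0$ for all $0\le j\le k$, so one applies \eqref{eqn;F(z)bound1} with $k+1$ in place of $k$, which yields the strictly better bound $O(|\Im z|^{-(b+1)+\frac{1}{m}})$ and hence still the needed $o$-estimate. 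Apart from this missing case split your argument is sound. A cosmetic difference worth noting: for the norm bound the paper estimates $f_k + e_f$ jointly via \eqref{eqn;F(z)bound1}, whereas you treat $f_k$ via the explicit $\mathfrak G$-formula and $e_f$ via Lemma~\ref{eqn;F(z)bound} separately; both routes work, since $f\mapsto f_k$ is a bounded projection onto a finite-dimensional space.
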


\begin{remark}\label{rem:kwedgen}
Before the proof, let us note that
\[
k\vee(n+1)=\left\{
\begin{array}{cl}
k+1&\text{if }k=0\text{ or }(k=1\text{ with }m=2),\\
k&\text{otherwise.}
\end{array}
\right.
\]
Indeed, the inequality $\frac{k-(m-2)}{m}+1\leq k$ is equivalent to $2\leq k(m-1)$. It follows that if $k\geq 1$ with $m\geq 3$ or $k\geq 2$ then $n<k$, so $k\vee(n+1)=k$.
\end{remark}
\begin{proof}
Let us decompose $f=f_{< k}+f_{k}+e_f$ with
\begin{gather*}
f_{< k}(\omega,\overline \omega)=\sum_{0\leq j< k}\frac{1}{j!}\sum_{0\leq i\leq j}\binom{j}{i}\frac{\partial^j f}{\partial\omega^i\partial\overline{\omega}^{j-i}}(0,0)\omega^i\overline{\omega}^{j-i},\\
f_{k}(\omega,\overline \omega)=\frac{1}{k!}\sum_{0\leq i\leq k}\binom{k}{i}\frac{\partial^{k} f}{\partial\omega^i\partial\overline{\omega}^{k-i}}(0,0)\omega^i\overline{\omega}^{k-i}.
\end{gather*}
By Lemma~\ref{lem:analf},
\begin{gather}\label{eq:anfk}
\varphi_{f_{< k},l}\in C^\omega([-1,0))\cap C^{\omega}((0,1])\text{ for }0\leq l<m,\\
\label{eq:anfk1}
\|\varphi_{f_{< k},l}\|_{C^{n+1}([-1,0)\cup(0,1])}\leq C^{n+1}_k\|f\|_{C^{k}(\mathcal{D})}.
\end{gather}
Since $D^j(f_{k}+e_f)=0$ for every $0\leq j< k$, in view of \eqref{eqn;F(z)bound1}, if $(j_1,j_2)\in\Z^2_{\geq 0}$ is such that $j_1+j_2=j
\leq n+1$ then
\begin{align*}
\left|\frac{\partial^{j} {\mathscr{F}_{f_{k}+e_f,l}}(z,{\overline z})}{\partial z^{j_1}\partial {\overline z}^{j_2}}\right| &=
O\big(\|f_{k}+e_f\|_{C^{k\vee(n+1)}(\mathcal{D})}\langle|\Im z|\rangle^{-(\frac{(m-2)-k}{m}+j)}\big)\\
&=O\big(\|f\|_{C^{k\vee(n+1)}(\mathcal{D})}\langle|\Im z|\rangle^{(n+1-j)-(b+1)}\big).
\end{align*}
As $\frac{d^{j}}{d s^{j}} = \left(\iota\left(\frac{\partial}{\partial {z}} - \frac{\partial}{\partial \overline {z}}\right)\right)^{j}$, this gives
\begin{align}
\label{neq:varnor1}
|D^{j}\varphi_{f_{k}+e_f,l}(s)|&=O\big(\|f\|_{C^{k\vee(n+1)}(\mathcal{D})}\big)\text{ if }0\leq j\leq n-1,\\
\label{neq:varnor2}
|D^{n}\varphi_{f_{k}+e_f,l}(s)|&=O\big(\|f\|_{C^{k\vee(n+1)}(\mathcal{D})}\langle|s|\rangle^{-b}\big),\\
\label{neq:varnor3}
|D^{n+1}\varphi_{f_{k}+e_f,l}(s)|&=O\big(\|f\|_{C^{k\vee(n+1)}(\mathcal{D})}|s|^{b+1}\big).
\end{align}
By \eqref{neq:varnor2},
\[\|D^{n}\varphi_{f_{k}+e_f,l}\|_{L^1}=O\big(\|f\|_{C^{k\vee(n+1)}(\mathcal{D})}\big).\]
In view of \eqref{eq:anfk1}, \eqref{neq:varnor1} and \eqref{neq:varnor3}, this gives
\[\|\varphi_{f,l}\|_{C^{n+\pb}}\leq \|\varphi_{f_{< k},l}\|_{C^{n+\pb}}+\|\varphi_{f_{k}+e_f,l}\|_{C^{n+\pb}}=O\big(\|f\|_{C^{k\vee(n+1)}(\mathcal{D})}\big).\]
Since $D^j(e_f)=0$ for every $0\leq j\leq k$, we also have
\begin{equation}
\label{neq:varnor4}
|D^{n+1}\varphi_{e_f,l}(s)|=o(|s|^{-(b+1)}).
\end{equation}
Indeed, if $k\vee(n+1)=k+1$, i.e.\ $k=n$ then, again by  \eqref{eqn;F(z)bound1},
\begin{align*}
\left\|D^{n+1} {\mathscr{F}_{e_f,l}}(z,{\overline z})\right\| =
O(\langle|\Im z|\rangle^{-(\frac{(m-2)-(k+1)}{m}+n+1)})= O(|\Im z|^{-(b+1)+\frac{1}{m}}).
\end{align*}
If $k\vee(n+1)=k$, i.e.\ $n+1\leq k$ then, by  \eqref{eqn;F(z)bound3},
\begin{align*}
\left\|D^{n+1} {\mathscr{F}_{e_f,l}}(z,{\overline z})\right\| =
o(|\Im z|^{-(\frac{(m-2)-k}{m}+n+1)})= o(|\Im z|^{-(b+1)}).
\end{align*}
Both yield \eqref{neq:varnor4}.

Therefore, by \eqref{eq:anfk} and \eqref{neq:varnor4},
\begin{equation}\label{eq:Dn+1}
 |s|^{b+1}D^{n+1}\varphi_{f,l}(s)=|s|^{b+1}D^{n+1}\varphi_{f_{k},l}(s)+o(1).
\end{equation}
By \eqref{eq:anxi1} (see the proof of Lemma~\ref{lem:analf}),
\[\mathscr{F}_{f_{k},l}(1,s)\!=\!\frac{1}{k!}\!\sum_{\substack{0\leq j\leq k\wedge(m-2)\\j\neq k-(m-1)\operatorname{mod} m}}\!\partial_j^{k}(f)\mathfrak G^l_{(m-1)-j,(m-1)-(k-j)}(1,s)+C^{\omega}((0,1])\cap C^{\omega}([-1,0)).\]
As $\frac{m-((m-1)-j+(m-1)-(k-j))}{m}=\frac{k-(m-2)}{m}=n-b$, by {the definition of ${\mathscr{C}}^k_{l}(f)$ and \eqref{eq:B1s},}
\begin{equation}\label{eq:B1s1}
\varphi_{f_k,l}(s)=\frac{{\mathscr{C}}^k_{2l}(f)}{k!}|s|^{n-b}+C^\omega((0,1]),\
\varphi_{f_k,l}(s)=\frac{{\mathscr{C}}^k_{2l+1}(f)}{k!}|s|^{n-b}+C^\omega([-1,0))
\end{equation}
if $0<b<1$ and, by \eqref{eq:B1sln},
\begin{align}\label{eq:B1sln1}
\begin{split}
\varphi_{f_k,l}(s)&=-\frac{\mathscr{C}^k_{2l}(f)}{k!}|s|^{n}\log |s|+C^\omega((0,1]),\\
\varphi_{f_k,l}(s)&=-\frac{{\mathscr{C}}^k_{2l+1}(f)}{k!}|s|^{n}\log|s|+C^\omega([-1,0))
\end{split}
\end{align}
if $b=0$. After $n+1$ times differentiation, it follows that
%
%
\begin{align*}\label{eq:fkl}
\begin{split}
&D^{n+1}\varphi_{f_k,l}(s)=|s|^{-(b+1)}(-1)^{n+1}\frac{(n+1)!}{k!}\binom{b}{n+1}\mathscr{C}^k_{2l}(f)+C^{\omega}((0,1]),\\
&D^{n+1}\varphi_{f_k,l}(s)=|s|^{-(b+1)}\frac{(n+1)!}{k!}\binom{b}{n+1}\mathscr{C}^k_{2l+1}(f)+C^{\omega}([-1,0)).
\end{split}
\end{align*}
Finally, by \eqref{eq:anfk} and \eqref{eq:Dn+1}, this yields \eqref{eq:b+1+} and \eqref{eq:b+1-}.
\end{proof}

{We also verify a similar result when we restrict our attention only to an individual $l$-th sector. Then we assume only the vanishing of functionals $\mathscr{C}^j_{l}$.}
\begin{theorem}\label{cor:Hold}
Let $k\geq 0$, $0\leq l<m$ and $\epsilon\in\{0,1\}$.
Suppose that $f\in C^{k\vee(n+1)}(\mathcal D)$ and
$\mathscr{C}^j_{2l+\epsilon}(f)=0$ for all $0\leq j<k$. Then $\varphi_{f,l}\in C^{n+\pb}((0,(-1)^\epsilon])$ with
\begin{equation}\label{eq:limcc}
\lim_{\substack{s\to 0\\ s\in (0,(-1)^\epsilon]}}|s|^{b+1}D^{n+1}\varphi_{f,l}(s)=(-1)^{(1-\epsilon)(n+1)}\frac{(n+1)!}{k!}\binom{b}{n+1}\mathscr{C}^k_{2l+\epsilon}(f)
\end{equation}
and
\begin{equation}\label{eq:norcc}
\text{there exists $C>0$ such that }\|\varphi_{f,l}\|_{C^{n+\pb}((0,(-1)^\epsilon])}\leq C\|f\|_{C^{k\vee(n+1)}(\mathcal D)}.
\end{equation}
In particular, if $k\geq m-1$  then $\varphi_{f,l}\in C^{\mathfrak{e}(\sigma,k)}((0,(-1)^\epsilon])$ and there exists $C>0$ such that
$\|\varphi_{f,l}\|_{C^{\mathfrak{e}(\sigma,k)}((0,(-1)^\epsilon])}\leq C\|f\|_{C^{k\vee (n+1)}(\mathcal D)}$.

On the other hand, if $f\in C^{k\vee(n+1)}(\mathcal D)$ is  such that $\varphi_{f,l}\in C^{r}((0,(-1)^\epsilon])$ for some $r\in \R_\eta$ with $0<v(r)\leq \mathfrak{o}(\sigma,k)$ then
$\mathscr{C}^j_{2l+\epsilon}(f)=0$ for all $j\geq 0$ such that $\mathfrak{o}(\sigma,j)<v(r)$.
\end{theorem}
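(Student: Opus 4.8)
The plan is to mimic the proof of Theorem~\ref{thm:thmC}, but tracking only the single ray $(0,(-1)^\epsilon]$, on which -- as it turns out -- the singular part of $\varphi_{f,l}$ coming from each homogeneous Taylor layer of $f$ collapses to one scalar $\mathscr{C}^j_{2l+\epsilon}(f)$ rather than to the whole family $\{\partial^j_i(f)\}_i$. First I would recall $n=\lceil\tfrac{k-(m-2)}{m}\rceil$ and $b=n-\tfrac{k-(m-2)}{m}=n-\mathfrak{o}(\sigma,k)\in[0,1)$, and decompose $f=f_{<k}+f_k+e_f$, where $f_{<k}=\sum_{0\le j<k}f_j$ and $f_k$ are the sums of homogeneous Taylor polynomials of $f$ at $(0,0)$ of degree $<k$ and $=k$, and $e_f$ is the remainder, so $D^je_f(0,0)=0$ for $0\le j\le k$. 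Each of $f\mapsto f_{<k}$, $f\mapsto f_k$, $f\mapsto e_f$ is a bounded operator.

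For the remainder, exactly as in the proof of Theorem~\ref{thm:thmC} (using \eqref{eqn;F(z)bound1}, and \eqref{eqn;F(z)bound3} when $n+1\le k$, cf.\ Remark~\ref{rem:kwedgen}) one gets $|D^j\varphi_{e_f,l}(s)|=O(\|f\|_{C^{k\vee(n+1)}})$ for $j\le n-1$, $|D^n\varphi_{e_f,l}(s)|=O(\|f\|_{C^{k\vee(n+1)}}\langle|s|\rangle^{-b})$, and $|D^{n+1}\varphi_{e_f,l}(s)|=o(|s|^{-(b+1)})$; hence $\varphi_{e_f,l}\in C^{n+\pb}((0,(-1)^\epsilon])$ with the stated norm bound and with limit $0$ in \eqref{eq:limcc}; this uses no hypothesis. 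For the top layer, \eqref{eq:anxi1} together with \eqref{eq:Gl} and \eqref{eq:B1s}--\eqref{eq:B1sln} give, modulo $C^\omega((0,(-1)^\epsilon])$, $\varphi_{f_k,l}(s)=\tfrac{\mathscr{C}^k_{2l+\epsilon}(f)}{k!}|s|^{n-b}$ if $b>0$ and $\varphi_{f_k,l}(s)=-\tfrac{\mathscr{C}^k_{2l+\epsilon}(f)}{k!}|s|^{n}\log|s|$ if $b=0$ -- here the reduction from $\{\partial^k_i(f)\}_i$ to the single combination $\mathscr{C}^k_{2l+\epsilon}(f)$ is precisely the definition of $\mathscr{C}^k_l$ together with the fact that every term of $\mathscr{F}_{f_k,l}$ has the common exponent $\tfrac{k-(m-2)}{m}$. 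Differentiating $n+1$ times and combining with the two other pieces yields \eqref{eq:limcc}, with the convention $\binom{0}{n+1}=\tfrac{(-1)^n}{n+1}$ covering $b=0$ (note $\binom{b}{n+1}\ne0$ for all $b\in[0,1)$). Finally, running the same computation on each layer $f_j$, $j<k$ (whose exponent is $\mathfrak{o}(\sigma,j)=\tfrac{j-(m-2)}{m}$) shows $\varphi_{f_j,l}(s)=\tfrac{\mathscr{C}^j_{2l+\epsilon}(f)}{j!}\cdot(\text{singular factor of order }\mathfrak{o}(\sigma,j))+C^\omega$ on $(0,(-1)^\epsilon]$, so the hypothesis $\mathscr{C}^j_{2l+\epsilon}(f)=0$ for $0\le j<k$ forces $\varphi_{f_{<k},l}\in C^\omega((0,(-1)^\epsilon])$, hence $\varphi_{f_{<k},l}\in C^{n+\pb}((0,(-1)^\epsilon])$ with norm $O(\|f\|_{C^{k}})$ by finite-dimensionality. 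Summing the three contributions gives $\varphi_{f,l}\in C^{n+\pb}((0,(-1)^\epsilon])$ with \eqref{eq:norcc}. The ``in particular'' statement for $k\ge m-1$ follows from $n\ge1$ (since $\mathfrak{o}(\sigma,k)\ge\tfrac1m>0$) and the embeddings $C^{n+\pb}\subset C^{(n-1)+(1-b)}=C^{\mathfrak{e}(\sigma,k)}$ for $b>0$ and $C^{n+\po}\subset C^{(n-1)+\eta}=C^{\mathfrak{e}(\sigma,k)}$ for $b=0$ of Remark~\ref{rmk:Hold}, which also carry the norm bound.

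For the converse, assume $\varphi_{f,l}\in C^r((0,(-1)^\epsilon])$ with $0<v(r)\le\mathfrak{o}(\sigma,k)$ and suppose, towards a contradiction, that $\mathscr{C}^j_{2l+\epsilon}(f)\ne0$ for some $j$ with $\mathfrak{o}(\sigma,j)<v(r)$; let $j_0$ be the least such $j$. Since $\mathfrak{o}(\sigma,\cdot)$ is strictly increasing, $\mathscr{C}^j_{2l+\epsilon}(f)=0$ for $0\le j<j_0$, while $j_0<k$ and $n_0:=\lceil\mathfrak{o}(\sigma,j_0)\rceil\le n$, so $f\in C^{j_0\vee(n_0+1)}$. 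Applying the first part with $k$ replaced by $j_0$ gives, with $b_0:=n_0-\mathfrak{o}(\sigma,j_0)$, that $D^{n_0+1}\varphi_{f,l}(s)\sim c\,|s|^{-(b_0+1)}$ as $s\to0$ along $(0,(-1)^\epsilon]$, where $c\ne0$ (this is where non-vanishing of $\binom{b_0}{n_0+1}$ and of $\mathscr{C}^{j_0}_{2l+\epsilon}(f)$ enter). Integrating: if $b_0>0$ then $D^{n_0}\varphi_{f,l}$ is unbounded near $0$ and $D^{n_0-1}\varphi_{f,l}(s)\sim c'|s|^{1-b_0}$, so $\varphi_{f,l}\in C^{(n_0-1)+(1-b_0)}=C^{\mathfrak{o}(\sigma,j_0)}$ and in no larger class; if $b_0=0$ then $D^{n_0}\varphi_{f,l}(s)\sim -c\log|s|$ is unbounded near $0$. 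In either case $\varphi_{f,l}\notin C^{r'}$ whenever $v(r')>\mathfrak{o}(\sigma,j_0)$; since $v(r)>\mathfrak{o}(\sigma,j_0)$ by choice of $j_0$, this contradicts $\varphi_{f,l}\in C^r$. Hence no such $j_0$ exists, i.e.\ $\mathscr{C}^j_{2l+\epsilon}(f)=0$ for every $j$ with $\mathfrak{o}(\sigma,j)<v(r)$.

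The main obstacle is the bookkeeping in the ``lower layers'' step (and its mirror in the converse): one must verify that, restricted to the single ray $(0,(-1)^\epsilon]$, the singular part of $\varphi_{f_j,l}$ reduces to the one scalar $\mathscr{C}^j_{2l+\epsilon}(f)$ -- so that the \emph{sectorial} hypothesis ``$\mathscr{C}^j_{2l+\epsilon}(f)=0$ for $j<k$'', which is genuinely weaker than the full vanishing of $\{\partial^j_i(f)\}$ required in Theorem~\ref{thm:thmC}, is exactly what is needed -- and that this singular term is non-removable, which pins the regularity of $\varphi_{f,l}$ at precisely $C^{\mathfrak{o}(\sigma,j_0)}$ and powers the converse.
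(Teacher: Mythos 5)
Your forward direction is essentially the paper's proof: same decomposition $f=f_{<k}+f_k+e_f$, same use of \eqref{eq:B1s1}--\eqref{eq:B1sln1} to reduce each layer $\varphi_{f_j,l}$ on the single ray to the one coefficient $\mathscr{C}^j_{2l+\epsilon}(f)$, same appeal to Theorem~\ref{thm:thmC} (or its ingredients) for the top layer and the remainder, and the same finite-dimensionality argument for the norm of the lower layers.

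For the converse you take a genuinely different route. The paper fixes $j_0$ with $\mathfrak{o}(\sigma,j_0-1)<v(r)\leq\mathfrak{o}(\sigma,j_0)$, applies the forward part to $f-f_{<j_0}$ to obtain $\varphi_{f-f_{<j_0},l}\in C^{\mathfrak{e}(\sigma,j_0)}\subset C^r$, subtracts to get $\varphi_{f_{<j_0},l}\in C^r$, writes $\varphi_{f_{<j_0},l}$ explicitly as a linear combination of $|s|^{\mathfrak{o}(\sigma,j)}$ and $|s|^{\mathfrak{o}(\sigma,j)}\log|s|$ terms (all of order $<v(r)$), and concludes by linear independence modulo $C^r$. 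You instead argue by contradiction through the limit formula \eqref{eq:limcc}: taking the least offending $j_0$, the hypotheses of the forward part hold with $k$ replaced by $j_0$, so the limit of $|s|^{b_0+1}D^{n_0+1}\varphi_{f,l}(s)$ is a nonzero multiple of $\mathscr{C}^{j_0}_{2l+\epsilon}(f)\neq 0$ (using that $\binom{b_0}{n_0+1}\neq 0$, including the $b_0=0$ convention), from which you deduce $\varphi_{f,l}$ is in no class above $C^{\mathfrak{o}(\sigma,j_0)}$. Both routes are valid; yours localizes everything in the single scalar asymptotic rather than a linear-independence argument, but the paper's subtraction avoids your final step of turning an asymptotic into a negative H\"older statement. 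One small imprecision there: since $D^{n_0-1}\varphi_{f,l}$ has a finite limit at $0$ (this follows from $\varphi_{f,l}\in C^{n_0+\operatorname{P}_{b_0}}$ and Remark~\ref{rmk:Hold}), the correct statement for $b_0>0$ is $D^{n_0-1}\varphi_{f,l}(s)-D^{n_0-1}\varphi_{f,l}(0^+)\sim c'|s|^{1-b_0}$, not $D^{n_0-1}\varphi_{f,l}(s)\sim c'|s|^{1-b_0}$; the failure of $\alpha$-H\"olderity for $\alpha>1-b_0$ is then cleanest via the increment $|D^{n_0-1}\varphi_{f,l}(2s)-D^{n_0-1}\varphi_{f,l}(s)|\gtrsim s^{1-b_0}$, which beats $s^\alpha$ as $s\to 0$. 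The same dyadic increment argument handles the case $v(r)\geq n_0$ against $\eta$, and $b_0=0$ is immediate from unboundedness of $D^{n_0}\varphi_{f,l}$. With that fix the argument is complete.
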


\begin{proof}
We will focus only on the even case, when $\epsilon=0$. The proof in the odd case proceeds in the same way.
Let us decompose $f=f_{<k}+f_k+e_f$, where $f_{< k}=\sum_{0\leq j< k}f_{j}$ with
\begin{gather*}
f_{j}(\omega,\overline \omega)=\frac{1}{j!}\sum_{0\leq i\leq j}\binom{j}{i}\frac{\partial^j f}{\partial\omega^i\partial\overline{\omega}^{j-i}}(0,0)\omega^i\overline{\omega}^{j-i}.
\end{gather*}
By \eqref{eq:B1s1}, \eqref{eq:B1sln1},
\begin{align}\label{eq:phifl}
\begin{split}
\varphi_{f_j,l}(s)&=\frac{{\mathscr{C}}^j_{2l}(f)}{j!}s^{\frac{j-(m-2)}{m}}+C^\omega((0,1])\text{ if }j\neq m-2\ \operatorname{mod}m,\\
\varphi_{f_j,l}(s)&=-\frac{\mathscr{C}^j_{2l}(f)}{j!}s^{\frac{j-(m-2)}{m}}\log s+C^\omega((0,1]))\text{ if }j= m-2\ \operatorname{mod}m.
\end{split}
\end{align}
Since the operator $f\mapsto f_j$ takes values in the finite-dimensional space of homogenous polynomials of degree $j$, for every $0\leq j<k$ there exists $C_j>0$ such that
\begin{align*}
\|\varphi_{f_j,l}(s)-\frac{{\mathscr{C}}^j_{2l}(f)}{j!}s^{\frac{j-(m-2)}{m}}\|_{ C^{n+\pb}((0,1])}&\leq C_j\|f\|_{C^{k}(\mathcal D)}\text{ or } \\
\|\varphi_{f_j,l}(s)+\frac{\mathscr{C}^j_{2l}(f)}{j!}s^{\frac{j-(m-2)}{m}}\log s\|_{ C^{n+\pb}((0,1])}&\leq C_j\|f\|_{C^{k}(\mathcal D)}.
\end{align*}
If $\mathscr{C}^j_{2l}(f)=0$ for all $0\leq j<k$, then
\begin{equation}\label{eq:e<k}
\varphi_{f_{<k},l}\in C^\omega((0,1])\text{ and }\|\varphi_{f_{<k},l}\|_{ C^{n+\pb}((0,1])}\leq \sum_{0\leq j<k}C_j\|f\|_{C^{k}(\mathcal D)}.
\end{equation}
Again, by Theorem~\ref{thm:thmC} applied to $f_k+e_f$, we have $\varphi_{f_{k}+e_f,l}\in C^{n+\pb}((0,1])$,
\[\|\varphi_{f_{k}+e_f,l}\|_{ C^{n+\pb}((0,1])}\leq C\|f_k+e_f\|_{C^{k\vee (n+1)}(\mathcal D)}\leq  C'\|f\|_{C^{k\vee (n+1)}(\mathcal D)}\] and
\[\lim_{s\to 0^+}s^{b+1}D^{n+1}\varphi_{f_k+e_f,l}(s)=(-1)^{(n+1)}\frac{(n+1)!}{k!}\binom{b}{n+1}\mathscr{C}^k_{2l}(f).
\]
Since $\varphi_{f,l}=\varphi_{f_{<k},l}+\varphi_{f_k+e_f,l}$, in view of \eqref{eq:e<k}, this yields \eqref{eq:limcc} and \eqref{eq:norcc}.
As $n-b=\mathfrak{o}(\sigma,k)$,  by  Remark~\ref{rmk:Hold}, this gives $\varphi_{f,l}\in C^{\mathfrak{e}(\sigma,k)}((0,1])$.


%

\medskip

Now suppose that $f\in C^{k\vee(n+1)}(\mathcal D)$ is  such that $\varphi_{f,l}\in C^{r}((0,1])$ for some $r\in \R_\eta$ with $0<v(r)\leq \mathfrak{o}(\sigma,k)$. Choose $m-2<j_0\leq k$ such that
$\mathfrak{o}(\sigma,j_0-1)<v(r)\leq \mathfrak{o}(\sigma,j_0)$. By the first part of the theorem, $\varphi_{f-f_{< j_0},l}\in C^{\mathfrak{e}(\sigma,j_0)}((0,1])$.
As $\varphi_{f,l}\in C^{r}((0,1])$ and $v(r)\leq \mathfrak{o}(\sigma,j_0)$, it follows that $\varphi_{f_{< j_0},l}\in C^{r}((0,1])$. In view of \eqref{eq:phifl},
\begin{align*}
\varphi_{f_{<j_0},l}(s)&=\!\!\!\sum_{\substack{0\leq j<j_0\\j\neq m-2\operatorname{mod} m
}}\!\!\!\frac{{\mathscr{C}}^j_{2l}(f)}{j!}s^{\frac{j-(m-2)}{m}}+\!\!\!\sum_{\substack{0\leq j<j_0\\j= m-2\operatorname{mod} m
}}\!\!\!\frac{\mathscr{C}^j_{2l}(f)}{j!}s^{\frac{j-(m-2)}{m}}(-\log s)+C^\omega((0,1]).
\end{align*}
Therefore,
\[\sum_{\substack{0\leq j<j_0\\j\neq m-2\operatorname{mod} m
}}\frac{{\mathscr{C}}^j_{2l}(f)}{j!}s^{\frac{j-(m-2)}{m}}-\sum_{\substack{0\leq j<j_0\\j= m-2\operatorname{mod} m
}}\frac{\mathscr{C}^j_{2l}(f)}{j!}s^{\frac{j-(m-2)}{m}}\log s\in C^r((0,1])\]
with $\frac{j-(m-2)}{m}\leq \mathfrak{o}(\sigma,j_0-1)<v(r)$ for $0\leq j<j_0$. It follows that ${\mathscr{C}}^j_{2l}(f)=0$ for $0\leq j<j_0$.
%
\end{proof}

By the proof of Theorem~\ref{cor:Hold}, we also have the following.
\begin{corollary}\label{cor:Hold1}
Let $k\geq 0$, $0\leq l<m$ and $\epsilon\in\{0,1\}$.
Suppose that $f\in C^{k\vee(n+1)}(\mathcal D)$.
Then
\begin{align}\label{eq:phiflcor}
\begin{split}
\varphi_{f,l}(s)&=-\sum_{\substack{0\leq j<k\\j= m-2\operatorname{mod} m
}}\frac{\mathscr{C}^j_{2l+\epsilon}(f)}{j!}|s|^{\frac{j-(m-2)}{m}}\log |s|\\
&\quad+\sum_{\substack{0\leq j<k\\j\neq m-2\operatorname{mod} m
}}\frac{{\mathscr{C}}^j_{2l+\epsilon}(f)}{j!}|s|^{\frac{j-(m-2)}{m}}+C^{n+\pb}((0,(-1)^\epsilon]).
\end{split}
\end{align}
\end{corollary}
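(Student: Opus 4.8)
The plan is to rerun, essentially unchanged, the decomposition used in the proof of Theorem~\ref{cor:Hold}, only this time \emph{without} discarding the leading singular terms, so that they survive and assemble into the right-hand side of \eqref{eq:phiflcor}. As there, it suffices to treat the case $\epsilon=0$ (the interval $(0,1]$); the case $\epsilon=1$ follows verbatim after replacing $2l$ by $2l+1$ and $(0,1]$ by $[-1,0)$. Throughout one keeps $n=\lceil\tfrac{k-(m-2)}{m}\rceil$ and $b=n-\tfrac{k-(m-2)}{m}\in[0,1)$. First I would split $f=f_{<k}+g$, where $f_{<k}=\sum_{0\le j<k}f_j$ is the Taylor polynomial of $f$ at the origin of degree $<k$ ($f_j$ its homogeneous part of degree $j$) and $g:=f-f_{<k}\in C^{k\vee(n+1)}(\mathcal D)$ satisfies $D^jg(0,0)=0$ for all $0\le j<k$; by linearity of $f\mapsto\varphi_{f,l}$ one then has $\varphi_{f,l}=\sum_{0\le j<k}\varphi_{f_j,l}+\varphi_{g,l}$.

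For the tail $\varphi_{g,l}$ I would invoke Theorem~\ref{thm:thmC}: since all partial derivatives of $g$ of order $<k$ vanish at $0$, the functionals $\partial^j_{i}(g)$ vanish for every $0\le j<k$, so the theorem applies and yields $\varphi_{g,l}\in C^{n+\pb}((0,(-1)^\epsilon])$ --- this membership is all that is needed, the limit formulas \eqref{eq:b+1+}--\eqref{eq:b+1-} playing no role here. For each homogeneous piece $f_j$ with $0\le j<k$ I would quote the computation already performed inside the proof of Lemma~\ref{lem:analf}: by \eqref{eq:anxi1},
\[
\varphi_{f_j,l}(s)=\mathscr F_{f_j,l}(1,s)=\frac{1}{j!}\sum_{\substack{0\le i\le j\wedge(m-2)\\ i\ne j-(m-1)\operatorname{mod} m}}\partial^j_{i}(f)\,\mathfrak G^l_{(m-1)-i,(m-1)-(j-i)}(1,s)+C^\omega((0,1]).
\]
Feeding each $\mathfrak G^l_{a_1,a_2}(1,s)$ with $a_1=(m-1)-i$, $a_2=(m-1)-(j-i)$ into the decompositions \eqref{eq:B1s}--\eqref{eq:B1sln} --- where $a_1+a_2=2(m-1)-j$ gives $\tfrac{m-(a_1+a_2)}{m}=\tfrac{j-(m-2)}{m}$, which is a non-negative integer precisely when $j= m-2\operatorname{mod} m$, and the prefactor is $\theta_0^{2l(a_2-a_1)}=\theta_0^{2l(2i-j)}$ --- and summing over $i$ against the defining formula for $\mathscr C^j_{2l}$ collapses the sum; this reproduces exactly \eqref{eq:phifl}, namely $\varphi_{f_j,l}(s)=\tfrac{\mathscr C^j_{2l}(f)}{j!}\,|s|^{\frac{j-(m-2)}{m}}+C^\omega((0,1])$ when $j\neq m-2\operatorname{mod} m$, and the same with the extra factor $-\log|s|$ on the singular term when $j= m-2\operatorname{mod} m$. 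Here $\mathscr C^j_{2l}(f)=\mathscr C^j_{2l}(f_j)$, since $\mathscr C^j_{2l}$ reads only the order-$j$ jet of $f$.

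To finish I would assemble the pieces: the finitely many $C^\omega((0,1])$ remainders coming from the $\varphi_{f_j,l}$ all lie in $C^{n+1}((0,1])\subset C^{n+\pb}((0,1])$ by Remark~\ref{rmk:filpa}, so, summed together with $\varphi_{g,l}$, they form a single $C^{n+\pb}((0,(-1)^\epsilon])$ term. Adding up the two displays for $\varphi_{f_j,l}$ over $0\le j<k$ together with $\varphi_{g,l}$ then yields precisely \eqref{eq:phiflcor}.

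I do not expect any genuine obstacle: every building block --- the local form \eqref{eq:anxi1} of $\mathscr F_{f_j,l}$, the decompositions \eqref{eq:B1s}--\eqref{eq:B1sln} of $\mathfrak G^l_{a_1,a_2}(1,s)$, and Theorem~\ref{thm:thmC} for functions with vanishing low-order jet --- is already available. The only points demanding care are clerical: checking that $g$ automatically satisfies the hypothesis of Theorem~\ref{thm:thmC}; matching the root-of-unity prefactor $\theta_0^{2l(2i-j)}$ (respectively $\theta_0^{(2l+1)(2i-j)}$ when $\epsilon=1$) against the defining formula for $\mathscr C^j_{2l+\epsilon}$; and the exponent identity $\tfrac{m-(a_1+a_2)}{m}=\tfrac{j-(m-2)}{m}$ together with the mod-$m$ dichotomy that distinguishes the logarithmic case $j= m-2\operatorname{mod} m$ from the pure-power one --- all of which already occur, essentially word for word, inside the proofs of Lemma~\ref{lem:analf} and Theorem~\ref{thm:thmC}.
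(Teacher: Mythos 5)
Your proposal is correct and follows essentially the same route as the paper: the paper presents this corollary as a direct byproduct of the proof of Theorem~\ref{cor:Hold}, whose decomposition $f=f_{<k}+f_k+e_f$ (your $g=f_k+e_f$), the explicit forms \eqref{eq:phifl} for $\varphi_{f_j,l}$ coming from \eqref{eq:anxi1} and \eqref{eq:B1s}--\eqref{eq:B1sln}, and the application of Theorem~\ref{thm:thmC} to the tail are exactly the ingredients you invoke. The only cosmetic point is that the membership of the $C^{\omega}((0,(-1)^\epsilon])$ remainders in $C^{n+\pb}$ follows simply because such functions extend analytically to the closed interval, so $D^{n+1}$ is bounded there — Remark~\ref{rmk:filpa} is not strictly needed.
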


\subsection{Basic properties of $\mathscr{C}^k_l$}
Recall that $\mathscr{C}^k_l:C^{k}(\mathcal{D})\to\C$ for $k\geq 0$ and $0\leq l<2m$  are given by
\begin{align*}
\mathscr{C}^k_{l}(f)=\sum_{\substack{0\leq j\leq k\wedge(m-2)\\j\neq k-(m-1)\operatorname{mod} m}}
\theta_0^{l(2j-k)}\mathfrak{B}(\tfrac{(m-1)-j}{m},\tfrac{(m-1)-(k-j)}{m})\partial_j^{k}(f).
\end{align*}
%
The functionals $\mathscr{C}^k_l$, $0\leq l<2m$ are not independent. By definition,
\begin{align}\label{eq:relCpm1}
\mathscr{C}^k_{l+m}=(-1)^k\mathscr{C}^k_l\text{ for any }0\leq l<m.
\end{align}
Moreover, we can also get back the value of $\partial ^{k}_j$ from $\mathscr{C}^k_l$. Indeed,
for every $0\leq j\leq k\wedge(m-2)$ with $j\neq k-(m-1)\operatorname{mod} m$,
\begin{gather*}
\mathfrak{B}(\tfrac{(m-1)-j}{m},\tfrac{(m-1)-(k+1)+j}{m})\partial_j^{k}
=\frac{1}{2m}\sum_{0\leq l<2m}\theta_0^{l(k-2j)}\mathscr{C}^k_l=\frac{1}{m}\sum_{0\leq l<m}\theta_0^{l(k-2j)}\mathscr{C}^k_l.
\end{gather*}
Similarly, if $k\wedge(m-2)<j\leq m-2$ or $j=m-1$ or $j= k-(m-1)\operatorname{mod} m$, then
\begin{gather*}
\sum_{0\leq l<2m}\theta_0^{l(k-2j)}\mathscr{C}^k_l=2\sum_{0\leq l<m}\theta_0^{l(k-2j)}\mathscr{C}^k_l=0.
\end{gather*}
Together with \eqref{eq:relCpm1}  this gives all  linear relations involving the functionals  $\mathscr{C}^k_l$.

Moreover, using \eqref{def:parl}, we obtain an elegant formula for $\mathscr{C}^k_l$ depending on the partial derivatives of the function $f$.
Indeed, if $0\leq j\leq m-2$, $j\neq k-(m-1)\operatorname{mod}m$ and $0\leq n\leq\frac{j-i}{m}$ then, by \eqref{eq:beta1},
\begin{align*}
 \mathfrak{B}&(\tfrac{(m-1)-j}{m},\tfrac{(m-1)-(k-j)}{m})\frac{\binom{\frac{(m-1)-j}{m}-1}{n}}{\binom{\frac{(k-j)-(m-1)}{m}}{n}}
 \\
&=\mathfrak{B}(\tfrac{(m-1)-j-nm}{m}+n,\tfrac{(m-1)-(k-j)+nm}{m}-n)(-1)^{n}
\frac{\binom{-\frac{(m-1)-j-nm}{m}}{n}}{\binom{-\frac{(m-1)-(k-j)+nm}{m}+n}{n}}\\
&=\mathfrak{B}(\tfrac{(m-1)-j-nm}{m},\tfrac{(m-1)-(k-j)+nm}{m}).
\end{align*}
By the definition of $\partial^k_j$, it follows that
\begin{align*}
\mathscr{C}^k_l(f)&=\sum_{\substack{0\leq j\leq k\wedge(m-2)\\j\neq k-(m-2)\operatorname{mod}m}}
\Big(\theta_0^{l(2j-k)}\mathfrak{B}(\tfrac{(m-1)-j}{m},\tfrac{(m-1)-(k-j)}{m})\\
&\qquad
\sum_{0\leq n\leq \frac{k-j}{m}}
\frac{\binom{k}{j+nm}\binom{\frac{(m-1)-j}{m}-1}{n}}{\binom{\frac{(k-j)-(m-1)}{m}}{n}}\frac{\partial^kf}{\partial \omega^{j+nm} \partial \overline{\omega}^{k-j-nm}}(0,0) \Big)\\
&=\sum_{\substack{0\leq i\leq k\\i\neq m-1\operatorname{mod} m\\i\neq k-(m-1)\operatorname{mod} m}}
\theta_0^{l(2i-k)}\binom{k}{i}\mathfrak{B}(\tfrac{(m-1)-i}{m},\tfrac{(m-1)-(k-i)}{m})\frac{\partial^{k}f}{\partial \omega^i\partial\overline{\omega}^{k-i}}(0,0).
\end{align*}
According to \eqref{def:Bint},
\begin{align}\label{def2:C}
\mathscr{C}^k_l(f)=\sum_{0\leq i\leq k}
\theta_0^{l(2i-k)}\binom{k}{i}\mathfrak{B}(\tfrac{(m-1)-i}{m},\tfrac{(m-1)-(k-i)}{m})\frac{\partial^{k}f}{\partial \omega^i\partial\overline{\omega}^{k-i}}(0,0).
\end{align}

%
%
\begin{remark}
This formula generalizes the one for $C_\alpha^{\pm}(\varphi_{f,l}), \alpha \in \mathcal{A}$ in \cite[Theorem 9.1]{Fr-Ki} by replacing new functionals for higher order derivatives.
\end{remark}


We now strengthen Theorem~\ref{thm:Cketa} by proving that $F_f$ is also smooth (with some drop of regularity) on the closed sectors $\overline{\mathcal{D}}(\frac{l}{2m}, \frac{l+1}{2m})$.

\begin{theorem}\label{thm;ext}
Fix $k\geq m-1$ and $0\leq l<2m$. Let $m-1\leq \underline{k}\leq k$ be the natural number given by $\widehat{\mathfrak{o}}(\sigma,\underline{k})=\underline{k}-(m-2)=\lceil\frac{k-(m-2)}{m}\rceil=\lceil\mathfrak{o}(\sigma,k)\rceil=:n$.
Suppose that $f\in C^{k\vee(n+1)}(\mathcal{D})$ is  such that $\partial^j_i(f)=0$ for all $0\leq j<\underline{k}$ and $0\leq i\leq j\wedge(m-2)$ with $i\neq j-(m-1)\operatorname{mod} m$
and $\mathscr{C}^j_{l}(f)=0$ for all $0\leq j< k$. Then the map $F_f:\mathcal{D}(\frac{l}{2m}, \frac{l+1}{2m})\to\C$ has a
$C^{\mathfrak{e}(\sigma,k)}$-extension on $\overline{\mathcal{D}}(\frac{l}{2m}, \frac{l+1}{2m})$ and there exists $C>0$ such that
$\|F_f\|_{C^{\mathfrak{e}(\sigma,k)}(\overline{\mathcal{D}}(\frac{l}{2m}, \frac{l+1}{2m}))}\leq C\|f\|_{C^{k\vee (n+1)}(\mathcal{D})}$.
\end{theorem}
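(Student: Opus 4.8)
The strategy is to combine two tools already at hand: Theorem~\ref{thm:Cketa}, which (through the vanishing of the $\partial^j_i$'s) controls $F_f$ near $(0,0)$ inside the cone $\mathscr{S}(r)^{1/m}$, and Theorem~\ref{cor:Hold} together with the structural identity $\mathscr{F}_{f,l}(u,s)=\varphi_{f,l}(s)-\int_u^1(v^2+s^2)^{-\frac{m-1}{m}}f(G_l(v,s))\,dv$, which (through the vanishing of the $\mathscr{C}^j_l$'s) controls $F_f$ near the one separatrix the cone misses. First I would reduce to $l=2l'$ even, the odd case being symmetric through \eqref{eq:Glu}--\eqref{eq:conj}, and pass to the coordinate $z=\omega^m$, in which $\overline{\mathcal D}(\tfrac{l}{2m},\tfrac{l+1}{2m})$ is carried onto the closed upper half $\{\Im z\geq 0\}$ of the $z$-disc, where $F_f$ is given by $\mathscr{F}_{f,l'}$. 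Fix $0<r'<r$ and cover this closed half-disc by two closed sub-sectors, each containing the origin: $A_1:=\{\Im z\geq 0,\ \Re z\leq r|\Im z|\}\cap\mathcal D$ (a wedge around the separatrix $\{s=0,\ u\leq 0\}$) and $A_2:=\{\Im z\geq 0,\ \Re z\geq r'|\Im z|\}\cap\mathcal D$ (a wedge around $\{s=0,\ u\geq 0\}$); since $r'<r$ one checks $A_1\cup A_2$ is the whole closed half-disc while $A_1\cap A_2$ has non-empty interior. It then suffices to produce a $C^{\mathfrak{e}(\sigma,k)}$-extension of $F_f$ on each preimage $A_1^{1/m},A_2^{1/m}$ inside $\mathcal D(\tfrac{l}{2m},\tfrac{l+1}{2m})$, with norm bounded by a multiple of $\|f\|_{C^{k\vee(n+1)}(\mathcal D)}$, and to glue: two functions of regularity class $C^{\mathfrak{e}(\sigma,k)}$ agreeing with the smooth $F_f$ on the relative interiors agree with one another on the open part of $A_1\cap A_2$ to order $v(\mathfrak{e}(\sigma,k))$, hence patch to a single $C^{\mathfrak{e}(\sigma,k)}$-function on the union.

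On $A_1$ the claim is immediate from Theorem~\ref{thm:Cketa} applied with $\underline k$ in place of $k$: since $\partial^j_i(f)=0$ for all $0\leq j<\underline k$ and the admissible $i$, and $f\in C^{k\vee(n+1)}\subset C^{\underline k}$ because $k\geq\underline k$, that theorem gives a $C^{\widehat{\mathfrak{e}}(\sigma,\underline k)}$-extension of $F_f$ on $\mathcal D\cap\mathscr{S}(r)^{1/m}$ — in particular on the angular sector of $\mathcal D\cap\mathscr{S}(r)^{1/m}$ containing $A_1^{1/m}$ — with norm controlled by $\|f\|_{C^{\underline k}}\leq\|f\|_{C^{k\vee(n+1)}}$. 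As $\widehat{\mathfrak{e}}(\sigma,\underline k)=\underline k-(m-1)+\eta=n-1+\eta$ and $v(n-1+\eta)=n\geq\mathfrak{o}(\sigma,k)=v(\mathfrak{e}(\sigma,k))$, one has $C^{\widehat{\mathfrak{e}}(\sigma,\underline k)}\subset C^{\mathfrak{e}(\sigma,k)}$, which settles this region.

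On $A_2$ I would use the identity $\mathscr{F}_{f,l'}(u,s)=\varphi_{f,l'}(s)-\int_u^1(v^2+s^2)^{-\frac{m-1}{m}}f(G_{l'}(v,s))\,dv$. The substitution $v\mapsto -v$ and the branch relation $G_{l'}(-v,s)=\zeta\,\overline{G_{l'}(v,s)}$ for $s>0$ (a fixed root of unity $\zeta$, read off from $G_0(-u-\iota s)=\theta_0 G_0(u+\iota s)$ and $G_0(\overline w)=\theta\,\overline{G_0(w)}$) rewrite this integral as $\mathscr{F}_{\widetilde f,l'}(-u,s)$, where $\widetilde f(\omega,\overline\omega):=f(\zeta\overline\omega,\overline\zeta\omega)\in C^{k\vee(n+1)}(\mathcal D)$ is the reflected observable. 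Thus $\mathscr{F}_{f,l'}$ splits into two pieces on $A_2^{1/m}$. For the first, $\varphi_{f,l'}$: the hypothesis $\mathscr{C}^j_l(f)=0$ for $j<k$ (with $l=2l'$) is exactly what Theorem~\ref{cor:Hold} needs to give $\varphi_{f,l'}\in C^{\mathfrak{e}(\sigma,k)}((0,1])$ (i.e.\ with a $C^{\mathfrak{e}(\sigma,k)}$-extension at $0$) with norm bounded by $\|f\|_{C^{k\vee(n+1)}}$; since $s=\Im\omega^m$ is a polynomial in $\omega$, non-negative on the half-sector, $\omega\mapsto\varphi_{f,l'}(\Im\omega^m)$ is $C^{\mathfrak{e}(\sigma,k)}$ on $A_2^{1/m}$ with the same bound. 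For the second, $\mathscr{F}_{\widetilde f,l'}(-u,s)$: one has $-u+\iota s=-\overline{\omega^m}=(\zeta'\overline\omega)^m$ for a fixed root of unity $\zeta'$, so $G_{l'}(-\overline{\omega^m})=\zeta'\overline\omega$ depends real-analytically on $\omega$, lands — for $\omega\in A_2^{1/m}$ — on the midline of a root-sector of $\mathcal D$, and has $m$-th power staying near the negative real $z$-axis, hence inside $\overline{\mathscr{S}(r)^{1/m}}$; therefore $\mathscr{F}_{\widetilde f,l'}(-u,s)=F_{\widetilde f}(\zeta'\overline\omega)$. A short combinatorial check on \eqref{def:parl} shows that the reflection $\omega\leftrightarrow\overline\omega$ permutes the relevant index set modulo $m$ and preserves the exclusion $i\neq j-(m-1)\bmod m$, so $\partial^j_i(\widetilde f)=0$ for all $0\leq j<\underline k$ and the admissible $i$; hence Theorem~\ref{thm:Cketa} applied to $\widetilde f$ yields a $C^{\widehat{\mathfrak{e}}(\sigma,\underline k)}\subset C^{\mathfrak{e}(\sigma,k)}$-extension of $F_{\widetilde f}$ near that point, with norm bounded by $\|f\|_{C^{\underline k}}$, and composition with the smooth map $\omega\mapsto\zeta'\overline\omega$ preserves this. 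Subtracting the two pieces gives $F_f\in C^{\mathfrak{e}(\sigma,k)}(A_2^{1/m})$ with the claimed bound.

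Combining the two regions and gluing produces $F_f\in C^{\mathfrak{e}(\sigma,k)}(\overline{\mathcal D}(\tfrac{l}{2m},\tfrac{l+1}{2m}))$ with $\|F_f\|_{C^{\mathfrak{e}(\sigma,k)}}\leq C\|f\|_{C^{k\vee(n+1)}(\mathcal D)}$. The step I expect to be the main obstacle is the $A_2$ analysis: verifying cleanly that the reflected integral $\int_u^1(\cdots)\,dv$ really is an $\mathscr{F}$-type object for $\widetilde f$ evaluated at a point sitting inside the cone $\mathscr{S}(r)^{1/m}$ (so that Theorem~\ref{thm:Cketa} applies there with constants uniform up to and including $\omega=0$), and bookkeeping the roots of unity $\zeta,\zeta',\theta_0,\theta$ through the branch identities and the transfer of the vanishing conditions to $\widetilde f$. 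Geometrically the point is simply that the wedge of the half-sector not reached by $\mathscr{S}(r)^{1/m}$ — the one against the positive-real-axis separatrix — becomes, after reflection, precisely the wedge against the negative-real-axis separatrix, which the cone does reach; making this quantitative near the saddle is where the care is required.
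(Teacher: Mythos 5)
Your proposal reproduces the paper's proof structure: Theorem~\ref{thm:Cketa} controls $F_f$ in the cone $\mathscr{S}(r)^{1/m}$, the identity $\mathscr{F}_{f,l}(u,s)=\varphi_{f,l}(s)-\int_u^1\cdots$ converts the remaining wedge near the excluded separatrix into a reflected $\mathscr{F}$-object plus $\varphi_{f,l}$, Theorem~\ref{cor:Hold} handles $\varphi_{f,l}$, and Theorem~\ref{thm:Cketa} is re-applied to the reflected observable. The one genuine divergence is in the choice of reflection. The paper uses the branch identity $G_l(-v,s)=\theta_0^{-1}G_l(v,-s)$, so the reflected observable is just $f\circ\theta_0^{-1}$ and the reflected map is $\omega\mapsto\theta_0\omega$; this makes the transfer of hypotheses trivial because $\partial^j_i(f\circ\theta_0^{-1})=\theta_0^{-(2i-j)}\partial^j_i(f)$ is merely a phase factor. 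You instead use $G_{l'}(-v,s)=\zeta\,\overline{G_{l'}(v,s)}$, so your reflected observable $\widetilde f(\omega,\overline\omega)=f(\zeta\overline\omega,\overline\zeta\omega)$ involves a conjugation, and the transfer $\partial^j_i(f)=0\Rightarrow\partial^j_i(\widetilde f)=0$ is not a phase factor but requires showing $\partial^j_i(\widetilde f)$ is a nonzero multiple of $\partial^j_{i'}(f)$ with $i'\equiv j-i\pmod m$. Your ``short combinatorial check'' is indeed correct — writing the coefficient ratio in \eqref{def:parl} in terms of Gamma functions shows that $c_n/c'_{N-n}=(-1)^N\Gamma(a')\Gamma(N+a)/(\Gamma(a)\Gamma(N+a'))$ with $a=(i+1)/m$, $a'=(i'+1)/m$, which is $n$-independent, and the two excluded indices $\{m-1,\,j-(m-1)\bmod m\}$ swap under $i\mapsto i'$ so the relevant index set maps to itself — but this needs to be carried out explicitly, whereas the paper's rotation sidesteps it entirely. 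Everything else (the two-wedge cover, the embedding $C^{\widehat{\mathfrak{e}}(\sigma,\underline k)}\subset C^{\mathfrak{e}(\sigma,k)}$, the gluing on the overlap) matches the paper's argument and is correct.
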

\begin{proof}
We focus only on the even sectors ${\mathcal{D}}(\frac{2l}{2m}, \frac{2l+1}{2m})$. The proof in the odd case proceeds in the same way.
By Theorem~\ref{thm:Cketa}, for every $0<\vep<1/2$ the map $F_f$ has a $C^{\widehat{\mathfrak{e}}(\sigma,\underline{k})}$-extension on
$\overline {\mathcal D}(\frac{2l+\vep}{2m}, \frac{2l+1}{2m})\subset \overline {\mathcal D}(\frac{2l+\vep}{2m}, \frac{2l+2-\vep}{2m})$ and there exists $C_\vep>0$ so that
$\|F_f\|_{C^{\widehat{\mathfrak{e}}(\sigma,\underline{k})}(\overline {\mathcal D}(\frac{2l+\vep}{2m}, \frac{2l+1}{2m}))}\leq C_\vep\|f\|_{C^{\underline{k}}(\mathcal{D})}$.
Moreover,
\begin{align*}
\mathscr{F}_{f,l}(u,s)&=\int_{-1}^u\frac{f(G_l(v,s))}{(v^2+s^2)^{\frac{m-1}{m}}}dv=\varphi_{f,l}(s)-\int_{u}^{1}\frac{f(G_l(v,s))}{(v^2+s^2)^{\frac{m-1}{m}}}dv\\
&=\varphi_{f,l}(s)-\int_{-1}^{-u}\frac{f(G_l(-v,s))}{(v^2+s^2)^{\frac{m-1}{m}}}dv.
\end{align*}
As $G_l(-v,s)=\theta_0^{-1}G_l(v,-s)$ for $s>0$, this gives
\[\mathscr{F}_{f,l}(z,\overline{z})=\varphi_{f,l}(\Im z)-\mathscr{F}_{f\circ \theta_0^{-1},l}(-z,-\overline{z})\text{ if }\Im z>0.\]
It follows that
\begin{equation}\label{eq:Ffsym}
{F}_f(\omega,\overline{\omega})=
\varphi_{f,l}(\Im \omega^m)-{F}_{f\circ \theta_0^{- 1}}(\theta_0\omega,\theta_0^{-1}\overline{\omega})\text{ on } \mathcal{D}(\tfrac{2l}{2m}, \tfrac{2l+1}{2m}).
\end{equation}
Note that
\[\frac{\partial^{j}(f\circ \theta_0^{-1})}{\partial \omega^i\partial\overline{\omega}^{j-i}}(0,0)=\theta_0^{-(2i-j)}\frac{\partial^{j}f}{\partial \omega^i\partial\overline{\omega}^{j-i}}(0,0).\]
By \eqref{def:parl}, it follows that $\partial_{i}^j(f\circ \theta_0^{-1})=\theta_0^{-(2i-j)}\partial_{i}^j(f)$. Therefore, by assumption, $\partial^j_i(f\circ \theta_0^{-1})=0$ for all $0\leq j<\underline{k}$ and $0\leq i\leq j\wedge(m-2)$ with $i\neq j-(m-1)\operatorname{mod} m$. Using Theorem~\ref{thm:Cketa} again, we obtain the map $F_{f\circ \theta_0^{-1}}$ has a $C^{\widehat{\mathfrak{e}}(\sigma,\underline{k})}$-extension on
$\overline {\mathcal D}(\frac{2l+1}{2m}, \frac{2l+2-\vep}{2m})\subset \overline {\mathcal D}(\frac{2l+\vep}{2m}, \frac{2l+2-\vep}{2m})$ and
$\|F_{f\circ \theta_0^{-1}}\|_{C^{\widehat{\mathfrak{e}}(\sigma,\underline{k})}(\overline {\mathcal D}(\frac{2l+1}{2m}, \frac{2l+2-\vep}{2m}))}\leq C_\vep\|f\circ \theta_0^{-1}\|_{C^{\underline{k}}(\mathcal{D})}$.
In particular,
\begin{align}\label{eq:Ffsym1}
\begin{split}
&\text{${F}_{f\circ \theta_0^{-1}}(\theta_0\omega,\theta_0^{-1}\overline{\omega})$ is of the class  $C^{\widehat{\mathfrak{e}}(\sigma,\underline{k})}$ on $\overline {\mathcal D}(\tfrac{2l}{2m}, \tfrac{2l+1-\vep}{2m})$ and}\\
&\|F_{f\circ \theta_0^{-1}}(\theta_0\omega,\theta_0^{-1}\overline{\omega})\|_{C^{\widehat{\mathfrak{e}}(\sigma,\underline{k})}(\overline {\mathcal D}(\frac{2l}{2m}, \frac{2l+1-\vep}{2m}))}\leq C_{\vep}\|f\|_{C^{\underline{k}}(\mathcal{D})}.
\end{split}
\end{align}
By Theorem~\ref{cor:Hold}, $\varphi_{f,l}$ has a $C^{\mathfrak{e}(\sigma,k)}$-extension on $[0,1]$ with
\[
\|\varphi_{f,l}\|_{C^{\mathfrak{e}(\sigma,k)}([0,1])}\leq C\|f\|_{C^{k\vee(n+1)}(\mathcal D)}.
\]
Therefore, $\omega\to \varphi_{f,l}(\Im \omega^m)$ has a $C^{\mathfrak{e}(\sigma,k)}$-extension on $\overline {\mathcal D}(\tfrac{2l}{2m}, \tfrac{2l+1}{2m})$ and
\begin{gather*}
\|\varphi_{f,l}(\Im \omega^m)\|_{C^{\mathfrak{e}(\sigma,k)}(\overline {\mathcal D}(\frac{2l}{2m}, \frac{2l+1}{2m}))}\leq C'\|f\|_{C^{k\vee(n+1)}(\mathcal D)}.
\end{gather*}
As $F_f$ is a $C^{\widehat{\mathfrak{e}}(\sigma,\underline{k})}$-map on
$\overline {\mathcal D}(\frac{2l+\vep}{2m}, \frac{2l+1}{2m})$ with
$\|F_{f}\|_{C^{\widehat{\mathfrak{e}}(\sigma,\underline{k})}(\overline {\mathcal D}(\frac{2l+\vep}{2m}, \frac{2l+1}{2m}))}\leq C_{\vep}\|f\|_{C^{\underline{k}}(\mathcal{D})}$,
$\mathfrak{o}(\sigma,k)\leq \lceil\mathfrak{o}(\sigma,k)\rceil= \widehat{\mathfrak{o}}(\sigma,\underline{k})$ and $\underline{k}\leq k$,
in view of \eqref{eq:Ffsym} and \eqref{eq:Ffsym1}, this gives our claim.
\end{proof}

We now show that {the regularity of $F_f$ obtained in} Theorem~\ref{thm;ext} is optimal.

\begin{theorem}\label{thm;extinv}
Fix $k\geq m-1$ and $0\leq l<2m$.
If $f\in C^{k\vee(n+1)}(\mathcal D)$ is  such that $F_{f}\in C^{r}(\overline {\mathcal D}(\frac{l}{2m}, \frac{l+1}{2m}))$ for some $r\in \R_\eta$ with $0<v(r)\leq \mathfrak{o}(\sigma,k)$ then
$\mathscr{C}^j_{l}(f)=0$ for all $j\geq 0$ such that $\mathfrak{o}(\sigma,j)<v(r)$ and $\partial^j_i(f)=0$ for all $j\geq 0$ with $\widehat{\mathfrak{o}}(\sigma,j)<v(r)$ and $0\leq i\leq j\wedge(m-2)$ with $i\neq j-(m-1)\operatorname{mod} m$.
\end{theorem}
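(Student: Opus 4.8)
\medskip
\noindent\emph{Overview.} This is the optimality counterpart of Theorem~\ref{thm;ext}, and the plan is to run the construction behind that theorem in reverse, reading the two families of obstructions off the behaviour of $F_f$ near the two strata of $\overline{\mathcal D}(\tfrac{l}{2m},\tfrac{l+1}{2m})$ along which the a priori smoothness of $F_f$ may fail: the $\mathscr C^j_l$ near the non‑corner boundary ray, and the $\partial^j_i$ near the corner $0$. By the symmetries $G_0(-u-\iota s)=\theta_0 G_0(u+\iota s)$, $G_0(u-\iota s)=\theta_0^2\overline{G_0(u+\iota s)}$ (already used to obtain \eqref{eq:Ffsym}) it is enough to treat the even sector $l=2l_0$; write $l_0$ for the branch index, and note that $\widehat{\mathfrak o}(\sigma,j)<v(r)$ always implies $\mathfrak o(\sigma,j)<v(r)$ and $j<k\le k\vee(n+1)$, so every distribution below lives on $C^{k\vee(n+1)}(\mathcal D)$.

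\medskip
\noindent\emph{Step 1: the $\mathscr C^j_l$.} I would use the real‑analytic curve $s\mapsto G_{l_0}(1+\iota s)$. For $s\downarrow 0$ it tends to the non‑corner boundary point $\theta^{l_0}\neq 0$ of the closed sector and enters the sector transversally (velocity $\tfrac{\iota}{m}\theta^{l_0}$ at $s=0$), and along it $F_f(G_{l_0}(1+\iota s))=\mathscr F_{f,l_0}(1,s)=\varphi_{f,l_0}(s)$. Since $F_f$ is of class $C^r$ in a one‑sided neighbourhood of $\theta^{l_0}$, and composing a $C^r$ function near a smooth boundary point with a smooth transversal curve preserves the class, $\varphi_{f,l_0}\in C^r((0,1])$. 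The converse half of Theorem~\ref{cor:Hold}, applied with the same $r$ (legitimate since $0<v(r)\le\mathfrak o(\sigma,k)$), then gives $\mathscr C^j_l(f)=\mathscr C^j_{2l_0}(f)=0$ for all $j$ with $\mathfrak o(\sigma,j)<v(r)$. For the odd sector one runs the curve for $s<0$ (it hits the opposite boundary ray transversally) and takes $\epsilon=1$ in Theorem~\ref{cor:Hold}.

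\medskip
\noindent\emph{Step 2: the $\partial^j_i$, set‑up.} Assume for contradiction that $\partial^J_i(f)\neq 0$ for some admissible $i$ with $\widehat{\mathfrak o}(\sigma,J)<v(r)$, and pick $J$ minimal with this property. By minimality and Step~1, all admissible $\partial^{J'}_i(f)$ with $J'<J$ vanish and all $\mathscr C^{J'}_l(f)$, $\mathscr C^{J}_l(f)$ in the relevant range vanish. Split $f=f_{<J}+f_J+e_f$ into Taylor polynomials at $0$. Minimality puts $f_{<J}$ under the hypotheses of Lemma~\ref{lem:analf}, so $F_{f_{<J}}\in C^\omega(\overline{\mathcal D}(\tfrac{l}{2m},\tfrac{l+1}{2m}))$; since $e_f$ vanishes to order $J+1$, the ``$o$''‑estimates \eqref{eqn;F(z)bound3} together with Lemma~\ref{lem:estDF} show that $F_{e_f}$ is negligible at the scale $|\omega|^{\widehat{\mathfrak o}(\sigma,J)}$ relative to the terms produced by $f_J$. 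Consequently the class of $F_f-F_{f_{<J}}=F_{f_J}+F_{e_f}$ near $0$ inside the sector is governed by $F_{f_J}$, and by identity \eqref{eq:anxi1} from the proof of Lemma~\ref{lem:analf} one has, modulo $C^\omega$,
\[F_{f_J}\equiv\frac1{J!}\sum_{i\ \mathrm{admissible}}\partial^J_i(f)\,\mathfrak G^{l_0}_{(m-1)-i,\,(m-1)-(J-i)}(\omega^m,\overline\omega^m).\]

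\medskip
\noindent\emph{Step 3: extracting the vanishing, and the hard part.} Now I would insert the sharp expansions of $\mathfrak G^{l_0}_{a_1,a_2}(u,s)$ near $s=0$ from Lemmas~\ref{lem:Bs^}, \ref{lem:Blog}, \ref{lem:Bmix} (here $a_1+a_2=2(m-1)-J$ is fixed, so a single case applies). In the region $\{|s|\le ru\}$ the $|s|^{(m-a)/m}$ (or $|s|^{(m-a)/m}\log|s|$) layer contributes, after $\omega\mapsto\omega^m$, exactly $\tfrac1{J!}\mathscr C^J_{2l_0}(f)\cdot|\Im\omega^m|^{(m-a)/m}(\cdots)$, which vanishes by Step~1; the analytic‑in‑$|s|$ layer $\rho^{\pm}(|s|)$ maps to a function analytic in $\omega$ (as $\Im\omega^m>0$ on the even sector); what survives is a combination, indexed by the admissible $i$, of the homogeneous‑degree‑$\widehat{\mathfrak o}(\sigma,J)$ profiles coming from the $u^{(m-a)/m}\varrho^{\pm}_i(|s|/u)$ terms (plus, at the borderline exponent, a $(\Im\omega^m)^{\bullet}\log\Re\omega^m$ term), with coefficients that are genuine linear combinations of the $\partial^J_i(f)$. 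Since $F_f-F_{f_{<J}}\in C^r$ with $v(r)>\widehat{\mathfrak o}(\sigma,J)$, this combination must be the restriction of a homogeneous polynomial — and for $J<m-2$ it must even be bounded, i.e.\ vanish. The hard part is then to show that this happens only when every admissible $\partial^J_i(f)$ vanishes — equivalently, that $\mathfrak G^{l_0}_{(m-1)-i,(m-1)-(J-i)}(\omega^m,\overline\omega^m)$ are, for admissible $i$, linearly independent modulo $C^{\omega,m}_{l_0}$ once the $\mathscr C^J_l$‑layer is removed; this rests on the precise shape of the auxiliary functions in Lemmas~\ref{lem:Bs^}, \ref{lem:Blog}, \ref{lem:Bmix} (the angular parts of these profiles are distinguished by the integers $2i-J$, and the very indices $i=m-1$, $i\equiv J-(m-1)\bmod m$ that would spoil independence are precisely the ``irrelevant'' ones of Remark~\ref{rem:irrel}, which produce the $C^{\omega,m}_{l_0}$ terms), and it is the multi‑saddle analogue of the corresponding computation in \cite{Fr-Ki}. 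The exceptional log‑case and $m=2$ collapse to the single condition $\mathscr C^J_l(f)=0$ already supplied by Step~1. This contradicts the choice of $J$, proving the $\partial^j_i$‑vanishing; the only further care needed is the bookkeeping of lower‑order terms absorbed along the way, including a possible $|\omega|^{\widehat{\mathfrak o}(\sigma,J)}\log|\omega|$ term at the borderline exponent, whose vanishing is likewise forced by $F_f\in C^{\widehat{\mathfrak o}(\sigma,J)}$.
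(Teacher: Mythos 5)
Step~1 of your proposal matches the paper's proof exactly: restrict $F_f$ to the real-analytic transversal $s\mapsto G_{l_0}(1+\iota s)$ to obtain $\varphi_{f,l_0}\in C^r((0,1])$, then invoke the converse half of Theorem~\ref{cor:Hold} to get the vanishing of the $\mathscr C^j_l$. This half is fine.

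Steps~2--3 diverge from the paper in structure (proof by contradiction with minimal $J$ and the three-term split $f=f_{<J}+f_J+e_f$, rather than the paper's direct split $f=f_{<\underline k}+e_f$ with the auxiliary indices $\underline k\le\overline k$ and Theorem~\ref{thm;ext} applied to $e_f$). The paper's choice is not cosmetic: it lets Theorem~\ref{thm;ext} deliver $F_{e_f}\in C^{\mathfrak e(\sigma,\overline k)}$ with $v(r)\le\mathfrak o(\sigma,\overline k)$, so that $F_{f_{<\underline k}}=F_f-F_{e_f}\in C^r$ \emph{exactly}. Your route, by contrast, only invokes the crude estimates \eqref{eqn;F(z)bound3} and Lemma~\ref{lem:estDF}, which yield a $C^{\widehat{\mathfrak e}(\sigma,J)}$-type bound for $F_{e_f}$; that is weaker than $C^r$, and the assertion that $F_{e_f}$ is ``negligible at the scale $|\omega|^{\widehat{\mathfrak o}(\sigma,J)}$'' is not carried out to the point where the $F_{f_J}$-contribution can be isolated cleanly inside $C^r$. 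This part of your argument would need to be tightened along the lines of the paper's use of Theorem~\ref{thm;ext}.

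The genuine gap, however, is at the end of Step~3. You identify the crux correctly --- one must show that the functions $\mathfrak G^{l_0}_{(m-1)-i,\,(m-1)-(J-i)}(\omega^m,\overline\omega^m)$, for admissible $i$, are linearly independent modulo $C^{\omega,m}_{l_0}$ once the $\mathscr C^J_l$-layer has been stripped off --- but you then defer it (``this rests on the precise shape of the auxiliary functions \dots it is the multi-saddle analogue of the corresponding computation in \cite{Fr-Ki}''). That claim \emph{is} the hard half of the theorem; stating it without proof does not close the argument. The paper settles it in a concrete way that your sketch does not reproduce: after using the expansion Lemmas~\ref{lem:Bs^}--\ref{lem:Bmix} to write
$\mathscr{F}_{f_{<\underline k},l}(u,s)=\sum_j u^{(j-(m-2))/m}\varrho_{j,l}(s/u)+\log u\sum_j c_{j,l}\,s^{(j-(m-2))/m}+\rho_l(s)$
on a cone, it first kills the $\log u$-coefficients $c_{j,l}$ by restricting to a ray $\omega=t\omega_0$ and exploiting $C^r$-regularity; then it invokes the Appendix Lemma~\ref{lem:hom} (on smooth homogeneous functions on a closed sector) to force each $\Upsilon_{j,l}(\omega^m,\overline\omega^m)$ to be a homogeneous polynomial $\sum_i a_{j,i}\,\omega^i\overline\omega^{(j-i)-(m-2)}$; and finally it differentiates the resulting identity in $u$, matches coefficients against the Taylor data of $f$, and runs a telescoping computation through the definition \eqref{def:parl} of $\partial^j_i$, using $a_{j,i-(m-1)}=0$ (since $i-(m-1)<0$) to conclude $\partial^j_i(f)=0$. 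Your proposal omits the homogeneous-function lemma entirely, and without it (or an equivalent argument) the linear-independence assertion is not established. To repair the proposal you would need to supply this explicit coefficient-matching step, not merely point to it.
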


\begin{proof}
We will focus only on the even sectors ${\mathcal{D}}(\frac{2l}{2m}, \frac{2l+1}{2m})$. The proof in the odd case proceeds in the same way.

By definition, $\varphi_{f,l}(s)=\mathscr{F}_{f,l}(1,s)=F_f(G_l(1+\iota s),\overline{G_l(1+\iota s)})$ on $(0,1]$. As $F_{f}\in C^{r}(\overline {\mathcal D}(\frac{2l}{2m}, \frac{2l+1}{2m}))$, it follows that
$\varphi_{f,l}\in C^r((0,1])$. In view of Theorem~\ref{cor:Hold}, $\mathscr{C}^j_{2l}(f)=0$ for all $j\geq 0$ such that $\mathfrak{o}(\sigma,j)<v(r)$.

The proof of the vanishing of $\partial^j_i$ is much more involved. Choose $m-1\leq\underline{k}\leq \overline{k}< k$ such that
$\mathfrak{o}(\sigma,\overline{k}-1)<v(r)\leq \mathfrak{o}(\sigma,\overline{k})$ and $\widehat{\mathfrak{o}}(\sigma,\underline{k}-1)<v(r)\leq \widehat{\mathfrak{o}}(\sigma,\underline{k})$.
By the first part of the theorem, $\mathscr{C}^j_{2l}(f)=0$ for all $0\leq j<\overline{k}$.
Let us decompose $f=f_{< \underline{k}}+e_f$, where $f_{< \underline{k}}=\sum_{0\leq j< \underline{k}}f_{j}$ with
\begin{gather*}
f_{j}(\omega,\overline \omega)=\frac{1}{j!}\sum_{0\leq i\leq j}\binom{j}{i}\frac{\partial^j f}{\partial\omega^i\partial\overline{\omega}^{j-i}}(0,0)\omega^i\overline{\omega}^{j-i}.
\end{gather*}
Then for every $0\leq j< \underline{k}$ we have $D^je_f(0,0)=0$ and $\mathscr{C}^j_{2l}(e_f)=0$ and for $\underline{k}\leq j<\overline{k}$ we have $\mathscr{C}^j_{2l}(e_f)=\mathscr{C}^j_{2l}(f)=0$.
Since $\widehat{\mathfrak{o}}(\sigma,\underline{k})=\lceil\mathfrak{o}(\sigma,\overline{k})\rceil$,
in view of Theorem~\ref{thm;ext}, this gives $F_{e_f}\in C^{\mathfrak{e}(\sigma, \overline{k})}(\overline{\mathcal{D}}(\frac{2l}{2m}, \frac{2l+1}{2m}))$. As $F_{f}\in C^{r}(\overline{\mathcal{D}}(\frac{2l}{2m}, \frac{2l+1}{2m}))$ and $v(r)\leq \mathfrak{o}(\sigma,\overline{k})$, this yields $F_{f_{< \underline{k}}}=F_{f}-F_{e_f}\in C^{r}(\overline{\mathcal{D}}(\frac{2l}{2m}, \frac{2l+1}{2m}))$.

For every $0<a<1$ let $\Delta_a=\{(u,s):0< u\leq 1,0<s\leq au\}$.
By Lemmas~\ref{lem:Bs^}, \ref{lem:Blog}, \ref{lem:Bmix} and \eqref{def2:C},
for every $0<a<1$, there exist ${\varrho_{j,l}}\in C^{\omega}([0,a])$ and ${c_{j,l}}\in \C$ for $0\leq j<\underline{k}$ and ${\rho_l}\in C^{\omega}([0,a])$ such that for any $(u,s)\in\Delta_a$,
\begin{align*}
\mathscr{F}_{f_{<\underline{k}},l}(u,s)&=\sum_{0\leq j<\underline{k}}\frac{1}{j}\sum_{0\leq i\leq j}\binom{j}{i}
\frac{\partial^j f(0,0)}{\partial\omega^i\partial\overline{\omega}^{j-i}}\mathfrak{G}^l_{(m-1)-i,(m-1)-(j-i)}(u,s)\\
&=\sum_{0\leq j<\underline{k}}u^{\frac{j-(m-2)}{m}}{\varrho_{j,l}}(s/u)+\log u\sum_{0\leq j<\underline{k}}{c_{j,l}}s^{\frac{j-(m-2)}{m}}+{\rho_l}(s).
\end{align*}
Let $\alpha\in(0,1/4)$ so that $\tan(\pi \alpha)=a$. Fix any $0<\beta<\alpha$ and let $\omega_0=e^{2\pi \iota \frac{2l+\beta}{2m}}$. Then for any $t\in(0,a]$,
\begin{align*}
\mathscr{F}_{f_{<\underline{k}},l}((t\omega_0)^m,\overline{t\omega_0}^m)&=\mathscr{F}_{f_{<\underline{k}},l}(t^m\cos(\pi\beta),t^m\sin(\pi\beta))\\
&=
\sum_{0\leq j<\underline{k}}\cos(\pi\beta)^{\frac{j-(m-2)}{m}}{\varrho_{j,l}}(\tan(\pi\beta))t^{j-(m-2)}+{\rho_l}(t^m\sin(\pi\beta))\\
&\quad+\sum_{0\leq j<\underline{k}}{c_{j,l}}\sin(\pi\beta)^{\frac{j-(m-2)}{m}}t^{j-(m-2)}\log (t^m\cos(\pi\beta)).
\end{align*}
Since $(0,a]\ni t\mapsto \mathscr{F}_{f_{<\underline{k}},l}((t\omega_0)^m,\overline{t\omega_0}^m)\in\C$ is of class $C^r$, $[0,a]\ni t\mapsto {\rho_l}(t^m\sin(\pi\beta))\in\C$ is analytic and $v(r)>\widehat{\mathfrak o}(\sigma,\underline{k}-1)=\underline{k}-1-(m-2)\geq j-(m-2)$ for every $0\leq j<\underline{k}$,
it follows that ${c_{j,l}}=0$ for all $0\leq j<\underline{k}$, so
$\mathscr{F}_{f_{<\underline{k}},l}(u,s)=\sum_{0\leq j<\underline{k}}u^{\frac{j-(m-2)}{m}}{\varrho_{j,l}}(s/u)+{\rho_l}(s)$.

For every $0\leq j<\underline{k}$ let ${\Upsilon_{j,l}}:\Delta_a\to\C$ be a real analytic homogenous map of degree  $\frac{j-(m-2)}{m}$ given by ${\Upsilon_{j,l}}(u,s)=u^{\frac{j-(m-2)}{m}}{\varrho_{j,l}}(s/u)$. Then
\[\mathscr{F}_{f_{<\underline{k}},l}(z,\overline{z})=\sum_{0\leq j<\underline{k}}{\Upsilon_{j,l}}(z,\overline{z})+{\rho_l}(\Im z)\text{ on }\Delta_a\]
and
\[{F}_{f_{<\underline{k}},l}(\omega,\overline{\omega})=\mathscr{F}_{f_{<\underline{k}},l}(\omega^m,\overline{\omega}^m)=\sum_{0\leq j<\underline{k}}{\Upsilon_{j,l}}(\omega^m,\overline{\omega}^m)+{\rho_l}(\Im\omega^m)\text{ on }{\mathcal{D}}(\tfrac{2l}{2m}, \tfrac{2l+\alpha}{2m}).\]
Since $F_{f_{< \underline{k}}}\in C^{r}(\overline{\mathcal{D}}(\frac{2l}{2m}, \frac{2l+1}{2m}))$  and ${\rho_l}(\Im\omega^m)\in C^{\omega}(\overline{\mathcal{D}}(\frac{2l}{2m}, \frac{2l+\alpha}{2m}))$, we have
\[\sum_{0\leq j<\underline{k}}{\Upsilon_{j,l}}(\omega^m,\overline{\omega}^m)\in C^{r}(\overline{\mathcal{D}}(\tfrac{2l}{2m}, \tfrac{2l+\alpha}{2m}))\]
and ${\Upsilon_{j,l}}(\omega^m,\overline{\omega}^m)$ is a homogenous map of degree  $j-(m-2)<v(r)$ for $0\leq j<\underline{k}$. Then standard arguments for smooth homogenous maps {(see e.g.\ Lemma~\ref{lem:hom} in Appendix)} show that ${\Upsilon_{j,l}}=0$ for $0\leq j<m-2$
and ${\Upsilon_{j,l}}(\omega^m,\overline{\omega}^m)$ is a homogenous polynomial of degree  $j-(m-2)$ for $m-2\leq j<\underline{k}$. Suppose that
\[{\Upsilon_{j,l}}(\omega^m,\overline{\omega}^m)=\sum_{0\leq i\leq j-(m-2)}a_{j,i}\omega^i\overline{\omega}^{(j-i)-(m-2)}\text{ for }m-2\leq j<\underline{k}.\]
Then
\begin{gather*}
\sum_{0\leq j<\underline{k}}\frac{1}{j!}\sum_{0\leq i\leq j}\binom{j}{i}\frac{\partial^j f(0,0)}{\partial\omega^i\partial\overline{\omega}^{j-i}}\mathfrak{G}^l_{(m-1)-i,(m-1)-(j-i)}(u,s)=\mathscr{F}_{f_{<\underline{k}},l}(u,s)\\=\!\!\sum_{0\leq j<\underline{k}}\!\!{\Upsilon_{j,l}}(u,s)\!+\!{\rho_l}(s)=\!\!\sum_{m-2\leq j<\underline{k}}\sum_{0\leq i\leq j-(m-2)}\!\!{a_{j,i}}G_l(u,s)^i\overline{G_l(u,s)}^{(j-i)-(m-2)}\!+\!{\rho_l}(s).
\end{gather*}
Differentiating with respect $u$, we get
\begin{align*}
\sum_{0\leq j<\underline{k}}&\frac{1}{j!}\sum_{0\leq i\leq j}\binom{j}{i}\frac{\partial^j f(0,0)}{\partial\omega^i\partial\overline{\omega}^{j-i}}G_l^{i-(m-1)}\overline{G_l}^{(j-i)-(m-1)}\\
&=
\sum_{m-2\leq j<\underline{k}}\sum_{0\leq i\leq j-(m-2)}{a_{j,i}}\big(\tfrac{i}{m}G_l^{i-m}\overline{G_l}^{(j-i)-(m-2)}+\tfrac{(j-i)-(m-2)}{m}G_l^{i}\overline{G_l}^{(j-i)-2(m-1)}\big)\\
&=
\sum_{m-1\leq j<\underline{k}}\Big(\sum_{0\leq i\leq j-(m-1)}{a_{j,i+1}}\tfrac{i+1}{m}G_l^{i-(m-1)}\overline{G_l}^{(j-i)-(m-1)}\\
&\qquad\qquad+
\sum_{m-1\leq i\leq j}{a_{j,i-(m-1)}}\tfrac{(j-i)+1}{m}G_l^{i-(m-1)}\overline{G_l}^{(j-i)-(m-1)}\Big).
\end{align*}
It follows that $D^jf(0,0)=0$ for $0\leq j\leq m-2$ and for every $m-1\leq j<\underline{k}$ and $0\leq i\leq j$,
\[\frac{1}{j!}\binom{j}{i}\frac{\partial^j f(0,0)}{\partial\omega^i\partial\overline{\omega}^{j-i}}=
{a_{j,i+1}}\tfrac{i+1}{m}+{a_{j,i-(m-1)}}\tfrac{(j-i)+1}{m},
\]
here we adhere to the convention that ${a_{j,i}}=0$ if $i<0$ or $i>j-(m-2)$.
It follows that for any $m-1\leq j<\underline{k}$ and $0\leq i\leq m-2$ with $i\neq j-(m-1)\operatorname{mod}m$,
\begin{align*}
\frac{\partial^j_{i}(f)}{j!}&=\sum_{0\leq n\leq \frac{j-i}{m}}\frac{\binom{\frac{(m-1)-i}{m}-1}{n}}{\binom{\frac{(j-i)-(m-1)}{m}}{n}}\frac{1}{j!}\binom{j}{mn+i}\frac{\partial^jf}{\partial \omega^{mn+i} \partial \overline{\omega}^{j-(mn+i)}}(0,0)\\
&=\sum_{n\geq 0}\frac{\binom{\frac{i-(m-1)}{m}+n}{n}}{\binom{-\frac{(j-i)-(m-1)}{m}+(n-1)}{n}}\Big(a_{j,i-(m-1)+m(n+1)}(\tfrac{i-(m-1)}{m}+n+1)\\
&\qquad\qquad+
a_{j,i-(m-1)+mn}(\tfrac{(j-i)-(m-1)}{m}-(n-1))\Big)\\
&=\sum_{n\geq 0}\frac{\binom{\frac{i-(m-1)}{m}+n+1}{n+1}(n+1)}{\binom{-\frac{(j-i)-(m-1)}{m}+(n-1)}{n}}a_{j,i-(m-1)+m(n+1)}\\
&\quad -\sum_{n\geq 1}\frac{\binom{\frac{i-(m-1)}{m}+n}{n}n}{\binom{-\frac{(j-i)-(m-1)}{m}+(n-2)}{n-1}}{a_{j,i-(m-1)+mn}}-{a_{j,i-(m-1)}}(\tfrac{(j-i)-(m-1)}{m}+1).
\end{align*}
As $i-(m-1)<0$, we have ${a_{j,i-(m-1)}}=0$. Hence $\partial^j_{i}(f)=0$ for every $0\leq j<\underline{k}$ and $0\leq i\leq j\wedge(m-2)$ with  $i\neq j-(m-1)\operatorname{mod}m$.
\end{proof}

\section{Global properties}\label{sec;GP}
In this section,
by combining previous results for local analysis near singularity, we finally obtain solutions for cohomological equations with optimal loss of regularity.

\subsection{Transition from local to global results}\label{sec;GP1}
Let $M$ be a compact connected orientable $C^\infty$-surface. Let $\psi_\R$ be a locally Hamiltonian $C^\infty$-flow on $M$ with isolated fixed points such that all its saddles are perfect and all saddle connections are loops. Let $M'\subset M$ be a minimal component of the flow and let $I\subset M'$ be a transversal curve. The corresponding IET $T:I\to I$ exchanges the intervals $\{I_\alpha:\alpha \in \mathcal{A}\}$.
There exists $0<\vep\leq 1$ such that for every $\sigma\in \mathrm{Sd}(\psi_\R)$ we have $\mathcal{D}_{\sigma,\vep}\subset U_\sigma$, where $\mathcal{D}_{\sigma,\vep}$ is the pre-image of the square  $[-\vep,\vep]\times[-\vep,\vep]$ via the map $z\mapsto z^{m_\sigma}$ in local singular coordinates, {see Figure~\ref{fig:Dep}}. Moreover, we can assume that every orbit starting from $I$ meets at most one set $\mathcal{D}_{\sigma,\vep}$ (maybe many times) before return to $I$.
For every $0\leq l<2m_\sigma$ let $\mathcal{D}^l_{\sigma,\vep}=\overline{\mathcal{D}}_{\sigma,\vep}(\frac{l}{2m_\sigma}, \frac{l+1}{2m_\sigma})$ be the $l$-th closed angular sector of $\mathcal{D}_{\sigma,\vep}$.

\begin{figure}[h!]
 \includegraphics[width=0.4\textwidth]{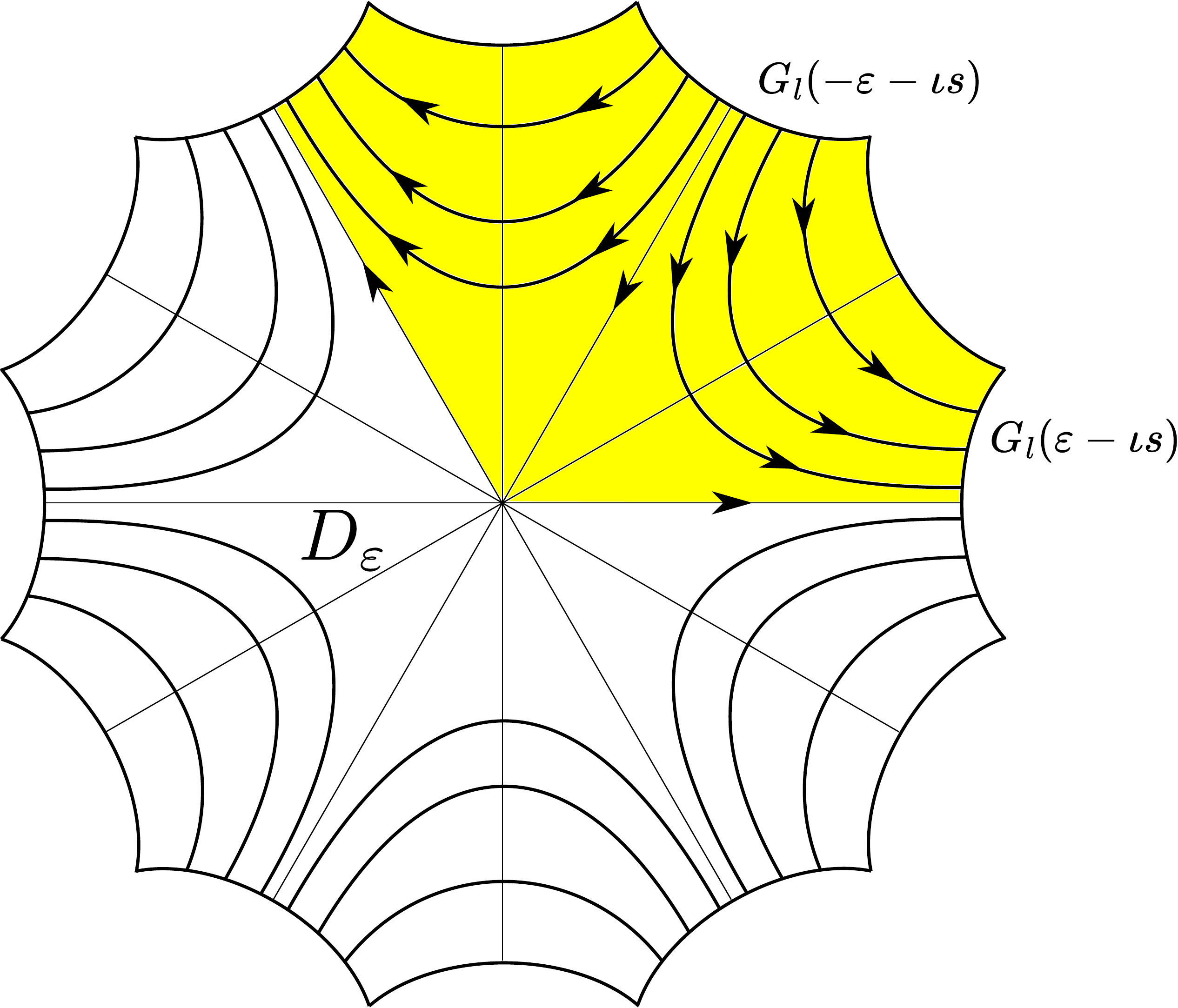}
 \caption{{The set $D_\varepsilon$ and the incoming and outgoing segments of its boundary in the case of $m_\sigma = 3$.} \label{fig:Dep}
 }
\end{figure}

\begin{remark}\label{rmk:Feps}
By Lemma~8.2 in \cite{Fr-Ki}, the enter and exit sets of ${\mathcal{D}}_{\sigma,\vep}(\frac{l}{m_\sigma}, \frac{l+1}{m_\sigma})$ are $C^\infty$-curves with standard parametrization
\[[-\vep,\vep]\ni s\mapsto G_l(-\vep-\iota s)\in {\mathcal{D}}_{\sigma,\vep}\text{ and }[-\vep,0)\cup(0,\vep]\ni s\mapsto G_l(\vep-\iota s)\in {\mathcal{D}}_{\sigma,\vep}\text{  resp.}\]
Every $\omega\in {\mathcal{D}}_{\sigma,\vep}(\frac{l}{m_\sigma}, \frac{l+1}{m_\sigma})$ lies on the positive semi-orbit of  $G_l(-\vep-\iota s)$, $s\in[-\vep,\vep]$ so that $\psi_{\xi_l(\omega)}G_l(-\vep-\iota s)=\omega$ for some $\xi_l(\omega)>0$.
By the proof of Lemma~8.2 in \cite{Fr-Ki}, $z=\omega^{m_\sigma}=u-\iota s$ for some $u\in[-\vep,\vep]$ and for any $f\in C(M)$,
\begin{align}\label{eq:intpass}
\begin{split}
\int_0^{\xi_l(\omega)}&f(\psi_tG_l(-\vep-\iota s))dt=\frac{1}{m_\sigma^2}\int_{-\vep}^{u}\frac{(f\cdot V)(G_l(v-\iota s))}{(v^2+s^2)^{\frac{m_\sigma-1}{m_\sigma}}}dv\\
&=\frac{\vep^{-\frac{m_\sigma-2}{m_\sigma}}}{m_\sigma^2}\int_{-1}^{u/\vep}\frac{(f\cdot V)(\vep^{\frac{1}{m_\sigma}}G_l(v-\iota (s/\vep)))}{(v^2+(s/\vep)^2)^{\frac{m_\sigma-1}{m_\sigma}}}dv\\
&=\frac{\vep^{-\frac{m_\sigma-2}{m_\sigma}}}{m_\sigma^2}\mathscr{F}_{(f\cdot V)\circ \vep^{1/m_{\sigma}},l}(z/\vep)
=\frac{\vep^{-\frac{m_\sigma-2}{m_\sigma}}}{m_\sigma^2}{F}_{(f\cdot V)\circ \vep^{1/m_{\sigma}}}(\vep^{-1/m_\sigma}\omega).
\end{split}
\end{align}
In particular, if $\omega=G_l(\vep-\iota s)$ for $s\in [-\vep,\vep]\setminus\{0\}$ then $\tau_l(s):=\xi_l(G_l(-\vep-\iota s))$ is the transit time of $G_l(-\vep-\iota s)$ through the set ${\mathcal{D}}_{\sigma,\vep}(\frac{l}{m_\sigma}, \frac{l+1}{m_\sigma})$, $u-\iota s=G_l(\vep-\iota s)^{m_\sigma}=\vep-\iota s$ and
\begin{align}\label{eq:intpass1}
\begin{split}
\int_0^{\tau_l(s)}f(\psi_tG_l(-\vep-\iota s))dt&=\frac{\vep^{-\frac{m_\sigma-2}{m_\sigma}}}{m_\sigma^2}\mathscr{F}_{(f\cdot V)\circ \vep^{1/m_{\sigma}},l}((\vep-\iota s)/\vep)\\
&=\frac{\vep^{-\frac{m_\sigma-2}{m_\sigma}}}{m_\sigma^2}\varphi_{(f\cdot V)\circ \vep^{1/m_{\sigma}},l}(-s/\vep).
\end{split}
\end{align}
\end{remark}

\begin{remark}\label{rmk:rkr}
Recall that $m\geq 2$ is the maximal multiplicity of saddles in $\mathrm{Sd}(\psi_\R)\cap M'$ {and for any $r\geq -\frac{m-2}{m}$,
\[
k_r=\left\{
\begin{array}{cl}
\lceil mr+(m-1)\rceil &\text{if }-\frac{m-2}{m}\leq r\leq -\frac{m-3}{m},\\
\lceil mr+(m-2)\rceil &\text{if }-\frac{m-3}{m}< r.
\end{array}
\right.
\]
Note that} for any
$r\geq -\frac{m-2}{m}$ we have $\lceil r\rceil+1\leq k_r$.
 Indeed, if $-\frac{m-2}{m}\leq r\leq -\frac{m-3}{m}$ then  $-\frac{m-2}{m-1}\leq r$. Hence $r+1\leq mr+(m-1)$, which yields $\lceil r\rceil+1\leq \lceil mr+(m-1)\rceil= k_r$.

 If $-\frac{m-3}{m}< r$ with $m\geq 3$ or $1\leq r$ with $m=2$ then  $-\frac{m-3}{m-1}\leq r$. Hence $r+1\leq mr+(m-2)$, which yields $\lceil r\rceil+1\leq \lceil mr+(m-2)\rceil= k_r$.
  Suppose that $m=2$ and $\frac{1}{2}=-\frac{m-3}{m}<r<1$. Then  $\lceil r\rceil+1=2\leq \lceil 2r\rceil=\lceil mr+(m-2)\rceil= k_r$.
\end{remark}
\begin{remark}\label{rmk:rkr1}
For any $r\geq -\frac{m-2}{m}$ and $\sigma\in \mathrm{Sd}(\psi_\R)\cap M'$ let $k\geq 0$ be such that $\mathfrak{o}(\sigma,k-1)<r\leq \mathfrak{o}(\sigma,k)$. It follows that $n:=\lceil \mathfrak{o}(\sigma,k)\rceil=\lceil r\rceil$.
 In view of \eqref{eq:expkr}, $k\leq \lceil mr+(m-2)\rceil\leq k_r$. Moreover, by Remark~\ref{rmk:rkr}, $n+1=\lceil r\rceil+1\leq k_r$. Therefore, $k\vee(n+1)\leq k_r$.
\end{remark}

\begin{proof}[Proof of Theorem~\ref{thm1}]
Let $\tau:I\to\R_{>0}\cup\{+\infty\}$ be the first return time map for the flow $\psi_\R$ restricted to $M'$. For any interval (set) $J\subset I$
avoiding the set $disc(T)$ of discontinuities of $T$ let $J^\tau=\{\psi_t s:s\in J, 0\leq t\leq \tau(s)\}$.
If an interval $J$ contains some elements of $disc(T)$ then $J^\tau$ is the closure of $(J\setminus disc(T))^\tau$.
\medskip

\textbf{Case 1.} Suppose that $J\subset I_\alpha$ is a closed interval such that $\sup\tau(J)<\infty$ and $\max\tau(J)<2\min\tau(J)$. Choose any $t_J<\min\tau(J)$ so that $2t_J>\max\tau(J)$. Let us consider the set
$J^\tau$ and its two subsets
\begin{equation}\label{eq:defJtau}
J_+^\tau=\{\psi_t s:s\in J, 0\leq t\leq t_J\},\quad J_-^\tau=\{\psi_{-t} (Ts):s\in J, 0\leq t\leq t_J\}.
\end{equation}
By assumption, $J_+^\tau\cup J_-^\tau=J^\tau$. Let $\rho_+,\rho_-:J^\tau\to[0,1]$ be the corresponding $C^\infty$-partition of unity, i.e.\ $\rho_\pm$ are $C^\infty$-maps such that $\rho_++\rho_-=1$ and $\rho_{\pm}=0$ on $J^\tau\setminus J_\pm^\tau$. Let $v_\pm:J\times [0,t_J]\to J^\tau_\pm$ be given by $v_+(s,t)=\psi_t s$ and $v_-(s,t)=\psi_{-t} (Ts)$. Then
\[\varphi_f(s)=\int_{0}^{t_J}(\rho_+\cdot f)\circ v_+(s,t)dt+\int_{0}^{t_J}(\rho_-\cdot f)\circ v_-(s,t)dt.\]
Since $v_\pm$ are of class $C^\infty$, it follows that for every $q>0$ if $f\in C^q(M)$ then $\varphi_f\in C^q(J)$ and there exists $C_J^q>0$ such that $\|\varphi_f\|_{C^q(J)}\leq C_J^q\|f\|_{C^q(M)}$ for any $f\in C^q(M)$. Suppose that $f\in C^{k_r}(M)$. In view of Remark~\ref{rmk:rkr1}, $n+1\leq k_r$, and hence $\varphi_f\in C^{n+1}(J)$ with
\begin{equation}\label{eq:estnpa0}
\|\varphi_f\|_{C^{n+1}(J)}\leq C_J^{n+1}\|f\|_{C^{k_r}(M)}\text{ for any }f\in C^{k_r}(M).
\end{equation}

\textbf{Case 2.} Suppose that $J\subset I_\alpha$ is of the form $J=[l_\alpha,l_\alpha+\vep]$. Suppose that $l_\alpha$ is the first backward meeting point of a separatrix incoming to $\sigma\in\mathrm{Sd}(\psi_\R)\cap M'$. It follows that
the orbits starting from $J$ meet the set $\mathcal{D}_{\sigma,\vep}$ before return to $I$. Suppose that each such orbit meets $\mathcal{D}_{\sigma,\vep}$ only once and it meets a sector $\mathcal{D}_{\sigma,\vep}^{2l+1}$ for some $0\leq l< m_\sigma$.
In general, the orbits of $J$ can meet $\mathcal{D}_{\sigma,\vep}$ several times in different sectors. This case arises when the saddle $\sigma$ has saddle loops, but this situation is discussed later.

For every $s\in J$ denote by $\tau_+(s)$ the first forward entrance time of the orbit of $s$ to $\mathcal{D}_{\sigma,\vep}$ and  by $\tau_-(s)$ the first backward entrance time of the orbit of $Ts$ to $\mathcal{D}_{\sigma,\vep}$.
Then $\psi_{\tau_+(s)}(s)=G_l(-\vep-\iota(s-l_\alpha))$.
Since $\tau(s)\to+\infty$ as $s\to l_\alpha$ and $\tau_\pm$ are bounded, decreasing $\vep$, if necessary, we can assume that $\min\tau(J)>\max\tau_\pm(J)$. Choose $\max\tau_\pm(J)<t_J<\min\tau(J)$ and let us consider two subsets $J_\pm^\tau\subset J^\tau$ given by \eqref{eq:defJtau}.
Then $J^\tau=J_+^\tau\cup \mathcal{D}_{\sigma,\vep}^{2l+1}\cup J_-^\tau$. Let us consider the corresponding $C^\infty$-partition of unity $\rho_+,\rho_\sigma, \rho_-:J^\tau\to[0,1]$, i.e.\
$\rho_+$, $\rho_\sigma$, $\rho_-$ are $C^\infty$-maps such that $\rho_++\rho_\sigma+\rho_-=1$, $\rho_{\pm}=0$ on $J^\tau\setminus J_\pm^\tau$ and $\rho_{\sigma}=0$ on $J^\tau\setminus \mathcal{D}_{\sigma,\vep}$. Then
\[\varphi_f(s)=\int_{0}^{t_J}(\rho_+\cdot f)\circ v_+(s,t)dt+\int_{0}^{t_J}(\rho_-\cdot f)\circ v_-(s,t)dt+\int_{\tau_+(s)}^{\tau(s)-\tau_-(s)}(\rho_\sigma\cdot f)(\psi_ts)dt.\]
Repeating the arguments used in Case 1, for any $q>0$ we get $C^q_J>0$ such that
\begin{equation}\label{eq:phipm}
\Big\|\int_{0}^{t_J}(\rho_+\cdot f)(\psi_t\,\cdot\,) dt+\int_{0}^{t_J}(\rho_-\cdot f)(\psi_{-t}(T\,\cdot\,) dt\Big\|_{C^q(J)}\leq C_J^q\|f\|_{C^q(M)}
\end{equation}
for any $f\in C^q(M)$.

Note that for every
$s\in(0,\vep]$,
\begin{align*}
\varphi^\sigma_f(l_\alpha+s):&=\int_{\tau_+(l_\alpha+s)}^{\tau(l_\alpha+s)-\tau_-(l_\alpha+s)}(\rho_\sigma\cdot f)(\psi_t(l_\alpha+s))dt\\
&=
\int_0^{\tau_l(s)}(\rho_\sigma\cdot f)(\psi_tG_l(-\vep-\iota s))dt.
\end{align*}
By \eqref{eq:intpass1}, it follows that  for any  $s\in(0,1]$,
$\varphi^\sigma_f(l_\alpha+\vep s)=\frac{\vep^{-\frac{m_\sigma-2}{m_\sigma}}}{m_\sigma^2}\varphi_{\tilde{f},l}(-s)$,
where $\tilde{f}(\omega,\overline{\omega})=(\rho_\sigma\cdot f\cdot V)(\vep^{\frac{1}{m_\sigma}}\omega,\vep^{\frac{1}{m_\sigma}}\overline{\omega})$.

\medskip

Suppose that  $f\in C^{k_r}(M)$ for some $r\geq -\frac{m-2}{m}$. Choose $k\geq 0$ such that $\mathfrak{o}(\sigma,k-1)<r\leq \mathfrak{o}(\sigma,k)$.  By Remark~\ref{rmk:rkr1}, we have  $\lceil \mathfrak{o}(\sigma,k)\rceil=\lceil r\rceil=n$ and $k\vee(n+1)\leq k_r$. Assume that $\mathfrak{C}^j_{\sigma,2l+1}(f)=0$ for all $0\leq j<k$, or equivalently for all $j\geq 0$ such that $\mathfrak{o}(\mathfrak{C}^j_{\sigma,2l+1})<r$.
Since $\rho_\sigma=1$ in a neighborhood of $\sigma$, it follows that
\[
\mathscr{C}^{j}_{2l+1}(\tilde f)=\vep^{\frac{j}{m_\sigma}}\mathscr{C}^{j}_{2l+1}(f\cdot V)=\vep^{\frac{j}{m_\sigma}}\mathfrak{C}^j_{\sigma,2l+1}(f)=0 \text{ for all } 0\leq j<k.
\]
Let $a_0:=\lceil\mathfrak{o}(\sigma,k)\rceil-\mathfrak{o}(\sigma,k)=n-\mathfrak{o}(\sigma,k)$.
Then  $n-a_0=\mathfrak{o}(\sigma,k)\geq r=n-a$.

As $k\vee(n+1)\leq k_r$ and both $f$ and $\tilde{f}$ are of class $C^{k_r}$,
in view of Theorem~\ref{cor:Hold}, $\varphi_{\tilde f,l}\in C^{n+\mathrm{P}_{a_0}}([-1,0))$ and there exists $C^r_{\sigma,l}>0$ such that
\[\|\varphi_{\tilde f,l}\|_{C^{n+\mathrm{P}_{a_0}}([-1,0))}\leq C^r_{\sigma,l}\|\tilde f\|_{C^{k\vee(n+1)}(\mathcal D)}\leq C^r_{\sigma,l}\|\rho_\sigma\cdot V\|_{C^{k_r}(\mathcal D)}\|f\|_{C^{k_r}(\mathcal D)}.\]
As $\varphi^\sigma_f(l_\alpha+\vep s)=\frac{\vep^{-\frac{m_\sigma-2}{m_\sigma}}}{m_\sigma^2}\varphi_{\tilde{f},l}(-s)$ and $n-a\leq n-a_0$, in view of Remark~\ref{rmk:filpa},
for any $f\in C^{k_r}(M)$ with $\mathfrak{C}^j_{\sigma,2l+1}(f)=0$ for all $0\leq j<k$,
\[\varphi^\sigma_f\in C^{n+\pa}(J)\text{ and }\|\varphi^\sigma_f\|_{C^{n+\pa}(J)}\leq \widetilde{C}^r_{\sigma,l}\|f\|_{C^{k_r}(\mathcal D)}.\]
In view of \eqref{eq:phipm} and  Remark~\ref{rmk:filpa}, it follows that for any $f\in C^{k_r}(M)\cap \displaystyle\bigcap_{0\leq j< k}\ker(\mathfrak{C}^j_{\sigma,2l+1})$,
\begin{align*}
\|\varphi_f\|_{C^{n+\pa}(J)}&\leq \widetilde{C}^r_{\sigma,l}\|f\|_{C^{k_r}(M)}+C_J^{n+1}\|f\|_{C^{n+1}(M)}\\
&\leq (\widetilde{C}^r_{\sigma,l}+C_J^{n+1})\|f\|_{C^{k_r}(M)}.
\end{align*}


\begin{figure}[h!]
 \includegraphics[width=0.6\textwidth]{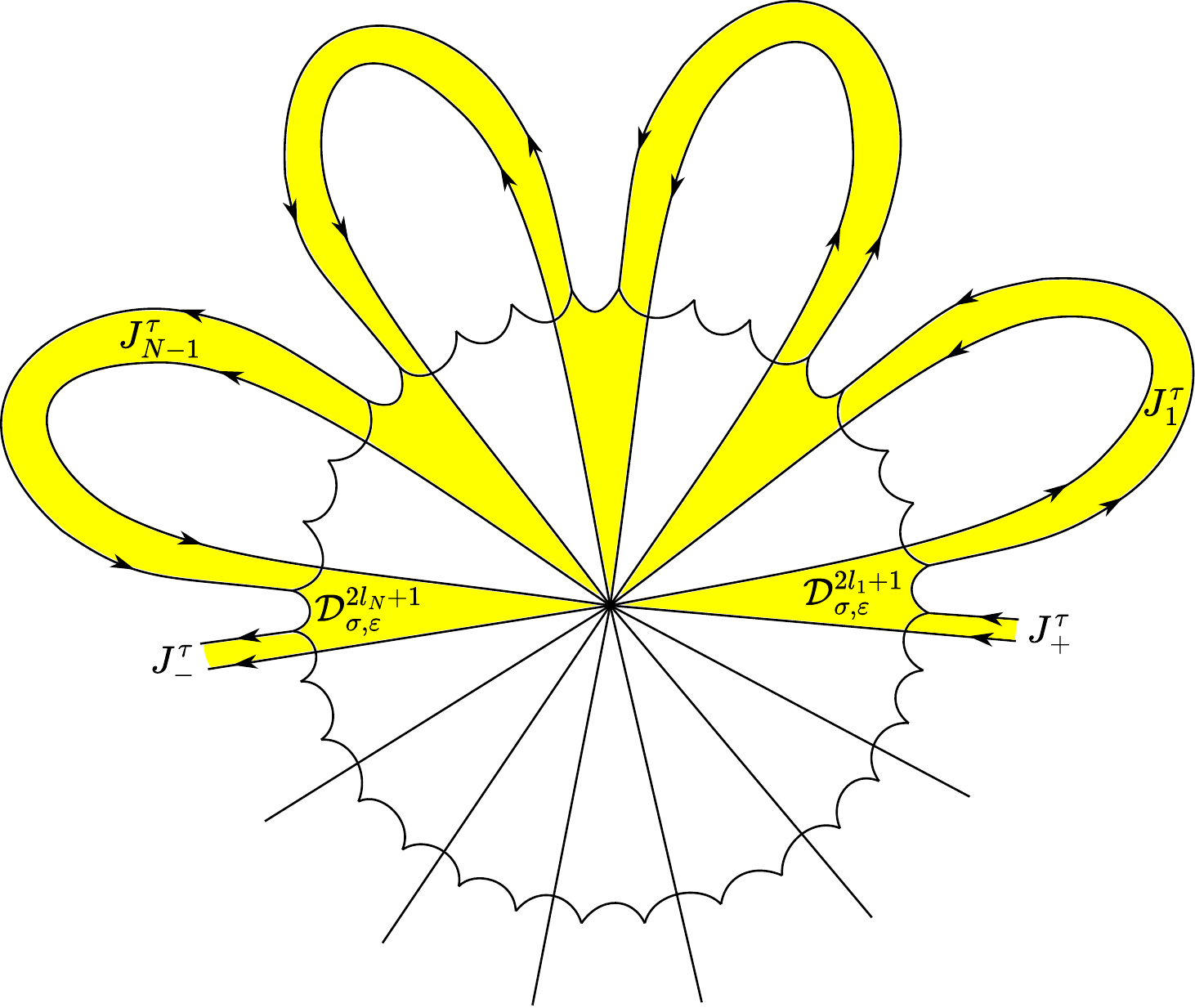}
 \caption{{The decomposition of $J^\tau$ along the bouquet of $N-1$ saddle loops.} \label{fig:krolik}}
\end{figure}

\textbf{Case 3.} Suppose that $J\subset I_\alpha$ is of the form $J=[l_\alpha,l_\alpha+\vep]$, where $l_\alpha$ is the first backward meeting point of a separatrix incoming to $\sigma\in\mathrm{Sd}(\psi_\R)\cap M'$.
Suppose that $\sigma$ has some saddle loops and $J^\tau$ meets $\sigma$ $N$-times ($1<N=N_J<m_\sigma$). Then all orbits starting from $\Int J$ meet the set $\mathcal{D}_{\sigma,\vep}$ $N$-times  before return to $I$. Assume that each such orbit meets its sectors $\mathcal{D}_{\sigma,\vep}^{2l_i+1}$ for $1\leq i\leq N$ consecutively. Then $\sigma$ has $N-1$ saddle loops $sl_i$ connecting the sector $\mathcal{D}_{\sigma,\vep}^{2l_i+1}$ with $\mathcal{D}_{\sigma,\vep}^{2l_{i+1}+1}$ for $1\leq i\leq N-1$. In particular,
\begin{equation}\label{eq:coverN}
J^\tau=J_+^\tau\cup J_-^\tau\cup \bigcup_{i=1}^N \mathcal{D}_{\sigma,\vep}^{2l_i+1}\cup\bigcup_{i=1}^{N-1}J_i^\tau,
\end{equation}
where $J^\tau_i=\{\psi_t\gamma_i(s):s\in[0,\vep],t\in[0,t_i]\}$ is a rectangle whose base is a $C^\infty$-curve $\gamma_i([0,\vep])\subset\mathcal{D}_{\sigma,\vep}^{2l_i+1}$ with a standard parametrization while its left side $\{\psi_t\gamma_i(0):t\in[0,t_i]\}$ is a part of the loop $sl_i$, {see Figure~\ref{fig:krolik}}. Using a partition of unity associated to the cover \eqref{eq:coverN} and repeating the arguments used in Case 1 and 2, for every $r>0$ we get $C_J^{r}>0$ such that
\begin{equation}\label{eq:estnpa1}
\|\varphi_f\|_{C^{n+\pa}(J)}\leq C_J^{r}\|f\|_{C^{k_r}(M)}\text{ for }f\in C^{k_r}(M)\cap\bigcap_{1\leq i\leq N_J}\bigcap_{0\leq j< k}\ker(\mathfrak{C}^j_{\sigma,2l_i+1}).
\end{equation}
\medskip

\textbf{Case 4.} Suppose that $J\subset I_\alpha$ is of the form $J=[r_\alpha-\vep,r_\alpha]$, where $r_\alpha$ is the first backward meeting point of a separatrix incoming to $\sigma\in\mathrm{Sd}(\psi_\R)$.
Suppose that $J^\tau$ meets $\sigma$ $N$-times ($N=N_J$) and the orbits starting from $\Int J$ meet the set $\mathcal{D}_{\sigma,\vep}^{2l_i}$ for $1\leq i\leq N$ consecutively  before return to $I$.
Then repeating the arguments used in Case 1, 2 and 3, for every $r>0$ we get $C_J^{r}>0$ such that
\begin{equation}\label{eq:estnpa2}
\|\varphi_f\|_{C^{n+\pa}(J)}\leq C_J^{r}\|f\|_{C^{k_r}(M)}\text{ for }f\in C^{k_r}(M)\cap\bigcap_{1\leq i\leq N_J}\bigcap_{0\leq j< k}\ker(\mathfrak{C}^j_{\sigma,2l_i}).
\end{equation}
\medskip

\textbf{Final step.} We can find a finite family of closed subintervals  $\{J_q\}_{q=1}^Q$ of $I$ which covers the whole interval $I$ and such that every $J_q$ is of the form $[l_\alpha,l_\alpha+\vep]$ (or $[r_\alpha-\vep,r_\alpha]$) with $\min\tau_\pm(J_q)>\max\tau(J_q)$, or $J_q\subset \Int I_\alpha$ with $2\min\tau(J_q)>\max\tau(J_q)$.

If $J_q$ is an interval of the form $[l_\alpha,l_\alpha+\vep]$ or $[r_\alpha-\vep,r_\alpha]$ then by \eqref{eq:estnpa1} and \eqref{eq:estnpa2},
\begin{align*}
\|\varphi_f\|_{C^{n+\pa}(J_q)}\leq C_{J_q}^{r}\|f\|_{C^{k_r}(M)}\text{ for }f\in C^{k_r}(M)\cap\bigcap_{\substack{(\sigma,j,l)\in \mathscr{TC}\\ \mathfrak{o}(\sigma,j)<r}}\ker(\mathfrak{C}^j_{\sigma,l}).
\end{align*}
If $J_q\subset \Int I_\alpha$ then, by Remark~\ref{rmk:filpa} and \eqref{eq:estnpa0},
\begin{align*}
\|\varphi_f\|_{C^{n+\mathrm{P_{a}}}(J_q)}&\leq\|\varphi_f\|_{C^{n+1}(J_q)}\leq C_{J_q}^{n+1}\|f\|_{C^{n+1}(M)}\leq C_{J_q}^{n+1}\|f\|_{C^{k_r}(M)}
\end{align*}
 for all $f\in C^{k_r}(M)$.
This yields $\varphi_f\in C^{n+\pa}(\sqcup_{\alpha \in \mathcal{A}}I_\alpha)$ and
\[\|\varphi_f\|_{C^{n+\mathrm{P_{a}}}}\leq \sum_{q=1}^Q\|\varphi_f\|_{C^{n+\mathrm{P_{a}}}(J_q)}\leq C \|f\|_{C^{k_r}(M)}\]
for all $f\in C^{k_r}(M)$ such that $\mathfrak{C}^j_{\sigma,l}(f)=0$ for  $(\sigma,j,l)\in \mathscr{TC}$ with $\mathfrak{o}(\sigma,j)<r$.
\medskip

Recall that, by assumption, the right end of $I$ is the first meeting point of a separatrix (that is not a saddle connection) emanating by a fixed point $\sigma$ (incoming or outgoing) with the
interval $I$. Suppose that the right end is the first backward meeting point of a separatrix incoming to $\sigma$.

Let $\alpha=\pi_1^{-1}(d)$, i.e.\ the interval $I_\alpha=[l_\alpha,r_\alpha)$ is the latest after the exchange. It follows that for every $0<\vep<|I_\alpha|$ the strip $[r_\alpha-\vep,r_\alpha]^\tau$ avoids all fixed points, so $\sup\tau([r_\alpha-\vep,r_\alpha])<\infty$. By the continuity of $\tau$, we can choose $\vep>0$ so that  $\max\tau([r_\alpha-\vep,r_\alpha])<2\min\tau([r_\alpha-\vep,r_\alpha])$. In view of Case~1, $\varphi_f\in C^{n+1}([r_\alpha-\vep,r_\alpha])$. Hence,
\[
C_{\alpha,n}^{a,-}(\varphi_f) =\lim_{x\nearrow r_\alpha}D^{n+1} \varphi_f(x)(r_\alpha-x)^{1+a}=0.\]
The same argument shows that if the right end is the first forward meeting point of a separatrix outgoing from $\sigma$,
then $C_{\alpha,n}^{a,-}(\varphi_f)=0$ for $\alpha=\pi_0^{-1}(d)$. Finally we have $C_{\pi_0^{-1}(d),n}^{a,-}(\varphi_f) \cdot C_{\pi_1^{-1}(d),n}^{a,-}(\varphi_f)=0$. Analyzing the orbit of the left end in the same way, we also get $C_{\pi_0^{-1}(1),n}^{a,+}(\varphi_f) \cdot C_{\pi_1^{-1}(1),n}^{a,+}(\varphi_f)=0$, which shows that  $\varphi_f\in C^{n+\pag}(\sqcup_{\alpha \in \mathcal{A}}I_\alpha)$.
\end{proof}

For all $(\sigma,k,j)\in \mathscr{TD}$ let ${\chi}^k_{\sigma,j}:M\to\C$ be a $C^\infty$-map such that ${\chi}^k_{\sigma,j}(\omega,\overline{\omega})=\omega^j\overline{\omega}^{k-j}/(k!V(\omega,\overline{\omega}))$ on $U_\sigma$ and it is equal to zero on all  $U_{\sigma'}$ for $\sigma'\neq \sigma$. By definition,
$\mathfrak{d}^k_{\sigma,j}({\chi}^k_{\sigma,j})=1$ and $\mathfrak{d}^{k'}_{\sigma',j'}({\chi}^k_{\sigma,j})=0$ if $(\sigma',k',j')\neq (\sigma,k,j)$.

In view of Theorem~\ref{thm1}, we get the following result.

\begin{corollary}\label{cor;fdecomp}
For every $r\geq -\frac{m-2}{m}$ and any   $f\in C^{k_r}(M)$ we have a decomposition
\begin{equation}\label{eq:Rr}
f=\sum_{\substack{(\sigma,k,j)\in \mathscr{TD}\\ \mathfrak{o}(\sigma,k)<r}}
\mathfrak{d}^k_{\sigma,j}(f)\chi^k_{\sigma,j}+\mathfrak{R}_{r}(f)
\end{equation}
such that $\varphi_{\mathfrak{R}_{r}(f)}\in C^{n+\pag}(\sqcup_{\alpha \in \mathcal{A}}I_\alpha)$ with $n=\lceil r\rceil$ and $a=n-r$. Moreover, the operators $\mathfrak{R}_{r}:C^{k_r}(M)\to C^{k_r}(M)$ and
$C^{k_r}(M)\ni f\mapsto \varphi_{\mathfrak{R}_{r}(f)}\in C^{n+\pag}(\sqcup_{\alpha \in \mathcal{A}}I_\alpha)$
are bounded.
\end{corollary}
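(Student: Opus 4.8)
The plan is to take the formula in \eqref{eq:Rr} as the \emph{definition} of $\mathfrak{R}_r$, namely
\[\mathfrak{R}_r(f):=f-\sum_{\substack{(\sigma,k,j)\in\mathscr{TD}\\ \mathfrak{o}(\sigma,k)<r}}\mathfrak{d}^k_{\sigma,j}(f)\,\chi^k_{\sigma,j},\]
and to deduce the asserted properties from Theorem~\ref{thm1} together with the biorthogonality of the family $\chi^k_{\sigma,j}$. First I would verify that the sum is finite and that each functional occurring in it is defined on $C^{k_r}(M)$: the set $\mathrm{Sd}(\psi_\R)\cap M'$ is finite, for each fixed $\sigma$ and $k$ there are at most $m_\sigma-1$ admissible values of $j$, and by \eqref{eq:expkr} together with the definition of $k_r$ (cf.\ Remark~\ref{rmk:rkr1}) every $k$ with $\mathfrak{o}(\sigma,k)<r$ satisfies $k\leq\lceil mr+(m-2)\rceil-1<k_r$; hence each $\mathfrak{d}^k_{\sigma,j}$ is a bounded linear functional on $C^{k_r}(M)$. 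Since each $\chi^k_{\sigma,j}$ is a fixed element of $C^\infty(M)\subset C^{k_r}(M)$, the operator $\mathfrak{R}_r\colon C^{k_r}(M)\to C^{k_r}(M)$ is then well-defined and bounded — a finite-rank perturbation of the identity.

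Next I would use the biorthogonality recorded just above the corollary, $\mathfrak{d}^{k'}_{\sigma',j'}(\chi^k_{\sigma,j})=1$ when $(\sigma',k',j')=(\sigma,k,j)$ and $=0$ otherwise. Applying $\mathfrak{d}^{k'}_{\sigma',j'}$ to the displayed formula gives $\mathfrak{d}^{k'}_{\sigma',j'}(\mathfrak{R}_r(f))=0$ for every $(\sigma',k',j')\in\mathscr{TD}$ with $\mathfrak{o}(\sigma',k')<r$. The point that connects this with the sectorial obstructions is that, for a fixed saddle $\sigma$ and a fixed $k$, the functional $\mathfrak{C}^k_{\sigma,l}$ is a fixed linear combination of the $\mathfrak{d}^k_{\sigma,j}$ with $(\sigma,k,j)\in\mathscr{TD}$: by the definitions of $\mathscr{C}^k_l$ and $\partial^k_j$ one has $\mathscr{C}^k_l=\sum_{0\leq j\leq k\wedge(m-2),\,j\neq k-(m-1)\operatorname{mod} m}\theta_0^{l(2j-k)}\mathfrak{B}(\tfrac{(m-1)-j}{m},\tfrac{(m-1)-(k-j)}{m})\,\partial^k_j$, and since $\mathfrak{d}^k_{\sigma,j}(f)$ and $\mathfrak{C}^k_{\sigma,l}(f)$ are exactly $\partial^k_j(f\cdot V)$ and $\mathscr{C}^k_l(f\cdot V)$ computed in the singular chart at $\sigma$ (compare \eqref{def:gothd} with \eqref{def:parl}, and \eqref{def:gothc} with \eqref{def2:C}), the same combination expresses $\mathfrak{C}^k_{\sigma,l}$ through the $\mathfrak{d}^k_{\sigma,j}$, the summation range being precisely $\{j:(\sigma,k,j)\in\mathscr{TD}\}$. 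Because $\mathfrak{o}(\mathfrak{C}^k_{\sigma,l})=\mathfrak{o}(\sigma,k)$, for every $(\sigma,k,l)\in\mathscr{TC}$ with $\mathfrak{o}(\mathfrak{C}^k_{\sigma,l})<r$ every triple $(\sigma,k,j)$ occurring in this combination lies in $\mathscr{TD}$ with $\mathfrak{o}(\sigma,k)<r$; hence $\mathfrak{d}^k_{\sigma,j}(\mathfrak{R}_r(f))=0$ and therefore $\mathfrak{C}^k_{\sigma,l}(\mathfrak{R}_r(f))=0$.

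Finally I would apply Theorem~\ref{thm1} to $\mathfrak{R}_r(f)\in C^{k_r}(M)$. By the previous step $\mathfrak{R}_r(f)$ lies in $\bigcap_{(\sigma,k,l)\in\mathscr{TC},\,\mathfrak{o}(\mathfrak{C}^k_{\sigma,l})<r}\ker(\mathfrak{C}^k_{\sigma,l})$, so Theorem~\ref{thm1} gives $\varphi_{\mathfrak{R}_r(f)}\in C^{n+\pag}(\sqcup_{\alpha\in\mathcal{A}}I_\alpha)$ with $n=\lceil r\rceil$ and $a=\lceil r\rceil-r=n-r$, and its ``moreover'' part shows that $h\mapsto\varphi_h$ is bounded on that intersection. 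Composing with the bounded operator $\mathfrak{R}_r$, which maps $C^{k_r}(M)$ into the intersection, shows that $f\mapsto\varphi_{\mathfrak{R}_r(f)}$ is bounded, completing the argument. This is essentially the standard ``subtract off the local obstructions'' device, so I do not expect a serious difficulty; the one place that requires care is the bookkeeping of the middle paragraph — confirming that the index set of the linear combination expressing $\mathfrak{C}^k_{\sigma,l}$ in terms of the $\mathfrak{d}^k_{\sigma,j}$ is exactly $\{j:(\sigma,k,j)\in\mathscr{TD}\}$, so that vanishing of all $\mathfrak{d}$-obstructions of order $<r$ forces vanishing of all $\mathfrak{C}$-obstructions of order $<r$, together with the degree count guaranteeing $k\leq k_r$ throughout so that every functional involved makes sense.
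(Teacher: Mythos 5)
Your proposal is correct and fills in precisely the argument the paper leaves implicit under ``In view of Theorem~\ref{thm1}, we get the following result'': you subtract off the finite-rank obstruction term using the biorthogonality of the $\chi^k_{\sigma,j}$, note that each $\mathfrak{C}^k_{\sigma,l}$ is by definition a fixed linear combination of the $\mathfrak{d}^k_{\sigma,j}$ with $(\sigma,k,j)\in\mathscr{TD}$ and the same $\sigma,k$ (so the vanishing of the $\mathfrak{d}$'s forces the vanishing of the $\mathfrak{C}$'s of the same order), and then apply Theorem~\ref{thm1}. The degree bound $k<k_r$ via \eqref{eq:expkr} and Remark~\ref{rmk:rkr1} is exactly the right check to ensure everything is well-defined on $C^{k_r}(M)$.
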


Let us consider an equivalence relation $\sim$ on $\mathscr{TC}$ as follows:
 $(\sigma,k,l)\sim(\sigma,k,l')$ if the angular sectors $U_{\sigma,l}$ and $U_{\sigma,l'}$ are connected through a chain of saddle loops emanating from the saddle $\sigma$. For every equivalence class $[(\sigma,k,l)]\in \mathscr{TC}/\sim$, let
\[\mathfrak{C}_{[(\sigma,k,l)]}(f):=\sum_{(\sigma,k,l')\sim (\sigma,k,l)}\mathfrak{C}^k_{\sigma,l}(f).\]
For any $[(\sigma,k,l)]\in \mathscr{TC}/\sim$ there exists $\alpha\in\mathcal{A}$ and an interval $J$ of the form $[l_\alpha,l_\alpha+\vep]$ or $[r_\alpha-\vep,r_\alpha]$ such that $l_\alpha$ or $r_\alpha$ is
the first backward meeting point of a separatrix incoming to $\sigma\in\mathrm{Sd}(\psi_\R)$ and $J^\tau$ contains all angular sectors $U_{\sigma,l'}$ for which $(\sigma,k,l')\sim(\sigma,k,l)$. Let $\xi_{[(\sigma,k,l)]}:I\to\R$ be given as follows:
\begin{itemize}
\item $\xi_{[(\sigma,k,l)]}$ is zero on any interval $I_\beta$ with $\beta\neq \alpha$;
\item if $J=[l_\alpha,l_\alpha+\vep]$ then for any $s\in I_\alpha$,
\begin{align*}
\xi_{[(\sigma,k,l)]}(s)&=\frac{(s-l_\alpha)^{\frac{k-(m_\sigma-2)}{m_\sigma}}}{m_\sigma^2 k!}\text{ if }k\neq m_\sigma-2\ \operatorname{mod}m_\sigma,\\
\xi_{[(\sigma,k,l)]}(s)&=-\frac{(s-l_\alpha)^{\frac{k-(m_\sigma-2)}{m_\sigma}}\log(s-l_\alpha)}{m_\sigma^2 k!} \text{ if }k= m_\sigma-2\ \operatorname{mod}m_\sigma;
\end{align*}
\item if $J=[r_\alpha-\vep,r_\alpha]$ then for any $s\in I_\alpha$,
\begin{align*}
\xi_{[(\sigma,k,l)]}(s)&=\frac{(r_\alpha-s)^{\frac{k-(m_\sigma-2)}{m_\sigma}}}{m_\sigma^2 k!}\text{ if }k\neq m_\sigma-2\ \operatorname{mod}m_\sigma,\\
\xi_{[(\sigma,k,l)]}(s)&=-\frac{(r_\alpha-s)^{\frac{k-(m_\sigma-2)}{m_\sigma}}\log(r_\alpha-s)}{m_\sigma^2 k!} \text{ if }k= m_\sigma-2\ \operatorname{mod}m_\sigma.
\end{align*}
\end{itemize}
Of course, $\xi_{[(\sigma,k,l)]}\in C^{n+\pag}(\sqcup_{\alpha \in \mathcal{A}}I_\alpha)$ with $n:=\lceil\mathfrak{o}(\sigma,k)\rceil$ and $a:=n-\mathfrak{o}(\sigma,k)$.

In view of the proof of  Theorem~\ref{thm1} we also have the following.
\begin{corollary}\label{cor:vfzan}
Fix $\sigma\in\mathrm{Sd}(\psi_\R)\cap M'$, $k\geq 0$ and let $n:=\lceil\mathfrak{o}(\sigma,k)\rceil$ and $a:=n-\mathfrak{o}(\sigma,k)$. Suppose that $f\in C^{k\vee (n+1)}(M)$ is such that it is equal to zero
on  $U_{\sigma'}$ for $\sigma'\neq \sigma$. Then
\begin{equation}\label{eq:sumspe}
\varphi_f=\sum_{\substack{[(\sigma,j,l)]\in \mathscr{TC}/\sim\\0\leq j<k}}\mathfrak{C}_{[(\sigma,j,l)]}(f)\xi_{[(\sigma,j,l)]}+ C^{n+\pag}(\sqcup_{\alpha \in \mathcal{A}}I_\alpha).
\end{equation}
\end{corollary}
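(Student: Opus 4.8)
The plan is to mirror the structure of the proof of Theorem~\ref{thm1}, only now tracking the singular part of $\varphi_f$ at the ends of the exchanged intervals instead of throwing it away. Since $f$ vanishes on every $U_{\sigma'}$ with $\sigma'\neq\sigma$, the only contributions to the singular behaviour of $\varphi_f$ come from the sectors of $\mathcal{D}_{\sigma,\vep}$, and we use the same cover $I=\bigcup_{q=1}^Q J_q$ as in the final step of that proof. On any $J_q\subset\Int I_\alpha$ we have, by Case~1, $\varphi_f\in C^{n+1}(J_q)\subset C^{n+\pag}(J_q)$ (via Remark~\ref{rmk:filpa}), so there is nothing to extract there. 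On any $J_q$ of the form $[l_\alpha,l_\alpha+\vep]$ or $[r_\alpha-\vep,r_\alpha]$ whose strip $J_q^\tau$ meets $\sigma$, I would use the decomposition \eqref{eq:coverN} of $J_q^\tau$ into the rectangles $J_\pm^\tau$, $J_i^\tau$ and the sectors $\mathcal{D}_{\sigma,\vep}^{2l_i+1}$ (or $\mathcal{D}_{\sigma,\vep}^{2l_i}$ on the other side), together with a subordinate $C^\infty$-partition of unity. The pieces supported on rectangles over which $\psi_\R$ acts by a $C^\infty$-flow box contribute a $C^{n+1}$ (hence $C^{n+\pag}$) term, exactly as in \eqref{eq:phipm}.

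The heart of the matter is the sectorial pieces. For the piece supported on $\mathcal{D}_{\sigma,\vep}^{2l_i+1}$ we invoke \eqref{eq:intpass1}, which identifies $\varphi^\sigma_f(l_\alpha+\vep s)$ with $\frac{\vep^{-(m_\sigma-2)/m_\sigma}}{m_\sigma^2}\varphi_{\tilde f,l_i}(-s)$ for $\tilde f(\omega,\overline\omega)=(\rho_\sigma\cdot f\cdot V)(\vep^{1/m_\sigma}\omega,\vep^{1/m_\sigma}\overline\omega)$. Now, rather than assuming the relevant $\mathfrak{C}^j_{\sigma,l}(f)$ vanish, I would apply Corollary~\ref{cor:Hold1} directly to $\varphi_{\tilde f,l_i}$: it gives
\[
\varphi_{\tilde f,l_i}(s)=-\!\!\sum_{\substack{0\leq j<k\\ j=m_\sigma-2\,\operatorname{mod}m_\sigma}}\!\!\frac{\mathscr{C}^j_{2l_i+\epsilon}(\tilde f)}{j!}|s|^{\frac{j-(m_\sigma-2)}{m_\sigma}}\log|s|
+\!\!\sum_{\substack{0\leq j<k\\ j\neq m_\sigma-2\,\operatorname{mod}m_\sigma}}\!\!\frac{\mathscr{C}^j_{2l_i+\epsilon}(\tilde f)}{j!}|s|^{\frac{j-(m_\sigma-2)}{m_\sigma}}+C^{n+\pb}.
\]
Using $\mathscr{C}^{j}_{2l_i+\epsilon}(\tilde f)=\vep^{j/m_\sigma}\mathscr{C}^{j}_{2l_i+\epsilon}(f\cdot V)=\vep^{j/m_\sigma}\mathfrak{C}^j_{\sigma,2l_i+\epsilon}(f)$ (because $\rho_\sigma\equiv 1$ near $\sigma$), and rescaling $s\mapsto s/\vep$, the explicit power-and-log terms match, up to the prefactor $\vep^{-(m_\sigma-2)/m_\sigma}/m_\sigma^2$, precisely the building blocks $\xi_{[(\sigma,j,l)]}$ (restricted to $J_q$), while the remainder lands in $C^{n+\pag}(J_q)$ once one checks the one-sided coefficient condition of geometric type — that is handled exactly as in the last paragraph of the proof of Theorem~\ref{thm1}, since the end of $I$ adjacent to $\sigma$ sees the bounded-return side.

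Summing the sectors $i=1,\dots,N_{J_q}$ that share an equivalence class assembles $\mathfrak{C}_{[(\sigma,j,l)]}(f)$, and summing over $q$ and over classes gives \eqref{eq:sumspe}, with the leftover collected into a single $C^{n+\pag}(\sqcup_{\alpha}I_\alpha)$ function. The main obstacle I anticipate is purely bookkeeping rather than conceptual: one must verify that when several intervals $J_q$ abut the same saddle end (or when a saddle end is shared by $I_{\pi_0^{-1}(d)}$ and $I_{\pi_1^{-1}(d)}$, say), the singular terms extracted from the various pieces assemble into the $\xi_{[(\sigma,j,l)]}$ with the correct combinatorial coefficients and no double counting — this is where the equivalence relation $\sim$ and the convention $\tbinom{0}{n}=\frac{(-1)^{n-1}}{n}$ (needed to make the $j=m_\sigma-2\bmod m_\sigma$ log-terms agree with $\eqref{eq:limcc}$) must be reconciled. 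A secondary, milder point is confirming that $k\vee(n+1)\leq k_r$-type bounds (Remark~\ref{rmk:rkr1}) are not actually needed here since the hypothesis already assumes $f\in C^{k\vee(n+1)}(M)$, so Corollary~\ref{cor:Hold1} applies verbatim.
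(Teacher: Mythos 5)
Your proposal is correct and follows essentially the same route as the paper: the paper's own proof states only that the argument proceeds as in Theorem~\ref{thm1} with Corollary~\ref{cor:Hold1} replacing Theorem~\ref{cor:Hold}, and then spells out precisely the sectorial step you describe (identify $\varphi^\sigma_f(l_\alpha+\vep s)$ with $\frac{\vep^{-(m_\sigma-2)/m_\sigma}}{m_\sigma^2}\varphi_{\tilde f,l}(-s)$, expand via Corollary~\ref{cor:Hold1}, use $\mathscr{C}^j_{2l+\epsilon}(\tilde f)=\vep^{j/m_\sigma}\mathfrak{C}^j_{\sigma,2l+\epsilon}(f)$, and match the rescaled power-and-log terms against the building blocks $\xi_{[(\sigma,j,l)]}$). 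The bookkeeping issues you flag are harmless by construction: each end of an $I_\alpha$ is covered by exactly one $J_q$, each $\xi_{[(\sigma,j,l)]}$ is supported on a single $I_\alpha$, and the equivalence class $[(\sigma,j,l)]$ is by definition exactly the set of sectors traversed by the orbits in $J_q^\tau$, so summing over $i=1,\dots,N_{J_q}$ produces $\mathfrak{C}_{[(\sigma,j,l)]}(f)$ with no double counting.
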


\begin{proof}
The proof proceeds in the same way as the proof of Theorem~\ref{thm1}, except that we use Corollary~\ref{cor:Hold1} instead of Theorem~\ref{cor:Hold} in the key reasoning. For example, using the notations introduced in the proof of the Theorem \ref{thm1}, for any $s \in (0,1]$
\[\varphi^\sigma_f(l_\alpha+\vep s)=\frac{\vep^{-\frac{m_\sigma-2}{m_\sigma}}}{m_\sigma^2}\varphi_{\tilde{f},l}(-s)\text{ with }
\tilde{f}(\omega,\overline{\omega})=(\rho_\sigma\cdot f\cdot V)(\vep^{\frac{1}{m_\sigma}}\omega,\vep^{\frac{1}{m_\sigma}}\overline{\omega})\]
and $\mathscr{C}^{j}_{2l+1}(\tilde f)=\vep^{\frac{j}{m_\sigma}}\mathscr{C}^{j}_{2l+1}(f\cdot V)=\vep^{\frac{j}{m_\sigma}}\mathfrak{C}^j_{\sigma,2l+1}(f)$. In view of Corollary~\ref{cor:Hold1}, for $s\in[-1,0)$,
\begin{align*}
\varphi_{\tilde{f},l}(s)&=-\sum_{\substack{0\leq j<k\\j= m_\sigma-2\operatorname{mod} m_\sigma
}}\frac{\mathscr{C}^j_{2l+1}(\tilde{f})}{j!}(-s)^{\frac{j-(m_\sigma-2)}{m_\sigma}}\log (-s)\\
&\quad+\sum_{\substack{0\leq j<k\\j\neq m_\sigma-2\operatorname{mod} m_\sigma
}}\frac{{\mathscr{C}}^j_{2l+1}(\tilde{f})}{j!}(-s)^{\frac{j-(m_\sigma-2)}{m_\sigma}}+C^{n+\pa}([-1,0)).
\end{align*}
It follows that for $s\in(l_\alpha,l_\alpha+\vep]$,
\begin{align*}
\varphi^\sigma_f(s)&=\frac{\vep^{-\frac{m_\sigma-2}{m_\sigma}}}{m_\sigma^2}\varphi_{\tilde{f},l}((l_\alpha-s)/\vep)\\
&=-\frac{\vep^{-\frac{m_\sigma-2}{m_\sigma}}}{m_\sigma^2}\sum_{\substack{0\leq j<k\\j= m_\sigma-2\operatorname{mod} m_\sigma
}}\frac{\vep^{\frac{j}{m_\sigma}}\mathfrak{C}^j_{\sigma,2l+1}(f)}{j!}\Big(\frac{s-l_\alpha}{\vep}\Big)^{\frac{j-(m_\sigma-2)}{m_\sigma}}\log \Big(\frac{s-l_\alpha}{\vep}\Big)\\
&\quad+\frac{\vep^{-\frac{m_\sigma-2}{m_\sigma}}}{m_\sigma^2}\!\!\sum_{\substack{0\leq j<k\\j\neq m_\sigma-2\operatorname{mod} m_\sigma
}}\frac{\vep^{\frac{j}{m_\sigma}}\mathfrak{C}^j_{\sigma,2l+1}(f)}{j!}\Big(\frac{s-l_\alpha}{\vep}\Big)^{\frac{j-(m_\sigma-2)}{m_\sigma}}+C^{n+\pa}((l_\alpha,l_\alpha+\vep])\\
&=-\sum_{\substack{0\leq j<k\\j= m_\sigma-2\operatorname{mod} m_\sigma
}}\!\!\mathfrak{C}^j_{\sigma,2l+1}(f)\frac{(s-l_\alpha)^{\frac{j-(m_\sigma-2)}{m_\sigma}}\log (s-l_\alpha)}{m_\sigma^2j!}\\
&\quad+\sum_{\substack{0\leq j<k\\j\neq m_\sigma-2\operatorname{mod} m_\sigma
}}\mathfrak{C}^j_{\sigma,2l+1}(f)\frac{(s-l_\alpha)^{\frac{j-(m_\sigma-2)}{m_\sigma}}}{m_\sigma^2j!}+C^{n+\pa}((l_\alpha,l_\alpha+\vep]).
\end{align*}
This key observation makes it possible to get \eqref{eq:sumspe} proceeding further as in the proof of Theorem~\ref{thm1}.
\end{proof}

\begin{theorem}
For any $r\geq-\frac{m-2}{m}$ let $n=\lceil r\rceil$ and $a=n-r$. Then for any $f\in C^{k_r}(M)$ we have
\begin{align}
\label{def:rsk}
\mathfrak{s}_{r}(f)&=\varphi_{f}-\sum_{\substack{[(\sigma,k,l)]\in \mathscr{TC}/\sim\\ \mathfrak{o}(\sigma,k)<r}}
\mathfrak{C}_{[(\sigma,k,l)]}(f)\xi_{[(\sigma,k,l)]}\in C^{n+\pa}(\sqcup_{\alpha \in \mathcal{A}}I_\alpha)
\end{align}
and the operator $\mathfrak{s}_{r}:C^{k_r}(M)\to C^{n+\pa}(\sqcup_{\alpha \in \mathcal{A}}I_\alpha)$ is bounded.
\end{theorem}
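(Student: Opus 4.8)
The standing hypothesis is $r\geq -\frac{m-2}{m}$, so $n=\lceil r\rceil\geq 0$ and $a=n-r\in[0,1)$, and $k_r$ is well defined. The plan is to assemble the statement from Theorem~\ref{thm1} and Corollary~\ref{cor:vfzan} by localizing $f$ around the saddles of $M'$. Choose pairwise disjoint singular neighbourhoods $U_\sigma$ for $\sigma\in\mathrm{Sd}(\psi_\R)\cap M'$, and $C^\infty$ cut-offs $\vartheta_\sigma:M\to[0,1]$ with $\supp\vartheta_\sigma\subset U_\sigma$ and $\vartheta_\sigma\equiv 1$ near $\sigma$. Decompose
\[
f=f_0+\sum_{\sigma\in\mathrm{Sd}(\psi_\R)\cap M'}f_\sigma,\qquad f_\sigma:=\vartheta_\sigma f,\quad f_0:=\Big(1-\sum_{\sigma}\vartheta_\sigma\Big)f,
\]
so that $f\mapsto f_0$ and $f\mapsto f_\sigma$ are bounded on $C^{k_r}(M)$; since $f\mapsto\varphi_f$ is linear it is enough to treat each summand separately.

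For $f_0$: as $1-\sum_\sigma\vartheta_\sigma$ vanishes near every saddle, so does $f_0$, hence every partial derivative of $f_0\cdot V$ vanishes at each $\sigma$ and $\mathfrak{C}^k_{\sigma,l}(f_0)=0$ for all $(\sigma,k,l)\in\mathscr{TC}$. Thus $f_0$ lies in the kernel intersection appearing in Theorem~\ref{thm1}, which gives $\varphi_{f_0}\in C^{n+\pag}(\sqcup_{\alpha\in\mathcal{A}}I_\alpha)\subset C^{n+\pa}(\sqcup_{\alpha\in\mathcal{A}}I_\alpha)$ with $n=\lceil r\rceil$, $a=n-r$, together with boundedness of $f\mapsto f_0\mapsto\varphi_{f_0}$. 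For each $\sigma$: since $\supp\vartheta_\sigma\subset U_\sigma$ and the $U_{\sigma'}$ are disjoint, $f_\sigma$ vanishes on $U_{\sigma'}$ for $\sigma'\neq\sigma$, and $f_\sigma$ has the same jet as $f$ at $\sigma$, so $\mathfrak{C}_{[(\sigma,j,l)]}(f_\sigma)=\mathfrak{C}_{[(\sigma,j,l)]}(f)$ for all $j$. Pick $k_\sigma\geq 0$ with $\mathfrak{o}(\sigma,k_\sigma-1)<r\leq\mathfrak{o}(\sigma,k_\sigma)$; by Remark~\ref{rmk:rkr1} one has $\lceil\mathfrak{o}(\sigma,k_\sigma)\rceil=\lceil r\rceil=n$ and $k_\sigma\vee(n+1)\leq k_r$, so $f_\sigma=\vartheta_\sigma f\in C^{k_r}(M)\subset C^{k_\sigma\vee(n+1)}(M)$. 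Corollary~\ref{cor:vfzan}, applied with $k=k_\sigma$, then yields
\[
\varphi_{f_\sigma}=\sum_{\substack{[(\sigma,j,l)]\in\mathscr{TC}/\sim\\ 0\leq j<k_\sigma}}\mathfrak{C}_{[(\sigma,j,l)]}(f)\,\xi_{[(\sigma,j,l)]}+R_\sigma,
\]
with $R_\sigma\in C^{n+\mathrm{P}_{a_\sigma}}(\sqcup_{\alpha}I_\alpha)$, $a_\sigma:=n-\mathfrak{o}(\sigma,k_\sigma)\leq n-r=a$. Because $\mathfrak{o}(\sigma,\cdot)$ is strictly increasing, the condition $0\leq j<k_\sigma$ is exactly $\mathfrak{o}(\sigma,j)<r$; and by Remark~\ref{rmk:filpa}, $C^{n+\mathrm{P}_{a_\sigma}}\subset C^{n+\pa}$, so $R_\sigma\in C^{n+\pa}(\sqcup_{\alpha}I_\alpha)$. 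Tracking constants along the proof of Corollary~\ref{cor:vfzan} (which runs parallel to that of Theorem~\ref{thm1}), the map $f\mapsto f_\sigma\mapsto R_\sigma$ is bounded into $C^{n+\pa}(\sqcup_{\alpha}I_\alpha)$.

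Summing over $\sigma$ and over $j<k_\sigma$, the double sum runs exactly once over every class $[(\sigma,k,l)]\in\mathscr{TC}/\sim$ with $\mathfrak{o}(\sigma,k)<r$, since each such class has a unique underlying saddle and a unique index $k$. Hence
\[
\varphi_f=\varphi_{f_0}+\sum_\sigma\varphi_{f_\sigma}=\sum_{\substack{[(\sigma,k,l)]\in\mathscr{TC}/\sim\\ \mathfrak{o}(\sigma,k)<r}}\mathfrak{C}_{[(\sigma,k,l)]}(f)\,\xi_{[(\sigma,k,l)]}+\Big(\varphi_{f_0}+\sum_\sigma R_\sigma\Big),
\]
so $\mathfrak{s}_r(f)=\varphi_{f_0}+\sum_\sigma R_\sigma\in C^{n+\pa}(\sqcup_{\alpha}I_\alpha)$, and boundedness of $\mathfrak{s}_r$ follows from the boundedness of all the operators above together with the finiteness of $\mathrm{Sd}(\psi_\R)\cap M'$. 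The point to be careful about is not analysis but bookkeeping: arranging the cut-offs so that each $f_\sigma$ satisfies the exact hypotheses of Corollary~\ref{cor:vfzan} (in particular $f_\sigma|_{U_{\sigma'}}\equiv 0$ and agreement of jets at $\sigma$), reconciling the remainder exponent $a_\sigma=n-\mathfrak{o}(\sigma,k_\sigma)$ with the target $a=n-r$ via Remark~\ref{rmk:filpa}, and verifying that the reindexed double sum reproduces precisely the sum defining $\mathfrak{s}_r$.
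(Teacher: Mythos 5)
Your proof of the membership part is essentially the paper's proof. The paper decomposes $f=\sum_{\sigma}f\rho_\sigma$ with $\{\rho_\sigma\}$ a $C^\infty$ partition of unity of $M$ satisfying $\rho_\sigma\equiv 1$ on $U_\sigma$ (hence automatically $\rho_\sigma\equiv 0$ on $U_{\sigma'}$ for $\sigma'\neq\sigma$, since the $U_\sigma$ are disjoint), then applies Corollary~\ref{cor:vfzan} to each $f\rho_\sigma$ and invokes Remark~\ref{rmk:filpa} to reconcile the exponent $n-\mathfrak{o}(\sigma,k_\sigma)$ with $a=n-r$; your version differs only by splitting off the extra piece $f_0=(1-\sum_\sigma\vartheta_\sigma)f$ and handling it through Theorem~\ref{thm1} rather than folding it into the partition of unity, which is equivalent and equally correct. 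The reindexing of the double sum is correctly justified by the strict monotonicity of $j\mapsto\mathfrak{o}(\sigma,j)$.

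The one place where your argument is thinner than the paper's is boundedness. Corollary~\ref{cor:vfzan} as stated asserts only membership of the remainder in $C^{n+\pag}(\sqcup_{\alpha\in\mathcal{A}}I_\alpha)$, not an operator norm bound, and Corollary~\ref{cor:Hold1} likewise carries no explicit norm estimate. You acknowledge this and say you would ``track constants along the proof of Corollary~\ref{cor:vfzan}'' — this is doable (the norm bound is available in \eqref{eq:norcc} inside the proof of Theorem~\ref{cor:Hold}, and the cut-off multiplications are bounded operators), but it is extra work you have not written out. The paper instead sidesteps this by an algebraic reduction: using the bounded decomposition $f=\sum_{(\sigma,k,j)}\mathfrak{d}^k_{\sigma,j}(f)\chi^k_{\sigma,j}+\mathfrak{R}_r(f)$ from Corollary~\ref{cor;fdecomp}, linearity of $\mathfrak{s}_r$, and the observation that $\mathfrak{C}_{[(\sigma,k,l)]}(\mathfrak{R}_r(f))=0$ for $\mathfrak{o}(\sigma,k)<r$ (since each $\mathfrak{C}^k_{\sigma,l}$ is a linear combination of the $\mathfrak{d}^k_{\sigma,j}$ with $(\sigma,k,j)\in\mathscr{TD}$), one gets
\[
\mathfrak{s}_r(f)=\sum_{\substack{(\sigma,k,j)\in\mathscr{TD}\\ \mathfrak{o}(\sigma,k)<r}}\mathfrak{d}^k_{\sigma,j}(f)\,\mathfrak{s}_r(\chi^k_{\sigma,j})+\varphi_{\mathfrak{R}_r(f)},
\]
which is bounded because each functional $\mathfrak{d}^k_{\sigma,j}$ is bounded, $\mathfrak{s}_r(\chi^k_{\sigma,j})$ are finitely many fixed elements of $C^{n+\pa}$ (by the membership part already proved), and $f\mapsto\varphi_{\mathfrak{R}_r(f)}$ is bounded by Corollary~\ref{cor;fdecomp}. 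This avoids re-running any of the local analysis with constants, so it is the cleaner route; if you keep your approach you should at least state where the required operator bound for the remainder in Corollary~\ref{cor:vfzan} comes from.
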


\begin{proof}
Let $\{\rho_\sigma:\sigma\in \mathrm{Sd}(\psi_\R)\cap M'\}$ be a $C^\infty$-partition of unity of $M$ such that $\rho_\sigma=1$ on $U_\sigma$.
For any $\sigma\in \mathrm{Sd}(\psi_\R)\cap M'$ choose $k_\sigma\geq 1$ so that $\mathfrak{o}(\sigma,k_\sigma-1)<r\leq \mathfrak{o}(\sigma,k_\sigma)$. Let $n_{\sigma,k_\sigma}=\lceil \mathfrak{o}(\sigma,k_\sigma)\rceil$ and $a_{\sigma,k_\sigma}=n_{\sigma,k_\sigma}-\mathfrak{o}(\sigma,k_\sigma)$. By Remark~\ref{rmk:rkr1}, $k_\sigma\vee(n_{\sigma,k_\sigma}+1)\leq k_r$. Therefore, Corollary~\ref{cor:vfzan} applied to $f\cdot \rho_\sigma$, shows that
\[\varphi_{f\cdot \rho_\sigma}=\sum_{\substack{[(\sigma,j,l)]\in \mathscr{TC}/\sim\\0\leq j<k_\sigma}}\mathfrak{C}_{[(\sigma,j,l)]}(f\cdot \rho_\sigma){\xi}_{[(\sigma,j,l)]}+ C^{n_{\sigma,k_\sigma}+\mathrm{P}_{a_{\sigma,k_\sigma}}\mathrm{G}}(\sqcup_{\alpha \in \mathcal{A}}I_\alpha).\]
As $r\leq \mathfrak{o}(\sigma,k_\sigma)$, by Remark~\ref{rmk:filpa}, $C^{n_{\sigma,k_\sigma}+\mathrm{P}_{a_{\sigma,k_\sigma}}\mathrm{G}}\subset C^{n+\pag}$. Since $\mathfrak{C}_{[(\sigma,j,l)]}(f\cdot \rho_\sigma)=\mathfrak{C}_{[(\sigma,j,l)]}(f)$, this gives
\[\varphi_{f\cdot \rho_\sigma}=\sum_{\substack{[(\sigma,j,l)]\in \mathscr{TC}/\sim\\0\leq j<k_\sigma}}\mathfrak{C}_{[(\sigma,j,l)]}(f){\xi}_{[(\sigma,j,l)]}+ C^{n+\pag}(\sqcup_{\alpha \in \mathcal{A}}I_\alpha).\]
When summed against $\sigma$, this yields \eqref{def:rsk}.

To prove that the operator $\mathfrak{s}_{r}$ is bounded, we use the decomposition \eqref{eq:Rr}. Indeed,
\begin{align*}
\mathfrak{s}_{r}(f)&=\sum_{\substack{(\sigma,k,j)\in \mathscr{TD}\\ \mathfrak{o}(\sigma,k)<r}}
\mathfrak{d}^k_{\sigma,j}(f)\mathfrak{s}_{r}(\chi^k_{\sigma,j})+\mathfrak{s}_{r}(\mathfrak{R}_{r}(f))\\
&=\sum_{\substack{(\sigma,k,j)\in \mathscr{TD}\\ \mathfrak{o}(\sigma,k)<r}}
\mathfrak{d}^k_{\sigma,j}(f)\mathfrak{s}_{r}(\chi^k_{\sigma,j})+\varphi_{\mathfrak{R}_{r}(f)}.
\end{align*}
Since the functionals $\mathfrak{d}^k_{\sigma,j}$ and the operator $f\mapsto \varphi_{\mathfrak{R}_{r}(f)}$ (by Corollary~\ref{cor;fdecomp}) are bounded, this gives that  $\mathfrak{s}_{r}$ is bounded.
\end{proof}

\begin{proof}[Proof of Theorem~\ref{thm2}]
Arguments presented in Section~\ref{sec:twoop} show that if $g\in C^{r}(I)$ is a solution of the cohomological equation $g\circ T-g=\varphi_f$,
then the corresponding function $u=u_{g,f}:M'\setminus (\mathrm{Sd}(\psi_\R)\cup \SL)\to\C$ given by
\[u(x):=g(\psi_tx)-\int_{0}^tf(\psi_sx)\,ds\]
whenever  $\psi_tx\in I$ for some $t\in\R$, is of class $C^{r}$ on $M'\setminus (\mathrm{Sd}(\psi_\R)\cup \SL)$. We need to show that if $\mathfrak{d}^k_{\sigma,j}(f)=0$ for all $(\sigma,k,j)\in\mathscr{TD}$ such that $\widehat{\mathfrak{o}}(\mathfrak{d}^k_{\sigma,j})<v(r)$ and $\mathfrak{C}^k_{\sigma,l}(f)=0$ for all $(\sigma,k,l)\in\mathscr{TC}$ such that ${\mathfrak{o}}(\mathfrak{C}^k_{\sigma,l})<v(r)$ then $u$ has a $C^r$-extension to $M'_e$ and
\begin{equation}\label{maineq}
\|u\|_{C^r(M'_e)}\leq C(\|g\|_{C^{r}(I)}+\|f\|_{C^{k_{v(r)}}(M)}).
\end{equation}
We split the proof of our claim into several steps. In fact, we split $M'_e$ into subsets of two kinds: subsets which are far from saddles and saddle loops, and sets surrounding saddles or saddle loops.
\medskip

\textbf{Step 1. Sets far from saddles and saddle loops.}
We will show that for any compact subset $A\subset M'\setminus (\mathrm{Sd}(\psi_\R)\cup \SL)$ there exists $C_{A}>0$ such that
\begin{equation}\label{eq:Acomp}
\|u\|_{C^r(A)}\leq  C_{A}(\|g\|_{C^{r}(I)}+\|f\|_{C^{k_{v(r)}}(M)}).
\end{equation}
Recall that, by arguments from Section~\ref{sec:twoop}, for any $x_0\in M'\setminus (\mathrm{Sd}(\psi_\R)\cup \SL)$ there exist closed intervals $[\tau_1,\tau_2]$ and $J\subset \Int I$ such that the set $R(x_0)=\{\psi_tx:x\in J,t\in[\tau_1,\tau_2]\}$ is a rectangle in $M'$, i.e.\
the map
\[J\times [\tau_1,\tau_2]\ni(x,t)\mapsto \nu(x,t)=\psi_tx\in R(x_0)\]
is a $C^\infty$-diffeomorphism and $x_0\in \Int R(x_0)$. Moreover,
\[u\circ\nu(x,t)=g(x)+\int_0^tf\circ\nu(x,s)ds\text{ on }J\times [\tau_1,\tau_2].\]
By Remark~\ref{rmk:rkr}, it follows that there exists $C_{x_0}>0$ such that
\[\|u\|_{C^r(R(x_0))}\leq C_{x_0}(\|g\|_{C^{r}(I)}+\|f\|_{C^{r}(M)})\leq C_{x_0}(\|g\|_{C^{r}(I)}+\|f\|_{C^{k_{v(r)}}(M)}).\]
Covering $A$ by a finite number of rectangles, this yields \eqref{eq:Acomp}.
\medskip

\textbf{Step 2. Some sets far from saddles.}
Suppose that $\gamma:[a,b]\to M\setminus\rm{Fix}(\psi_\R)$ is a standard $C^\infty$-parametrization of a curve and $\xi:[a,b]\to\R_{>0}$ is a $C^\infty$ map such that
\[[a,b]^\xi\ni (x,t)\mapsto \nu(x,t)=\psi_tx\in \nu([a,b]^\xi)=:(\gamma[a,b])^\xi\]
is a $C^\infty$-diffeomorphism, where $[a,b]^\xi=\{(x,t):x\in[a,b],0\leq t\leq \xi(x)\}$. Then the arguments used in Step~1 show that if $u\circ \gamma\in C^r([a,b])$ then $u\in C^r([a,b]^\gamma)$ and there exists $C_{\gamma,\xi}>0$ such that
\begin{equation}\label{eq:gxi}
\|u\|_{C^r(\gamma([a,b])^\xi)}\leq  C_{\gamma,\xi}(\|u\circ\gamma\|_{C^{r}([a,b])}+\|f\|_{C^{k_{v(r)}}(M)}).
\end{equation}

\textbf{Step 3. Strips touching saddles and saddle loops and their decomposition.}
From now on we will use a notation introduced in the proof of Theorem~\ref{thm1}. Let $\tau:I\to\R_{>0}\cup\{+\infty\}$ be the first return time map.
Suppose that $J\subset I_\alpha$ is of the form $J=[l_\alpha,l_\alpha+\vep]$, where $l_\alpha$ is the first backward meeting point of a separatrix incoming to $\sigma\in\mathrm{Sd}(\psi_\R)\cap M'$.
Suppose that $J^\tau$ meets $\sigma$ exactly $N$-times ($1\leq N=N_J<m_\sigma$) and the orbits starting from $\Int J$ meet $\mathcal{D}_{\sigma,\vep}$ in  its sectors $\mathcal{D}_{\sigma,\vep}^{2l_i+1}$ for $1\leq i\leq N$ consecutively   before return to $I$. Then $\sigma$ has $N-1$ saddle loops $sl_i$ connecting the sector $\mathcal{D}_{\sigma,\vep}^{2l_i+1}$ with $\mathcal{D}_{\sigma,\vep}^{2l_{i+1}+1}$ for $1\leq i\leq N-1$.
Recall that $J^\tau$ is the closure of $(\Int J)^\tau$.
Then
\begin{equation}\label{eq:coverN2}
J^\tau= \bigcup_{i=1}^N \mathcal{D}_{\sigma,\vep}^{2l_i+1}\cup\bigcup_{i=0}^{N}E_i,
\end{equation}
where each $E_i$ is of the form $\gamma_i([0,\vep])^{\xi_i}$ with
\begin{itemize}
\item $\gamma_0(s)=\gamma(l_\alpha+s)$ (here $\gamma$ is the parametrization of $I$)
and $\xi_0(s)$ is the time spent to go from $J$ to  $\mathcal{D}_{\sigma,\vep}^{2l_1+1}$;
\item  for $0\leq i\leq N-1$, $\gamma_i(s)=G_{l_i}(\vep-\iota s)$
and $\xi_i(s)$ is the time spent to go from $\mathcal{D}_{\sigma,\vep}^{2l_i+1}$ to  $\mathcal{D}_{\sigma,\vep}^{2l_{i+1}+1}$;
\item   $\gamma_N(s)=G_{l_N}(\vep-\iota s)$
and $\xi_N(s)$ is the time spent to go from $\mathcal{D}_{\sigma,\vep}^{2l_N+1}$ to  $I$.
\end{itemize}

\textbf{Step 4.0. The set $E_0$.}
In view of \eqref{eq:gxi} in Step~2,
\begin{equation}\label{eq:gxi1}
u\in C^r(E_0)\text{ and }\|u\|_{C^r(E_0)}\leq  C_{\gamma_0,\xi_0}(\|g\|_{C^{r}(I)}+\|f\|_{C^{k_{v(r)}}(M)}).
\end{equation}

\textbf{Step 4.1. The sets $\mathcal{D}_{\sigma,\vep}^{2l_i+1}$ surrounding the saddle $\sigma$.} We will show that for every $1\leq i\leq N$ there exist $C_i,C'_i>0$ such that  if $u$ has a $C^r$-extension on $E_{i-1}$ then
it has $C^r$-extension on $\mathcal{D}_{\sigma,\vep}^{2l_i+1}$ and
\begin{equation}\label{eq:aaa3}
\|u\|_{C^r({\mathcal{D}}_{\sigma,\vep}^{2l_i+1})}\leq C_{i}\|u\|_{C^r(E_{i-1})}+C'_i\|f\|_{C^{k_{v(r)}}(M)}.
\end{equation}
This is the main inductive step running to the proof of \eqref{maineq} restricted to $J^\tau$.

By Remark~\ref{rmk:Feps}, for every $\omega\in \mathcal{D}_{\sigma,\vep}(\frac{l_i}{m_\sigma},\frac{l_i+1}{m_\sigma})$ we have $\psi_{\xi_l(\omega)}G_{l_i}(-\vep-\iota s)=\omega$ for $s=-\Im \omega^{m_\sigma}\in[0,\vep]$ and
\[u(\omega)-u( G_{l_i}(-\vep-\iota s))=\int_{0}^{\xi_l(\omega)}f(\psi_t G_{l_i}(-\vep-\iota s))dt.\]
In view of \eqref{eq:intpass}, for  $\omega\in \mathcal{D}_{\sigma,\vep}(\frac{2l_i+1}{2m_\sigma},\frac{2l_i+2}{2m_\sigma})$,
\begin{equation}\label{eq:aaa0}
u(\omega)-u( G_{l_i}(-\vep+\iota\Im \omega^{m_\sigma}))=\frac{\vep^{-\frac{m_\sigma-2}{m_\sigma}}}{m_\sigma^2}{F}_{(f\cdot V)\circ \vep^{1/m_{\sigma}}}(\vep^{-1/m_\sigma}\omega).
\end{equation}
Choose $m-1\leq\underline{k}\leq k\leq k_{v(r)}$ such that
\[
\mathfrak{o}(\sigma,{k}-1)<v(r)\leq \mathfrak{o}(\sigma,{k}) \text{ and } \widehat{\mathfrak{o}}(\sigma,\underline{k}-1)<v(r)\leq \widehat{\mathfrak{o}}(\sigma,\underline{k}).
\]
Then $\widehat{\mathfrak{o}}(\sigma,\underline{k})=\lceil\mathfrak{o}(\sigma,{k})\rceil$. Moreover, by Remark~\ref{rmk:rkr1}, $n:=\lceil v(r)\rceil=\lceil\mathfrak{o}(\sigma,{k})\rceil$ and $k\vee(n+1)\leq k_{v(r)}$.

By assumption, for every $0\leq j<k$ and $0\leq i\leq j\wedge(m_\sigma-2)$ with $i\neq j-(m_\sigma-1)\operatorname{mod} m_\sigma$ we have $\partial_i^j(f\cdot V)=\mathfrak{d}_{\sigma,i}^j(f)=0$ and $\mathscr{C}^j_l(f\cdot V)=\mathfrak{C}_{\sigma,l}^j(f)=0$ for all $0\leq j<k$ and $l=2l_i+1$, $1\leq i\leq N$.

In view of Theorem~\ref{thm;ext},
 the map $F_{(f\cdot V)\circ \vep^{1/m_\sigma}}\circ\vep^{-1/m_\sigma}:\mathcal{D}_{\sigma,\vep}(\frac{2l_i+1}{2m_\sigma}, \frac{2l_i+2}{2m_\sigma})\to\C$ has a
$C^{\mathfrak{e}(\sigma,k)}$-extension on ${\mathcal{D}}_{\sigma,\vep}^{2l_i+1}=\overline{\mathcal{D}}_{\sigma,\vep}(\frac{2l_i+1}{2m_\sigma}, \frac{2l_i+2}{2m_\sigma})$ and there exists $C'_i>0$ such that
\begin{align}\label{eq:aaa1}
\Big\|\frac{\vep^{-\frac{m_\sigma-2}{m_\sigma}}}{m_\sigma^2}{F}_{(f\cdot V)\circ \vep^{1/m_{\sigma}}}\circ \vep^{-1/m_\sigma}\Big\|_{C^{\mathfrak{e}(\sigma,k)}({\mathcal{D}}_{\sigma,\vep}^{2l_i+1})}\leq C'_i\|f\|_{C^{k\vee(n+1)}(M)}.
\end{align}
Moreover, the map ${\mathcal{D}}_{\sigma,\vep}(\frac{2l_i+1}{2m_\sigma}, \frac{2l_i+2}{2m_\sigma})\ni\omega \mapsto G_{l_i}(-\vep+\iota \Im \omega^{m_\sigma})\in {\mathcal{D}}_{\sigma,\vep}^{2l_i+1}\cap E_{i-1}$ has an obvious analytic extension on ${\mathcal{D}}_{\sigma,\vep}^{2l_i+1}$.
It follows that there exists $C_i>0$ such that if $u$ is of class $C^r$ on $E_{i-1}$ then  $u\circ G_{l_i}(-\vep+\iota\Im \omega^{m_\sigma})$
has a $C^r$-extension to ${\mathcal{D}}_{\sigma,\vep}^{2l_i+1}$ and
\begin{equation}\label{eq:aaa2}
\|u\circ G_{l_i}(-\vep+\iota\Im\omega^{m_\sigma})\|_{C^r({\mathcal{D}}_{\sigma,\vep}^{2l_i+1})}\leq C_i\|u\|_{C^r(E_{i-1})}.
\end{equation}
As $v(r)\leq \mathfrak{o}(\sigma,k)$ and $k\vee(n+1)\leq k_{v(r)}$, by \eqref{eq:aaa0},  \eqref{eq:aaa1} and \eqref{eq:aaa2},
$u$ has a $C^r$-extension on  ${\mathcal{D}}_{\sigma,\vep}^{2l_i+1}$ and \eqref{eq:aaa3} holds.
\medskip

\textbf{Step 4.2. The sets $E_i$ surrounding the saddle loops.}
We will show that for every $1\leq i\leq N$ there exist $C''_i,C'''_i>0$ such that  if $u$ has a $C^r$-extension on $\mathcal{D}_{\sigma,\vep}^{2l_i+1}$ then it has $C^r$-extension on $E_i$ and
\begin{equation*}
\|u\|_{C^r(E_i)}\leq C''_{i}\|u\|_{C^r({\mathcal{D}}_{\sigma,\vep}^{2l_i+1})}+C'''_i\|f\|_{C^{k_{v(r)}}(M)}.
\end{equation*}
This is an easy inductive step leading to the proof of \eqref{maineq} restricted to $J^\tau$, which follows directly from \eqref{eq:gxi}.
Indeed, as  $\gamma_i:[0,\vep]\to {\mathcal{D}}_{\sigma,\vep}^{2l_i+1}$ is an analytic curve,
there exists $C>0$  such that if $u$ is of class $C^r$ on ${\mathcal{D}}_{\sigma,\vep}^{2l_i+1}$ then $\|u\circ \gamma_i\|_{C^r([0,\vep])}\leq C\|u\|_{C^r({\mathcal{D}}_{\sigma,\vep}^{2l_i+1})}$. As $E_i=\gamma_i([0,\vep])^{\xi_i}$, in view of \eqref{eq:gxi}, $u$ has $C^r$-extension on $E_i$ and
\begin{align*}
\|u\|_{C^r(E_i)}&\leq C_{\gamma_i,\xi_i}(\|u\circ \gamma_i\|_{C^r([0,\vep])}+\|f\|_{C^{k_{v(r)}}(M)})\\
&\leq C_{\gamma_i,\xi_i}(C\|u\|_{C^r({\mathcal{D}}_{\sigma,\vep}^{2l_i+1})}+\|f\|_{C^{k_{v(r)}}(M)}).
\end{align*}

\textbf{Step 4.3. Induction.} Starting from Step~4.0 (as the initial inductive step) and then repeating alternately Steps~4.1 and 4.2 $N$-times, we have
that there exists $C_J>0$ such that $u$ has a $C^r$-extension on $J^\tau$ and
\begin{equation}\label{eq:Jcomp}
\|u\|_{C^r(J^\tau)}\leq  C_{J}(\|g\|_{C^{r}(J)}+\|f\|_{C^{{k}_{v(r)}}(M)}).
\end{equation}

\textbf{Step 5. Summary.} Using the arguments from Step~4, we obtain \eqref{eq:Jcomp} also in the case where $J=[r_\alpha-\vep,r_\alpha]$. Then the strip $J^\tau$ touches a saddle on right side. Let $A\subset M'\setminus (\mathrm{Sd}(\psi_\R)\cup \SL)$ be the closure of
\[M'\setminus \bigcup_{\alpha\in\mathcal{A}}([l_\alpha,l_\alpha+\vep]^\tau\cup[r_\alpha-\vep,r_\alpha]^\tau).\]
Then by Step~1 applied to $A$ and Step~4 applied to the intervals  $[l_\alpha,l_\alpha+\vep]$ and $[r_\alpha-\vep,r_\alpha]$ for all $\alpha\in\mathcal{A}$, we have that $u$ has a $C^r$-extension on $M'_e$ and \eqref{maineq} holds with $C=C_A+\sum_{\alpha\in\mathcal{A}}(C_{[l_\alpha,l_\alpha+\vep]}+C_{[r_\alpha-\vep,r_\alpha]})$.
\end{proof}

{Finally, we prove the optimality of the regularity of solutions for cohomological equation. The proof of Theorem \ref{thm3}  reduces our problem to a local framework and then applies Theorem~\ref{thm;extinv} (proved in Section~\ref{sec;laphi}) knowing that $\mathscr{C}^j_{l}$ corresponds to $\mathfrak{C}^{j}_{\sigma,l}$ and $\partial^{j}_{i}$ corresponds  to $\mathfrak{d}^{j}_{\sigma,i}$ in the local context. If $u$ is a $C^r$-solution of the cohomological equation on an angular sector around a saddle, then we compare the value of $u$ at any point of the sector with the value of $u$ at the entry point of its orbit into the sector. As both are of class $C^r$, their difference is also of class $C^r$. Moreover, this difference is related to $F_f$ on the sector. This is what the mentioned reduction is all about.}

\begin{proof}[Proof of Theorem~\ref{thm3}]
Suppose that there exists $u\in C^{r}(M'_e)$ such that $Xu=f$ for some $r\in\R_\eta$ with $v(r)>0$.
Choose $\sigma\in \mathrm{Sd}(\psi_\R)\cap M'$ and  $0\leq l<m_\sigma$ such that $U_{\sigma,2l+1}\cap M'\neq \emptyset$.
We will show that $\mathfrak{C}^{j}_{\sigma,2l+1}(f) = 0$ for all $j\geq 0$ such that $\mathfrak{o}(\sigma,j)<v(r)$ and
$\mathfrak{d}^{j}_{\sigma,i}(f) = 0$ for all $j\geq 0$ such that $\widehat{\mathfrak{o}}(\sigma,j)<v(r)$ and $0\leq i\leq j\wedge(m_\sigma-2)$ with
$i\neq j-(m_\sigma-1)\operatorname{mod}m_\sigma$.
The proof for even sectors follows the same way as for odd sectors, so we will only focus on the latter.

In view of \eqref{eq:aaa0}, for  $\omega\in \mathcal{D}_{\sigma,\vep}(\frac{2l+1}{2m_\sigma},\frac{2l+2}{2m_\sigma})$,
\[u(\omega)-u( G_{l}(-\vep+\iota\Im \omega^{m_\sigma}))=\frac{\vep^{-\frac{m_\sigma-2}{m_\sigma}}}{m_\sigma^2}{F}_{(f\cdot V)\circ \vep^{1/m_{\sigma}}}(\vep^{-1/m_\sigma}\omega).\]
By assumption, $u$ is of class $C^r$ on $\overline{\mathcal{D}}_{\sigma,\vep}(\frac{2l+1}{2m_\sigma},\frac{2l+2}{2m_\sigma})$, and hence
$u( G_{l}(-\vep+\iota\Im \omega^{m_\sigma}))$ is of class $C^r$ on $\overline{\mathcal{D}}_{\sigma,\vep}(\frac{2l+1}{2m_\sigma},\frac{2l+2}{2m_\sigma})$. Therefore, the map ${F}_{(f\cdot V)\circ \vep^{1/m_{\sigma}}}\circ \vep^{-1/m_\sigma}$ has a $C^r$-extension on  $\overline {\mathcal D}(\frac{2l+1}{2m}, \frac{2l+2}{2m})$.

Choose $k\geq m_\sigma-1$ such that $\mathfrak{o}(\sigma,k-1)<v(r)\leq\mathfrak{o}(\sigma,k)$. By Remark~\ref{rmk:rkr1}, we have $n:=\lceil v(r)\rceil=\lceil\mathfrak{o}(\sigma,k)\rceil$ and $k\vee(n+1)\leq k_{v(r)}$.
Therefore, by Theorem~\ref{thm;extinv},
\[\vep^{j/m_{\sigma}}\mathfrak{C}^{j}_{\sigma,2l+1}(f)=\mathscr{C}^j_{2l+1}((f\cdot V)\circ \vep^{1/m_{\sigma}})=0\]
for all  $j\geq 0$ such that $\mathfrak{o}(\sigma,j)<v(r)$, and
\[\vep^{j/m_{\sigma}}\mathfrak{d}^{j}_{\sigma,i}(f)=\partial^j_i((f\cdot V)\circ \vep^{1/m_{\sigma}})=0\]
for all $j\geq 0$ with $\widehat{\mathfrak{o}}(\sigma,j)<v(r)$ and $0\leq i\leq j\wedge(m_\sigma-2)$ with $i\neq j-(m_\sigma-1)\operatorname{mod} m_\sigma$.
\end{proof}

\section*{Acknowledgements}
The authors would like to thank Alexander Gomilko for his help in understanding some analytical issues used in Section~\ref{sec;prelim} and Giovanni Forni for an interesting discussion of potentially alternative approaches to solving cohomological equations. We are grateful to the referees for their helpful comments and suggestions for improving in the presentation of this work.

The authors acknowledge the Center of Excellence ``Dynamics, mathematical analysis and artificial intelligence'' at the Nicolaus Copernicus University in Toru\'n and  Centro di Ricerca Matematica Ennio De Giorgi - Scuola Normale Superiore, Pisa for hospitality during their visits.
The Research was partially supported by the Narodowe Centrum Nauki Grant 2022/45/B/ST1/00179.

{
\appendix
\section{}
The following technical result is crucial to prove an optimality of regularity in Theorem~\ref{thm;extinv}. Its proof is included for reader's convenience.
\begin{lemma}\label{lem:hom}
Let $0\leq \alpha<\beta<1$, $N\in\N$ and $r>0$. Suppose that $f:\overline{\mathcal{D}}(\alpha,\beta)\to\C$ is a $C^r$-maps such that
\[f(x,y)=\sum_{-N\leq n<r}f_n(x,y)\text{ for }(x,y)\in \overline{\mathcal{D}}(\alpha,\beta)\setminus\{(0,0)\},\]
where $f_n:\overline{\mathcal{D}}(\alpha,\beta)\setminus\{(0,0)\}\to\C$ is a  $C^\infty$-map homogenous of degree $n$ for all $-N\leq n<r$.
Then $f_{-n}\equiv 0$ for $1\leq n\leq N$ and $f_n$ is a homogenous polynomial of degree $n$ for all $0\leq n<r$.
\end{lemma}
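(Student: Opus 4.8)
The plan is to reduce the statement to one-variable Taylor analysis along rays through the vertex of the sector. I would fix a direction $\mathbf v$ with $\Arg\mathbf v\in(2\pi\alpha,2\pi\beta)$ --- such directions form a nonempty open arc since $\alpha<\beta$ --- and study $h_{\mathbf v}(t):=f(t\mathbf v)$ for small $t>0$. By the hypothesis and homogeneity, $h_{\mathbf v}(t)=\sum_{-N\le n<r}t^{n}f_n(\mathbf v)$, a finite Laurent polynomial in $t$ whose coefficients are the values $f_n(\mathbf v)$.

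For the vanishing of the negative-degree parts: since $r>0$ the map $f$ is continuous at the origin, so $h_{\mathbf v}$ is bounded as $t\to0^+$; if $f_n(\mathbf v)\ne0$ for some $n<0$, taking $n$ minimal would make $h_{\mathbf v}(t)\sim f_n(\mathbf v)t^{n}$ blow up, a contradiction. Hence $f_n(\mathbf v)=0$ for $-N\le n\le-1$ and every interior direction $\mathbf v$, and by continuity of $f_n$ on $\overline{\mathcal D}(\alpha,\beta)\setminus\{(0,0)\}$ together with its homogeneity this gives $f_{-n}\equiv0$ for $1\le n\le N$.

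For the polynomiality I would induct on $n=0,1,\dots,\lceil r\rceil-1$, with the hypothesis that $f_0,\dots,f_{n-1}$ are already homogeneous polynomials of the respective degrees. Then $g:=f-\sum_{j<n}f_j$ lies in $C^r\subset C^n$ (since $n$ is an integer $<r$), and away from the vertex $g=\sum_{k\ge n}f_k$, so $g(t\mathbf v)=t^{n}f_n(\mathbf v)+O(t^{n+1})$. On the other hand, differentiating $t\mapsto g(t\mathbf v)$ by the chain rule shows this is a $C^n$ function of $t$ on $[0,\delta)$ with $\frac{d^j}{dt^j}\big|_{0}\,g(t\mathbf v)=a_j(\mathbf v)$, where $a_j$ is the homogeneous degree-$j$ polynomial $\mathbf v\mapsto\sum_{|\gamma|=j}\binom{j}{\gamma}D^\gamma g(0)\,\mathbf v^{\gamma}$; Taylor's theorem with Peano remainder then gives $g(t\mathbf v)=\sum_{j=0}^{n}\frac{a_j(\mathbf v)}{j!}t^{j}+o(t^{n})$. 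Comparing the two expressions, the difference is a polynomial in $t$ of degree $\le n$ which is $o(t^{n})$, so every coefficient vanishes: $a_j(\mathbf v)=0$ for $j<n$ and $f_n(\mathbf v)=a_n(\mathbf v)/n!$. Since $a_n/n!$ is a homogeneous polynomial of degree $n$ agreeing with $f_n$ on a nonempty open cone of directions, it agrees with $f_n$ on all of $\overline{\mathcal D}(\alpha,\beta)$ by homogeneity; for $n=0$ this just says $f_0$ equals the constant $f(0,0)$. This closes the induction.

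The only genuinely delicate point will be justifying the $C^n$ Taylor--Peano expansion of $t\mapsto g(t\mathbf v)$ and identifying its coefficients as homogeneous polynomials in $\mathbf v$: this follows once one extends the $C^n$ map $g$ from the (Lipschitz) sector $\overline{\mathcal D}(\alpha,\beta)$ to a $C^n$ map on a full neighbourhood of the vertex and applies the usual multivariate Taylor theorem, then restricts to interior rays where all the $f_k$ are defined. The remaining points --- the bookkeeping with possibly non-integer $r$ via $C^r\subset C^{\lfloor r\rfloor}$, and the degenerate case $n=0$ --- are immediate.
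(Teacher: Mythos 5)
Your proof is correct, and its core idea --- compare the Laurent-polynomial decomposition along a ray with a Taylor-type expansion coming from the $C^r$ smoothness --- is the same as the paper's. The route differs in two places. First, the paper avoids Taylor's theorem entirely: for each pair $(i,j)$ with $i+j\leq r$ it lets $s\searrow 0$ in the identity $\frac{\partial^{i+j}}{\partial x^i\partial y^j}f(sx,sy)=\sum_n s^{n-i-j}\frac{\partial^{i+j}}{\partial x^i\partial y^j}f_n(x,y)$ and matches powers, obtaining directly that $\frac{\partial^{i+j}}{\partial x^i\partial y^j}f_n\equiv 0$ for $n<i+j$ and $\frac{\partial^{i+j}}{\partial x^i\partial y^j}f_n\equiv\frac{\partial^{i+j}}{\partial x^i\partial y^j}f(0,0)$ for $n=i+j$; the polynomiality of $f_k$ then falls out by differentiating the Euler relation $f_k(sx,sy)=s^kf_k(x,y)$ exactly $k$ times in $s$ and substituting these constants. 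There is no induction and no subtraction of partial sums. Second, the ``delicate point'' you flag --- extending $g$ to a $C^n$ map on a full neighbourhood of the vertex --- is actually unnecessary: for a fixed interior direction $\mathbf v$ the function $t\mapsto g(t\mathbf v)$ on $[0,\delta]$ is already $C^n$ by the chain rule, because $g$ and its partial derivatives up to order $n$ are continuous up to the vertex and the ray stays inside $\overline{\mathcal{D}}(\alpha,\beta)$; one-dimensional Taylor with Peano remainder then applies with no Whitney-type extension. Your induction is also dispensable: one can compare the full finite Laurent polynomial $\sum_{0\leq k<r}t^kf_k(\mathbf v)$ with the full Taylor polynomial $\sum_{j\leq\lfloor r\rfloor}\frac{a_j(\mathbf v)}{j!}t^j$ in one pass, since a polynomial in $t$ of bounded degree that is $o(t^{\lfloor r\rfloor})$ is identically zero. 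Finally, when you pass from a conclusion on a dense open set of directions to all of $\overline{\mathcal{D}}(\alpha,\beta)\setminus\{(0,0)\}$, the cleaner justification is continuity of $f_n$ and $a_n$ plus density, rather than homogeneity alone, since homogeneity only relates values along a single ray.
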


\begin{proof}
Take any pair $(i,j)\in\Z_{\geq 0}^2$ with $i+j\leq r$. Then
\[\frac{\partial^{i+j}}{\partial x^i\partial y^j}f(x,y)=\sum_{-N\leq n<r}\frac{\partial^{i+j}}{\partial x^i\partial y^j}f_n(x,y)\text{ for }(x,y)\in \overline{\mathcal{D}}(\alpha,\beta)\setminus\{(0,0)\},\]
where $\frac{\partial^{i+j}}{\partial x^i\partial y^j}f_n:\overline{\mathcal{D}}(\alpha,\beta)\setminus\{(0,0)\}\to\C$ is a  $C^\infty$-map homogenous of degree $n-i-j$ for all $-N\leq n<r$.

Since $\frac{\partial^{i+j}}{\partial x^i\partial y^j}f$ is continuous on $\overline{\mathcal{D}}(\alpha,\beta)$, for any $(x,y)\in \overline{\mathcal{D}}(\alpha,\beta)\setminus\{(0,0)\}$ we have
$\frac{\partial^{i+j}}{\partial x^i\partial y^j}f(sx,sy)\to \frac{\partial^{i+j}}{\partial x^i\partial y^j}f(0,0)$ as $s\searrow 0$.
On the other hand,
\[\frac{\partial^{i+j}}{\partial x^i\partial y^j}f(sx,sy)=\sum_{-N\leq n<r}s^{n-i-j}\frac{\partial^{i+j}}{\partial x^i\partial y^j}f_n(x,y).\]
It follows that for any $(x,y)\in \overline{\mathcal{D}}(\alpha,\beta)\setminus\{(0,0)\}$ we have
\begin{gather}
\frac{\partial^{i+j}}{\partial x^i\partial y^j}f_n(x,y)=0\text{ if }-N\leq n<i+j\text{ and  }\nonumber\\
\frac{\partial^{i+j}}{\partial x^i\partial y^j}f_n(x,y)=\frac{\partial^{i+j}}{\partial x^i\partial y^j}f(0,0)\text{ if } n=i+j.\label{eq:hom}
\end{gather}
Letting $i=j=0$ we get $f_{-k}\equiv 0$ for $1\leq k\leq N$.

Let us consider any $0\leq k<r$. Then for any $(x,y)\in \overline{\mathcal{D}}(\alpha,\beta)\setminus\{(0,0)\}$ and $0<s\leq 1$ we have $f_k(sx,sy)=s^kf_k(x,y)$. Applying the $k$th-order derivative with respect to $s$, we get
\[\sum_{0\leq i\leq k}\binom{k}{i}\frac{\partial^{k}}{\partial x^i\partial y^{k-i}}f_k(sx,sy)x^iy^{k-i}=k!f_k(x,y).\]
In view of \eqref{eq:hom}, we obtain
\[f_k(x,y)=\frac{1}{k!}\sum_{0\leq i\leq k}\binom{k}{i}\frac{\partial^{k}}{\partial x^i\partial y^{k-i}}f(0,0)x^iy^{k-i}.\]
\end{proof}}


\begin{thebibliography}{99}

\bibitem{Arn} V.I.\ Arnold, {\em Topological and ergodic properties
of closed $1$-forms with incommensurable periods}, (Russian)
Funktsional.\ Anal.\ i Prilozhen.\ \textbf{25} (1991), 1-12;
translation in Funct.\ Anal.\ Appl.\ \textbf{25} (1991), 81--90.

\bibitem{Av-Fa-Ko}
  A.\ Avila, B.\ Fayad, A. Kocsard,
  \emph{On manifolds supporting distributionally uniquely ergodic diffeomorphisms},
   J.\ Differential Geom.\ 99 (2015), 191-213.

\bibitem{Av-Ko}
 A.\ Avila, A.\ Kocsard,
 \emph{Cohomological equations and invariant distributions for minimal circle diffeomorphisms},
 Duke Math.\ J.\ 158 (2011), 501-536.


\bibitem{Bu}
 A.\ Bufetov,
 \emph{Limit theorems for translation flow},
 Ann.\ of Math.\ (2) 179 (2014), 431-499.

\bibitem{Ch-Wr}
 {J.~Chaika}, {A.~Wright},
 \emph{A smooth mixing flow on a surface with nondegenerate fixed points},
 J.\ Amer.\ Math.\ Soc.\ 32 (2019), 81-117.

\bibitem{Fa-Ka-Ze}
 B.\ Fayad, A.\ Kanigowski, R.\ Zelada,
 \emph{A non-mixing Arnold flow on a surface},
 preprint \url{arXiv:2308.01247}.

\bibitem{Fl-Fo03}
 L.\ Flaminio, G.\ Forni,
 \emph{Invariant distributions and time averages for horocycle flows},
 Duke Math.\ J.\ 119 (2003), 465-526.

\bibitem{Fl-Fo07}
 L.\ Flaminio, G.\ Forni,
 \emph{On the cohomological equation for nilflows},
 J.\ Mod.\ Dyn.\ 1 (2007), 37-60.

\bibitem{Fo1}
 G.\ Forni,
 \emph{Solutions of the cohomological equation for area-preserving flows on compact surfaces of higher genus},
 Ann.\ of Math.\ (2) {146} (1997), 295-344.

\bibitem{Fo2}
 G.\ Forni,
 \emph{Deviation of ergodic averages for area-preserving flows on surfaces of higher genus},
 Ann.\ of Math.\ (2) {155} (2002), 1-103.

\bibitem{Fo3}
  G.\ Forni,
  \emph{Sobolev regularity of solutions of the cohomological equation},
  Ergodic Theory Dynam.\ Systems 41 (2021),  685-789.

\bibitem{Fo4}
 G.\ Forni,
 \emph{Twisted cohomological equations for translation flows},
 Ergodic Theory Dynam.\ Systems 42 (2022),  881-916.

\bibitem{Fo-Ma-Ma}
G.\ Forni, S.\ Marmi, C.\ Matheus
\emph{Cohomological equation and local conjugacy class of Diophantine interval exchange maps},
to appear in Proceedings of the American Mathematical Society,
    \url{DOI: https://doi.org/10.1090/proc/14538}.

\bibitem{Fr-Ki}
 K.\ Fr\k{a}czek, M.\ Kim,
  \emph{New phenomena in deviation of Birkhoff integrals for locally Hamiltonian flows}, J.\ Reine Angew.\ Math.\ 807  (2024), 81-149.

\bibitem{Fr-Ki3}
 K.\ Fr\k{a}czek, M.\ Kim,
  \emph{Solving the cohomological equation for locally hamiltonian flows, part II - global obstructions}, preprint \url{https://arxiv.org/abs/2306.02340}.


\bibitem{Fr-Ul}
 K.\ Fr\k{a}czek, C.\ Ulcigrai,
 \emph{Ergodic properties of infinite extensions of area-preserving flows},
 Math.\ Ann.\ 354 (2012), 1289-1367.

\bibitem{Fr-Ul2}
 K.\ Fr\k{a}czek, C.\ Ulcigrai,
 \emph{On the asymptotic growth of Birkhoff integrals for locally Hamiltonian flows and ergodicity of their extensions}, preprint
 \url{https://arxiv.org/abs/2112.05939}, accepted in Commentarii
Mathematici Helvetici, \url{DOI 10.4171/CMH/567}.

\bibitem{Gu-Li}
  P.\ Giulietti, C.\ Liverani,
  \emph{Parabolic dynamics and anisotropic Banach spaces}, J.\ Eur.\ Math.\ Soc.\ (JEMS) 21 (2019),  2793-2858.


\bibitem{Kat}
 A.\ Katok,
 \emph{Combinatorial constructions in ergodic theory and dynamics}. University Lecture Series, 30. American Mathematical Society, Providence, RI, 2003. iv+121 pp.

\bibitem{Ko:mix}
 {A.V.~Kochergin},
 \emph{Mixing in special flows over a rearrangement of segments and in smooth flows on surfaces},
 Mat.\ Sb.\ (N.S.) 96(138) (1975), 471-502.

\bibitem{Ma-Mo-Yo}
 S.\ Marmi, P.\ Moussa, J.-C.\  Yoccoz,
 \emph{The cohomological equation for Roth-type interval exchange maps},
 J.\ Amer.\ Math.\ Soc.\ {18} (2005), 823-872.

\bibitem{Ma-Mo-Yo3}
 S.\ Marmi, P.\ Moussa, J.-C.\  Yoccoz,
 \emph{Linearization of generalized interval exchange maps}, Ann.\ of Math.\ (2) 176 (2012),  1583-1646.

\bibitem{Ma-Yo}
 S.\ Marmi, J.-C.\ Yoccoz,
 \emph{H\"older regularity of the solutions of the cohomological equation for Roth type interval exchange maps},
 Comm.\ Math.\ Phys.\ \textbf{344} (2016), 117-139.

\bibitem{No}
 S.P.\ Novikov.
 \emph{The {H}amiltonian formalism and a multivalued analogue of {M}orse theory},
 Uspekhi Mat.\ Nauk \textbf{37} (1982), 3--49;
 translated in Russian Math.\ Surveys \textbf{37} (1982), 1--56.

\bibitem{Rav}
 D.~Ravotti,
 \emph{Quantitative mixing for locally Hamiltonian flows with saddle loops on compact surfaces},
 Ann.\ Henri Poincar\'e \textbf{18} (2017), 3815-3861.

 \bibitem{Ro}
  R.\ Roussarie,
  \emph{Mod\`{e}les locaux de champs et de formes.} With an English summary. Ast\'{e}risq\^{u}e, No.~30. Soci\'{e}t\'{e} Math\'{e}matique de France, Paris, 1975. 181 pp.

\bibitem{Ta}
 J.\ Tanis,
 \emph{The cohomological equation and invariant distributions for horocycle maps},
   Ergodic Theory Dynam. Systems 34 (2014), 299-340.

\bibitem{Ul:mix}
 {C.\ Ulcigrai},
 \emph{Mixing of asymmetric logarithmic suspension flows over interval exchange transformations},
 Ergodic Theory Dynam.\ Systems 27 (2007), 991-1035.

\bibitem{Ul:wea}
 {C.\ Ulcigrai},
 \emph{Weak mixing for logarithmic flows over interval exchange transformations},
 J.\ Mod.\ Dyn.\ 3 (2009), 35-49.

\bibitem{Ul:abs}
 {C.\ Ulcigrai},
 \emph{Absence of mixing in area-preserving flows on surfaces},
 Ann.\ of Math.\ (2) 173 (2011), 1743-1778.

\bibitem{Ul:ICM}
C.\ Ulcigrai,
 \emph{Dynamics and 'arithmetics' of higher genus surface flows},
 ICM Proceedings 2022.

\bibitem{Wa}
 Z.J.\ Wang,
 \emph{Cohomological equation and cocycle rigidity of parabolic actions in some higher-rank Lie groups},
 Geom.\ Funct.\ Anal.\ 25 (2015), 1956-2020.





%

\bibitem{Zo:how}
 A.\ Zorich,
 \emph{How do the leaves of a closed 1-form wind around a surface?}
 Pseudoperiodic topology, 135--178, Amer.\ Math.\ Soc.\ Transl.\ Ser.\ 2, 197, Adv.\ Math.\ Sci., 46, Amer.\ Math.\ Soc., Providence, RI, 1999.


\end{thebibliography}
\end{document}